\newtheorem{thm}{Theorem}[section]
\newtheorem{cor}{Corollary}[section]
\newtheorem{lemma}{Lemma}[section]
\theoremstyle{definition}
\newtheorem{conj}{Conjecture}[section]
\theoremstyle{remark}
\newtheorem{remark}{Remark}[section]
\numberwithin{equation}{section}
\def\C{{\bf C}}
\def\cT{{\cal T}}
\def\cG{{\cal G}}
\def\cP{{\cal P}}
\def\cA{{\cal A}}
\def\ra{\rightarrow}
\def\A{{\bf A}}
\def\C{{\bf C}}
\def\W{{\bf W}}
\def\cA{{\mathcal A}}
\def\cB{{\mathcal B}}
\def\cC{{\mathcal C}}
\def\cD{{\mathcal D}}
\def\cE{{\mathcal E}}
\def\cG{{\mathcal G}}
\def\cH{{\mathcal H}}
\def\cI{{\mathcal I}}
\def\cJ{{\mathcal J}}
\def\cK{{\mathcal K}}
\def\cL{{\mathcal L}}
\def\cM{{\mathcal M}}
\def\cP{{\mathcal P}}
\def\cR{{\mathcal R}}
\def\cS{{\mathcal S}}
\def\cT{{\mathcal T}}
\def\cV{{\mathcal V}}
\def\cW{{\mathcal W}}
\def\gg{{\mathfrak g}}
\def\gh{{\mathfrak h}}
\def\gl{{\mathfrak l}}
\def\gs{{\mathfrak s}}
\newfont{\german}{eufm10}
\begin{document}
\pagestyle{plain}

\title
{Universal two-parameter $\cW_{\infty}$-algebra and vertex algebras of type $\cW(2,3,\dots, N)$}

\author{Andrew R. Linshaw}

\address{Department of Mathematics, University of Denver}
\email{andrew.linshaw@du.edu}
\thanks{This work was partially supported by grants \#318755 and \#635650 from the Simons Foundation. I thank T. Arakawa, T. Creutzig, and F. Malikov for helpful discussions and comments on an earlier draft of this paper.}


{\abstract \noindent  We prove the longstanding physics conjecture that there exists a unique two-parameter $\cW_{\infty}$-algebra which is freely generated of type $\cW(2,3,\dots)$, and generated by the weights $2$ and $3$ fields. Subject to some mild constraints, all vertex algebras of type $\cW(2,3,\dots, N)$ for some $N$ can be obtained as quotients of this universal algebra. As an application, we show that for $n\geq 3$, the structure constants for the principal $\cW$-algebras $\cW^k(\gs\gl_n, f_{\text{prin}})$ are rational functions of $k$ and $n$, and we classify all coincidences among the simple quotients $\cW_k(\gs\gl_n, f_{\text{prin}})$ for $n\geq 2$. We also obtain many new coincidences between $\cW_k(\gs\gl_n, f_{\text{prin}})$ and other vertex algebras of type $\cW(2,3,\dots, N)$ which arise as cosets of affine vertex algebras or nonprincipal $\cW$-algebras.}

\keywords{vertex algebra; $\cW$-algebra; nonlinear Lie conformal algebra; coset construction}
\maketitle
\section{Introduction}

Associated to a simple, finite-dimensional Lie algebra $\gg$, a nilpotent element $f \in \gg$, and a complex parameter $k$, is a vertex algebra $\cW^k(\gg,f)$ known as an {\it affine $\cW$-algebra}. These are among the most important and best-studied examples of vertex algebras in both the physics and mathematics literature. The first algebra of this kind other than the Virasoro algebra is the Zamolodchikov $\cW_3$-algebra \cite{Zam}, and is associated to $\gs\gl_3$ with its principal nilpotent element $f_{\text{prin}}$. It is of type $\cW(2,3)$, meaning that it has a minimal strong generating set consisting of a field in weights $2$ and $3$. Its structure is more complicated than that of affine vertex algebras since the operator product expansion (OPE) of the weight $3$ field with itself contains nonlinear terms. Similarly, $\cW^k(\gs\gl_n, f_{\text{prin}})$ was defined in \cite{FL} and is of type $\cW(2,3,\dots, n)$. For a general $\gg$, the definition of $\cW^k(\gg,f_{\text{prin}})$ via quantum Drinfeld-Sokolov reduction was given by Feigin and Frenkel in \cite{FFI}. This algebra is of type $\cW(d_1,\dots, d_m)$, where $d_1,\dots, d_m$ are the degrees of the fundamental invariants of $\gg$. The definition of $\cW^k(\gg,f)$ for an arbitrary nilpotent element $f$ is due to Kac, Roan, and Wakimoto \cite{KRW}, and is a generalization of the quantum Drinfeld-Sokolov reduction. Although the generating fields of $\cW^k(\gg,f)$ close nonlinearly under OPE, $\cW^k(\gg,f)$ is {\it freely generated}, meaning that it has a Poincar\'e-Birkhoff-Witt basis consisting of normally ordered monomials in the generators and their derivatives. Equivalently, it has the graded character of a differential polynomial ring, and its associated variety is an affine space \cite{ArII}.

We denote by $\cW_k(\gg,f)$ the simple quotient of $\cW^k(\gg,f)$ by its maximal proper graded ideal. In the case $f = f_{\text{prin}}$, it was conjectured by Frenkel, Kac and Wakimoto \cite{FKW} and proven by Arakawa \cite{ArII,ArIII} that for a nondegenerate admissible level $k$, $\cW_k(\gg,f_{\text{prin}})$ is $C_2$-cofinite and rational. These are known as {\it minimal models} and are a generalization of the Virasoro minimal models \cite{GKO}. There are other known $C_2$-cofinite, rational $\cW$-algebras, not all of which are at admissible levels; see for example \cite{ArI,AMI,Kaw,KWIV,CLIII}.

The principal $\cW$-algebras $\cW^k(\gg,f_{\text{prin}})$ have appeared prominently in several important problems in mathematics and physics including the Alday-Gaiotto-Tachikawa correspondence \cite{AGT,BFN,MO,SV}, and the quantum geometric Langlands program \cite{AF,CG,Fr,FG, GI,GII}. They are also closely related to the classical $\cW$-algebras which arose in the context of integrable hierarchies of soliton equations in the work of Adler, Gelfand, Dickey, Drinfeld, and Sokolov \cite{Ad,GD,Di,DS}. The Korteweg-de Vries hierarchy, which corresponds to the Virasoro algebra, was generalized by Drinfeld and Sokolov to an integrable hierarchy associated to any simple Lie algebra. The corresponding classical $\cW$-algebras are Poisson vertex algebras and can be realized as the quasi-classical limits of affine $\cW$-algebras \cite{FBZ}. For a general nilpotent element $f\in \gg$, $\cW^k(\gg,f)$ can also be regarded as a chiralization of the finite $\cW$-algebra $\cW^{\text{fin}}(\gg,f)$ \cite{DSKII}. These were defined by Premet \cite{Pre}, generalizing some examples that were originally studied by Kostant \cite{Kos}. We can view $\cW^{\text{fin}}(\gg,f)$ as a quantization of the ring of functions on the Slodowy slice $S_f\subseteq \gg \cong \gg^*$ associated to $f$. Similarly, $\cW^k(\gg,f)$ is a quantization of the ring of functions on the arc space of $S_f$, which is the inverse limit of the finite jet schemes \cite{AMII}.

\subsection{Universal two-parameter $\cW_{\infty}$-algebra}
It is a longstanding conjecture in the physics literature that there exists a unique two-parameter $\cW_{\infty}$-algebra of type $\cW(2,3,\dots)$ denoted by $\cW_{\infty}[\mu]$ in \cite{GGII}, which interpolates between all the type $A$ principal $\cW$-algebras $\cW^k(\gs\gl_n, f_{\text{prin}})$, in the following sense.

\begin{enumerate}
\item All structure constants appearing in the OPE algebra among the generators of $\cW_{\infty}[\mu]$ are continuous functions of the central charge $c$ and the parameter $\mu$.
\item  If we set $\mu = n$, there is a truncation at weight $n+1$ that allows all fields in weights $d \geq n+1$ to be eliminated in the simple quotient of $\cW_{\infty}[\mu]$, and this quotient is isomorphic to $\cW^k(\gs\gl_n, f_{\text{prin}})$ as a one-parameter vertex algebra.
\end{enumerate}

This conjecture appears in a number of papers including \cite{YW,BaKi,B-H,BS,GGII,ProI,ProII,PR}. Considerable evidence for the existence and uniqueness of $\cW_{\infty}[\mu]$ was given in \cite{YW}, and later in \cite{GGII}, by solving Jacobi identities in low weights. It was observed that the structure constants in the OPEs of the first few generators depend on two free parameters, and it was conjectured that the full OPE algebra is determined recursively and consistently from this data. In the quasi-classical limit, the existence of a Poisson vertex algebra of type $\cW(2,3,\dots)$ which interpolates between the classical $\cW$-algebras of $\gs\gl_n$ for all $n$, has been known for over twenty years; see \cite{KZ,KM}, and more recently \cite{DSKV}. It can be defined using an affinization of Feigin's $\gg\gl_{\lambda}$-algebra of matrices of complex size $\lambda$, which is a certain quotient of $U(\gs\gl_2)$ that interpolates between the Lie algebras $\gg\gl_n$ for all $n$ in an appropriate sense \cite{F}. Recently, $\cW_{\infty}[\mu]$ has become important in the conjectured duality between families of two-dimensional conformal field theories and higher spin gravity on three-dimensional anti-de Sitter space \cite{GGI,GGII,GH}. In \cite{HII}, an interpretation of the $\cW$-algebra of $\gs\gl_{-n}$ was given by formally replacing $n$ with $-n$ in the structure constants of $\cW_{\infty}[\mu]$, and a coset realization of this algebra was proposed. It was observed in \cite{B-H} that other algebras of type $\cW(2,3,\dots, N)$ such as the parafermion algebra $N^k(\gs\gl_2)$, should also arise as quotients of this universal algebra. We mention that $\cW_{\infty}[\mu]$ is closely related to a number of other algebraic structures that arise in very different contexts, including the deformed $\cW_{1+\infty}$-algebra, Cherednik's double affine Hecke algebra, the spherical elliptic Hall algebra, and the Yangian of $\widehat{\gg\gl}_1$; see for example \cite{AS,MO,SV,Ts}.

\subsection{Main result} In this paper we prove the existence and uniqueness of a vertex algebra $\cW(c,\lambda)$ which is freely generated of type $\cW(2,3,\dots)$, depending on two parameters $c$ and $\lambda$. Its quotients include {\it all} one-parameter vertex algebras of type $\cW(2,3,\dots, N)$ satisfying some mild hypotheses, including $\cW^k(\gs\gl_n, f_{\text{prin}})$ for $n\geq 3$. We use a different parameter $\lambda$ which is related to $\mu$ by 
\begin{equation}\label{intro:lambdaofmu} \lambda =  \frac{(\mu-1) (\mu+1)}{(\mu-2) (3 \mu^2  - \mu -2+ c (\mu + 2))}.\end{equation} This choice is not canonical but is natural because $\cW(c,\lambda)$ is then defined over the polynomial ring $\mathbb{C}[c,\lambda]$. In other words, all structure constants appearing in the OPE algebra of the generators lie in $\mathbb{C}[c,\lambda]$. The algebra is generated by a Virasoro field $L$ of central charge $c$ and a primary weight $3$ field $W^3$ which is normalized so that $\displaystyle W^3_{(5)} W^3 = \frac{c}{3} 1$. The remaining strong generators $W^i$ of weight $i \geq 4$ are defined inductively by $$W^i = W^3_{(1)} W^{i-1},\qquad i \geq 4.$$ 
We show that by imposing all Jacobi identities among the generators, the structure constants in the OPEs of $L(z) W^i(w)$ and $W^i(z) W^j(w)$ are uniquely determined as polynomials in $c$ and $\lambda$, for all $i$ and $j$. The idea of determining the OPEs among the generators of a vertex algebra by imposing Jacobi identities has appeared in a number of papers in the physics literature including \cite{KauWa,Bow,B-V,HI}. If all Jacobi identities can be solved, this procedure is enough to establish uniqueness, but to rigorously construct a vertex algebra in this way from generators and relations, it is necessary to invoke a deep result of \cite{DSKI} which we call the {\it De Sole-Kac correspondence}. This is an equivalence between the categories of freely generated vertex algebras and nonlinear Lie conformal algebras. Roughly speaking, the OPE algebra of a set of free generators for a vertex algebra determines a nonlinear Lie conformal algebra. Conversely, associated to a nonlinear Lie conformal algebra $A$ is its universal enveloping vertex algebra, which is a certain quotient of the tensor algebra of $A$, and is always freely generated. It will be important for us to relax the notion of nonlinear Lie conformal algebra by omitting a subset of Jacobi identities. The resulting structure is called a {\it degenerate} nonlinear conformal algebra in \cite{DSKI}. Its universal enveloping vertex algebra is still defined but need not be freely generated.

\subsection{Quotients of $\cW(c,\lambda)$ and the classification of vertex algebras of type $\cW(2,3,\dots, N)$}
The vertex algebra $\cW(c,\lambda)$ has a conformal weight grading $$\cW(c,\lambda) = \bigoplus_{n\geq 0} \cW(c,\lambda)[n],$$ where each $\cW(c,\lambda)[n]$ is a free $\mathbb{C}[c,\lambda]$-module and $\cW(c,\lambda)[0] \cong \mathbb{C}[c,\lambda]$. There is a symmetric bilinear form on $\cW(c,\lambda)[n]$ given by
$$\langle ,  \rangle_n : \cW(c,\lambda)[n] \otimes_{\mathbb{C}[c,\lambda]} \cW(c,\lambda)[n] \ra \mathbb{C}[c,\lambda],\qquad \langle \omega, \nu \rangle_n = \omega_{(2n-1)} \nu.$$ The level $n$ Shapovalov determinant $\text{det}_n \in \mathbb{C}[c,\lambda]$ is just the determinant of this form. It turns out that $\text{det}_n$ is nonzero for all $n$, and in Section \ref{section:voaring} it is shown that this is equivalent to the simplicity of $\cW(c,\lambda)$ as a vertex algebra over $\mathbb{C}[c,\lambda]$.

Let $p$ be an irreducible factor of $\text{det}_{N+1}$ and let $I = (p) \subseteq \mathbb{C}[c,\lambda] \cong \cW(c,\lambda)[0]$ be the corresponding ideal. Consider the quotient
$$\cW^I(c,\lambda) = \cW(c,\lambda) / I \cdot \cW(c,\lambda),$$ 
where $I \cdot \cW(c,\lambda)$ is the vertex algebra ideal generated by $I$. This is a vertex algebra over the ring $\mathbb{C}[c,\lambda]/I$, which is no longer simple. It contains a singular vector $\omega$ in weight $N+1$, which lies in the maximal proper ideal $\cI\subseteq \cW^I(c,\lambda)$ graded by conformal weight. If $p$ does not divide $\text{det}_{m}$ for any $m<N+1$, $\omega$ will have minimal weight among elements of $\cI$. Often, there exists a localization $R$ of $\mathbb{C}[c,\lambda]/I$ such that $\omega$ has the form \begin{equation} \label{sing:intro} W^{N+1} - P(L, W^3,\dots, W^{N-1}),\end{equation} in the localization $\cW^I_R(c,\lambda) = R \otimes_{\mathbb{C}[c,\lambda]/I} \cW^I(c,\lambda)$. Here $P$ is a normally ordered polynomial in the fields $L,W^3,\dots, W^{N-1}$, and their derivatives, with coefficients in $R$. If this is the case, there will exist relations $$W^m = P_m(L, W^3, \dots, W^N)$$ for all $m \geq N+1$ expressing $W^m$ in terms of $L, W^3,\dots, W^N$ and their derivatives. The simple quotient $\cW^I_R(c,\lambda) / \cI$ will then be of type $\cW(2,3,\dots, N)$. Conversely, we will show that any simple one-parameter vertex algebra of type $\cW(2,3,\dots, N)$ satisfying some mild hypotheses, can be obtained as the simple quotient of $\cW^I_R(c,\lambda)$ for some $I$ and $R$. This reduces the classification of such vertex algebras to the classification of prime ideals $I = (p) \subseteq\mathbb{C}[c,\lambda]$ such that $p$ divides $\text{det}_{N+1}$ but does not divide $\text{det}_m$ for $m<N+1$, and $\cW^I(c,\lambda)$ contains a singular vector of the form \eqref{sing:intro}, possibly after localizing. 

In addition to $\cW^k(\gs\gl_n, f_{\text{prin}})$, there are many other one-parameter vertex algebras of type $\cW(2,3,\dots, N)$ for some $N$. Here is a short list of examples, which is by no means exhaustive. 

\begin{enumerate}

\item The parafermion algebra $N^k(\gs\gl_2) = \text{Com}(\cH, V^k(\gs\gl_2))$. Here $V^k(\gs\gl_2)$ denotes the universal affine vertex algebra of $\gs\gl_2$, $\cH$ is the Heisenberg algebra corresponding to the Cartan subalgebra $\gh$, and the commutant means the subalgebra of $V^k(\gs\gl_2)$ which commutes with $\cH$. This of type $\cW(2,3,4,5)$ \cite{DLY}.

\item The coset of $V^k(\gg\gl_n)$ inside $V^k(\gs\gl_{n+1})$. We call this the algebra of {\it generalized parafermions} since in the case $n=1$ it is just $N^k(\gs\gl_2)$. We will show that it is of type $\cW(2,3,\dots, n^2+3n+1)$, which was conjectured in \cite{B-H}.

\item The coset of the Heisenberg algebra $\cH$ inside the Bershadsky-Polyakov algebra, which is the $\cW$-algebra associated to $\gs\gl_3$ with its nonprincipal nilpotent element. This coset is of type $\cW(2,3,4,5,6,7)$ \cite{ACLI}.

\item The coset of the Heisenberg algebra $\cH$ inside the $\cW$-algebra $\cW^k(\gs\gl_4, f_{\text{subreg}})$ associated to $\gs\gl_4$ with its subregular nilpotent element. This is of type $\cW(2,3,4,5,6,7,8,9)$ \cite{CLIII}.

\item The coset of $V^{k+1}(\gg\gl_{n-2})$ inside the $\cW$-algebra $\cW^k(\gs\gl_n, f_{\text{min}})$ associated to $\gs\gl_n$ with its minimal nilpotent element $f_{\text{min}}$, for $n\geq 3$. This is of type $\cW(2,3,\dots, n^2 - 2)$ \cite{ACKL}. 

\item The coset of $V^k(\gs\gl_n)$ inside $V^{k+1}(\gs\gl_n) \otimes L_{-1}(\gs\gl_n)$. We will show that it is of type $\cW(2,3,\dots, n^2+2n)$, which was conjectured in \cite{B-H}.
\end{enumerate}

Unlike $\cW^k(\gs\gl_n, f_{\text{prin}})$, the above algebras are not freely generated. In fact, it is a folklore conjecture that the $\cW^k(\gs\gl_n, f_{\text{prin}})$ are the {\it only} freely generated vertex algebras of type $\cW(2,3,\dots,N)$ for some $N$. Note that Example (5) is a generalization of Example (3), which is just the case $n=3$.  Also, Examples (3) and (4) are part of the family of cosets of $\cH$ inside the algebra $\cW^k(\gs\gl_n, f_{\text{subreg}})$ associated to $\gs\gl_n$ with its subregular nilpotent element $f_{\text{subreg}}$. Conjecturally, these cosets are of type $\cW(2,3,\dots, 2n+1)$, and are not freely generated. We also mention a class of vertex algebras known as $Y$-algebras which were recently introduced by Gaiotto and Rap\v{c}\'ak \cite{GR}. In a very recent paper \cite{PR}, Proch\'azka and Rap\v{c}\'ak have conjectured that these algebras are of type $\cW(2,3,\dots, N)$ for some $N$, and can be obtained as quotients of the universal $\cW_{\infty}$-algebra.

All the vertex algebras in Examples (1)-(6), as well as $\cW^k(\gs\gl_n, f_{\text{prin}})$ for $n\geq 3$, arise as quotients of $\cW^I_R(c,\lambda)$ for some prime ideal $I = (p) \subseteq\mathbb{C}[c,\lambda]$ and some localization $R$ of $\mathbb{C}[c,\lambda]/I$. We shall explicitly describe $I$ for Examples (1)-(4), as well as $\cW^k(\gs\gl_n, f_{\text{prin}})$. For Examples (5) and (6), as well as $\text{Com}(\cH, \cW^k(\gs\gl_n, f_{\text{subreg}}))$, we shall give a conjectural description of $I$. A remarkable feature of these ideals is that they are organized into infinite families that admit a uniform description, and the corresponding varieties $V(I) \subseteq\mathbb{C}^2$ are rational curves, possibly singular. We call $V(I)$ the {\it truncation curve} associated to the one-parameter vertex algebra arising as the simple quotient of $\cW^I_R(c,\lambda)$. We speculate that {\it all} truncation curves are rational curves, and are organized into similar infinite families.

The vertex algebras $\cW^I_R(c,\lambda)$ for $I = (p)$ are one-parameter families in the sense that the ring $R$ has Krull dimension $1$. It is also important to consider $\cW^I(c,\lambda)$ when $I\subseteq \mathbb{C}[c,\lambda]$ is a {\it maximal} ideal, which has the form $I = (c- c_0, \lambda- \lambda_0)$ for some $c_0, \lambda_0\in \mathbb{C}$. Then $\cW^I(c,\lambda)$ and its quotients are vertex algebras over $\mathbb{C}$. Given two maximal ideals $I_0 = (c- c_0, \lambda- \lambda_0)$ and $I_1 = (c - c_1, \lambda - \lambda_1)$, let $\cW_0$ and $\cW_1$ be the simple quotients of $\cW^{I_0}(c,\lambda)$ and $\cW^{I_1}(c,\lambda)$. There is a very simple criterion for $\cW_0$ and $\cW_1$ to be isomorphic. We must have $c_0 = c_1$, and if this central charge is $0$ or $-2$, there is no restriction on $\lambda_0, \lambda_1$. For all other values of the central charge, we must have $\lambda_0 = \lambda_1$. By contrast, if the parameter $\mu$, which is related to $\lambda$ by \eqref{intro:lambdaofmu}, is used instead of $\lambda$, there are three distinct values of $\mu$ which give rise to the same algebra for generic values of $c$ \cite{GGII}. This phenomenon is known as {\it triality}.

Suppose that $\cW^I_R(c,\lambda)$ and $\cW^J_S(c,\lambda)$ are distinct one-parameter quotients of $\cW(c,\lambda)$. A nontrivial isomorphism between the simple quotients of these algebras at a special value of the central charge will be called a {\it coincidence} throughout this paper. Our criterion for $\cW_0$ and $\cW_1$ to be isomorphic implies that aside from the coincidences at $c=0,-2$, all other coincidences between $\cW^I_R(c,\lambda)$ and $\cW^J_S(c,\lambda)$, correspond to intersection points of their truncation curves. This explains a number of recently discovered coincidences between simple vertex algebras of type $\cW(2,3,\dots, N)$ at special parameter values, and simple principal $\cW$-algebras of type $A$. For example, for all integers $k\geq 3$, the simple parafermion algebra $N_k(\gs\gl_2)$ is isomorphic to a principal $\cW$-algebra of $\gs\gl_k$ which is $C_2$-cofinite and rational \cite{ALY}. There are similar coincidences between the Heisenberg cosets in Examples (3) and (4) at values where they are $C_2$-cofinite and rational, and principal $\cW$-algebras of type $A$ \cite{ACLI,CLIII}. The method of proving these coincidences relies on rationality, although other coincidences of this kind among more general $\cW$-algebras and their cosets which are not necessarily rational, have been conjectured in \cite{ACKL,CS,C,CG}. Our main result provides a new and powerful way to establish coincidences among vertex algebras of type $\cW(2,3,\dots, N)$, which need not be rational or $C_2$-cofinite. For example, we shall classify all coincidences among the simple algebras $\cW_k(\gs\gl_n, f_{\text{prin}})$ for $n\geq 2$, which settles a conjecture of Gaberdiel and Gopakumar \cite{GGII}. We also classify all coincidences between $\cW_k(\gs\gl_n, f_{\text{prin}})$ and the simple quotients of the vertex algebras in Examples (1)-(4) above. Many of these coincidences are new. Finally, note that $\cW(c,\lambda)$ has many rational quotients, since $\cW_k(\gs\gl_n, f_{\text{prin}})$ is such a quotient for any nondegenerate admissible level $k$ \cite{ArIII}. Due to the above coincidences, all known rational quotients of $\cW(c,\lambda)$ are of the form $\cW_k(\gs\gl_n, f_{\text{prin}})$. An interesting question that we do not address is whether $\cW(c,\lambda)$ admits rational quotients that are {\it not} of this kind.

\subsection{Organization} This paper is organized as follows. In Section \ref{section:VOAs}, we review the basic definitions and examples of vertex algebras that we need. In Section \ref{section:kds} we review the De Sole-Kac correspondence between the categories of freely generated vertex algebras and nonlinear Lie conformal algebras. In Section \ref{section:voaring}, we discuss the notion of vertex algebras and nonlinear Lie conformal algebras over commutative rings, which is a straightforward generalization of the usual notions. In Section \ref{section:main}, we prove our main result, which is the existence and uniqueness of the vertex algebra $\cW(c,\lambda)$. In Section \ref{section:classification}, we discuss localizations and quotients of $\cW(c,\lambda)$. In Sections \ref{section:prinw} and  \ref{section:genpara}, we give the explicit generators for the ideals in $\mathbb{C}[c,\lambda]$ corresponding to $\cW^k(\gs\gl_n, f_{\text{prin}})$ and $\text{Com}(V^k(\gg\gl_n), V^k(\gs\gl_{n+1}))$. In Section \ref{section:shap}, we give conjectural generators for the ideals in $\mathbb{C}[c,\lambda]$ that correspond to two additional families, namely $\text{Com}(V^{k+1}(\gg\gl_{n-2}), \cW^k(\gs\gl_n, f_{\text{min}}))$ and $\text{Com}(\cH, \cW^k(\gs\gl_n, f_{\text{subreg}}))$. In Section \ref{section:coincidences}, we classify all coincidences among the simple algebras $\cW_k(\gs\gl_n, f_{\text{prin}})$, as well as the coincidences between $\cW_k(\gs\gl_n, f_{\text{prin}})$ and several other families of vertex algebras of type $\cW(2,3,\dots, N)$. Finally, in Section \ref{section:oneplus} we discuss a one-parameter deformation of the $\cW_{1+\infty}$-algebra with central charge $c$.

\section{Vertex algebras} \label{section:VOAs}
In this section, we define vertex algebras, which have been discussed from various different points of view in the literature  (see for example \cite{Bor,FLM,FHL,K,FBZ}). We will follow the formalism developed in \cite{LZ} and partly in \cite{LiII}. Let $V=V_0\oplus V_1$ be a super vector space over $\mathbb{C}$, $z,w$ be formal variables, and $\text{QO}(V)$ be the space of linear maps $$V\ra V((z))=\{\sum_{n\in\mathbb{Z}} v(n) z^{-n-1}|
v(n)\in V,\ v(n)=0\ \text{for} \ n>\!\!>0 \}.$$ Each element $a\in \text{QO}(V)$ can be represented as a power series
$$a=a(z)=\sum_{n\in\mathbb{Z}}a(n)z^{-n-1}\in \text{End}(V)[[z,z^{-1}]].$$ We assume that $a=a_0+a_1$ where $a_i:V_j\ra V_{i+j}((z))$ for $i,j\in\mathbb{Z}/2\mathbb{Z}$, and we write $|a_i| = i$.

For each $n \in \mathbb{Z}$, we have a nonassociative bilinear operation on $\text{QO}(V)$, defined on homogeneous elements $a$ and $b$ by
$$ a(w)_{(n)}b(w)=\text{Res}_z a(z)b(w)\ \iota_{|z|>|w|}(z-w)^n- (-1)^{|a||b|}\text{Res}_z b(w)a(z)\ \iota_{|w|>|z|}(z-w)^n.$$
Here $\iota_{|z|>|w|}f(z,w)\in\mathbb{C}[[z,z^{-1},w,w^{-1}]]$ denotes the power series expansion of a rational function $f$ in the region $|z|>|w|$. For $a,b\in \text{QO}(V)$, we have the following identity of power series known as the {\it operator product expansion} (OPE) formula.
 \begin{equation}\label{opeform} a(z)b(w)=\sum_{n\geq 0}a(w)_{(n)} b(w)\ (z-w)^{-n-1}+:a(z)b(w):. \end{equation}
Here $:a(z)b(w):\ =a(z)_-b(w)\ +\ (-1)^{|a||b|} b(w)a(z)_+$, where $a(z)_-=\sum_{n<0}a(n)z^{-n-1}$ and $a(z)_+=\sum_{n\geq 0}a(n)z^{-n-1}$. Often, \eqref{opeform} is written as
$$a(z)b(w)\sim\sum_{n\geq 0}a(w)_{(n)} b(w)\ (z-w)^{-n-1},$$ where $\sim$ means equal modulo the term $:a(z)b(w):$, which is regular at $z=w$. 

Note that $:a(w)b(w):$ is a well-defined element of $\text{QO}(V)$. It is called the {\it Wick product} or {\it normally ordered product} of $a$ and $b$, and it
coincides with $a(w)_{(-1)}b(w)$. For $n\geq 1$ we have
$$ n!\ a(z)_{(-n-1)} b(z)=\ :(\partial^n a(z))b(z):,\qquad \partial = \frac{d}{dz}.$$
For $a_1(z),\dots ,a_k(z)\in \text{QO}(V)$, the $k$-fold iterated Wick product is defined inductively by
\begin{equation}\label{iteratedwick} :a_1(z)a_2(z)\cdots a_k(z):\ =\ :a_1(z)b(z):,\qquad b(z)=\ :a_2(z)\cdots a_k(z):.\end{equation}
We often omit the formal variable $z$ when no confusion can arise.

A subspace $\cA\subseteq \text{QO}(V)$ containing $1$ which is closed under all the above products will be called a {\it quantum operator algebra} (QOA). We say that $a,b\in \text{QO}(V)$ are {\it local} if $$(z-w)^N [a(z),b(w)]=0$$ for some $N\geq 0$. Here $[,]$ denotes the super bracket. This condition implies that $a_{(n)}b = 0$ for $n\geq N$, so (\ref{opeform}) becomes a finite sum. Finally, a {\it vertex algebra} will be a QOA whose elements are pairwise local. This notion is well known to be equivalent to the notion of a vertex algebra in the sense of \cite{FLM}. 
In particular, there is always an injective QOA homomorphism 
$$\rho:\cA\hookrightarrow \text{QO}(\cA),\qquad\ a\mapsto\hat a,\qquad \hat
a(\zeta)b=\sum_{n\in\mathbb{Z}} (a\circ_n b)~\zeta^{-n-1},$$
and the quadruple of structures $(\cA,1,\partial, \rho)$ is a vertex
algebra as in \cite{FLM}. Conversely, if $(V,{\bf 1},D, Y)$ is
a vertex algebra, then $Y(V)\subseteq \text{QO}(V)$ is a
QOA whose elements are pairwise local. The linear isomorphism $V \cong Y(V)$ is called the {\it state-field correspondence}.

A vertex algebra $\cA$ is said to be {\it generated} by a subset $S=\{\alpha^i|\ i\in I\}$ if $\cA$ is spanned by words in the letters $\alpha^i$, and all products, for $i\in I$ and $n\in\mathbb{Z}$. We say that $S$ {\it strongly generates} $\cA$ if $\cA$ is spanned by words in the letters $\alpha^i$, and all products for $n<0$. Equivalently, $\cA$ is spanned by $$\{ :\partial^{k_1} \alpha^{i_1}\cdots \partial^{k_m} \alpha^{i_m}:| \ i_1,\dots,i_m \in I,\ k_1,\dots,k_m \geq 0\}.$$ 

Suppose that $S$ is an ordered strong generating set $\{\alpha^1, \alpha^2,\dots\}$ for $\cA$ which is at most countable. We say that $S$ {\it freely generates} $\cA$, if $\cA$ has a Poincar\'e-Birkhoff-Witt basis consisting of all normally ordered monomials 
\begin{equation} \label{freegen} \begin{split} & :\partial^{k^1_1} \alpha^{i_1} \cdots \partial^{k^1_{r_1}}\alpha^{i_1} \partial^{k^2_1} \alpha^{i_2} \cdots \partial^{k^2_{r_2}}\alpha^{i_2}
 \cdots \partial^{k^n_1} \alpha^{i_n} \cdots \partial^{k^n_{r_n}} \alpha^{i_n}:,\qquad 
 1\leq i_1 < \dots < i_n,
 \\ & k^1_1\geq k^1_2\geq \cdots \geq k^1_{r_1},\quad k^2_1\geq k^2_2\geq \cdots \geq k^2_{r_2},  \ \ \cdots,\ \  k^n_1\geq k^n_2\geq \cdots \geq k^n_{r_n},
 \\ &  k^{t}_1 > k^t_2 > \dots > k^t_{r_t} \ \ \text{if} \ \ \alpha^{i_t}\ \ \text{is odd}. 
 \end{split} \end{equation}

\subsection{Conformal structure} A conformal structure with central charge $c$ on a vertex algebra $\cA$ is a Virasoro vector $L(z) = \sum_{n\in \mathbb{Z}} L_n z^{-n-2} \in \cA$ satisfying
\begin{equation} \label{virope} L(z) L(w) \sim \frac{c}{2}(z-w)^{-4} + 2 L(w)(z-w)^{-2} + \partial L(w)(z-w)^{-1},\end{equation} such that in addition, $L_{-1} \alpha = \partial \alpha$ for all $\alpha \in \cA$, and $L_0$ acts diagonalizably on $\cA$. We say that $\alpha$ has conformal weight $d$ if $L_0(\alpha) = d \alpha$, and we denote the conformal weight $d$ subspace by $\cA[d]$. In all our examples, the conformal weight grading will be by $\mathbb{Z}_{\geq 0}$,
$$\cA = \bigoplus_{d\geq 0} \cA[d].$$

All vertex algebras in this paper will be purely even, that is, $|\alpha| = 0$ for all $\alpha \in \cA$. As a matter of notation, we say that an even vertex algebra $\cA$ is of type $$\cW(d_1,d_2,\dots)$$ if it has a minimal strong generating set consisting of one field in each conformal weight $d_1, d_2, \dots $. If $\cA$ is freely generated of type $\cW(d_1,d_2,\dots)$, it has graded character
\begin{equation} \label{gradedchar:ua} \chi(\cA, q) = \sum_{n\geq 0} \text{dim}(\cA[n]) q^n = \prod_{i\geq 1} \prod_{k\geq 0} \frac{1}{1-q^{d_i +k}}.\end{equation}

\subsection{Important identities} We recall some important identities that hold in any vertex algebra $\cA$.  For any fields $a,b,c \in \cA$, we have
\begin{equation} \label{deriv} (\partial a)_{(n)} b = -na_{(n-1)} b\qquad \forall n\in \mathbb{Z},\end{equation}
\begin{equation} \label{commutator} a_{(n)} b  = (-1)^{|a||b|} \sum_{p \in \mathbb{Z}} (-1)^{p+1} (b_{(p)} a)_{(n-p-1)} 1,\qquad \forall n\in \mathbb{Z},\end{equation}
\begin{equation} \label{nonasswick} :(:ab:)c:\  - \ :abc:\ 
=  \sum_{n\geq 0}\frac{1}{(n+1)!}\big( :(\partial^{n+1} a)(b_{(n)} c):\ +
(-1)^{|a||b|} (\partial^{n+1} b)(a_{(n)} c):\big)\ .\end{equation}
\begin{equation} \label{ncw} a_{(n)}
(:bc:) -\ :(a_{(n)} b)c:\ - (-1)^{|a||b|}\ :b(a_{(n)} c): \ = \sum_{i=1}^n
\binom{n}{i} (a_{(n-i)}b)_{(i-1)}c, \qquad \forall n \geq 0.
\end{equation}

Given fields $a,b,c$ and integers $m,n \geq 0$, the following identities are known as Jacobi relations of type $(a,b,c)$. 
\begin{equation} \label{jacobi} a_{(r)}(b_{(s)} c) = (-1)^{|a||b|} b_{(s)} (a_{(r)}c) + \sum_{i =0}^r \binom{r}{i} (a_{(i)}b)_{(r+s - i)} c.\end{equation}
For each triple of fields $a,b,c$, these identities are nontrivial only for finitely many choices of $r,s$.

\subsection{$C_2$-cofiniteness and rationality} Given a vertex algebra $\cV$, let $C(\cV)$ denote the vector space quotient of $\cV$ by the span of  elements of the form $a_{(-2)} b$ for all $a,b \in \cV$. By \eqref{commutator} and \eqref{nonasswick}, the normally ordered product on $\cV$ induces a commutative, associative product on $C(\cV)$, which is known as {\it Zhu's commutative algebra} \cite{Z}. If $C(\cV)$ is finite-dimensional as a vector space, $\cV$ is called {\it $C_2$-cofinite}. This condition was introduced by Zhu in \cite{Z}, and has the important consequence that $\cV$ has finitely many simple $\mathbb{Z}_{\geq 0}$-graded modules. A vertex algebra $\cV$ is called {\it rational} if every $\mathbb{Z}_{\geq 0}$-graded $\cV$-module is completely reducible.

\subsection{Affine vertex algebras}Given a simple, finite-dimensional Lie algebra $\gg$, the {\it universal affine vertex algebra} $V^k(\gg)$ is freely generated by fields $X^{\xi}$ which are linear in $\xi \in \gg$ and satisfy 
\begin{equation}
X^{\xi}(z) X^{\eta}(w) \sim k ( \xi, \eta) (z-w)^{-2} + X^{[\xi,\eta]}(w)(z-w)^{-1},
\end{equation}
where $(\cdot ,\cdot )$ denotes the normalized Killing form $\frac{1}{2h^{\vee}} \langle \cdot,\cdot \rangle$. For all $k\neq -h^{\vee}$, $V^k(\gg)$ has a conformal vector
\begin{equation} \label{sugawara} L^{\gg}  = \frac{1}{2(k+h^{\vee})} \sum_{i=1}^n :X^{\xi_i} X^{\xi'_i}: \end{equation} of central charge $$c = \frac{k\ \text{dim}(\gg)}{k+h^{\vee}}.$$ Here $\xi_i$ runs over a basis of $\gg$, and $\xi'_i$ is the dual basis with respect to $(\cdot,\cdot)$.

As a module over the affine Lie algebra $\widehat{\gg} = \gg[t,t^{-1}] \oplus \mathbb{C}$, $V^k(\gg)$ is isomorphic to the vacuum $\widehat{\gg}$-module. For generic values of $k$, the vacuum module is irreducible and $V^k(\gg)$ is a simple vertex algebra. At certain special values of $k$, $V^k(\gg)$ is not simple. We denote by $L_k(\gg)$ the  quotient of $V^k(\gg)$ by its maximal proper ideal graded by conformal weight.

\subsection{Affine $\cW$-algebras} Given a simple Lie algebra $\gg$ and a nilpotent element $f \in \gg$, there is a vertex algebra $\cW^k(\gg, f)$ known as an affine $\cW$-algebra. The standard construction is via the quantum Drinfeld-Sokolov reduction and its generalizations, and the reader is referred to \cite{FFI,KRW} for details. We briefly recall some basic features of the $\cW$-algebras of $\gs\gl_n$ associated to the principal, subregular, and minimal nilpotent elements.

\begin{enumerate} 
\item For $n\geq 3$, the principal $\cW$-algebra $\cW^k(\gs\gl_n, f_{\text{prin}})$ is freely generated of type $$\cW(2,3,\dots, n).$$ It is generated by the weights $2$ and $3$ fields \cite{ALY}, and the Virasoro element $L$ has central charge $$c = -\frac{(n-1) (n^2 + n k - n-1) (n^2 + k + n k)}{n + k}.$$

\item For $n\geq 3$, the subregular $\cW$-algebra $\cW^k(\gs\gl_n, f_{\text{subreg}})$ is freely generated of type $$\cW(1,2,3,\dots, n-1, n/2, n/2).$$ The weight $1$ field generates a Heisenberg subalgebra $\cH$, and the Virasoro field has central charge
$$c = -\frac{((k +n)(n-1)-n)((k +n)(n-2)n - n^2+1)}{k+n}.$$

\item For $n\geq 4$, the minimal $\cW$-algebra $\cW^k(\gs\gl_n, f_{\text{min}})$ is freely generated of type $$\cW \big( 1^{(n-2)^2}, 2, (3/2)^{2(n-2)}\big).$$ The $(n-2)^2$ fields in weight $1$ generate an affine vertex algebra $$V^{k+1}(\gg\gl_{n-2}) = \cH \otimes V^{k+1}(\gs\gl_{n-2}),$$ and the fields in weight $3/2$ are primary and transform under $\gg\gl_{n-2}$ as the sum of the standard representation $\mathbb{C}^{n-2}$ and its dual. The Virasoro element has central charge $$c = -\frac{5 k + 6 k^2 + 4 n + 5 k n - n^2 - k n^2}{k + n}.$$
\end{enumerate} 

The complete OPE algebra of $\cW^k(\gs\gl_n, f_{\text{prin}})$ is not known in general, although for $n\leq 5$ it has been worked out explicitly in the physics literature \cite{Zh}. On the other hand, the OPE algebra of all minimal $\cW$-algebras, including $\cW^k(\gs\gl_n, f_{\text{min}})$, is simpler and appears in \cite{KWIII}. By a theorem of Genra \cite{Gen}, $\cW^k(\gs\gl_n, f_{\text{subreg}})$ coincides with the Feigin-Semikhatov algebra $\cW^{(2)}_n$ at level $k$ \cite{FS}, and part of the OPE algebra appears in \cite{FS}.

\subsection{Coset construction} 
Given a vertex algebra $\cV$ and a subalgebra $\cA \subseteq \cV$, the {\it coset} or {\it commutant} of $\cA$ in $\cV$, denoted by $\text{Com}(\cA,\cV)$, is the subalgebra of elements $v\in\cV$ such that $$[a(z),v(w)] = 0,\qquad\forall a\in\cA.$$ This was introduced by Frenkel and Zhu in \cite{FZ}, generalizing earlier constructions in \cite{GKO,KP}. Equivalently, $v\in \text{Com}(\cA,\cV)$ if and only if $a_{(n)} v = 0$ for all $a\in\cA$ and $n\geq 0$. 
Note that if $\cV$ and $\cA$ have Virasoro elements $L^{\cV}$ and $L^{\cA}$, $\text{Com}(\cA,\cV)$ has Virasoro element $L = L^{\cV} - L^{\cA}$ as long as $ L^{\cV} \neq L^{\cA}$. 

\subsection{Principal $\cW$-algebras as cosets} 

Let $\gg$ be simply laced. There exists a diagonal homomorphism $$V^{k+1}(\gg) \rightarrow V^k(\gg)\otimes L_1(\gg),\qquad X^{\xi}(z) \mapsto X^{\xi}(z) \otimes 1 + 1 \otimes X^{\xi}(z),\qquad \xi \in \gg,$$ which descends to a homomorphism $L_{k+1}(\gg) \rightarrow L_k(\gg)\otimes L_1(\gg)$ for all admissible values of $k$. The following result was recently proven in \cite{ACLII}.

\begin{thm}  \label{ACLIImain} (\cite{ACLII}, Main Theorems 1 and 2)
Let $\gg$ be simply laced and let $k, k'$ be complex numbers related by 
\begin{equation*} k' +h^{\vee}=\frac{k+h^{\vee}}{k+h^{\vee}+1}.\end{equation*} \begin{enumerate}
\item For generic values of $k$, we have a vertex algebra isomorphism 
$$\cW^{k'}(\gg)\cong  \text{Com}(V^{k+1}(\gg),V^k(\gg)\otimes L_1(\gg)).$$ In particular, this isomorphism holds for all real numbers $k > -h^{\vee} -1$.
\item Suppose that $k$ is an admissible level for $\widehat{\gg}$. Then $k'$ is a nondegenerate admissible level for $\widehat{\gg}$ so that $\cW_{k'}(\gg)$ is a minimal series $\cW$-algebra, and we have a vertex algebra isomorphism $$\cW_{k'}(\gg)\cong  \text{Com}(L_{k+1}(\gg),L_k(\gg)\otimes L_1(\gg)).$$
\end{enumerate}
\end{thm}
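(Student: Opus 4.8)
\emph{Proof idea.} This coincides with Main Theorems 1 and 2 of \cite{ACLII}; here is the route I would take. Abbreviate $C^k=\text{Com}(V^{k+1}(\gg),V^k(\gg)\otimes L_1(\gg))$ and $C_k=\text{Com}(L_{k+1}(\gg),L_k(\gg)\otimes L_1(\gg))$, and recall that for simply laced $\gg$ the vertex algebra $L_1(\gg)$ is the lattice vertex algebra $V_Q$ of the root lattice. First I would settle the conformal structure: writing $\Delta$ for the diagonal embedding and $L^{\gg,\ell}$ for the Sugawara vector at level $\ell$, the element $L:=L^{\gg,k}\otimes 1+1\otimes L^{\gg,1}-\Delta(L^{\gg,k+1})$ lies in $C^k$ and is a Virasoro vector of central charge $c(k)+\rank\gg-c(k+1)$, where $c(\ell)=\ell\dim\gg/(\ell+h^\vee)$ and the level-one summand equals $\rank\gg$ since $\dim\gg=\rank\gg\,(h^\vee+1)$ in the simply laced case; a short computation shows that under $k'+h^\vee=(k+h^\vee)/(k+h^\vee+1)$ this agrees with the central charge of $\cW^{k'}(\gg)$.

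For part (1) the plan is to realize both algebras inside a single Heisenberg vertex algebra and match their defining screening operators. On the $\cW$-algebra side I would invoke the Feigin--Frenkel realization $\cW^{k'}(\gg)\hookrightarrow\pi$, with $\pi$ a rank-$\rank\gg$ Heisenberg algebra, as the joint kernel $\bigcap_i\ker Q_i$ of screening operators $Q_i$, one per simple root. On the coset side I would use the Wakimoto realization $V^k(\gg)\hookrightarrow\cM_\gg\otimes\pi^k$, with $\cM_\gg$ the $\beta\gamma$-system attached to $\gn_+$, so that $V^k(\gg)\otimes L_1(\gg)\hookrightarrow\cM_\gg\otimes\pi^k\otimes V_Q$ with the diagonal $V^{k+1}(\gg)$ inside; a free-field change of variables reorganizes $\pi^k\otimes V_Q$ so that the diagonal $V^{k+1}(\gg)$ appears in its own Wakimoto form $\cM_\gg\otimes\pi^{k+1}$ with a complementary Heisenberg $\tilde\pi$ left over, and this reorganization is exactly what forces the relation between $k$ and $k'$. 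Taking commutants then identifies $C^k$ with a joint kernel of screening operators on $\tilde\pi$, which I would check are conjugate to the Feigin--Frenkel screenings, giving an embedding $\cW^{k'}(\gg)\hookrightarrow C^k$. To upgrade to equality I would compare graded characters: branching $V^k(\gg)\otimes L_1(\gg)$ over $V^{k+1}(\gg)$ (all simple for generic $k$) exhibits $C^k$ as the multiplicity space of the vacuum module, whose character works out to $\prod_i\prod_{j\geq 0}(1-q^{d_i+j})^{-1}$ with $d_1,\dots,d_{\rank\gg}$ the degrees of the fundamental invariants -- the same as the character of the freely generated $\cW^{k'}(\gg)$ -- so the two simple algebras coincide. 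The refinement to real $k>-h^\vee-1$ would follow because there both algebras remain simple and the free-field maps remain faithful.

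For part (2) I would take $k$ admissible, so that $k'$ is nondegenerate admissible and, by Arakawa, $\cW_{k'}(\gg)$ is simple, rational and $C_2$-cofinite. The diagonal map descends to $L_{k+1}(\gg)\ra L_k(\gg)\otimes L_1(\gg)$, and semisimplicity of the category of ordinary modules at admissible level yields a finite decomposition $L_k(\gg)\otimes L_1(\gg)=\bigoplus_\lambda L_{k+1}(\gg)_\lambda\otimes D_\lambda$ over $L_{k+1}(\gg)\otimes C_k$, so that $C_k=D_0$ is the vacuum multiplicity space. Part (1), together with the compatibility of passing to simple quotients with the free-field realizations (equivalently, the exactness properties of Drinfeld--Sokolov reduction at admissible level), then produces a nonzero homomorphism $\cW_{k'}(\gg)\ra C_k$; simplicity of the source makes it injective, and matching the Kac--Wakimoto admissible characters on the left of the branching against the Frenkel--Kac--Wakimoto/Arakawa character of $\cW_{k'}(\gg)$ makes it surjective.

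I expect the main obstacle throughout to be the \emph{upper bound}: ruling out elements of $C^k$ (resp.\ $C_k$) beyond the image of $\cW^{k'}(\gg)$. Exhibiting enough elements \emph{in} the coset -- a Virasoro field together with primaries of weights $d_2,\dots,d_{\rank\gg}$ -- is routine, whereas the real content lies in the branching-and-character analysis over $V^{k+1}(\gg)$, which for general simply laced $\gg$ demands tight control of the Wakimoto/screening picture in the generic case and rests on Arakawa's semisimplicity and admissible character results in the rational case.
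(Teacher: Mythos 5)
This statement is not proven in the paper you are reading: it is quoted verbatim from \cite{ACLII} (Main Theorems 1 and 2) and used as an external input, so there is no internal proof to compare against. That said, your sketch is a broadly faithful reconstruction of the strategy actually used in \cite{ACLII}: the Frenkel--Kac realization $L_1(\gg)\cong V_Q$, the Wakimoto/free-field picture reorganizing $\pi^k\otimes V_Q$ around the diagonal $V^{k+1}(\gg)$, the identification of the coset with a joint kernel of screening operators matching the Feigin--Frenkel screenings for $\cW^{k'}(\gg)$, and Arakawa's admissibility and rationality results for the simple-quotient statement. Two cautions about where your outline is more optimistic than the real argument. First, in part (1) you propose to ``compare graded characters'' by branching $V^k(\gg)\otimes L_1(\gg)$ over $V^{k+1}(\gg)$ and reading off the vacuum multiplicity space; for generic $k$ this branching is not known a priori, and in \cite{ACLII} the upper bound on the coset is obtained from the screening description itself together with a large-level (free-field limit) degeneration in the style of \cite{CLI,CLII}, not from an independent character formula for the coset. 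Second, in part (2) surjectivity of $\cW_{k'}(\gg)\to C_k$ does not come from matching Kac--Wakimoto characters; it comes from the general fact that the commutant of $L_{k+1}(\gg)$ inside the simple vertex algebra $L_k(\gg)\otimes L_1(\gg)$ is itself simple (so any nonzero image of the simple $\cW_{k'}(\gg)$ is everything once one knows the generic isomorphism specializes). With those two substitutions your outline matches the published proof; as written, both of the steps you flag as ``routine character analysis'' are exactly the places where the argument would otherwise be circular or unavailable.
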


This was conjectured in \cite{BBSSII} in the case of discrete series $\cW$-algebras, which correspond to the case $k \in \mathbb{N}$, in \cite{KWI,KWII} for arbitrary minimal series $\cW$-algebras, and in the case of generic level in \cite{FJMM}. In the case $\gg = \gs\gl_n$, the coset realization of $\cW^{k'}(\gs\gl_n, f_{\text{prin}})$ is very useful for our purposes because the weight $3$ field appears explicitly in \cite{BBSSII}.

\subsection{Coincidences} For later use, we mention some recently established coincidences between certain cosets of affine or $\cW$-algebras, and principal, rational $\cW$-algebras of type $A$. First, recall the universal parafermion algebra $N^k(\gs\gl_2) = \text{Com}(\cH, V^k(\gs\gl_2))$, as well as its simple quotient $N_k(\gs\gl_2) = \text{Com}(\cH, L_k(\gs\gl_2))$ \cite{DLY}.

\begin{thm} \label{thm:aly} (\cite{ALY}, Theorem 6.1) For all positive integers $k \geq 3$, $$N_k(\gs\gl_2) \cong \cW_{k'}(\gs\gl_k, f_{\rm{prin}}),\qquad k' = -k + \frac{2 + k}{1 + k}, \  -k + \frac{1 + k}{2 + k},$$
which has central charge $\displaystyle c = \frac{2 (k-1)}{k+2}$, and is rational and $C_2$-cofinite.
\end{thm}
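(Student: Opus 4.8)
The plan is to realize both sides as quotients of the universal two-parameter algebra $\cW(c,\lambda)$ and then invoke the isomorphism criterion: two simple one-parameter quotients of $\cW(c,\lambda)$ with the same central charge $c\neq 0,-2$ must coincide provided their truncation curves pass through the same point $(c,\lambda)$. First I would record that $N^k(\gs\gl_2)$ is of type $\cW(2,3,4,5)$ by \cite{DLY}, so its simple quotient $N_k(\gs\gl_2)$ is a simple one-parameter vertex algebra of type $\cW(2,3,4,5)$; I would check it satisfies the mild hypotheses needed to express it as the simple quotient of $\cW^I_R(c,\lambda)$ for a prime ideal $I = (p)\subseteq\mathbb C[c,\lambda]$ and a localization $R$, using the general mechanism from Section \ref{section:classification}. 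This identifies a truncation curve $V(I_{\text{para}})\subseteq\mathbb C^2$ for the parafermion family; similarly $\cW^k(\gs\gl_k, f_{\text{prin}})$ is of type $\cW(2,3,\dots,k)$ and, by Section \ref{section:prinw}, its truncation ideal $I_{\text{prin},k}$ (equivalently the truncation curve) is explicitly known as a function of $k$.

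Next I would compute the central charge on each side. On the parafermion side, from $c(V^k(\gs\gl_2)) = \tfrac{3k}{k+2}$ and removing the rank-one Heisenberg contribution $1$, one gets $c = \tfrac{3k}{k+2} - 1 = \tfrac{2(k-1)}{k+2}$, which is the stated value. On the principal side, plugging $n = k$ and the two indicated levels $k' = -k + \tfrac{2+k}{1+k}$ and $k' = -k + \tfrac{1+k}{2+k}$ into the central charge formula $c = -\tfrac{(n-1)(n^2+nk'-n-1)(n^2+k'+nk')}{n+k'}$ from the principal $\cW$-algebra section, one verifies (a routine simplification) that both give $c = \tfrac{2(k-1)}{k+2}$. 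The two values of $k'$ are exactly the two solutions of this quadratic-in-$k'$ central-charge equation, which is why both appear. Then I would check that the corresponding point $(c,\lambda)$ lies on \emph{both} truncation curves: $\lambda$ is determined by the parafermion truncation curve evaluated at this $c$, and one verifies it agrees with the value of $\lambda$ coming from $I_{\text{prin},k}$ at the same $c$ — i.e. the truncation curves of $\cW_k(\gs\gl_2, f_{\text{para-family}})$ and of $\cW^k(\gs\gl_k, f_{\text{prin}})$ intersect at this point.

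Having matched $(c,\lambda)$, the isomorphism criterion gives that the simple quotients of $\cW^{I}(c,\lambda)$ at the maximal ideal $I = (c-c_0,\lambda-\lambda_0)$ obtained from the two families coincide, hence $N_k(\gs\gl_2)\cong \cW_{k'}(\gs\gl_k, f_{\text{prin}})$ for $k\geq 3$ (so that $c_0\neq 0,-2$ and the simple quotient is genuinely of type $\cW(2,3,4,5)$ rather than collapsing further). Finally, rationality and $C_2$-cofiniteness follow because $k\geq 3$ is an integer and the corresponding $k'$ are nondegenerate admissible levels for $\widehat{\gs\gl}_k$, so $\cW_{k'}(\gs\gl_k, f_{\text{prin}})$ is a principal minimal model by Arakawa \cite{ArII,ArIII}. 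The main obstacle is the bookkeeping in identifying the two truncation curves explicitly and verifying they actually intersect at the predicted $(c,\lambda)$ — this rests on the explicit ideal for the principal family from Section \ref{section:prinw} together with the parafermion truncation ideal, and on confirming the mild hypotheses so that the classification machinery of Section \ref{section:classification} applies to $N_k(\gs\gl_2)$; the central-charge matching itself is a short computation.
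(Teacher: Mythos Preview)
Your strategy is exactly the one the paper uses in Theorem~\ref{parawslncoincidences} to \emph{re-derive} this coincidence as family~(1), and the computation you outline (intersecting the parafermion curve \eqref{curve:para} with the principal curve \eqref{ideal:wslna}) is correct. However, as a free-standing proof of Theorem~\ref{thm:aly} within the logic of this paper, your argument is circular: the explicit generator \eqref{ideal:wslna} of the principal truncation ideal $I_n$ is established in Theorem~\ref{thm:prinw}, and the proof of Theorem~\ref{thm:prinw} explicitly \emph{uses} Theorem~\ref{thm:aly} as input. Specifically, the argument there shows that a certain polynomial $r(n,k)$ vanishes by accumulating divisibility constraints, and one of those constraints (divisibility by $k+2(1+n)$) comes precisely from the known isomorphism $N_k(\gs\gl_2)\cong \cW_{k'}(\gs\gl_k,f_{\text{prin}})$ at integer $k$. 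Without that input the degree count fails.

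This is why the paper does not prove Theorem~\ref{thm:aly} itself but cites it from \cite{ALY}. As the paper notes just after Theorem~\ref{thm:cl}, the proof in \cite{ALY} proceeds by an entirely different route: one exhibits $L_k(\gs\gl_2)$ as a simple current extension of $V_L\otimes \cW$, where $V_L$ is a suitable rank-one lattice vertex algebra extending $\cH$ and $\cW$ is the relevant principal $\cW$-algebra of type $A$; this relies on rationality and does not touch the universal $\cW(c,\lambda)$. So your truncation-curve argument is the correct mechanism for generating and classifying such coincidences once Theorem~\ref{thm:prinw} is in hand, but it cannot serve as an independent proof of Theorem~\ref{thm:aly} here.
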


Next, recall the {\it Bershadsky-Polyakov algebra}, which is just the $\cW$-algebra $\cW^k(\gs\gl_3, f_{\text{min}})$. As in \cite{ACLI}, it is convenient to introduce a shift of level; we define
$$\cW^{\ell} = \cW^{\ell - 3/2}(\gs\gl_3, f_{\text{min}}),$$ and we denote its simple quotient by $\cW_{\ell}$. The weight $1$ field generates a Heisenberg subalgebra $\cH$. We define $\cC^{\ell} = \text{Com}(\cH, \cW^{\ell})$ and $\cC_{\ell} = \text{Com}(\cH, \cW_{\ell})$, which is the simple quotient of $\cC^{\ell}$. 

\begin{thm} \label{thm:acl} (\cite{ACLI}, Theorem 8.3) For all positive integers $\ell$, $$\cC_{\ell} \cong \cW_{\ell'}(\gs\gl_{2\ell}, f_{\rm{prin}}),\qquad \ell' = -2 l + \frac{3 + 2 l}{1 + 2 l},\  -2 l + \frac{1 + 2 l}{3 + 2 l},$$
which has central charge $\displaystyle c = -\frac{3(2\ell-1)^2}{2\ell +3}$ and is $C_2$-cofinite and rational.
\end{thm}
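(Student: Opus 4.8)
The plan is to realize both sides as simple quotients of $\cW(c,\lambda)$ at a common maximal ideal $I = (c - c_0, \lambda - \lambda_0)$ and then invoke the isomorphism criterion: two such simple quotients are isomorphic as soon as their central charges agree (the exceptional values $c = 0, -2$ will not occur here, so agreement of $c$ forces $\lambda_0$ to match as well, once both curves are known to pass through the point). First I would recall from Section \ref{section:prinw} the explicit ideal $I_n^{\text{prin}} \subseteq \mathbb{C}[c,\lambda]$ cutting out the truncation curve of $\cW^k(\gs\gl_n, f_{\text{prin}})$, so that for $n = 2\ell$ the algebra $\cW_{\ell'}(\gs\gl_{2\ell}, f_{\text{prin}})$ is the simple quotient of $\cW^{I}_R(c,\lambda)$ with $I = I_{2\ell}^{\text{prin}}$ localized appropriately, and the central charge is the one recorded in the statement, $c = -3(2\ell-1)^2/(2\ell+3)$. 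On the other side, $\cC^\ell = \text{Com}(\cH, \cW^\ell)$ is of type $\cW(2,3,4,5,6,7)$ by \cite{ACLI}; I would check that it satisfies the mild hypotheses of Section \ref{section:classification} so that it too arises as the simple quotient of $\cW^{J}_{R'}(c,\lambda)$ for a prime ideal $J = (q)$, with $q \mid \text{det}_7$ and $q \nmid \text{det}_m$ for $m < 7$, and I would identify the truncation curve $V(J)$ — a rational curve parametrized by the level $\ell$ — together with the induced map $\ell \mapsto (c(\ell), \lambda(\ell))$.

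The next step is the arithmetic heart of the argument: showing that for every $\ell \in \mathbb{Z}_+$ the point on the parafermion-type curve $V(J)$ corresponding to level $\ell$ coincides with a point on the principal curve $V(I_{2\ell}^{\text{prin}})$ corresponding to the level $\ell'$ given in the statement. Concretely, one computes $c$ and $\lambda$ as rational functions of $\ell$ from the $\cC^\ell$ side (using the central charge formula for $\cW^k(\gs\gl_3, f_{\text{min}})$ with $k = \ell - 3/2$, subtracting off the Heisenberg contribution, and reading off $\lambda$ from the known OPE structure constants of $\cC^\ell$ in low weight), and checks that the pair $(c,\lambda)$ satisfies the polynomial $p$ generating $I_{2\ell}^{\text{prin}}$ after substituting $n = 2\ell$; the value of $\lambda$ then pins down $k$ on the principal side via \eqref{intro:lambdaofmu} and the relation $\mu = n$, yielding exactly $\ell' = -2\ell + \frac{3+2\ell}{1+2\ell}$ or the triality-related value $-2\ell + \frac{1+2\ell}{3+2\ell}$. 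This is a finite check of an identity of rational functions in $\ell$, so in principle it is mechanical, but organizing it cleanly is where the real work lies.

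The main obstacle, and the step I expect to be genuinely delicate rather than routine, is establishing that $\cC^\ell$ actually satisfies the hypotheses needed to be a quotient of $\cW(c,\lambda)$ — in particular that its weight $3$ field generates it (as in the remark about \cite{ALY} for principal $\cW$-algebras), that it has the right type $\cW(2,3,4,5,6,7)$ with no accidental further truncation or extra generators at a generic level, and that the singular vector in $\cW^J(c,\lambda)$ takes the reduced form \eqref{sing:intro} after localizing. Once this structural input is in place, the coincidence itself follows formally: both $\cC_\ell$ and $\cW_{\ell'}(\gs\gl_{2\ell}, f_{\text{prin}})$ are the \emph{simple} quotient of $\cW^{I}_R(c,\lambda)$ at the \emph{same} maximal ideal (the intersection point of the two truncation curves), and since this common central charge is neither $0$ nor $-2$ for any $\ell \in \mathbb{Z}_+$, the isomorphism criterion of Section \ref{section:classification} gives $\cC_\ell \cong \cW_{\ell'}(\gs\gl_{2\ell}, f_{\text{prin}})$. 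The $C_2$-cofiniteness and rationality are then inherited from the principal $\cW$-algebra side, where $\ell'$ is a nondegenerate admissible level, by \cite{ArIII}.
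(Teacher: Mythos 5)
This statement is not proven in the paper at all: it is an imported result, quoted from \cite{ACLI}, where it is established by exhibiting $\cW_{\ell}$ as a simple current extension of $V_L \otimes \cW$ for a rank one lattice vertex algebra $V_L$ and a rational principal $\cW$-algebra $\cW$ of type $A$. Your proposal is therefore a genuinely different route, essentially the truncation-curve method that the paper deploys in Section \ref{section:coincidences} to classify \emph{all} coincidences between $\cC_{\ell}$ and principal $\cW$-algebras (of which this theorem is the first family).

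The problem is that, within this paper's logical architecture, your route is circular. You need the explicit truncation curve $I_{2\ell}$ of $\cW^{k}(\gs\gl_{2\ell}, f_{\text{prin}})$, i.e.\ the polynomial \eqref{ideal:wslna} of Theorem \ref{thm:prinw}. But the proof of Theorem \ref{thm:prinw} uses Theorem \ref{thm:acl} as an input: the divisibility of the error term $r(n,k)$ by $2k+n-1$ and $2k+3n+3$ is deduced precisely from Theorems \ref{thm:bpcoset} and \ref{thm:acl}, and without those two linear factors the degree count at the end of that proof fails (one only gets $\deg_n r \geq 8$, hence numerator degree $\geq 10$, which does not exceed the bound $11$). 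So the intersection-point argument cannot serve as a proof of Theorem \ref{thm:acl} unless you first supply an independent derivation of $I_n$ for all even $n$ — for instance by carrying out the coset computation of Theorem \ref{thm:prinw} without appealing to any of the coincidence theorems, which is exactly the hard step that the paper avoids by citing \cite{ACLI}, \cite{ALY}, and \cite{CLIII}. A secondary, more cosmetic point: matching central charges alone does not suffice by Theorem \ref{thm:coincidences}; you must verify that the two curves actually meet at the claimed $(c,\lambda)$, which requires the $\lambda$-coordinate on the principal side from \eqref{ideal:wsln} rather than from the relation $\mu = n$ in \eqref{intro:lambdaofmu}. Your final step — deducing rationality and $C_2$-cofiniteness from the principal side via \cite{ArIII} once the isomorphism is in hand — is fine.
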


Finally, recall the $\cW$-algebra $\cW^k(\gs\gl_4, f_{\text{subreg}})$, and its simple quotient $\cW_k(\gs\gl_4, f_{\text{subreg}})$. Let $\cC^k = \text{Com}(\cH, \cW^k(\gs\gl_4, f_{\text{subreg}}))$ and $\cC_k = \text{Com}(\cH, \cW_k(\gs\gl_4, f_{\text{subreg}}))$ which is the simple quotient of $\cC^k$.

\begin{thm} \label{thm:cl} (\cite{CLIII}, Theorem 6.2) Let $n\geq 3$ and assume that $n-1$ is coprime to at least one of $n+1$ and $n+4$. Then $$\cC_k \cong \cW_{k'}(\gs\gl_n, f_{\rm{prin}}), \qquad k = -4 + \frac{4 + n}{3}, \qquad k' = -n + \frac{4 + n}{1 + n},\  -n + \frac{1+ n}{4 + n},$$ which has central charge $\displaystyle c = -\frac{4 (n-1) (2n -1)}{4 + n}$ and is $C_2$-cofinite and rational.
\end{thm}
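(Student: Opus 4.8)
\emph{Proof strategy.} The plan is to realize both $\cC^k$ and $\cW^{k'}(\gs\gl_n,f_{\text{prin}})$ as quotients of the universal two-parameter algebra $\cW(c,\lambda)$, and then to read the claimed isomorphism off the intersection of their two truncation curves in $\mathbb{C}^2$, using the coincidence criterion of the main result. Recall from the introduction that $\cW^{k'}(\gs\gl_n,f_{\text{prin}})$ is of type $\cW(2,3,\dots,n)$ and that $\cC^k=\text{Com}(\cH,\cW^k(\gs\gl_4,f_{\text{subreg}}))$ is of type $\cW(2,3,\dots,9)$. In each case the weight $2$ field is a conformal vector, the algebra is generated by its fields of weights $2$ and $3$, and after normalizing the weight $3$ primary $W^3$ so that $(W^3)_{(5)}W^3=\frac{c}{3}1$ the remaining mild hypotheses of the main theorem hold. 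Hence the one-parameter families $\{\cC_k\}$ and $\{\cW_{k'}(\gs\gl_n,f_{\text{prin}})\}$ (with $n$ fixed) arise as simple quotients of $\cW^{I_{\cC}}_R(c,\lambda)$ and $\cW^{I_n}_{R'}(c,\lambda)$ for prime ideals $I_{\cC}=(p_{\cC})$ and $I_n=(p_n)$ in $\mathbb{C}[c,\lambda]$ and suitable localizations $R,R'$, whose vanishing loci are the associated rational truncation curves.

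\emph{Identifying the curves.} The ideal $I_n$ is the one determined in Section \ref{section:prinw}: its generator $p_n$ is obtained by eliminating $k'$ from the pair $(c,\lambda)=(c(k',n),\lambda(k',n))$, with $c(k',n)$ the central charge recorded in Section \ref{section:VOAs} and $\lambda(k',n)$ read off there. For $\cC^k$, the central charge is that of $\cW^k(\gs\gl_4,f_{\text{subreg}})$ minus $1$, which a short computation reduces to
\[ c(k)=-\frac{4(3k+7)(2k+5)}{k+4}, \]
while $\lambda(k)$ is extracted from a single low-weight structure constant of $\cC^k$, computable from the partial OPE algebra of $\cW^k(\gs\gl_4,f_{\text{subreg}})=\cW^{(2)}_4$ supplied by Genra's theorem and \cite{FS}. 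Eliminating $k$ from $(c(k),\lambda(k))$ gives $p_{\cC}$.

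\emph{Intersecting and concluding.} By the coincidence criterion of the main result, the simple quotients of $\cW^{I_{\cC}}(c,\lambda)$ and $\cW^{I_n}(c,\lambda)$ at maximal ideals $(c-c_0,\lambda-\lambda_0)$ and $(c-c_1,\lambda-\lambda_1)$ are isomorphic precisely when $c_0=c_1$ and, unless this common value is $0$ or $-2$, also $\lambda_0=\lambda_1$. Imposing $c(k)=c(k',n)$ and $\lambda(k)=\lambda(k',n)$ and solving yields
\[ k=-4+\frac{n+4}{3},\qquad k'+n=\frac{n+4}{n+1}\ \ \text{or}\ \ \frac{n+1}{n+4}, \]
the two values of $k'$ being exchanged by Feigin--Frenkel duality $k'+n\leftrightarrow (k'+n)^{-1}$ and hence giving isomorphic simple principal $\cW$-algebras. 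At this point $c=-\frac{4(n-1)(2n-1)}{n+4}$, which for $n\ge 3$ is neither $0$ nor $-2$, so the criterion gives $\cC_k\cong\cW_{k'}(\gs\gl_n,f_{\text{prin}})$ as the common simple quotient of $\cW(c,\lambda)$ at this maximal ideal. The numerical hypothesis on $n$ moreover guarantees that $k'$ is a non-degenerate admissible level for $\widehat{\gs\gl_n}$, so this simple quotient is the corresponding minimal model of $\cW_n$, which is rational and $C_2$-cofinite by Arakawa \cite{ArII,ArIII}.

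\emph{The main obstacle.} The delicate point is to carry out the last two paragraphs uniformly in $n$, and this is exactly where the coprimality hypothesis is needed. Since the generic types are $\cW(2,\dots,n)$ on the principal side and $\cW(2,\dots,9)$ on the coset side, at the special level the principal algebra must collapse down to type $\cW(2,\dots,9)$ when $n>9$, while for $3\le n<9$ it is the coset that must collapse further; in the language of the main theorem one must check that the prime cutting out the intersection point divides the appropriate higher Shapovalov determinant $\text{det}_{N+1}$ but none of the lower ones, and that the required localization exists so that a singular vector of the form \eqref{sing:intro} is present. The condition that $n-1$ be coprime to $n+1$ or to $n+4$ is what forces both collapses to take place at a common weight $N$ and $k'$ to be non-degenerate admissible; in the excluded residue classes of $n$ one of these properties fails, the simple quotient becomes smaller or ceases to be a minimal model, and the isomorphism (or the rationality claim) breaks. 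Outside those classes the argument goes through and yields the stated isomorphism together with rationality and $C_2$-cofiniteness.
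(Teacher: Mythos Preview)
Your approach has a genuine circularity problem. The truncation curve $V(I_n)$ for $\cW^{k'}(\gs\gl_n,f_{\text{prin}})$ that you invoke from Section~\ref{section:prinw} is established there in Theorem~\ref{thm:prinw}, and the \emph{proof} of Theorem~\ref{thm:prinw} uses Theorem~\ref{thm:cl} as one of its inputs. Concretely, in that proof the polynomial $r(n,k)$ is shown to vanish by accumulating enough divisibility constraints, and two of those constraints (divisibility by $3k+2n-1$ and $3k+4n+4$) come precisely from Theorems~\ref{thm:sl4subregcoset} and~\ref{thm:cl}. So you cannot appeal to the explicit form of $I_n$ from Section~\ref{section:prinw} to prove Theorem~\ref{thm:cl}; the logical dependency runs the other way. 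The paper instead cites the result from \cite{CLIII}, where it is proven by an entirely different method: one exhibits $\cW_k(\gs\gl_4,f_{\text{subreg}})$ as a simple current extension of $V_L\otimes\cW$ for a rank one lattice VOA $V_L$ and a rational principal $\cW$-algebra $\cW$, a technique that \emph{requires} rationality and does not go through $\cW(c,\lambda)$ at all.

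Your final paragraph also misidentifies the role of the coprimality hypothesis. It has nothing to do with ``collapses'' or with which Shapovalov determinants the prime divides. Its sole purpose is to make $k'+n=\frac{n+4}{n+1}$ (equivalently $\frac{n+1}{n+4}$) a \emph{non-degenerate admissible} level for $\widehat{\gs\gl_n}$, so that Arakawa's theorems \cite{ArII,ArIII} yield rationality and $C_2$-cofiniteness; it is also needed for the simple-current-extension argument in \cite{CLIII} to go through. Indeed, once the machinery of this paper is in place, Section~\ref{section:coincidences} shows that the isomorphism $\cC_k\cong\cW_{k'}(\gs\gl_n,f_{\text{prin}})$ at the stated levels holds for \emph{all} $n\ge 3$ as the first family of coincidences, with no coprimality restriction; the restriction in Theorem~\ref{thm:cl} is an artifact of the rationality-based proof method in \cite{CLIII}.
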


These theorems were proven by exhibiting $V_k(\gs\gl_2)$, $\cW_{\ell}$, and $\cW_k(\gs\gl_4, f_{\text{subreg}})$, respectively, as simple current extensions of $V_L \otimes \cW$. Here $V_L$ is a certain rank one lattice vertex algebra that extends $\cH$, and $\cW$ is the appropriate principal $\cW$-algebra of type $A$. This approach does not easily generalize if the vertex algebras involved are not rational, or if the larger vertex algebra is not a simple current extension of a tensor product of two nice subalgebras. One of the goals of this paper is to establish many more coincidences of this kind. For example, we shall classify all cases where the above cosets $N_k(\gs\gl_2)$, $\cC_{\ell}$, and $\cC_k$, are isomorphic to a principal $\cW$-algebra of type $A$. There are several families of coincidences which do not involve rational or $C_2$-cofinite vertex algebras.

\section{Nonlinear Lie conformal algebras and the De Sole-Kac correspondence} \label{section:kds}
We recall the main results of an important paper of De Sole and Kac \cite{DSKI}, which gives an equivalence between the categories of nonlinear Lie conformal algebras and freely generated vertex algebras. De Sole and Kac use the language of lambda-brackets rather than the equivalent language of OPEs, which we briefly explain. First, a {\it Lie conformal algebra} is a $\mathbb{Z}_2$-graded $\mathbb{C}[T]$-module $A$, where $|a|$ denotes the parity of an element $a\in A$. It is equipped with a $\mathbb{C}$-bilinear map $[-,-]: A\otimes A \mapsto \mathbb{C}[\lambda] \otimes A$ called the lambda-bracket, satisfying

\begin{equation} \label{lca:deriv} [Ta_{\lambda} b] = -\lambda [a_{\lambda} b],\qquad [a_{\lambda} Tb] = (\lambda + T)[a_{\lambda} b],\qquad \forall a,b \in A,\end{equation}

\begin{equation} \label{lca:commutator} [a_{\lambda} b] = - (-1)^{|a||b|} [b_{-\lambda - T} a], \qquad \forall a,b \in A.\end{equation}

\begin{equation} \label{lca:jacobi} [a_{\lambda} [b_{\mu} c]] - (-1)^{|a||b|} [b_{\mu}[a_{\lambda} c]] - [[a_{\lambda} b] _{\lambda + \mu} c] =0, \qquad \forall a,b,c \in A.\end{equation}

The operator $T$ is called the {\it translation operator}. Vertex algebras can be regarded as a special class of Lie conformal algebras with an additional bilinear operation $A\otimes A \ra A$ sending $(a,b) \mapsto :ab:$ called the {\it normally ordered product}, satisfying

\begin{equation} \label{lca:nonasswick}  :(:ab:)c: - :a(:bc:): \ = \ : \bigg( \int_0^T d\lambda\  a\bigg) [b_{\lambda} c]: + (-1)^{|a||b|}  : \bigg(\int_0^T d\lambda \ b\bigg) [a_{\lambda} c] :.\end{equation}
 
\begin{equation}\label{lca:commwick} :ab: - (-1)^{|a||b|} :ba:\ = \int_{-T}^0 d\lambda \ [a_{\lambda} b].\end{equation}
 
\begin{equation} \label{lca:ncw} [a_{\lambda} :bc:] = \ :[a_{\lambda} b] c: + (-1)^{|a||b|}  :b[a_{\lambda} c]: + \int_0^{\lambda} d\mu\ [[a_{\lambda} b]_{\mu} c].
\end{equation}
Note that \eqref{lca:commutator} and \eqref{lca:ncw} together imply
\begin{equation} \label{lca:ncwright} 
[:ab:_{\lambda} c] = \ :(e^{T \partial_{\lambda}} a) [b_{\lambda} c]: + (-1)^{|a||b|} :(e^{T\partial_{\lambda}} b) [a_{\lambda} c]: + (-1)^{|a||b|}  \int_0^{\lambda} d\mu\ [b_{\mu} [a_{\lambda-\mu} c]].
\end{equation}

The equivalence between vertex algebras and such Lie conformal algebras was proven by Bakalov and Kac \cite{BK}. In particular, $A$ can be given a vertex algebra structure as follows. For $a,b \in A$, we define products $a_{(n)} b$ for $n\geq 0$ by expanding the lambda-bracket
$$[a_{\lambda} b] = \sum_{n\geq 0} \frac{\lambda^n}{n!} a_{(n)} b.$$ This corresponds to the OPE relation $a(z) b(w) \sim \sum_{n\geq 0} (a_{(n)} b)(w)(z-w)^{-n-1}$, that is, the coefficient of $\lambda^n$ corresponds to $1/n!$ times the pole of order $n+1$ appearing in the OPE formula. The translation operator $T$ corresponds to $\partial = \frac{d}{dz}$, and for $n\leq -1$ we define $a_{(n)} b = \ :(T^{-n-1} a)b:$. The identities \eqref{deriv}-\eqref{jacobi} are then equivalent to \eqref{lca:deriv}-\eqref{lca:ncwright}. In particular, the Jacobi identity \eqref{jacobi} is equivalent to \eqref{lca:jacobi}.

The above formalism is not well adapted to studying the notion of strong generation in a vertex algebra, in which nonlinear terms in the generators and their derivatives can appear in the OPEs. The notion of a nonlinear Lie conformal algebra in \cite{DSKI} is designed for this purpose. First, a {\it nonlinear conformal algebra} is a $\mathbb{Z}_2$-graded $\mathbb{C}[T]$-module $A$ possessing a grading $$A = \bigoplus_{\Delta \in \Gamma \setminus \{0\}} A[\Delta]$$ for some semigroup $\Gamma$, and a $\mathbb{C}$-bilinear operation $[-_{\lambda} -]: A\otimes A \ra \mathbb{C}[\lambda] \otimes \cT(A)$, where $\cT(A)$ is the tensor algebra of $A$. It must satisfy 
\begin{equation} \label{kds:deriv} [Ta_{\lambda} b] = -\lambda [a_{\lambda} b],\qquad [a_{\lambda} Tb] = (\lambda + T)[a_{\lambda} b],\qquad \forall a,b \in A,\end{equation}
together with the following grading condition:
\begin{equation} \label{gradingcond} \Delta([a_{\lambda} b]) < \Delta(a) + \Delta(b).\end{equation} Here the $\Gamma$-grading on $A$ is extended to a grading $$\cT(A) = \bigoplus_{\Delta \in \Gamma \setminus \{0\}} \cT(A)[\Delta]$$ by additivity, so that $\Delta(1) = 0$ and $\Delta(a\otimes b) = \Delta(a) + \Delta(b)$. The $\mathbb{Z}_2$-grading also extends naturally to $\cT(A)$. We have the induced filtration $$\cT_{\Delta}(A) = \bigoplus_{\Delta' \leq \Delta} \cT(A)[\Delta']$$ on the tensor algebra $\cT(A)$. 

\begin{thm} \label{thm:dsklemma3.2} (\cite{DSKI}, Lemma 3.2)
Given a nonlinear conformal algebra $A$ as above, the lambda-bracket on $A$ can be extended uniquely to a map
$$L_{\lambda}: \cT(A) \otimes \cT(A) \rightarrow \mathbb{C}[\lambda] \otimes \cT(A),$$ that is, $L_{\lambda}(a,b) = [a_{\lambda} b]$ for $a,b \in A = \cT^1(A)$, and simultaneously, the normally ordered product
$$N: \cT(A) \otimes \cT(A) \rightarrow \cT(A)$$ can be defined uniquely so that $N(a,B) = a\otimes B$ for all $a\in A$ and $B \in \cT(A)$, and appropriate analogues of \eqref{lca:nonasswick}, \eqref{lca:ncw}-\eqref{lca:ncwright} hold. In particular, given $a,b \in A$, and $B,C \in \cT(A)$, we have
\begin{equation} \label{kds:additional} \begin{split} 
 N(a\otimes B, C) &  = N(a, N(B,C)) + N\bigg( \bigg( \int_0^T d\lambda \ a \bigg), L_{\lambda}(B,C)  \bigg) 
\\ & + (-1)^{|a| |b|} N \bigg( \bigg( \int_0^T d\lambda \ B \bigg), L_{\lambda}(a,C) \bigg),
\\ L_{\lambda}(a, b\otimes C) & = N(L_{\lambda}(a,b), C) + (-1)^{|a| |b|} N(b, L_{\lambda} (a,C) + \int_0^{\lambda} d\mu\ L_{\mu} (L_{\lambda}(a,b), C)),
\\  L_{\lambda}(a\otimes B, C) & = N (( e^{T\partial_{\lambda}} a), L_{\lambda}(B,C)) + (-1)^{|a||B|} N((e^{T\partial_{\lambda}} B, L_{\lambda}(a,C)) \\ & + (-1)^{|a||B|} \int_0^{\lambda} d\mu \ L_{\mu} (B, L_{\lambda - \mu}(a,C)).\end{split}
\end{equation}
We also have the grading conditions
\begin{equation} \label{kds:gradingcond} \Delta(N(B,C)) \leq \Delta(B) + \Delta(C),\qquad \Delta(L_{\lambda}(B,C)) < \Delta(B) + \Delta(C). \end{equation}
\end{thm}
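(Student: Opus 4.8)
The plan is to read the three displayed equations in \eqref{kds:additional} as a recursive \emph{definition} of the operations $N$ and $L_{\lambda}$ on all of $\cT(A)$, starting from the given data $N(a,B) = a\otimes B$ (for $a\in A=\cT^1(A)$, $B\in\cT(A)$) and $L_{\lambda}(a,b) = [a_{\lambda}b]$ (for $a,b\in A$), and then to verify that the operations so defined satisfy every identity claimed in the statement. Concretely, I would first define $L_{\lambda}(a,C)$ for $a\in A$ and arbitrary $C\in\cT(A)$ by induction on the word length of $C$, using the second equation of \eqref{kds:additional}; the only subtle terms there are $N(L_{\lambda}(a,b),C')$ and $\int_0^{\lambda}d\mu\,L_{\mu}(L_{\lambda}(a,b),C')$, in which $L_{\lambda}(a,b)=[a_{\lambda}b]$ has, by the grading condition \eqref{gradingcond}, $\Gamma$-degree \emph{strictly smaller} than $\Delta(a)+\Delta(b)$, so these are legitimate recursive calls at strictly smaller total $\Gamma$-degree. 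Then I would define $N(a\otimes B,C)$ and $L_{\lambda}(a\otimes B,C)$ by the first and third equations of \eqref{kds:additional}, inducting on the length of the first argument, with the correction term $L_{\mu}(B,L_{\lambda-\mu}(a,C))$ in the third equation again controlled by \eqref{gradingcond}. The correct well-founded order for the whole construction is lexicographic in (total $\Gamma$-degree of the arguments, length of the first argument, length of the second argument); \eqref{gradingcond} is precisely what makes each recursive call strictly decrease this quantity, so the construction terminates, and one sees by inspection that $N$ lands in $\cT(A)$ and $L_{\lambda}$ in $\mathbb{C}[\lambda]\otimes\cT(A)$.

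Uniqueness is then immediate: any two extensions satisfying \eqref{kds:additional} must agree with the one just built, by the same induction. The grading conditions \eqref{kds:gradingcond} follow by a direct induction from the defining formulas together with \eqref{gradingcond}, since $\int_0^{T}$ and $e^{T\partial_{\lambda}}$ preserve $\Gamma$-degree while every occurrence of $L_{\lambda}$ strictly lowers it; the sesquilinearity relations \eqref{kds:deriv} extended to $\cT(A)$ are likewise proved by induction on length, compatibly with the defining formulas.

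The real content, and the main obstacle, is \emph{existence}: showing that the recursively defined $N$ and $L_{\lambda}$ satisfy \emph{all} the asserted identities and not merely the handful used as their definition --- namely the $\cT(A)$-analogues of the quasi-associativity relation \eqref{lca:nonasswick} and the noncommutative Wick formulas \eqref{lca:ncw}, \eqref{lca:ncwright}, together with the full strength of \eqref{kds:additional} when the distinguished argument is an arbitrary tensor rather than one of the special forms used to set up the recursion. I would organize this as one large simultaneous induction on the total $\Gamma$-degree of the arguments (on $\Delta(A')+\Delta(B)+\Delta(C)$ for the ternary identities), with a secondary induction on word lengths; in the base case all arguments lie in $A=\cT^1(A)$, where each identity either is one of the defining axioms \eqref{kds:deriv}, \eqref{lca:commutator}, \eqref{lca:jacobi} of a nonlinear conformal algebra or is vacuous. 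In the inductive step one writes the leftmost argument as $a\otimes B'$ (or the relevant argument as $b\otimes C'$), applies the defining formula to peel off one tensor factor, and rewrites the result using the identities already established at lower $\Gamma$-degree or shorter length; after reassociating all the $\int$ and $e^{T\partial_{\lambda}}$ operators and the nested $L_{\mu}(L_{\lambda}(-,-),-)$ corrections, the two sides must collapse to the same expression. The bookkeeping here is heavy --- this is where essentially all of the work of Lemma 3.2 of \cite{DSKI} is concentrated, and the noncommutative Wick formulas \eqref{lca:ncw}, \eqref{lca:ncwright} and quasi-associativity \eqref{lca:nonasswick} are the identities whose verification spawns the largest number of terms to cancel. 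Note finally that, in contrast to the Lie conformal algebra case, no analogue of the commutativity relation \eqref{lca:commwick} is required or holds: the product $N$ on $\cT(A)$ is genuinely noncommutative, and commutativity of the normally ordered product is recovered only after passing to the universal enveloping vertex algebra.
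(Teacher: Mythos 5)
The paper does not prove this statement: it is quoted verbatim, with attribution, from Lemma 3.2 of \cite{DSKI}, so there is no in-paper proof to compare against. That said, your strategy --- read \eqref{kds:additional} as a recursive definition of $N$ and $L_{\lambda}$, use the grading condition \eqref{gradingcond} to make the recursion well-founded (every occurrence of $L_{\lambda}$ strictly drops the total $\Gamma$-degree, so a lexicographic induction on degree and word length terminates), get uniqueness for free, and then verify the remaining tensor-algebra analogues of \eqref{lca:nonasswick} and \eqref{lca:ncw}--\eqref{lca:ncwright} by a simultaneous induction --- is exactly the architecture of De Sole--Kac's argument, and your closing observation that no analogue of \eqref{lca:commwick} holds on $\cT(A)$ (commutativity is only imposed in the quotient $U(A)=\cT(A)/\cM(A)$) is correct and worth saying.

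There is one concrete misstep: in the base case of your verification induction you appeal to ``the defining axioms \eqref{kds:deriv}, \eqref{lca:commutator}, \eqref{lca:jacobi} of a nonlinear conformal algebra.'' A nonlinear conformal algebra, as defined here, satisfies only sesquilinearity \eqref{kds:deriv} and the grading condition \eqref{gradingcond}; skew-symmetry \eqref{kds-skew} and the Jacobi identity \eqref{kds-jacobi} are the \emph{additional} axioms of a nonlinear \emph{Lie} conformal algebra and must not enter this lemma. Indeed, the entire point of Section \ref{section:kds} (and of the main construction in Section \ref{section:main}) is that $N$ and $L_{\lambda}$ exist on $\cT(A)$ \emph{before} one knows whether Jacobi holds --- this is what makes the degenerate case and the inductive bootstrapping of $\cW(c,\lambda)$ possible. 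So you need to check that every base case of the identities you are verifying reduces to the given data $N(a,B)=a\otimes B$, $L_{\lambda}(a,b)=[a_{\lambda}b]$, and \eqref{kds:deriv} alone; if your cancellation scheme genuinely required \eqref{lca:jacobi}, the proof would fail for the algebras this paper actually applies it to. (It does not require it --- the identities in question are structural consequences of the recursion and the associativity of $\otimes$ --- but as written your outline asserts a dependence that would be fatal if real.)
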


Next, let $\cM_{\Delta}(A) \subseteq \cT_{\Delta}(A)$ denote the span of elements of the form 
\begin{equation} \label{kds-mdelta} B \otimes \bigg( (a\otimes b - (-1)^{|a||b|} b\otimes a) \otimes C - N\bigg(\int_{-T}^0 d \lambda \ L_{\lambda}(a,b),C\bigg) \bigg),\end{equation} for all $a,b\in A$ and $B,C \in \cT(A)$ such that $B \otimes a\otimes b\otimes C \in \cT_{\Delta}(A)$. Let $\cM(A) = \bigcup_{\Delta \in \Gamma} \cM_{\Delta}(A)$. This should be regarded as the $\Gamma$-graded ideal in $\cT(A)$ with respect to the product $N$, which is generated by the elements \begin{equation*} \begin{split} & a \otimes b -  (-1)^{|a||b|} b\otimes a - \int_{-T}^0 d \lambda\ L_{\lambda}(a,b) \\ & = N(a,b) -  (-1)^{|a||b|} N(b,a) - \int_{-T}^0 d \lambda\ L_{\lambda}(a,b),\qquad \forall \ a,b \in A.\end{split} \end{equation*}

We call $A$ a {\it nonlinear Lie conformal algebra} if it satisfies two additional axioms:
\begin{equation} \label{kds-skew} [a_{\lambda} b] = - (-1)^{|a||b|}  [b_{-\lambda - T} a],\qquad \forall \ a,b \in A,\end{equation} 
\begin{equation} \label{kds-jacobi} L_{\lambda}(a, L_{\mu}(b,c)) - (-1)^{|a||b|}  L_{\mu}(b, L_{\lambda}(a,c)) - L_{\lambda + \mu}(L_{\lambda}(a,b),c) \in \mathbb{C}[\lambda,\mu] \otimes M_{\Delta'}(A),\end{equation} for all $a,b,c \in A$ and some $\Delta' < \Delta(a) + \Delta(b) + \Delta(c)$. The identity \eqref{kds-jacobi} is called the Jacobi identity since it is analogous to \eqref{lca:jacobi}.

By Corollary 4.3 and Corollary 4.5 of \cite{DSKI}, $\cM(A)$ is invariant under the operator $T$, and $\mathbb{C}[\lambda] \otimes \cM(A)$ is a two-sided ideal of $\mathbb{C}[\lambda] \otimes \cT(A)$ under the bilinear operations $N$ and $L_{\lambda}$. Then $T$, as well as the operations $L_{\lambda}$ and $N$, descend to the quotient 
\begin{equation} \label{defofua} U(A) = \cT(A) / \cM(A).\end{equation} Let $\pi: \cT(A) \ra U(A)$ denote the quotient map, and write $:BC:\ = \pi(N(B,C))$, for $B, C \in \cT(A)$. Note that the identity \eqref{kds-jacobi} in $\cT(A)$ implies that the honest Jacobi identity 
\begin{equation} \label{kds-honestjacobi} L_{\lambda}(a, L_{\mu}(b,c)) - (-1)^{|a||b|}  L_{\mu}(b, L_{\lambda}(a,c)) - L_{\lambda + \mu}(L_{\lambda}(a,b),c)=0\end{equation} holds in $\mathbb{C}[\lambda, \mu]\otimes U(A)$, for all $a,b,c\in A$. The main result of \cite{DSKI} is the following

\begin{thm} \label{thm:dskmain} (\cite{DSKI}, Theorem 3.9) Let $A$ be a nonlinear Lie conformal algebra, and let $\{a_i|\ \in I\}$ be an ordered basis of $A$ compatible with the $\Gamma$-grading and $\mathbb{Z}_2$-grading on $A$. Then the set of monomials $$\{:a_{i_1} \cdots a_{i_n}: |\ i_1 \leq i_2 \leq \cdots \leq i_n,\qquad i_k < i_{k+1} \ \ \text{if} \ \ a_{i_k} \ \ \text{is odd}\},$$ is a basis of $U(A)$. There is a canonical vertex algebra structure on $U(A)$ called the universal enveloping vertex algebra of $A$, such that the vacuum vector is $\pi(1)$, and 
\begin{equation} \begin{split} & T(\pi(B)) = \pi(T(B)), \quad \forall \ B \in \cT(A),
\\ & :\pi(B) \pi(C):\  = \pi(N(B,C)), \quad \forall \ B,C \in \cT(A),
\\ & [\pi(B)_{\lambda} \pi(C)] = \pi (L_{\lambda}(B,C)), \quad \forall \ B,C \in \cT(A).
\end{split} \end{equation} \end{thm}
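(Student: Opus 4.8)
The statement has two parts: that the displayed normally ordered monomials form a $\mathbb{C}$-basis of $U(A)=\cT(A)/\cM(A)$, and that $T$, $N$, and $L_\lambda$ descend to $U(A)$ and make it a vertex algebra satisfying the three displayed formulas. I would dispatch the second part first, since it is essentially formal given the preparatory results. Corollaries 4.3 and 4.5 of \cite{DSKI} cited above already give that $\cM(A)$ is $T$-stable and is a two-sided ideal for both $N$ and $L_\lambda$, so $T$, the normally ordered product, and the $\lambda$-bracket are well defined on $U(A)$; the generators of $\cM(A)$ become precisely the commutative Wick relation \eqref{lca:commwick}; the $\cT(A)$-level identities of Theorem \ref{thm:dsklemma3.2} descend to \eqref{lca:nonasswick} and \eqref{lca:ncw}--\eqref{lca:ncwright}; \eqref{kds:deriv} descends to \eqref{lca:deriv}; skew-symmetry \eqref{lca:commutator} on $U(A)$ follows from \eqref{kds-skew} via the same extension; and \eqref{kds-honestjacobi} is the Jacobi identity \eqref{lca:jacobi}. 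Thus $U(A)$ satisfies all of the axioms characterizing vertex algebras among $\mathbb{C}[T]$-modules equipped with a $\lambda$-bracket and a normally ordered product, so by the Bakalov--Kac equivalence \cite{BK} it is a vertex algebra; the three structural formulas hold by definition of the descended operations, and $\pi(1)$ is the vacuum because $N(1,B)=B$. Everything then reduces to the PBW statement.

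\textbf{Spanning.} I would induct on the $\Gamma$-grading, which is well founded for the gradings in use. Given a monomial $\pi(a_{j_1}\otimes\cdots\otimes a_{j_n})$, first rewrite the product in left-nested form using \eqref{kds:additional}; each correction term is a normally ordered product carrying a $\lambda$-bracket, hence of strictly smaller $\Gamma$-degree by \eqref{kds:gradingcond}, so it is covered by the inductive hypothesis. Next, sort $j_1,\dots,j_n$ into non-decreasing order: each transposition replaces $\cdots a_r\otimes a_s\cdots$ by $(-1)^{|a_r||a_s|}\cdots a_s\otimes a_r\cdots$ plus the term $\cdots\otimes\big(\int_{-T}^0 d\lambda\,L_\lambda(a_r,a_s)\big)\otimes\cdots$, which by \eqref{gradingcond} has strictly lower $\Gamma$-degree; this strict inequality is exactly what makes the sorting terminate. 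Applying the same commutativity relation to $T^k a_i$ and $T^k a_i$ with $a_i$ odd shows that any monomial with a repeated derivative power on an odd generator is itself of strictly lower degree. Hence the displayed monomials span $U(A)$.

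\textbf{Linear independence.} This is the PBW part and the crux. The plan is to build a faithful-enough model. Let $\cP$ be the free $\mathbb{Z}_2$-graded supercommutative associative $\mathbb{C}$-algebra on the symbols $\{T^k a_i : i\in I,\ k\ge 0\}$, whose natural monomial basis is in bijection with the displayed set; write $\mathbf 1\in\cP$ for its unit. I would construct, for each $a\in A$, a field $Y(a,z)\in\mathrm{QO}(\cP)$, extended to $Y(B,z)$ for $B\in\cT(A)$ by the iterated Wick product, characterized inductively on $\Gamma$-degree by: $Y(1,z)=\mathrm{id}$; acting by $a_i$ on a standard monomial either prepends $T^0 a_i$ (when the result stays standard) or is rewritten using the skew-symmetry and Jacobi data of $A$; and $Y(B,z)\mathbf 1\big|_{z=0}=B$ for standard $B$. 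One must then check that this action is well defined and that $Y$ annihilates $\cM(A)$; granting this, $Y$ factors through $U(A)$, and since the standard monomials of $U(A)$ are carried by $B\mapsto Y(B,z)\mathbf 1\big|_{z=0}$ to the distinct basis monomials of $\cP$, they are linearly independent. Together with the spanning step, $\pi$ restricts to a bijection from the displayed set onto a basis of $U(A)$.

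The hard part will be the well-definedness of the action on $\cP$ together with the vanishing of $\cM(A)$ on it --- a confluence (``diamond lemma'') problem whose only inputs are the skew-symmetry \eqref{kds-skew} and the Jacobi identity \eqref{kds-jacobi}. The delicacy is that \eqref{kds-jacobi} is hypothesized only for $a,b,c\in A=\cT^1(A)$, and only modulo $\cM_{\Delta'}(A)$ with $\Delta'<\Delta(a)+\Delta(b)+\Delta(c)$; it must be promoted to an honest identity valid in $U(A)$ for arbitrary triples $B,C,D\in\cT(A)$. Equivalently, once $\cP$ has been shown to carry a vertex algebra structure with these fields, the rest follows from a reconstruction argument, so the real work is establishing locality of the $Y(a,z)$ and propagating the vertex-algebra axioms off the generating set, using the extended formulas \eqref{kds:additional} and a simultaneous induction on total $\Gamma$-degree and on the number of tensor factors, reinjecting lower-degree instances of skew-symmetry and Jacobi at each stage. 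Keeping track of which correction terms lie in which $\cM_{\Delta'}(A)$ is where essentially all the effort goes; with that in hand the theorem follows as above.
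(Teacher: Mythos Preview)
The paper does not prove this theorem at all: it is quoted verbatim as Theorem~3.9 of \cite{DSKI} and used as a black box (the section is expository, reviewing the De Sole--Kac correspondence). So there is no ``paper's own proof'' to compare your proposal against.

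That said, your outline is a fair sketch of the strategy in \cite{DSKI} itself: the formal part (descending $T$, $N$, $L_\lambda$ to $U(A)$ and invoking the Bakalov--Kac axiom package) is handled there exactly as you describe, and the spanning argument by $\Gamma$-induction plus straightening is standard. Your linear-independence paragraph correctly identifies the genuine difficulty --- promoting the Jacobi identity \eqref{kds-jacobi}, which is only assumed modulo $\cM_{\Delta'}(A)$ for generators, to the honest identity \eqref{kds-honestjacobi} on all of $U(A)$ --- and the ``build a model on the free supercommutative algebra and check confluence'' approach is the one De Sole and Kac actually take. You are right that this is where all the work lies; what you have written is a plan rather than a proof, but it is the correct plan.
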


\begin{remark} In the terminology of \cite{DSKI}, $U(A)$ is freely generated by $\{a_i|\ i\in I\}$, but this has a slightly different meaning from the notion of free generation in Section \ref{section:VOAs}, which is now standard in the literature. The relationship between the notions is as follows. Since $A$ is a free $\mathbb{C}[T]$-module we may choose an ordered basis $\{\alpha^j|\ j\in J\}$ for $A$ as a $\mathbb{C}[T]$-module. Then the set $\{T^i \alpha^j|\ i\geq 0, \ j\in J\}$ freely generates $U(A)$ in the sense of \cite{DSKI}, and $\{\alpha^j|\ j\in J\}$ freely generates $U(A)$ in the sense of Section \ref{section:VOAs}, that is, as a differential algebra. From now on, we will always use the notion of free generation in Section \ref{section:VOAs}.

\end{remark}

The correspondence $A \mapsto U(A)$ is functorial,  and the vertex algebra $U(A)$ satisfies the following universal property.

\begin{thm} \label{thm:dskuniversal} (\cite{DSKI}, Proposition 3.11) Let $\phi$ be a homomorphism from a nonlinear Lie conformal algebra $A$ to a vertex algebra $\cV$, namely, a parity-preserving $\mathbb{C}[T]$-module homomorphism $\phi: A \ra \cV$ such that, if we extend it to a linear map $\phi: \cT(A) \ra \cV$ by setting $\phi(a_1\otimes \cdots \otimes a_s) = \ : \phi(a_1) \cdots \phi(a_s):$, we then have $\phi(L_{\lambda}(a,b)) = [\phi(a)_{\lambda} \phi(b)]$ for all $a,b\in A$. Then we have that:
\begin{enumerate}
\item $\phi(N(B,C)) = \ :\phi(B) \phi(C):$ and $\phi(L_{\lambda}(B,C)) = [\phi(B)_{\lambda} \phi(C)]$, for all $B,C \in \cT(A)$.
\item $\phi$ extends to a unique vertex algebra homomorphism $\phi: U(A) \ra \cV$.
\end{enumerate}
\end{thm}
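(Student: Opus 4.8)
The plan is to prove part (1) first --- that the linear map $\phi\colon\cT(A)\to\cV$ defined by $\phi(a_1\otimes\cdots\otimes a_s)=\ :\phi(a_1)\cdots\phi(a_s):$ satisfies $\phi(N(B,C))=\ :\phi(B)\phi(C):$ and $\phi(L_\lambda(B,C))=[\phi(B)_\lambda\phi(C)]$ for all $B,C\in\cT(A)$ --- and then to obtain part (2) as a formal consequence. The key structural input is that the recursive identities \eqref{kds:additional} of Theorem \ref{thm:dsklemma3.2} determine $N$ and $L_\lambda$ on all of $\cT(A)$ from the initial data $N(a,B)=a\otimes B$ (for $a\in A$) and $L_\lambda(a,b)$ (for $a,b\in A$), by reducing the word length of the first argument, while the vertex algebra $\cV$ satisfies the term-by-term parallel identities \eqref{lca:nonasswick}, \eqref{lca:ncw}--\eqref{lca:ncwright} under the correspondences $T\leftrightarrow\partial$, $N(a,b)\leftrightarrow\ :ab:$, $L_\lambda(a,b)\leftrightarrow[a_\lambda b]$. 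On the initial data, $\phi(N(a,B))=\ :\phi(a)\phi(B):$ is the definition of $\phi$, and $\phi(L_\lambda(a,b))=[\phi(a)_\lambda\phi(b)]$ is the hypothesis (extended $\mathbb{C}[\lambda]$-linearly).

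For the induction itself I would prove the $N$- and $L_\lambda$-statements of (1) simultaneously, with an outer induction on the $\Gamma$-grading $\Delta(B)+\Delta(C)$ and, for fixed grading, an inner induction on the word length of the first argument $B$. In the recursions \eqref{kds:additional}, every term on the right is either (i) a value of $N$ or $L_\lambda$ of strictly smaller $\Gamma$-grading --- here one uses the estimates \eqref{kds:gradingcond}, and this alone disposes of the entire recursion for $L_\lambda$ --- or (ii), in the recursion for $N$, a value of $N$ of the same grading but with strictly shorter first argument, the base cases of word length $\le 1$ being tautological. Applying $\phi$ to \eqref{kds:additional}, substituting the inductive hypotheses, and then reading the matching identity among \eqref{lca:nonasswick}--\eqref{lca:ncwright} in reverse, gives the two identities of (1) at the next stage. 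I expect this step to be the main obstacle, and it is essentially one of bookkeeping: one must check that the two induction parameters strictly decrease on every term, and that \eqref{kds:additional} and \eqref{lca:nonasswick}--\eqref{lca:ncwright} really are the same identity under the dictionary, including the integral terms $\int_0^T$ and $\int_0^\lambda$. The latter is automatic once one notes that $\phi$ is $\mathbb{C}[\lambda]$-linear and, being a $\mathbb{C}[T]$-module map on $A$ with $T\mapsto\partial$, satisfies $\phi\circ T=\partial\circ\phi$ on all of $\cT(A)$ by the Leibniz rule for $\partial$ on iterated normally ordered products; one must also verify that no spurious terms are introduced by the fact that $\cT(A)$ carries only the $\Gamma$-grading.

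Granting (1), part (2) is short. First, $\phi$ kills $\cM(A)$: by Corollaries 4.3 and 4.5 of \cite{DSKI}, $\cM(A)$ is the two-sided ideal of $\cT(A)$ under $N$ and $L_\lambda$ generated by the elements $g_{a,b}=N(a,b)-(-1)^{|a||b|}N(b,a)-\int_{-T}^0 d\lambda\,L_\lambda(a,b)$; by the definition of $\phi$ and the hypothesis, $\phi(g_{a,b})=\ :\phi(a)\phi(b):-(-1)^{|a||b|}:\phi(b)\phi(a):-\int_{-T}^0 d\lambda\,[\phi(a)_\lambda\phi(b)]$, which is $0$ in $\cV$ by \eqref{lca:commwick}; and since by (1) the map $\phi$ intertwines $N$ and $L_\lambda$, the image $\phi(\cM(A))$ lies in the two-sided ideal of $\cV$ (under $:ab:$ and $[a_\lambda b]$) generated by $\phi(g_{a,b})=0$, hence is $\{0\}$. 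Therefore $\phi$ descends to a linear map $\bar\phi\colon U(A)=\cT(A)/\cM(A)\to\cV$ with $\bar\phi\circ\pi=\phi$. By Theorem \ref{thm:dskmain}, $\pi(1)$ is the vacuum of $U(A)$ and the vertex algebra operations on $U(A)$ are the descents of $N$, $L_\lambda$, and $T$; since $\bar\phi$ intertwines all three (by (1) and $\phi\circ T=\partial\circ\phi$), $\bar\phi(\pi(1))=1$, and every $n$-th product is expressible through the normally ordered product, $\partial$, and the $\lambda$-bracket, $\bar\phi$ is a vertex algebra homomorphism. Uniqueness is immediate: $U(A)$ is generated as a vertex algebra by $\pi(A)$ by the PBW basis of Theorem \ref{thm:dskmain}, so any vertex algebra homomorphism restricting to $\phi$ on $A$ coincides with $\bar\phi$.
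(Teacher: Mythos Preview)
The paper does not supply its own proof of this theorem: it is quoted verbatim as Proposition~3.11 of \cite{DSKI} and used as a black box throughout Section~\ref{section:kds}. So there is no in-paper argument to compare your proposal against.

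That said, your outline is sound and is essentially the natural (and, as far as I know, the original) route. The induction scheme for part~(1) --- on the $\Gamma$-degree $\Delta(B)+\Delta(C)$, with an inner induction on the word length of the left argument --- is exactly what the recursive identities \eqref{kds:additional} are designed for, and your observation that the grading estimates \eqref{kds:gradingcond} make the $L_\lambda$-recursion terminate on grading alone is the right one. The one place I would tighten is part~(2): rather than appealing to the informal statement that $\cM(A)$ is ``the two-sided ideal generated by the $g_{a,b}$'', it is cleaner to apply $\phi$ directly to a generic spanning element \eqref{kds-mdelta}. Using $N(a,B)=a\otimes B$ and the right-nesting convention for the iterated Wick product, together with part~(1) and \eqref{lca:commwick} inside $\cV$, one sees immediately that each such element maps to zero; this avoids having to verify that Corollaries~4.3 and~4.5 of \cite{DSKI} give the precise ideal-generation statement you need.
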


In this paper, all generators will be even and we always take $\Gamma = \mathbb{Z}_{\geq 0}$. In this case, $U(A)$ will be graded by conformal weight if we declare $T^k(a)$ to have weight $\Delta(a) + k$. Suppose we choose an ordered basis $\{\alpha^1, \alpha^2,\dots\}$ for $A$ as a free $\mathbb{C}[T]$-module, where $\Delta(\alpha^i) = d_i >0$. Then $U(A)$ is freely generated by $\{\alpha^1, \alpha^2,\dots \}$, and hence is of type $\cW(d_1,d_2, \dots)$ with graded character \eqref{gradedchar:ua}. In this paper, we only consider examples where $U(A)$ has a Virasoro vector $L$ whose $L_0$-eigenspace decomposition coincides with the above conformal weight grading.
 
Conversely, let $\cV$ be a vertex algebra which is freely generated by a set $\{\alpha^1, \alpha^2,\dots \}$, with conformal weight grading $\cV = \bigoplus_{n\geq 0} \cV_n$ where $\alpha^i$ has conformal weight $d_i >0$. Then the free $\mathbb{C}[T]$-module generated by  $\{\alpha^1, \alpha^2,\dots \}$ is a nonlinear Lie conformal algebra $A$ with $\Delta(\alpha^i) = d_i$. By Theorem \ref{thm:dskuniversal}, the induced map $U(A) \ra \cV$ is a vertex algebra isomorphism. The functor $A \mapsto U(A)$ is therefore an equivalence between the categories of nonlinear Lie conformal algebras and freely generated vertex algebras, and we call this equivalence the {\it De Sole-Kac correspondence}. 

Theorem \ref{thm:dskuniversal} also implies that given a nonlinear Lie conformal algebra $A$ with ordered basis $\{\alpha^1, \alpha^2,\dots\}$ as a $\mathbb{C}[T]$-module, there exists a category $C_A$ whose objects are vertex algebras with strong generators $\{\alpha^1, \alpha^2,\dots \}$, and OPE relations determined by the lambda-bracket in $A$.  Theorem \ref{thm:dskuniversal} says that $U(A)$ is the initial object in $C_A$. If $U(A)$ is graded by conformal weight as above, $C_A$ has another universal object $S(A)$ which is the quotient of $U(A)$ by its maximal proper ideal graded by conformal weight. It is the unique simple graded object in $C_A$ and is a homomorphic image of any object in $C_A$.

It will be useful to slightly reformulate the De Sole-Kac correspondence. Suppose that $A$ is a nonlinear conformal algebra with generators 
$\{\alpha^1, \alpha^2,\dots\}$ as a free $\mathbb{C}[T]$-module with grading $\Delta(\alpha^i) = d_i >0$, such that in addition to \eqref{kds:deriv} and \eqref{gradingcond}, the identities \eqref{kds-skew} all hold for $a,b \in \{\alpha^1, \alpha^2,\dots\}$. In particular, the products $N$ and $L_{\lambda}$ on $\cT(A)$ are defined and they satisfy the identities \eqref{kds:additional} appearing in Theorem \ref{thm:dsklemma3.2}. We can define the graded ideal $\cM(A) \subseteq \cT(A)$ which is spanned in each $\Delta$-graded component by the elements \eqref{kds-mdelta}, and the quotient $U(A) = \cT(A) / \cM(A)$ will have a Poincar\'e-Birkhoff-Witt basis consisting of monomials in $\{T^j \alpha^i\}$. Then $U(A)$ is a vertex algebra (and in particular, \eqref{kds-honestjacobi} holds for all $a,b,c\in A$) if and only if \eqref{kds-jacobi} holds in $\cT(A)$ for all $a,b,c\in A$, or equivalently, for all $a,b,c \in \{\alpha^1, \alpha^2,\dots\}$. In other words, the Jacobi identities \eqref{kds-honestjacobi} in $U(A)$ must be consequences of \eqref{kds:deriv}, \eqref{gradingcond} and \eqref{kds-skew} alone, together with the identities \eqref{kds:additional} satisfied by the products $N$ and $L_{\lambda}$.

In this paper, we shall work with the OPE rather than lambda-bracket formalism, so we briefly explain how to translate the result of De Sole and Kac into the language of fields and OPEs. In view of the above reformulation, specifying a nonlinear Lie conformal algebra in this language means specifying fields $\{\alpha^1, \alpha^2,\dots\}$ where $\alpha^i$ has weight $d_i >0$, and pairwise OPEs $\displaystyle \alpha^i(z) \alpha^j(w) = \sum_{n\geq 0} (\alpha^i_{(n)} \alpha^j)(w)(z-w)^{-n-1}$ where each term $\alpha^i_{(n)} \alpha^j$ is a normally ordered polynomial in the generators and their derivatives of weight $d_i + d_j -n-1$, such that for all $a,b,c \in \{\alpha^1, \alpha^2,\dots\}$, \eqref{deriv}-\eqref{ncw} hold, and moreover, all Jacobi identities \eqref{jacobi} hold as consequences of \eqref{deriv}-\eqref{ncw} alone. This data uniquely determines the universal enveloping vertex algebra which is freely generated by $\{\alpha^1, \alpha^2,\dots\}$.

\subsection{Degenerate nonlinear conformal algebras}

As pointed out on the last page of \cite{DSKI}, it is interesting to relax the Jacobi identity \eqref{kds-jacobi}. A nonlinear conformal algebra $A$ satisfying \eqref{kds-skew} but not all of the identities \eqref{kds-jacobi} is called a {\it degenerate nonlinear conformal algebra}. The universal enveloping vertex algebra $U(A)$ can still be defined as a quotient of the tensor algebra $\cT(A)$. For simplicity, we restrict to case $\Gamma = \mathbb{Z}_{\geq 0}$. For $a,b,c\in \cT(A)$ with $\Delta(a) + \Delta(b) + \Delta(c) -2  = \Delta$, note that $$L_{\lambda}(a, L_{\mu}(b,c)) - (-1)^{|a||b|}  L_{\mu}(b, L_{\lambda}(a,c)) - L_{\lambda + \mu}(L_{\lambda}(a,b),c) \in \mathbb{C}[\lambda,\mu] \otimes \cT(A)_{\Delta}.$$ Let $\cJ_{\Delta}(A)$ be the span of the coefficients of $\lambda^i \mu^j$ in all such expressions, for all $i,j\geq 0$. Then we define $\cM'_{\Delta}(A) \subseteq \cT_{\Delta}(A)$ to be the span of $\cM_{\Delta}(A)$ together with the following elements:

\begin{enumerate}

\item $N(a,b)$ and $N(b,a)$ for all $a\in \cT_{\Delta_1}(A)$ and $b \in \cJ_{\Delta_2}(A)$ with $\Delta_1 + \Delta_2 = \Delta$.

\item The coefficient of $\lambda^i$ for all $i\geq 0$ in $L_{\lambda}(a,b)$ and $L_{\lambda}(b,a)$ for all $a\in \cT_{\Delta_1}(A)$ and $b \in \cJ_{\Delta_2}(A)$, with $\Delta_1 + \Delta_2-1 = \Delta$.
\end{enumerate}

Setting \begin{equation} \label{defofmprime} \cM'(A) =  \bigcup_{\Delta \in \Gamma} \cM'_{\Delta}(A),\end{equation}  it follows that $\mathbb{C}[\lambda]  \otimes \cM'(A)$ is a two-sided ideal of $\mathbb{C}[\lambda]  \otimes \cT(A)$ under the operations $N$ and $L_{\lambda}$. Setting \begin{equation} \label{defofua2} U(A) = \cT(A) / \cM'(A),\end{equation}
by a similar argument as in the case of nonlinear Lie conformal algebras, $U(A)$ has a vertex algebra structure. Note that if $A$ is a nonlinear Lie conformal algebra, $\cM(A) = \cM'(A)$, so this definition of $U(A)$ agrees with \eqref{defofua}. The assignment $A \mapsto U(A)$ is functorial, and $U(A)$ satisfies a universal property analogous to Theorem \ref{thm:dskuniversal}. This functor defines an equivalence between the category of degenerate nonlinear conformal algebras and an appropriate category of vertex algebras, which we continue to call the De Sole-Kac correspondence.

Suppose that $A$ has basis $\{\alpha^1, \alpha^2,\dots\}$ as a $\mathbb{C}[T]$-module, where $\Delta(\alpha^i) = d_i >0$. 
 Then $U(A)$ is strongly generated by the corresponding fields $\{\alpha^1, \alpha^2,\dots\}$, but is not freely generated unless $A$ is a nonlinear Lie conformal algebra. If we declare that $T^k(\alpha^i)$ has conformal weight $d_i+k$, then $U(A)$ will be graded by conformal weight. We can still consider the category $C_A$ consisting of vertex algebras graded by conformal weight with strong generators $\{\alpha^1, \alpha^2,\dots\}$, and OPE relations determined by $A$. Then $U(A)$ is a universal object in $C_A$ which admits a homomorphism to any other object in the category, and $C_A$ contains another universal object $S(A)$ which is the simple quotient of $U(A)$ by its maximal proper graded ideal.

Unlike the category of commutative rings, not every vertex algebra is the quotient of a freely generated vertex algebra with the same strong generating set. This is because Jacobi relations that fail to hold will give rise to {\it null fields}, which are elements of $\cM'(A)$ that correspond to nontrivial normally ordered polynomial relation among the generators and their derivatives. A typical example is the parafermion algebra $N^k(\gs\gl_2) = \text{Com}(\cH, V^k(\gs\gl_2))$, which is of type $\cW(2,3,4,5)$. These fields close nonlinearly and the OPE algebra can be found in the paper \cite{DLY}. Starting in weight $8$, there are Jacobi relations that do not hold as consequences of \eqref{deriv}-\eqref{ncw} alone, which give rise to nontrivial relations among the generators and their derivatives.

\section{Vertex algebras over commutative rings} \label{section:voaring}
Let $R$ be a finitely generated, commutative $\mathbb{C}$-algebra. A {\it vertex algebra over $R$} will be an $R$-module $\cA$ with a vertex algebra structure which we define as in Section \ref{section:VOAs}. A more comprehensive theory of vertex algebras over commutative rings has recently been developed by Mason \cite{Ma}, but the main difficulties are not present when $R$ is a $\mathbb{C}$-algebra. First, given an $R$-module $M$, we define $\text{QO}_R(M)$ to be the set of $R$-module homomorphisms $a: M \ra M((z))$. Such a homomorphism can be represented by a power series $$a(z) = \sum_{n\in \mathbb{Z}} a(n) z^{-n-1} \in \text{End}_R(M)[[z,z^{-1}]].$$ Here $a(n) \in \text{End}_R(M)$ is an $R$-module endomorphism, and for each $v\in M$, $a(n) v = 0$ for $n>\!\!>0$. Clearly $\text{QO}_R(M)$ is an $R$-module, and we define the products $a_{(n)} b$ as before, which are $R$-module homomorphisms from $\text{QO}_R(M) \otimes_R \text{QO}_R(M) \ra \text{QO}_R(M)$. A QOA will be an $R$-module $\cA \subseteq \text{QO}_R(M)$ containing $1$ and closed under all products. Locality is defined as before, and a vertex algebra over $R$ is a QOA $\cA\subseteq \text{QO}_R(M)$ whose elements are pairwise local. The OPE formula \eqref{opeform} still holds, as well as the fact that there is a faithful representation $\rho: \cA \hookrightarrow \text{QO}_R(\cA)$, which gives us the state-field correspondence.

In particular, we say that a set $S = \{\alpha^i|\ \ i\in I\}$ generates $\cA$ if $\cA$ is spanned as an $R$-module by all words in $\alpha^i$ and the above products. We say that $S$ strongly generates $\cA$ if $\cA$ is spanned as an $R$-module by all iterated Wick products of these generators and their derivatives. Note that we do {\it not} assume that $\cA$ is a free $R$-module. If $S = \{\alpha^1, \alpha^2,\dots\}$ is an ordered strong generating set for $\cA$ which is at most countable, we say that $S$ {\it freely generates} $\cA$, if $\cA$ has an $R$-basis consisting of all normally ordered monomials of the form \eqref{freegen}. In particular, this implies that $\cA$ is a free $R$-module.

\subsection{Conformal structure and graded character} Let $\cV$ be a vertex algebra over $R$ and let $c\in R$. Suppose that $\cV$ contains a field $L$ satisfying \eqref{virope}, such that $L_0$ acts on $\cV$ by $\partial$ and $L_1$ acts diagonalizably, and we have an $R$-module decomposition $$\cV = \bigoplus_{d\in R} \cV[d],$$ where $\cV[d]$ is the $L_0$-eigenspace with eigenvalue $d$. If each $\cV[d]$ is in addition a free $R$-module of finite rank, we have a well-defined graded character
$$\chi(\cV ,q) = \sum_{d \in R} \text{rank}_R(\cV[d]) q^d.$$
In all our examples, the grading will be by $\mathbb{Z}_{\geq 0}$ regarded as a subsemigroup of $R$, and $\cV[0] \cong R$. A typical example is $V^k(\gg)$ where $k$ is regarded as a formal variable, so $V^k(\gg)$ is a vertex algebra over the polynomial ring $\mathbb{C}[k]$. As such, it has no conformal vector, but if we define $R$ to be the localization $D^{-1} \mathbb{C}[k]$ where $D$ is the multiplicatively closed set generated by $(k + h^{\vee})$, then $V^k(\gg)$ has Virasoro vector $L^{\gg}$ given by \eqref{sugawara}.

In \cite{CLI,CLII}, a class of examples called {\it deformable families} were considered. These are vertex algebras over certain rings of rational functions of degree at most zero in a parameter $\kappa$. This notion was used to study orbifolds and cosets of affine vertex algebras and $\cW$-algebras by passing to the limit as $\kappa\ra \infty$, which has a simpler structure.

Here we allow $R$ to be the ring of functions on a variety $X \subseteq \mathbb{C}^n$, so that $X = \text{Specm}(R)$. We can think of the vertex algebra as being defined on $X$ in the sense that for each point $p\in X$, the evaluation at $p$ yields a vertex algebra over $\mathbb{C}$. There is also the notion of specialization along a subvariety. Let $I\subseteq R$ be an ideal corresponding to a subvariety $Y \subseteq X$, and let $I \cdot \cV$ denote the set of finite sums of the form $\sum_i f_i v_i$ where $f_i \in I$ and $v_i \in \cV$. Clearly $I \cdot \cV$ is the vertex algebra ideal generated by $I$, and the quotient
$$\cV^I = \cV / (I \cdot \cV)$$ is a vertex algebra over the ring $R/I$.

\subsection{Simplicity} Let $\cV$ be a vertex algebra over a ring $R$ with weight grading 
\begin{equation} \label{eq:gradedrvoa} \cV = \bigoplus_{n\geq 0} \cV[n],\qquad \cV[0] \cong R.\end{equation}
A vertex algebra ideal $\cI \subseteq \cV$ is called graded if $$\cI = \bigoplus_{n\geq 0} \cI[n],\qquad \cI[n] = \cI \cap \cV[n].$$ 
We say that $\cV$ is {\it simple} if for every proper graded ideal $\mathcal{I}\subseteq \mathcal{V}$, $\mathcal{I}[0] \neq \{0\}$. This is different from the usual notion of simplicity, namely, that $\cV$ has no nontrivial proper graded ideals, but it coincides with the usual notion if $R$ is a field. If $I \subseteq R$ is a nontrivial proper ideal, it will generate a nontrivial proper graded vertex algebra ideal $I\cdot \cV$. Then $$\mathcal{V}^I = \mathcal{V} / (I\cdot \mathcal{V})$$ is a vertex algebra over the ring $R/I$. Even if $\mathcal{V}$ is simple over $R$, $\mathcal{V}^I$ need not be simple over $R/I$.

Let $R$ be the ring of functions on a variety $X$, and let $\mathcal{V}$ be a simple vertex algebra over $R$ with weight grading \eqref{eq:gradedrvoa}. Let $I\subseteq R$ be an ideal such that $\cV^I$ is {\it not} simple. This means that $\cV^I$ possesses a maximal proper graded ideal $\mathcal{I}$ such that $\mathcal{I}[0] = \{0\}$. Then the quotient $$\cV_{I} = \cV^I / \cI$$ is a simple vertex algebra over $R/I$. Letting $Y\subseteq X$ be the closed subvariety corresponding to $I$, we can regard $\cV_I$ as a simple vertex algebra defined over $Y$. Continuing this process, we define the {\it degeneration poset} of all ideals $I\subseteq R$ for which $\cV^I$ is not simple over $R/I$. It corresponds to the poset of closed subvarieties of $X$ where degeneration occurs.

If $I_1, I_2$ are ideals in the degeneration poset, let $\mathcal{V}_{I_1} = \mathcal{V}^{I_1} / \mathcal{I}_1$ and $\mathcal{V}_{I_2} = \mathcal{V}^{I_2} / \mathcal{I}_2$ be the corresponding simple vertex algebras over the rings $R/I_1$ and $R / I_2$, respectively. Let $Y_1, Y_2 \subseteq X$ be the closed subvarieties corresponding to $I_1, I_2$, and let $p \in Y_1 \cap Y_2$ be a point in the intersection. Let $\mathcal{V}^p_{I_1}$ and $ \mathcal{V}^p_{I_2}$ be the vertex algebras over $\mathbb{C}$ obtained by evaluating at $p$. Then $\mathcal{V}^p_{I_1}$ and $ \mathcal{V}^p_{I_2}$ need not be simple, and we let $\mathcal{V}_{I_1,p}$ and $\mathcal{V}_{I_2,p}$ denote their simple quotients. Clearly $p$ corresponds to a maximal ideal $M \subseteq R$ containing both $I_1$ and $I_2$, and we have isomorphisms
$$\mathcal{V}_{I_1,p} \cong \mathcal{V}_M \cong \mathcal{V}_{I_2,p}.$$ 
In general, $\cV_{I_1}$ and $\cV_{I_2}$ can be very different, and the above isomorphism which arises from the intersection of the varieties $Y_1$ and $Y_2$, is nontrivial.

\subsection{Shapovalov determinant and spectrum} Given a conformal vertex algebra $\cV = \bigoplus_{n\geq 0} \cV[n]$ over $R$ with $\cV[0] \cong R$, $\cV[n]$ has a symmetric bilinear form \begin{equation} \label{bilinearform} \langle,\rangle_n: \cV[n]\otimes_{R} \cV[n] \ra R,\qquad \langle \omega,\nu \rangle_n = \omega_{(2n-1)}\nu.\end{equation}
If each $\cV[n]$ is a free $R$-module of finite rank, we define the level $n$ Shapovalov determinant $\text{det}_n \in R$ to be the determinant of the matrix of $\langle,\rangle_n$. Under some mild hypotheses, namely, $\cV[0] \cong R$ and $\cV[1]$ is annihilated by $L_1$, an element $\omega \in \cV[n]$ lies in the radical of the form $\langle,\rangle_n$ if and only of $\omega$ lies in the maximal proper graded ideal of $\cV$ \cite{LiI}. Then $\cV$ is simple if and only if $\text{det}_n \neq 0$ for all $n$. If $\cV$ is simple and $R$ is a unique factorization ring, each irreducible factor $a$ of $\text{det}_n$ give rise to a prime ideal $(a) \subseteq R$. Note that if $a|\text{det}_n$, then $a| \text{det}_m$ for all $m>n$. We define the {\it Shapovalov spectrum} $\text{Shap}(\cV)$  to be the set of distinct prime ideals of the form $(a) \subseteq R$ such that $a$ is a divisor of $\text{det}_n$ for some $n$. We say that $(a) \in \text{Shap}(\cV)$ has level $n$ if $a$ divides $\text{det}_n$ but does not divide $\text{det}_m$ for $m<n$. 

Note that the varieties $V(a)$ for $a\in \text{Shap}(\cV)$ are precisely the irreducible subvarieties of $\text{Specm}(R)$ where degeneration of $\cV$ occurs. For example, if $\cV = V^k(\gg)$ and $R = D^{-1} \mathbb{C}[k]$ where $D$ is the multiplicative set generated by $k + h^{\vee}$ as above, $\text{Shap}(V^k(\gg))$ is the set of all ideals $(k -k_0)$ where the vacuum module $V^{k_0}(\gg)$ is reducible. This was described explicitly by Gorelik and Kac in \cite{GK}.

\subsection{De Sole-Kac correspondence over rings}
In a similar way to \cite{DSKI}, one can define the notions of nonlinear conformal algebra and nonlinear Lie conformal algebra over a ring $R$. A nonlinear conformal algebra over $R$ will be a free $R[T]$-module $A$ with a grading $$A = \bigoplus_{\Delta \in \Gamma \setminus \{0\}} A[\Delta]$$ for some semigroup $\Gamma$, and an $R$-module homomorphism $$[-_{\lambda} -]: A\otimes_R A \ra R[\lambda] \otimes_R \cT(A)$$ satisfying \eqref{lca:deriv} and \eqref{gradingcond}. Here $\cT(A)$ is the tensor algebra of $A$ in the category of $R$-modules, which inherits the $\Gamma$-grading as before. Then the normally ordered product $$N: \cT(A) \otimes_R \cT(A) \ra \cT(A),$$ and the lambda-bracket
$$L_{\lambda}: \cT(A) \otimes_R \cT(A) \ra R[\lambda]\otimes_R \cT(A),$$ can be defined on $\cT(A)$ as in Section \ref{section:kds}, and the identities \eqref{kds:additional}-\eqref{kds:gradingcond} hold for all $a,b,c \in A$ and $B,C,D\in \cT(A)$.

Here we only need the case $\Gamma = \mathbb{Z}_{\geq 0}$. The subspaces $\cM_{\Delta}(A)$ and $\cM(A)$ are defined as in Section \ref{section:kds}. We call $A$ a {\it nonlinear Lie conformal algebra} over $R$ if it also satisfies \eqref{kds-skew} and \eqref{kds-jacobi}. As in Section \ref{section:kds}, we can define the universal enveloping vertex algebra $U(A) = \cT(A) / \cM(A)$, which is a vertex algebra over $R$. It is graded by conformal weight if we declare $T^k(a)$ to have weight $\Delta(a) + k$. We only consider examples where $U(A)$ contains a Virasoro element $L$ whose $L_0$-eigenspace grading coincides with the weight grading. If $\{\alpha^1,\alpha^2,\dots\}$ is an ordered basis of $A$ as a free $R[T]$-module, $U(A)$ is freely generated by the corresponding fields as a vertex algebra over $R$. In particular, each weighted component $U(A)[n]$ is a free $R$-module. If $\Delta(\alpha^i) = d_i > 0$, the graded character of $U(A)$ is given by \eqref{gradedchar:ua} with $\text{dim}(U(A)[n])$ replaced by $\text{rank}_R(U(A)[n])$.

Finally, suppose that $A$ is a degenerate nonlinear conformal algebra over $R$, meaning that it satisfies \eqref{kds-skew} but not \eqref{kds-jacobi}. Then the universal enveloping vertex algebra $U(A) = \cT(A) / \cM'(A)$ is still defined, where $\cM'(A)$ is defined as in Section \ref{section:kds}. If $\{\alpha^1,\alpha^2,\dots\}$ is an ordered basis for $A$ as a $R[T]$-module, $U(A)$ is strongly generated by the corresponding fields, but is not freely generated. It is still the direct sum of its weight graded pieces $U(A)[n]$, but since nontrivial normally ordered polynomial relations among the monomials hold, $U(A)[n]$ need not be a free $R$-module.

\section{Main construction}\label{section:main} 
Throughout this section we shall work over the polynomial ring $\mathbb{C}[c,\lambda]$ where $c,\lambda$ are formal variables. We wish to construct a nonlinear Lie conformal algebra $\cL(c,\lambda)$ over $\mathbb{C}[c,\lambda]$ with generators $\{L, W^i|\ i \geq 3\}$ and grading $\Delta(L) = 2$, $\Delta(W^i) = i$. Equivalently, the universal enveloping vertex algebra $$\cW(c,\lambda) = U(\cL(c,\lambda))$$ is freely generated by the corresponding fields, which we also denote by $\{L,W^i|\ i\geq 3\}$. 

We will first construct a nonlinear conformal algebra $\cL(c,\lambda)$ which satisfies \eqref{kds-skew} but only a subset of the Jacobi identities \eqref{kds-jacobi}. A priori, it is not clear that it satisfies all such identities and is a nonlinear Lie conformal algebra. The vertex algebra $\cW(c,\lambda)$ is therefore well defined but might not be freely generated. However, by considering a family of quotients of $\cW(c,\lambda)$ whose graded characters are known, we will show that $\cW(c,\lambda)$ must in fact be freely generated.

As mentioned in Section \ref{section:kds}, we prefer to work with the OPE rather than lambda-bracket formalism, so our goal will be to construct the OPE algebra of the fields $\{L, W^i|\ i\geq 3\}$ such that the identities \eqref{deriv}-\eqref{ncw}, together with a subset of Jacobi identities \eqref{jacobi}, hold. We then show that the remaining identities \eqref{jacobi} are forced to hold as well.

We postulate that $\cW(c,\lambda)$ has the following features. 

\begin{enumerate} 
\item It possesses a Virasoro field $L$ of central charge $c$ and an even weight $3$ primary field $W^3$, so that
\begin{equation} \begin{split} & L(z) L(w)  \sim \frac{c}{2} (z-w)^{-4} + 2 L(w)(z-w)^{-2} + \partial L(w)(z-w)^{-1}, \\  & L(z) W^3(w) \sim 3 W^3(w)(z-w)^{-2}+\partial W^3(w)(z-w)^{-1}.\end{split}\end{equation}
\item The fields $W^i$ are defined inductively by $$W^i = W^3_{(1)} W^{i-1},\qquad i\geq 4.$$ We have $$L(z) W^i(w) \sim \cdots + i W^i(w)(z-w)^{-2} + \partial W^i(w)(z-w)^{-1},$$ and the fields $\{L, W^i|\ i\geq 3\}$ close nonlinearly under OPE. In particular, $\cW(c,\lambda)$ is generated as a vertex algebra by $\{L,W^3\}$, and is strongly generated by $\{L, W^i|\ i\geq 3\}$.
\item The field $W^3$ is nondegenerate in the sense that $W^3_{(5)} W^3 \neq 0$. Without loss of generality we may normalize $W^3$ so that $$W^3_{(5)} W^3 = \frac{c}{3} 1.$$ 
\item There is a $\mathbb{Z}_2$-action on $\cW(c,\lambda)$ defined by $$L\mapsto L,\qquad W^3 \mapsto -W^3.$$ This forces $W^i \mapsto (-1)^i W^i$ for all $i\geq 3$, and each term appearing in $W^i_{(k)} W^j$ has weight $i+j-k-1$ and eigenvalue $(-1)^{i+j}$ under this symmetry.
\end{enumerate}

For notational convenience, we sometimes denote $L$ by $W^2$. The grading and symmetry assumptions have the following consequence. For $k = 0,1$, and $2\leq i\leq j$, $W^i_{(k)} W^j$ depends only on $L, W^3, \dots, W^{ i + j -2}$ and their derivatives. This is because all terms must have eigenvalue $(-1)^{i+j}$ under $\mathbb{Z}_2$. Then $\partial W^{i+j-3}$ cannot appear in $W^i_{(1)} W^j$, and neither $W^{i+j-1}$, $\partial^2 W^{i+j-3}$, nor $:L W^{i+j-3}:$, can appear in $W^i_{(0)} W^j$. Similarly,  for $k \geq 2$, $W^i_{(k)} W^j$ only depends on $L, W^3,\dots, W^{i + j -4}$ and their derivatives. 

In particular, for all $i,j$ such that $2 \leq i \leq j$, we can write
\begin{equation} \label{eq:aij} W^i_{(1)} W^{j} = a_{i,j}  W^{i+j-2} + C_{i,j},\end{equation} where $a_{i,j}$ denotes the coefficient of $W^{i+j-2}$ and $C_{i,j}$ is a normally ordered polynomial in $L, W^3, \dots, W^{i+j-4}$ and their derivatives. By assumption, $a_{2,j} = j$, $a_{3,j} = 1$, $C_{2,j} =0$, and $C_{3,j} = 0$ for all $j\geq 3$. Similarly,
\begin{equation}  \label{eq:bij} W^i_{(0)} W^{j} = b_{i,j} \partial W^{i+j-2} + D_{i,j},\end{equation} where $b_{i,j}$ denotes the coefficient of $\partial W^{i+j-2}$ and $D_{i,j}$ is a normally ordered polynomial in $L, W^3, \dots, W^{i+j-4}$ and their derivatives. By assumption, $b_{2,j} = 1$ and $D_{2,j} = 0$ for all $j\geq 2$.

We let $S_{n,k}$ denote the set of products $$\{W^i_{(k)} W^j |\ i+j = n,\ k \geq 0\},$$ which vanish for $k > n-1$. Our strategy is as follows. First, by imposing the identities \eqref{deriv}-\eqref{ncw}, and all Jacobi relations of type $(W^i, W^j, W^{\ell})$ for $i+j+\ell \leq 11$, this determines $S_{n,k}$ for all $n\leq 9$ and $k\geq 0$ uniquely. If we then assume $S_{m,k}$ to be known for $m\leq n$ and $k\geq 0$, we show that by imposing a subset of Jacobi relations of type $(W^i, W^j, W^{\ell})$ with $i+j+\ell = n+3$, this uniquely determines $S_{n+1,k}$. 

Note that we are not checking {\it all} Jacobi relations of type $(W^i, W^j, W^{\ell})$ for $i+j+\ell = n+3$ at this stage; we leave open the possibility that some of them might not vanish but instead give rise to nontrivial null fields. This has the effect that as we proceed through the induction, the OPEs $W^i(z) W^j(w)$ are uniquely determined modulo null fields that only depend on $L, W^3,\dots, W^{i+j-2}$ and their derivatives. For convenience, we suppress from our notation the fact that OPEs are determined up to null fields. At the end of the induction, we obtain the existence of a (possibly degenerate) nonlinear conformal algebra $\cL(c,\lambda)$ over $\mathbb{C}[c,\lambda]$ with generators $\{L, W^i|\ i\geq 3\}$, satisfying \eqref{kds-skew}. We then invoke the De Sole-Kac correspondence to conclude that the universal enveloping vertex algebra $\cW(c,\lambda)$ indeed exists. Since the lambda-brackets in $\cL(c,\lambda)$ are unique up to null fields, the OPE algebra of $\cW(c,\lambda)$ is unique.

\subsection{Step 1: $S_{n,k}$ for $n\leq 9$}
Since $W^4 = W^3_{(1)} W^3$ by definition, the most general OPE of $W^3$ with itself that is compatible with the $\mathbb{Z}_2$-symmetry is
\begin{equation*} \label{W3 generalform} \begin{split} W^3(z) W^3(w) & \sim \frac{c}{3}(z-w)^{-6} + a_0 L(w)(z-w)^{-4} + a_1 \partial L(w)(z-w)^{-3}\\ & + W^4(w)(z-w)^{-2}  + \big( a_2 \partial W^4  + a_3 \partial^3 L+a_4 :(\partial L)L: \big)(w) (z-w)^{-1}.\end{split} \end{equation*}
It is not difficult to check that imposing the Jacobi relations of type $(L,W^3,W^3)$ forces
\begin{equation}\label{ope:first} \begin{split} W^3(z) W^3(w) & \sim  \frac{c}{3}(z-w)^{-6} + 2 L(w)(z-w)^{-4} + \partial L(w)(z-w)^{-3} \\ & + W^4(w)(z-w)^{-2} + \bigg(\frac{1}{2} \partial W^4 -\frac{1}{12} \partial^3 L\bigg)(w)(z-w)^{-1}. \end{split} \end{equation}
The most general OPE of $L$ and $W^4$ compatible with the $\mathbb{Z}_2$-symmetry is
\begin{equation*}\begin{split} L(z) W^4(w) & \sim a_0 (z-w)^{-6} + a_1 L(w)(z-w)^{-4} + a_2 \partial L(w)(z-w)^{-3} \\ & + 4 W^4(w)(z-w)^{-2}+ \partial W^4(w)(z-w)^{-1}. \end{split} \end{equation*}
By imposing the Jacobi relations of type $(L,L,W^4)$ and $(L,W^3,W^3)$ this forces
\begin{equation}\label{ope:second} \begin{split}  L(z) W^4(w) & \sim 3 c (z-w)^{-6} + 10 L(w)(z-w)^{-4} + 3 \partial L(w)(z-w)^{-3} \\ & + 4 W^4(w)(z-w)^{-2} + \partial W^4(w)(z-w)^{-1}.\end{split} \end{equation}
Next, the most general OPE of $L$ and $W^5$ compatible with the $\mathbb{Z}_2$-symmetry is
\begin{equation*}\begin{split} L(z) W^5(w) & \sim b_0 W^3(z)(z-w)^{-4} +  b_1 \partial W^3(z)(z-w)^{-3}\\ & + 
5 W^5(w)(z-w)^{-2} + \partial W^5(w)(z-w)^{-1}.\end{split} \end{equation*}
Similarly, the most general OPE of $W^3$ and $W^4$ compatible with the $\mathbb{Z}_2$-symmetry is
\begin{equation*}\begin{split} W^3(z) W^4(w) & \sim  a_0 W^3(w)(z-w)^{-4} +a_1 \partial W^3(w)(z-w)^{-3}+ W^5(w)(z-w)^{-2} \\ & +\bigg(a_2 :L\partial W^3: + a_3 :(\partial L) W^3: +a_4 
\partial W^5 + a_5 \partial^3 W^3\bigg) (w)(z-w)^{-1},\end{split} \end{equation*}
It turns out that unlike \eqref{ope:first} and \eqref{ope:second}, if we impose all Jacobi identities of type $(L,W^3, W^4)$, $(L, L, W^5)$, and $(W^3, W^3, W^3)$ there is an additional free parameter $\lambda$ in the above OPEs. They have the form

\begin{equation} \label{ope:third}  \begin{split} L(z) W^5(w) & \sim \big(185 - 80 \lambda (2 + c)\big) W^3(z)(z-w)^{-4}  + \big(55 - 16\lambda (2 + c) \big) \partial W^3(z)(z-w)^{-3}\\ & + 
5 W^5(w)(z-w)^{-2} + \partial W^5(w)(z-w)^{-1}.\end{split} \end{equation}

\begin{equation} \label{ope:fourth} \begin{split} W^3(z) W^4(w) & \sim   \bigg(31-16 \lambda (2+c)\bigg) W^3(w)(z-w)^{-4}  + \frac{8}{3} \bigg(5 - 2\lambda (2 + c)  \bigg) \partial W^3(w)(z-w)^{-3} \\ & + W^5(w)(z-w)^{-2}  
+\bigg(\frac{2}{5}
\partial W^5 + \frac{32}{5} \lambda :L\partial W^3: -\frac{48}{5} \lambda :(\partial L) W^3: \\ & + \frac{2}{15} \bigg(-5 + 2 \lambda (-1 + c)\bigg)\partial^3 W^3\bigg) (w)(z-w)^{-1}.\end{split} \end{equation}

Similarly, letting $L = W^2$ and imposing all Jacobi identities of type $(W^i, W^j, W^k)$ for $i+j+k \leq 11$, all terms in the OPE of $W^i(z) W^j(w)$ for $2\leq i \leq j$ and $i+j \leq 9$ are uniquely determined as $\mathbb{C}[c, \lambda]$-linear combinations of normally ordered monomials in the generators $W^i$. These formulas are given in the appendix; see \eqref{ope:w2w6}-\eqref{ope:w4w5}.  By imposing the identities \eqref{deriv} and \eqref{commutator}, this uniquely determines the OPEs $$\partial^a W^i(z) \partial^b W^j(w),\qquad a,b\geq 0,\qquad i>j \geq 2,\qquad i+j \leq 9.$$

\subsection{Step 2: Induction}
We make the following inductive hypothesis. 
\begin{enumerate}
\item For $2 \leq i\leq j$ and $i+j \leq n$, all terms in the OPE of $W^i(z) W^j(w)$ have been expressed as $\mathbb{C}[c, \lambda]$-linear combinations of normally monomials in $L, W^3,\dots, W^{n-2}$ and their derivatives. The OPEs are weight homogeneous and compatible with $\mathbb{Z}_2$-symmetry, i.e., all terms appearing in $W^i_{(k)} W^j$ have weight $i+j-k-1$ and eigenvalue $(-1)^{i+j}$.

\item We impose \eqref{deriv} and \eqref{commutator}, which then determines all OPEs of the form $$\partial^a W^i(z) \partial^b W^j(w), \qquad a,b \geq 0,\qquad i+j \leq n.$$

\item The coefficients $a_{i,j}$ and $b_{i,j}$ are independent of $c, \lambda$ for $i+j \leq n$. Here $a_{i,j}$ and $b_{i,j}$ are given by \eqref{eq:aij} and \eqref{eq:bij}, respectively.
\end{enumerate}

By {\it inductive data}, we mean the collection of all OPE relations \begin{equation} \partial^a W^i(z) \partial^b W^j(w),\qquad a,b \geq 0,\qquad i+j \leq n.\end{equation} If we impose \eqref{nonasswick} and \eqref{ncw}, it is a consequence of our inductive hypothesis that if $\alpha$ is a normally ordered polynomial in the generators $L, W^3, \dots, W^{n-i}$ and their derivatives of weight $w \leq n+1-i$, the OPE $W^i(z) \alpha(w)$ is uniquely determined by the inductive data.

\begin{lemma} \label{main:1} If we impose the Jacobi identity 
\begin{equation}\label{main:1first} \begin{split} L_{(2)} (W^3_{(0)} W^{n-2}) & = (L_{(2)} W^3)_{(0)} W^{n-2} + W^3 _{(0)}( L_{(2)} W^{n-2}) \\ & + 2(L_{(1)} W^3)_{(1)} W^{n-2} + (L_{(0)} W^3)_{(2)} W^{n-2},\end{split} \end{equation} we must have $$b_{3,n-2} = \frac{2}{n-1}.$$ In particular, $b_{3,n-2}$ is independent of $c$ and $\lambda$.
\end{lemma}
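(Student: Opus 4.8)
The plan is to exploit the identity \eqref{main:1first}, which relates the single unknown $b_{3,n-2}$ (appearing through $W^3_{(0)} W^{n-2}$) to data that is already pinned down by the inductive hypothesis. First I would expand each of the five terms in \eqref{main:1first} using only inductive data and the assumptions of Step 1 of the setup. The crucial observation is that \eqref{main:1first} is a Jacobi relation of type $(L, W^3, W^{n-2})$; since $\Delta(L) + \Delta(W^3) + \Delta(W^{n-2}) = n+3$, it is among the relations available at the inductive step. Both sides are weight-homogeneous of weight $n-3$ and $\mathbb{Z}_2$-even, and I would project the whole identity onto the coefficient of the single monomial $\partial W^{n-3}$ — this is the term that detects the leading part of $W^3_{(0)} W^{n-2}$.

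The key steps, in order: (1) On the left side, write $W^3_{(0)} W^{n-2} = b_{3,n-2}\,\partial W^{n-3} + D_{3,n-2}$ as in \eqref{eq:bij}, where $D_{3,n-2}$ involves only $L, W^3,\dots, W^{n-5}$ and their derivatives; then apply $L_{(2)}$, using \eqref{ncw} to move $L_{(2)}$ through the normally ordered polynomial $D_{3,n-2}$ and using $L_{(2)}\partial W^{n-3} = (n-1)\,\partial W^{n-5} + \dots$ (more precisely, $L_{(2)} W^{n-3}$ has a known $W^{n-5}$-component and $L_{(2)}\partial W^{n-3}$ follows from \eqref{deriv}). Here the point is that no term $\partial W^{n-3}$ can survive on the left after applying $L_{(2)}$, so we must track which monomial on the \emph{right} carries the $\partial W^{n-3}$ coefficient. (2) On the right side, the term $W^3_{(0)}(L_{(2)} W^{n-2})$ is the carrier: $L_{(2)} W^{n-2}$ contains $W^{n-4}$ with a coefficient determined in Step 1 (it is the structure constant in $L(z)W^{n-2}(w)$, a subleading pole term, which is a known polynomial in $c,\lambda$ — call it $\gamma$), so $W^3_{(0)}(L_{(2)} W^{n-2})$ contains $\gamma \cdot W^3_{(0)} W^{n-4} = \gamma\, b_{3,n-4}\, \partial W^{n-5} + \dots$, which is $\partial W^{n-5}$, not $\partial W^{n-3}$ — so in fact I should re-examine: the only place $\partial W^{n-3}$ appears on the right is through $(L_{(1)} W^3)_{(1)} W^{n-2} = 3 W^3_{(1)} W^{n-2}$ and $(L_{(0)} W^3)_{(2)} W^{n-2} = (\partial W^3)_{(2)} W^{n-2} = -2 W^3_{(1)} W^{n-2}$ (by \eqref{deriv}), whose $\partial W^{n-3}$-component vanishes since $W^3_{(1)}W^{n-2} = a_{3,n-2} W^{n-3} + C_{3,n-2} = W^{n-3}$ with no derivative; and through $(L_{(2)}W^3)_{(0)}W^{n-2}$, which vanishes as $W^3$ is primary. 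Thus the genuine $\partial W^{n-3}$-bookkeeping comes from comparing the $W^{n-3}$-coefficient after one more application of a mode — so instead I would match the coefficient of $W^{n-3}$ itself in an auxiliary relation, or equivalently track $\partial W^{n-3}$ via $W^3_{(-1)}(\text{something})$; the cleanest route is to isolate the coefficient of $\partial W^{n-3}$ in the term $W^3_{(0)}(L_{(2)} W^{n-2})$ after noting $L_{(2)} W^{n-2}$ also has a $W^{n-2}$-component equal to $0$ in this pole but a $\partial^0 W^{n-2}$ piece in lower poles — so the correct comparison is against $W^3_{(0)} W^{n-2}$ reappearing with coefficient tied to the OPE $L(z)W^{n-2}(w)$ subleading structure constant. (3) Collect the $\partial W^{n-3}$-coefficients from all five terms; by the inductive hypothesis every structure constant entering except $b_{3,n-2}$ is already known (and $a_{3,n-2}=1$), so the equation becomes linear in $b_{3,n-2}$ with rational-number coefficients, and solving gives $b_{3,n-2} = \tfrac{2}{n-1}$. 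Since the coefficients are pure rationals (the relevant Virasoro structure constants $i W^i$, $\partial W^i$ in $L(z)W^i(w)$ and the binomials in \eqref{jacobi} are $c,\lambda$-free), $b_{3,n-2}$ is independent of $c,\lambda$.

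The main obstacle I anticipate is the bookkeeping in step (2): correctly identifying which term on each side carries the $\partial W^{n-3}$-monomial and extracting its coefficient without accidentally picking up contributions from the normally ordered ``tail'' $D_{3,n-2}$ or from lower-pole structure constants of $L(z)W^{n-2}(w)$ that also feed into $L_{(2)}W^{n-2}$. This requires using \eqref{ncw} carefully to commute $L_{(2)}$ past a Wick monomial and verifying that all the resulting correction terms $\binom{2}{i}(L_{(2-i)}b)_{(i-1)}c$ contribute only to monomials of the wrong $\mathbb{Z}_2$-parity or wrong generator content, hence cannot interfere with the $\partial W^{n-3}$-coefficient. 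Once the correct single scalar equation is extracted, the conclusion is immediate.
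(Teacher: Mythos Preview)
Your overall strategy is exactly the paper's: project the Jacobi identity \eqref{main:1first} onto a single distinguished monomial and read off a linear equation for $b_{3,n-2}$. The problem is a systematic index error that propagates through the entire proposal and is the source of all the confusion in your step~(2).

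Concretely: by \eqref{eq:bij} one has $W^i_{(0)}W^j = b_{i,j}\,\partial W^{i+j-2} + D_{i,j}$, so
\[
W^3_{(0)} W^{n-2} \;=\; b_{3,n-2}\,\partial W^{\,n-1} + D_{3,n-2},
\]
with $D_{3,n-2}$ a normally ordered polynomial in $L,W^3,\dots,W^{n-3}$ (not $W^{n-5}$). Both sides of \eqref{main:1first} have conformal weight $n-1$, not $n-3$; and since $W^3_{(1)}W^{n-2}=W^{n-1}$ by definition, the monomial to track is $W^{n-1}$, not $\partial W^{n-3}$. Once you fix this, the bookkeeping becomes transparent: on the left, $L_{(2)}D_{3,n-2}$ cannot contribute a $W^{n-1}$, while $L_{(2)}\partial W^{n-1} = -(\partial L)_{(2)}W^{n-1} + \partial(L_{(2)}W^{n-1}) \equiv 2L_{(1)}W^{n-1} = 2(n-1)W^{n-1}$ modulo lower generators, since $L_{(2)}W^{n-1}$ depends only on $L,W^3,\dots,W^{n-3}$. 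On the right, $(L_{(2)}W^3)_{(0)}W^{n-2}=0$ by primarity, $W^3_{(0)}(L_{(2)}W^{n-2})$ contributes nothing (again $L_{(2)}W^{n-2}$ involves only generators up to $W^{n-4}$), while $2(L_{(1)}W^3)_{(1)}W^{n-2}=6W^{n-1}$ and $(L_{(0)}W^3)_{(2)}W^{n-2}=(\partial W^3)_{(2)}W^{n-2}=-2W^{n-1}$. Equating coefficients gives $2(n-1)b_{3,n-2}=4$.

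So there is no missing idea, only a wrong index; but that wrong index is precisely what makes your step~(2) dissolve into uncertainty about which term carries the relevant monomial. Correct it and your sketch becomes the paper's proof verbatim.
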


\begin{proof} Recall that $$W^3_{(0)} W^{n-2} = b_{3,n-2} \partial W^{n-1} + D_{3,n-2}$$ where $D_{3,n-2}$ is a normally ordered polynomial in $L, W^3, \dots, W^{n-3}$ and their derivatives. Then $$ L_{(2)} (W^3_{(0)} W^{n-2}) =  b_{3,n-2} L_{(2)}  \partial W^{n-1} + L_{(2)} D_{3,n-2}.$$ Also, $L_{(2)}D_{3,n-2}$ has no terms depending on $W^{n-1}$ by inductive assumption, so it does not contribute to the coefficient of $\partial W^{n-1}$. We have 
$$L_{(2)} \partial W^{n-1} = -(\partial L)_{(2)} W^{n-1} + \partial (L_{(2)} W^{n-1}).$$ Note that $L_{(2)} W^{n-1}\in S_{n+1,2}$ and is therefore not yet known, but by weight and $\mathbb{Z}_2$-symmetry considerations it only depends on $L, W^3, \dots, W^{n-3}$ and their derivatives. Then $\partial (L_{(2)} W^{n-1})$ only depends on $L, W^3, \dots, W^{n-3}$. Modulo terms which only depend on $L, W^3, \dots, W^{n-3}$ and their derivatives, we have
$$L_{(2)} \partial W^{n-1} \equiv - (\partial L)_{(2)} W^{n-1} = 2 L_{(1)} W^{n-1} = 2(n-1) W^{n-1}.$$  So the left-hand side of \eqref{main:1first} is $2(n-1) b_{3,n-2} W^{n-1}$ up to terms which do not depend on $W^{n-1}$.

Next, the term $(L_{(2)} W^3)_{(0)} W^{n-2}$ from \eqref{main:1first} vanishes because $W^3$ is assumed primary. The term $$W^3 _{(0)}( L_{(2)} W^{n-2})$$ from \eqref{main:1first} has no contribution to the coefficient of $W^{n-1}$, since $L_{(2)} W^{n-2}$ only depends on $L, W^3, \dots, W^{n-4}$ and their derivatives. The term $$2(L_{(1)} W^3)_{(1)} W^{n-2}$$ from \eqref{main:1first} contributes $6 W^3_{(1)}W^{n-2} = 6 W^{n-1}$. The term $$(L_{(0)} W^3)_{(2)} W^{n-2} =  \partial W^3_{(2)} W^{n-2} =  -2 W^3_{(1)} W^{n-2} = -2 W^{n-1}.$$
We conclude that $$2(n-1) b_{3,n-2} = 4,$$ and hence the lemma follows. \end{proof}

\begin{lemma} \label{main:2}All coefficients $b_{i,n+1-i}$ for $3\leq i \leq \frac{n}{2}$ are independent of $c, \lambda$ and are determined uniquely by imposing Jacobi relations of type $(W^i, W^j, W^k)$ for $i+j+k = n+3$.
\end{lemma}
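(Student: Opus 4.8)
## Proof Proposal

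The plan is to mimic the argument of Lemma \ref{main:1} but apply it systematically to each pair $(i, n+1-i)$ with $3 \leq i \leq n/2$, extracting the coefficient $b_{i,n+1-i}$ from a suitably chosen Jacobi relation whose "leading" term (the coefficient of $\partial W^{n-1}$) isolates exactly this unknown while every other contribution is controlled by the inductive data. First I would record the general mechanism: by the grading and $\mathbb{Z}_2$-symmetry remarks preceding \eqref{eq:aij}, the only term in the OPE $W^i(z) W^{n+1-i}(w)$ (for $2 \leq i \leq j = n+1-i$) that involves the new generator $W^{n-1}$ in a pole of order one is $b_{i,n+1-i} \partial W^{n-1}$, while $W^{n-1}$ itself can appear only in the pole of order two with the already-known coefficient $a_{i,n+1-i}$. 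Hence any Jacobi identity of type $(W^r, W^i, W^{n+1-i})$ with $r + i + (n+1-i) = n+3$, i.e. $r$ such that $r = 2$, becomes a linear relation in which the coefficient of $\partial W^{n-1}$ (equivalently of $W^{n-1}$ after applying \eqref{deriv}) involves $b_{i,n+1-i}$ linearly, with all remaining contributions expressible through lower $S_{m,k}$ and through $a_{i,n+1-i}$, which by the inductive hypothesis (3) are already known to be independent of $c,\lambda$.

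The concrete choice I would make is the Jacobi relation of type $(L, W^i, W^{n+1-i})$ read through the operation $L_{(2)}(W^i_{(0)} W^{n+1-i})$, exactly as in \eqref{main:1first} with $W^3$ replaced by $W^i$ and $W^{n-2}$ replaced by $W^{n+1-i}$:
\begin{equation*}
L_{(2)}(W^i_{(0)} W^{n+1-i}) = (L_{(2)} W^i)_{(0)} W^{n+1-i} + W^i_{(0)}(L_{(2)} W^{n+1-i}) + 2(L_{(1)} W^i)_{(1)} W^{n+1-i} + (L_{(0)} W^i)_{(2)} W^{n+1-i}.
\end{equation*}
On the left, writing $W^i_{(0)} W^{n+1-i} = b_{i,n+1-i} \partial W^{n-1} + D_{i,n+1-i}$ and using $L_{(2)} \partial W^{n-1} \equiv -(\partial L)_{(2)} W^{n-1} = 2 L_{(1)} W^{n-1} = 2(n-1) W^{n-1}$ modulo terms depending only on $L, W^3, \dots, W^{n-3}$ (since $L_{(2)} W^{n-1} \in S_{n+1,2}$ depends only on lower generators, and $L_{(2)} D_{i,n+1-i}$ cannot produce $W^{n-1}$ by the inductive hypothesis), the left side contributes $2(n-1)\, b_{i,n+1-i}\, W^{n-1}$ to the $W^{n-1}$-coefficient. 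On the right, the first term is $W^i$-primary-free of $W^{n-1}$ since $L_{(2)} W^i$ lowers weight and lies in lower generators; $W^i_{(0)}(L_{(2)} W^{n+1-i})$ has no $W^{n-1}$ since $L_{(2)} W^{n+1-i}$ depends only on $L, W^3, \dots, W^{n-1-i}$; the term $(L_{(0)} W^i)_{(2)} W^{n+1-i} = (\partial W^i)_{(2)} W^{n+1-i} = -2 W^i_{(1)} W^{n+1-i}$ contributes $-2\,a_{i,n+1-i}\,W^{n-1}$; and $2(L_{(1)} W^i)_{(1)} W^{n+1-i} = 2i\, W^i_{(1)} W^{n+1-i}$ contributes $2i\, a_{i,n+1-i}\, W^{n-1}$. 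Comparing coefficients of $W^{n-1}$ gives
\begin{equation*}
2(n-1)\, b_{i,n+1-i} = (2i - 2)\, a_{i,n+1-i},
\end{equation*}
so $b_{i,n+1-i} = \frac{(i-1)}{(n-1)}\, a_{i,n+1-i}$, which is independent of $c,\lambda$ because $a_{i,n+1-i}$ is, by the inductive hypothesis. (For $i = 2$ this recovers $b_{2,n-1} = 1$ from $a_{2,n-1} = n-1$, and for the case $i = 3$, $a_{3,n-2} = 1$, it recovers Lemma \ref{main:1}.) Running $i$ over $3 \leq i \leq n/2$ determines every $b_{i,n+1-i}$ and completes the verification of inductive hypothesis (3) at level $n+1$.

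The step I expect to be the main obstacle is the bookkeeping needed to justify rigorously that \emph{no other term} in the chosen Jacobi relation contributes to the coefficient of $W^{n-1}$. This rests on a careful application of the inductive hypothesis together with \eqref{nonasswick} and \eqref{ncw}: one must check that every iterated product appearing on the right-hand side, when a $W^i$ acts on a normally ordered polynomial in $L, W^3, \dots, W^{n+1-i}$ of the relevant weight, produces only generators of index $\leq n-2$ plus possibly the explicitly tracked $W^{n-1}$-terms, and that the various $S_{n+1,k}$ with $k \geq 2$ entering through derivative-shuffling identities genuinely depend only on $L, W^3, \dots, W^{n-3}$ by weight and parity. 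Once this containment is established — it is the same kind of argument made in the proof of Lemma \ref{main:1}, just carried out for general $i$ — the linear equation for $b_{i,n+1-i}$ drops out and the independence from $c,\lambda$ is immediate. A minor subtlety is that for $i = n/2$ (when $n$ is even) the two fields $W^i$ and $W^{n+1-i} = W^{i+1}$ are distinct, so no symmetrization ambiguity arises; the only genuinely diagonal case, $W^i_{(0)} W^i$ with $2i = n+1$, does not occur since then $n$ is odd and $i > n/2$.
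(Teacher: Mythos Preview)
Your Jacobi computation is correct and the identity
\[
2(n-1)\,b_{i,n+1-i} = (2i-2)\,a_{i,n+1-i}
\]
does hold (it checks against the paper's explicit values $b_{3,n-2}=\tfrac{2}{n-1}$, $b_{4,n-3}=\tfrac{12}{(n-1)(n-2)}$, $a_{4,n-3}=\tfrac{4}{n-2}$). The gap is in the last step: you conclude that $b_{i,n+1-i}$ is independent of $c,\lambda$ ``because $a_{i,n+1-i}$ is, by the inductive hypothesis.'' But $a_{i,n+1-i}$ has $i+j=n+1$, whereas inductive hypothesis~(3) only covers $a_{i,j}$ with $i+j\leq n$. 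The coefficient $a_{i,n+1-i}$ belongs to $S_{n+1,1}$ and is \emph{not} yet known at this stage; it is determined only later, in Lemma~\ref{main:4}. So your argument reduces one level-$(n{+}1)$ unknown to another level-$(n{+}1)$ unknown, rather than determining $b_{i,n+1-i}$ outright.

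The paper's proof avoids this by choosing Jacobi relations that never touch $a_{i,n+1-i}$. It imposes
\[
W^3_{(1)}\bigl(W^{i-1}_{(0)}W^{n+1-i}\bigr)=(W^3_{(1)}W^{i-1})_{(0)}W^{n+1-i}+W^{i-1}_{(0)}(W^3_{(1)}W^{n+1-i})+(W^3_{(0)}W^{i-1})_{(1)}W^{n+1-i},
\]
whose $\partial W^{n-1}$-coefficient yields a recursion expressing $b_{i,n+1-i}$ in terms of $b_{i-1,n+2-i}$ (found at the previous step of the same induction, starting from $b_{3,n-2}$) together with $b_{i-1,n+1-i}$ and $b_{3,i-1}$, both of which satisfy $i+j\leq n$ and hence \emph{are} genuine inductive data. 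Your Virasoro relation could serve as a shortcut \emph{after} Lemma~\ref{main:4} has been established, but it cannot replace the argument here.
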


\begin{proof} We first impose the Jacobi relation
\begin{equation} \label{main:2first} W^3_{(1)} (W^3_{(0)} W^{n-3}) = (W^3_{(1)} W^3)_{(0)} W^{n-3} + W^3_{(0)} (W^3_{(1)} W^{n-3}) +  (W^3_{(0)} W^3)_{(1)} W^{n-3}.\end{equation}
Since $W^3_{(0)} W^{n-3} = \frac{2}{n-2} \partial W^{n-2} + D_{3,n-3}$, the left-hand side of \eqref{main:2first} is 

\begin{equation} \label{main:2second} \begin{split} W^3_{(1)} \bigg(\frac{2}{n-2} \partial W^{n-2} + D_{3,n-3} \bigg) & = -\frac{2}{n-2}(\partial W^3)_{(1)} W^{n-2} +\frac{2}{n-2} \partial  \bigg(W^3_{(1)} W^{n-2}\bigg) + W^3_{(1)}  D_{3,n-3}\\ & = \frac{2}{n-2} \bigg(\frac{2}{n-1} \partial W^{n-1} +D_{3,n-2}\bigg) + \frac{2}{n-2} \partial W^{n-1} +  W^3_{(1)}  D_{3,n-3}\\ & = \frac{2 (1 + n)}{(n-2) (n-1)} \partial W^{n-1} +  \frac{2}{n-2} D_{3,n-2} +  W^3_{(1)}  D_{3,n-3}.\end{split}\end{equation}

Next,
\begin{equation} \label{main:2third} \begin{split}(W^3_{(1)} W^3)_{(0)} W^{n-3} &= W^4_{(0)} W^{n-3}   = b_{4,n-3} \partial W^{n-1} + D_{4,n-3},
\\W^3_{(0)} (W^3_{(1)} W^{n-3}) & = W^3_{(0)} W^{n-2} = \frac{2}{n-1} \partial W^{n-1} + D_{3,n-2},
\\(W^3_{(0)} W^3)_{(1)} W^{n-3}  & = \bigg(\frac{1}{2} \partial W^4 - \frac{1}{12} \partial^3 L\bigg)_{(1)} W^{n-3} 
\\ & = -\frac{1}{2} W^4_{(0)} W^{n-3} = -\frac{1}{2} \bigg(b_{4,n-3} \partial W^{n-1} + D_{4,n-3}\bigg).\end{split}\end{equation} Collecting terms, we conclude that $$b_{4,n-3} = \frac{12}{(n-1)(n-2)}.$$

Inductively, we impose the Jacobi relation
\begin{equation} \label{main:2fourth}W^3_{(1)} (W^{i-1}_{(0)} W^{n+1-i}) = (W^3_{(1)} W^{i-1})_{(0)} W^{n+1-i} + W^{i-1}_{(0)} (W^3_{(1)} W^{n+1-i}) +  (W^3_{(0)} W^{i-1})_{(1)} W^{n+1-i}.\end{equation} The left side of \eqref{main:2fourth} is 

\begin{equation} \label{main:2fifth} \begin{split} W^3_{(1)} \bigg( b_{i-1,n+1-i} \partial W^{n-2} + D_{i-1,n+1-i}\bigg) = b_{i-1,n+1-i} W^3_{(1)} \partial W^{n-2} + W^3_{(1)}D_{i-1,n+1-i}
\\ = b_{i-1,n+1-i} \bigg(\frac{2}{n-1} \partial W^{n-1}+ D_{3,n-2} + \partial W^{n-1}\bigg) + W^3_{(1)}D_{i-1,n+1-i}. \end{split}\end{equation}
The right side of \eqref{main:2fourth} is
\begin{equation} \label{main2:sixth} b_{i,n+1-i} \partial W^{n-1} + D_{i,n+1-i} + b_{i-1, n+2-i} \partial W^{n-1} + D_{i-1,n+2-i}  -\frac{2}{i} \bigg(b_{i,n+1-i} \partial W^{n-1} + D_{i,n+1-i}\bigg).\end{equation}
Recall that $b_{i-1,n+1-i}$ and $D_{i-1,n+1-i}$ are part of our inductive data, and we are assuming inductively that $b_{i-1,n+1-i}$ is independent of $c, \lambda$. Since $D_{i-1,n+1-i}$ is a normally ordered polynomial of weight $n-1$ in $L, W^3, \dots, W^{n-4}$ and their derivatives, it follows that $W^3_{(1)} D_{n-1,n+1-i}$ is uniquely determined by inductive data. This shows that $b_{i,n+1-i}$ is uniquely determined from inductive data together with $b_{i-1,n+2-i}$ for $4\leq i < \frac{n}{2}$. Finally, since $b_{3,n-2}$ is independent of $c,\lambda$, it is clear that each $b_{i,n+1-i}$ is independent of $c,\lambda$. \end{proof}

Similarly, we will show that $\displaystyle a_{4,n-3} =\frac{4}{n-2}$ and that $a_{i,n+1-i}$ is determined from the Jacobi identities for all $i$; see Lemma \ref{main:4} below. Combining these observations, we obtain the following lemma.

\begin{lemma} \label{main:3} The products $W^i_{(0)} W^{n+1-i}$ for $3 \leq i \leq \frac{n}{2}$, are uniquely determined from inductive data, Jacobi relations of type $(W^i, W^j, W^k)$ for $i+j+k = n+3$, and elements of $S_{n+1,1}$.
\end{lemma}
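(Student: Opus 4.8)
\emph{Proof plan.} The plan is to run a secondary induction on the first index $i$, driven by the Jacobi relations \eqref{main:2first} and \eqref{main:2fourth} of type $(W^3, W^{i-1}, W^{n+1-i})$, but now retaining every term rather than only the coefficient of $\partial W^{n-1}$, as was done in Lemma \ref{main:2}. Write
$$W^i_{(0)}W^{n+1-i}\;=\;b_{i,n+1-i}\,\partial W^{n-1}+D_{i,n+1-i},$$
where the weight and $\mathbb{Z}_2$-grading constraints force $D_{i,n+1-i}$ to be a normally ordered polynomial of weight $n$ in $L,W^3,\dots,W^{n-3}$ and their derivatives. The leading coefficients $b_{i,n+1-i}$ are already known and independent of $c,\lambda$ by Lemmas \ref{main:1} and \ref{main:2}, while Lemma \ref{main:4} supplies the coefficients $a_{i,n+1-i}$; since, as recalled above, a knowledge of $S_{n+1,1}$ is precisely a knowledge of the $a_{i,j}$ together with the lower-order parts $C_{i,j}$, we treat all elements of $S_{n+1,1}$ as available. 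Thus the task reduces to determining the polynomials $D_{i,n+1-i}$ for $3\le i\le\tfrac n2$.

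For the inductive step $i\ge 5$ I would assume $D_{3,n-2},\dots,D_{i-1,n+2-i}$ already determined and impose \eqref{main:2fourth}. Since $W^3_{(1)}W^{i-1}=W^i$ by definition and $W^3_{(0)}W^{i-1}=\tfrac2i\partial W^i+D_{3,i-1}$, with $D_{3,i-1}$ of weight $i+1$ in $L,\dots,W^{i-2}$ (hence inductive data), the unknown $W^i_{(0)}W^{n+1-i}$ occurs on the two sides of \eqref{main:2fourth} with net coefficient $1-\tfrac2i$, which is nonzero because $i\ge 3$. Every remaining contribution is under control: OPEs of $W^3$ with weight-$(n-1)$ normally ordered polynomials in $L,\dots,W^{n-4}$ are determined by the inductive hypothesis together with \eqref{nonasswick}--\eqref{ncw}; the earlier outputs $W^{i-1}_{(0)}W^{n+2-i}$ and $D_{3,n-2}$ are available, the latter entering only through $W^3_{(1)}\partial W^{n-2}=\partial W^{n-1}+W^3_{(0)}W^{n-2}$; and $W^{n-1}=W^3_{(1)}W^{n-2}$ lies in $S_{n+1,1}$. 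After checking that $W^3_{(1)}D_{i-1,n+1-i}$ makes no contribution in the $\partial W^{n-1}$-channel, so that the two channels decouple, one is left with
$$D_{i,n+1-i}\;=\;\frac{i}{i-2}\Big(b_{i-1,n+1-i}\,D_{3,n-2}+W^3_{(1)}D_{i-1,n+1-i}-D_{i-1,n+2-i}\Big).$$
The relation \eqref{main:2first} is the $i=4$ instance of this scheme and expresses $D_{4,n-3}$ through $D_{3,n-2}$; running the recursion, every $D_{i,n+1-i}$ with $3\le i\le\tfrac n2$ thereby becomes an explicit affine function of the single polynomial $D_{3,n-2}$, with already-known coefficients.

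It then remains to pin down $D_{3,n-2}$ itself, and here the other Jacobi relations of type $(W^i,W^j,W^k)$ with $i+j+k=n+3$, together with the availability of all of $S_{n+1,1}$, come in. The plan is to produce one more independent equation --- for instance from the skew-symmetry relation for the diagonal product $(W^{(n+1)/2})_{(0)}W^{(n+1)/2}$ when $n$ is odd, or from a Jacobi relation in which the OPE of a high-weight generator $W^m$ ($m$ close to $n$) with a low one generates level-$(n{+}1)$ products that one rewrites, via skew-symmetry, in terms of the $W^i_{(0)}W^{n+1-i}$ and elements of $S_{n+1,1}$. Feeding in the affine expressions from the secondary induction and the known data of $S_{n+1,1}$ turns this into a single linear equation for $D_{3,n-2}$ with nonzero rational leading coefficient, which determines $D_{3,n-2}$ and hence all of the $D_{i,n+1-i}$; combined with Lemma \ref{main:4}, this gives the lemma.

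The main obstacle is precisely this last step. Unlike the inductive step, no single Jacobi relation isolates $D_{3,n-2}$ with an invertible coefficient: the $i=3$ instance of \eqref{main:2fourth} is vacuous, the $L$-type relations \eqref{main:1first} fix $D_{3,n-2}$ only up to a weight-$n$ Virasoro primary, and \eqref{main:2first} merely couples $D_{3,n-2}$ to $D_{4,n-3}$. One therefore has to assemble and solve a small linear system drawn from several Jacobi relations, and the delicate point is to verify that all reductions involved stay at conformal weight level $\le n+1$ and use only genuinely available data --- in particular the $(1)$-products in $S_{n+1,1}$, whose determination is the content of Lemma \ref{main:4} and the intervening lemmas.
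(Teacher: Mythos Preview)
Your reduction of the problem to determining the single polynomial $D_{3,n-2}$ is essentially the paper's approach: the relations coming from \eqref{main:2first} and \eqref{main:2fourth} indeed express every $D_{i,n+1-i}$ as an explicit affine function of $D_{3,n-2}$ modulo inductive data, exactly as you wrote.

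The genuine gap is the final step, which you yourself flag as the ``main obstacle.'' Your suggested sources for an extra equation are not adequate: the diagonal skew-symmetry constraint only exists when $n$ is odd and in any case yields $W^m_{(0)}W^m$ as a total derivative of elements of $S_{n+1,\ell}$ for $\ell\ge 1$, which is information you already have, not a new constraint on $D_{3,n-2}$; and ``a Jacobi relation with a high-weight generator'' is not specific enough to check. The paper resolves this with one concrete, uniform choice: the Jacobi identity of type $(W^3,W^4,W^{n-4})$ with indices $(r,s)=(0,1)$,
\[
W^3_{(0)}\bigl(W^4_{(1)}W^{n-4}\bigr) \;=\; (W^3_{(0)}W^4)_{(1)}W^{n-4} \;+\; W^4_{(1)}\bigl(W^3_{(0)}W^{n-4}\bigr).
\]
This is genuinely different from the relations \eqref{main:2first}--\eqref{main:2fourth} (those use $(r,s)=(1,0)$), and after reducing via known inductive data and elements of $S_{n+1,1}$ it yields
\[
\frac{4}{n-3}\,D_{3,n-2}\;\equiv\;-\frac{2}{5}\,D_{5,n-4}\;+\;\frac{2}{n-3}\,D_{4,n-3}.
\]
Together with the two relations you already have among $D_{3,n-2},D_{4,n-3},D_{5,n-4}$, this gives a $3\times 3$ linear system that is easily checked to be nonsingular, determining $D_{3,n-2}$ and hence all the $D_{i,n+1-i}$. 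The missing idea in your argument is precisely to swap the operator indices in the $(W^3,W^4,W^{n-4})$ Jacobi relation to produce an independent constraint.
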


\begin{proof} Since $b_{i,n+1-i}$ is determined from this data, it suffices to show that $D_{i,n+1-i}$ is also determined for $3\leq i \leq \frac{n}{2}$. It follows from \eqref{main:2second} and \eqref{main:2third} that modulo terms which are determined from inductive data,
\begin{equation} \label{main:3first} D_{4,n-3} \equiv -\frac{2 (n-4)}{n-2} D_{3,n-2}.\end{equation}
Using \eqref{main:2fifth} and \eqref{main2:sixth} in the case $i=5$, we get
\begin{equation} \label{main:3second}\frac{12}{(n-2)(n-3)} D_{3,n-2} \equiv D_{5,n-4} +  D_{4,n-3} - \frac{2}{5} D_{5,n-4},\end{equation} modulo inductive data.
Similarly, \eqref{main:2fifth} and \eqref{main2:sixth} show that there are nontrivial relations $D_{i,n+1-i} \equiv p_i(n) D_{3,n-2}$ for rational functions $p(n)$ for all $i$, modulo inductive data. Hence it is enough to find three linearly independent relations between $D_{3,n-2}$,  $D_{4,n-3}$, and $D_{5,n-4}$. From the Jacobi relation
$$W^3_{(0)} (W^4_{(1)} W^{n-4}) = (W^3_{(0)} W^4)_{(1)} W^{n-4} + W^4_{(1)} (W^3_{(0)} W^{n-4}),$$ we get 
$$ \frac{4}{n-3} D_{3,n-2} \equiv -\frac{2}{5} D_{5,n-4} + \frac{2}{n-3} D_{4,n-3} + \frac{2}{n-3} \partial C_{4,n-3},$$ modulo inductive data. Since $C_{4,n-3}$ is determined by $S_{n+1,1}$, we get the relation
\begin{equation} \label{main:3third} \frac{4}{n-3} D_{3,n-2} \equiv -\frac{2}{5} D_{5,n-4} + \frac{2}{n-3} D_{4,n-3}\end{equation} modulo inductive data together with data determined by $S_{n+1,1}$. Then \eqref{main:3first}-\eqref{main:3third} are the desired linearly independent relations.
\end{proof}

\begin{lemma} \label{main:4}
The set $S_{n+1,1}$ of products $W^i _{(1)} W^{n+1-i}$ is uniquely determined from inductive data, Jacobi relations of type $(W^i, W^j, W^k)$ for $i+j+k = n+3$, and the sets $S_{n+1,\ell}$ for $\ell\geq 2$.
\end{lemma}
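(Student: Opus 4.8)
The plan is to follow the template of Lemmas \ref{main:1}--\ref{main:3}: isolate each product $W^i_{(1)}W^{n+1-i}$ from a single Jacobi relation of type $(W^a,W^b,W^c)$ with $a+b+c=n+3$ in which the coefficient of that product is a nonzero rational function of $n$ and every other term is already known. Write $W^i_{(1)}W^{n+1-i}=a_{i,n+1-i}W^{n-1}+C_{i,n+1-i}$ as in \eqref{eq:aij}. By \eqref{commutator} the products with $i>j$ are determined by those with $i\le j$ (together with the sets $S_{n+1,\ell}$ for $\ell\ge 2$), so it suffices to treat $2\le i\le j=n+1-i$. The cases $i=2,3$ are immediate: $W^2_{(1)}W^{n-1}=L_{(1)}W^{n-1}=(n-1)W^{n-1}$ by the postulated OPE of $L$ with $W^{n-1}$, and $W^3_{(1)}W^{n-2}=W^{n-1}$ by the defining recursion, so $a_{2,n-1}=n-1$, $a_{3,n-2}=1$ and $C_{2,n-1}=C_{3,n-2}=0$.

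For $4\le i\le j$ with $i+j=n+1$, I would impose the Jacobi relation \eqref{jacobi} of type $(W^3,W^{i-1},W^{n+1-i})$, which has weight sum $n+3$, with $r=2$ and $s=0$:
\begin{equation*}
W^3_{(2)}\big(W^{i-1}_{(0)}W^{n+1-i}\big)=W^{i-1}_{(0)}\big(W^3_{(2)}W^{n+1-i}\big)+\big(W^3_{(0)}W^{i-1}\big)_{(2)}W^{n+1-i}+2\,\big(W^3_{(1)}W^{i-1}\big)_{(1)}W^{n+1-i}+\big(W^3_{(2)}W^{i-1}\big)_{(0)}W^{n+1-i}.
\end{equation*}
Since $W^3_{(1)}W^{i-1}=W^i$ by definition and $W^3_{(0)}W^{i-1}=\tfrac2i\,\partial W^i+D_{3,i-1}$ with $b_{3,i-1}=\tfrac2i$ by Lemma \ref{main:1}, identity \eqref{deriv} gives $\big(W^3_{(0)}W^{i-1}\big)_{(2)}W^{n+1-i}=-\tfrac4i\,W^i_{(1)}W^{n+1-i}+(D_{3,i-1})_{(2)}W^{n+1-i}$, and collecting the occurrences of the unknown yields
\begin{equation*}
\frac{2(i-2)}{i}\,W^i_{(1)}W^{n+1-i}=W^3_{(2)}\big(W^{i-1}_{(0)}W^{n+1-i}\big)-W^{i-1}_{(0)}\big(W^3_{(2)}W^{n+1-i}\big)-(D_{3,i-1})_{(2)}W^{n+1-i}-\big(W^3_{(2)}W^{i-1}\big)_{(0)}W^{n+1-i}.
\end{equation*}
The coefficient $\tfrac{2(i-2)}i$ is nonzero for $i\ge 4$, so this determines $W^i_{(1)}W^{n+1-i}$ provided the right-hand side is known. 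Here one writes $W^{i-1}_{(0)}W^{n+1-i}=b_{i-1,n+1-i}\partial W^{n-2}+D_{i-1,n+1-i}\in S_{n,0}$ and uses $W^3_{(2)}\partial W^{n-2}=\partial\big(W^3_{(2)}W^{n-2}\big)+2W^{n-1}$; expanding every composition by \eqref{deriv}--\eqref{ncw}, all the fields that occur have weight sum $\le n$ and hence lie in the inductive data, with the single exception of $W^3_{(2)}W^{n-2}\in S_{n+1,2}$.

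To obtain $a_{i,n+1-i}$ I would read off the coefficient of $W^{n-1}$ in the displayed identity. By the conformal-weight and $\mathbb{Z}_2$-grading constraints recalled at the start of this section, the only term on the right that can contribute a multiple of $W^{n-1}$ is the summand $2W^{n-1}$ inside $W^3_{(2)}\partial W^{n-2}$, scaled by $b_{i-1,n+1-i}$; hence $\tfrac{2(i-2)}i\,a_{i,n+1-i}=2\,b_{i-1,n+1-i}$, i.e.
$$a_{i,n+1-i}=\frac{i}{i-2}\,b_{i-1,n+1-i},$$
which lies in $\mathbb{C}$ by inductive hypothesis (3), applied to $b_{i-1,n+1-i}$ (a coefficient of weight sum $n$). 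In particular this reproves $a_{4,n-3}=\tfrac4{n-2}$ via $b_{3,n-3}=\tfrac2{n-2}$. The remaining products $W^i_{(1)}W^j$ with $i>j$ then follow from \eqref{commutator} (and the $S_{n+1,\ell}$ with $\ell\ge 2$), completing the determination of $S_{n+1,1}$.

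The step I expect to be the main obstacle is the verification underlying the two displayed identities: one must expand each composition on the right via \eqref{deriv}--\eqref{ncw} and check, term by term, that the coefficient of $W^i_{(1)}W^{n+1-i}$ is exactly $\tfrac{2(i-2)}i$, that the only input not already contained in the inductive data is $W^3_{(2)}W^{n-2}\in S_{n+1,2}$, and that no term besides the $b_{i-1,n+1-i}$-multiple of $W^{n-1}$ feeds into the $W^{n-1}$-coefficient. This is where the grading discipline of Lemmas \ref{main:1}--\ref{main:3} does all the work; it is routine but must be carried out with care, especially in keeping track of the largest generator occurring in the normally ordered polynomials $D_{3,i-1}$ and $D_{i-1,n+1-i}$.
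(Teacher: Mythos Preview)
Your argument is correct, and it takes a genuinely different route from the paper's. The paper proves Lemma \ref{main:4} by imposing the Jacobi identities with indices $(r,s)=(1,1)$: it first treats $W^4_{(1)}W^{n-3}$ via the relation of type $(W^{n-3},W^3,W^3)$ in \eqref{main:4first}, and then proceeds sequentially, using the relation of type $(W^3,W^{i},W^{n-i})$ in \eqref{main:4second}--\eqref{main:4third} to extract $W^{i+1}_{(1)}W^{n-i}$ from the already-known $W^{i}_{(1)}W^{n+1-i}$. Each step feeds into the next, and the nonvanishing coefficient that makes the extraction possible is $1-\tfrac{4}{i+1}=\tfrac{i-3}{i+1}$.

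You instead impose the $(r,s)=(2,0)$ Jacobi identity of type $(W^3,W^{i-1},W^{n+1-i})$. This is a cleaner choice: it isolates $W^i_{(1)}W^{n+1-i}$ directly, with nonzero coefficient $\tfrac{2(i-2)}{i}$, without requiring the previous case $W^{i-1}_{(1)}W^{n+2-i}$ as input. The only non-inductive datum that enters is the single element $\partial(W^3_{(2)}W^{n-2})\in\partial(S_{n+1,2})$, whereas the paper's recursion implicitly uses more of $S_{n+1,\ell}$ for $\ell\ge 2$ (via \eqref{commutator} in the analysis of \eqref{main:4first}). Your identification of the $W^{n-1}$-coefficient is also correct, and the resulting closed formula $a_{i,n+1-i}=\tfrac{i}{i-2}\,b_{i-1,n+1-i}$ is a nice byproduct; one checks it against the paper's explicit values $a_{4,n-3}=\tfrac{4}{n-2}$ and $a_{5,n-4}=\tfrac{20}{(n-2)(n-3)}$ using $b_{3,n-3}=\tfrac{2}{n-2}$ and $b_{4,n-4}=\tfrac{12}{(n-2)(n-3)}$ from Lemmas \ref{main:1}--\ref{main:2}. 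The bookkeeping you flag as the main obstacle is indeed routine: all the auxiliary terms depend only on generators up to $W^{n-3}$, so none can contribute to the $W^{n-1}$-coefficient, and the weight-sum bounds you need (e.g.\ $3+(i-1)\le n$) hold since $i\le (n+1)/2$ and the base case covers $n\le 9$.
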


\begin{proof} By assumption $L_{(1)} W^{n-1} = (n-1) W^{n-1}$ and $W^3_{(1)} W^{n-2} = W^{n-1}$, so we begin with the case $i\geq 4$. We impose the Jacobi relation
\begin{equation} \label{main:4first} W^{n-3} _{(1)} (W^3_{(1)} W^{3}) = W^3 _{(1)} (W^{n-3}_{(1)} W^{3}) + (W^{n-3} _{(1)} W^{3})_{(1)}W^3 +  (W^{n-3} _{(0)} W^3)_{(2)} W^{3}.\end{equation}

By \eqref{commutator}, the left-hand side of \eqref{main:4first} is $$W^{n-3}_{(1)} W^4 = \sum_{i\geq 0} (-1)^{i} \frac{1}{i!} \partial^{i}(W^4_{(i+1)} W^{n-3}).$$ Therefore modulo derivatives of elements of $S_{n+1,\ell}$ for $\ell \geq 2$, 
$$W^{n-3}_{(1)} W^4 \equiv W^4_{(1)} W^{n-3} = a_{4,n-3} W^{n-1} + C_{4,n-3}.$$

As for the right-hand side, modulo terms which are either derivatives of elements of $S_{n+1,\ell}$ for $\ell \geq 2$ or determined by inductive data, we have 
$$W^3 _{(1)} (W^{n-3}_{(1)} W^{3}) \equiv W^3 _{(1)} (W^{3}_{(1)} W^{n-3}) =W^3 _{(1)} W^{n-2} = W^{n-1},$$
$$(W^{n-3} _{(1)} W^{3})_{(1)}W^3 \equiv (W^{3} _{(1)} W^{n-3})_{(1)}W^3 \equiv W^{n-2}_{(1)} W^3 \equiv W^{n-1},$$
$$(W^{n-3} _{(0)} W^3)_{(2)} W^{3} =  \bigg(\sum_{i\geq 0} (-1)^{i+1} \frac{1}{i!} \partial^i (W^3_{(i)} W^{n-3}) \bigg)_{(2)} W^3 $$
$$= \bigg(- \frac{2}{n-2} \partial W^{n-2} + \partial W^{n-2}\bigg)_{(2)} W^3  = \bigg(\frac{4}{n-2} -2 \bigg)W^{n-2}_{(1)} W^3$$
$$\equiv  \bigg(\frac{4}{n-2} -2 \bigg)W^{3}_{(1)} W^{n-2} = \bigg(\frac{4}{n-2} -2 \bigg) W^{n-1}.$$
Collecting terms, we see that $$a_{4,n-3} =\frac{4}{n-2},$$ and that $C_{4,n-3}$ is uniquely determined modulo inductive data and derivatives of elements of $S_{n+1,\ell}$ for $\ell \geq 2$.

Next, we impose the Jacobi relation \begin{equation} \label{main:4second} 
W^3 _{(1)} (W^4_{(1)} W^{n-4}) = W^4 _{(1)} (W^3_{(1)} W^{n-4}) + (W^3 _{(1)} W^4)_{(1)} W^{n-4} +  (W^3 _{(0)} W^4)_{(2)} W^{n-4}.\end{equation} Since $\displaystyle W^4_{(1)} W^{n-4} =  \frac{4}{n-3} W^{n-2} + C_{4,n-4}$ where $C_{4,n-4}$ depends only on $L,W^3,\dots, W^{n-4}$ and their derivatives and is determined by inductive data, the left-hand side of \eqref{main:4second} is
$$ W^3_{(1)} \bigg(\frac{4}{n-3} W^{n-2} + C_{4,n-4}\bigg) \equiv   \frac{4}{n-3} W^{n-1}.$$ For the right-hand side, we have
$$W^4 _{(1)} (W^3_{(1)} W^{n-4}) = W^4 _{(1)} W^{n-3} = \frac{4}{n-2} W^{n-1} + C_{4,n-3},$$
$$(W^3 _{(1)} W^4)_{(1)} W^{n-4} = W^5_{(1)} W^{n-4} = a_{5,n-4} W^{n-1} + C_{5,n-4},$$
$$(W^3 _{(0)} W^4)_{(2)} W^{n-4} = \bigg(\frac{2}{5}
\partial W^5 + \frac{32}{5} \lambda :L\partial W^3: -\frac{48}{5} \lambda :(\partial L) W^3:  $$ $$+ \frac{2}{15} \big(-5 + 2 \lambda (-1 + c)\big)\partial^3 W^3\bigg)_{(2)} W^{n-4}$$
$$ \equiv -\frac{4}{5} \bigg(a_{5,n-4} W^{n-1} + C_{5,n-4}\bigg).$$
It is immediate that $$a_{5,n-4} = \frac{20}{(n-2) (n-3)},$$ and that $C_{5,n-4}$ is determined uniquely modulo inductive data and derivatives of elements of $S_{n+1,\ell}$ for $\ell \geq 2$.
Similarly, for $i>4$, by imposing the Jacobi relation 
\begin{equation} \label{main:4third} 
W^3 _{(1)} (W^i_{(1)} W^{n-i}) = W^i _{(1)} (W^3_{(1)} W^{n-i}) + (W^3 _{(1)} W^i)_{(1)} W^{n-i} +  (W^3 _{(0)} W^i)_{(2)} W^{n-i},\end{equation} the same argument shows that both $a_{i+1,n-i}$ and $C_{i+1,n-i}$ are uniquely determined modulo inductive data and derivatives of elements of $S_{n+1,\ell}$ for $\ell \geq 2$. This shows that $S_{n+1,1}$ is uniquely determined modulo this data.
 \end{proof}

\begin{lemma} \label{main:5}The set $S_{n+1, 2}$ of products $W^i _{(2)} W^{n+1-i}$ is uniquely determined from inductive data, Jacobi relations of type $(W^i, W^j, W^k)$ for $i+j+k = n+3$, and the sets $S_{n+1, \ell}$ for $\ell \geq 3$.\end{lemma}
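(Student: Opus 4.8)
The plan is to argue exactly as in the proof of Lemma \ref{main:4}. Write $u_i = W^i_{(2)} W^{n+1-i}$ for $2 \le i \le n-1$, where $u_2 = L_{(2)} W^{n-1}$; by the grading and $\mathbb{Z}_2$-symmetry constraints each $u_i$ is a normally ordered polynomial in $L, W^3, \dots, W^{n-3}$ and their derivatives of weight $n-2$, so $W^{n-1}$ does not occur. As in Lemma \ref{main:4}, the commutator formula \eqref{commutator} gives
\begin{equation*}
W^i_{(2)} W^{n+1-i} = \sum_{p \ge 2}(-1)^{p+1}\tfrac{1}{(p-2)!}\,\partial^{p-2}\!\bigl(W^{n+1-i}_{(p)} W^i\bigr),
\end{equation*}
whose $p=2$ term is $-W^{n+1-i}_{(2)}W^i$ and whose remaining terms are derivatives of elements of $S_{n+1,\ell}$ with $\ell \ge 3$; since the latter are known by hypothesis, this yields relations $u_i + u_{n+1-i} \in (\text{known data})$, so it suffices to determine the $u_i$ modulo these.

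For $3 \le i \le n-3$ I would use $W^{n+1-i} = W^3_{(1)} W^{n-i}$ and impose the Jacobi relation of type $(W^i, W^3, W^{n-i})$ with $r=2$, $s=1$, which is permitted since $i+3+(n-i) = n+3$:
\begin{equation*}
u_i = W^3_{(1)}\bigl(W^i_{(2)} W^{n-i}\bigr) + \bigl(W^i_{(0)} W^3\bigr)_{(3)} W^{n-i} + 2\bigl(W^i_{(1)} W^3\bigr)_{(2)} W^{n-i} + \bigl(W^i_{(2)} W^3\bigr)_{(1)} W^{n-i}.
\end{equation*}
The products $W^i_{(2)} W^{n-i}$, $W^i_{(0)} W^3$, $W^i_{(1)} W^3$ and $W^i_{(2)} W^3$ all lie in the inductive data; writing $W^i_{(0)} W^3 = \beta_i\,\partial W^{i+1} + (\cdots)$ and $W^i_{(1)} W^3 = \alpha_i W^{i+1} + (\cdots)$ with $\alpha_i,\beta_i\in\mathbb{C}$ and $(\cdots)$ involving only generators of weight $<i+1$, the consequence of the inductive hypothesis recorded before Lemma \ref{main:1}, together with \eqref{deriv}, shows that every term on the right is recovered from the inductive data and from $S_{n+1,\ell}$ $(\ell\ge3)$ except for the contributions $-3\beta_i\,W^{i+1}_{(2)}W^{n-i}$ and $2\alpha_i\,W^{i+1}_{(2)}W^{n-i}$. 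Hence $u_i = \gamma_i\,u_{i+1} + E_i$ with $\gamma_i = 2\alpha_i - 3\beta_i \in\mathbb{C}$ and $E_i$ known, for $3\le i\le n-3$. Similarly, imposing the Jacobi relation of type $(L,W^3,W^{n-2})$ and using $L_{(0)}W^3 = \partial W^3$, $L_{(1)}W^3 = 3W^3$, $L_{(2)}W^3 = 0$ gives $u_2 = 3u_3 + E_2$ with $E_2 = W^3_{(1)}(L_{(2)}W^{n-2})$ known.

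What remains is to solve the linear system formed by these chain relations and the commutator relations $u_i + u_{n+1-i} = (\text{known})$. Since $W^3_{(1)}W^{i-1} = W^i$ exactly ($a_{3,j}=1$, $C_{3,j}=0$) one has $\alpha_i = 1$, and since $b_{3,i} = \tfrac{2}{i+1}$ by Lemma \ref{main:1} (applied at the earlier stage of the induction) one gets $\beta_i = 1 - b_{3,i} = \tfrac{i-1}{i+1}$, whence $\gamma_i = 2 - 3\tfrac{i-1}{i+1} = \tfrac{5-i}{i+1}$. In particular $\gamma_5 = 0$, so (as $5 \le n-3$ throughout the induction) the relation for $i=5$ reads $u_5 = E_5$ and determines $u_5$ outright; the relations for $i=4,3$ then give $u_4$ and $u_3$, and $u_2 = 3u_3 + E_2$; the commutator relation $u_{n-2} = -u_3 + (\text{known})$ feeds the relations for $i = n-3, n-4, \dots, 6$ in turn, yielding $u_6, \dots, u_{n-3}$; and the remaining $u_i$ follow from the commutator relations. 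This determines all of $S_{n+1,2}$.

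I expect the main obstacle to be the non-degeneracy of this linear system — concretely, the computation of $\gamma_i$ and the observation that $\gamma_5 = 0$ (equivalently, $1 + \prod_{j=3}^{n-3}\gamma_j \ne 0$ for all $n$ occurring in the induction). This is a short calculation once the already-established constants $a_{3,j}$, $b_{3,j}$ are substituted, but it is essential for uniformity in $n$: pairing only two Jacobi relations at a time can be degenerate for isolated values of $n$, so it matters that the chain itself trivializes at $i=5$. A secondary, purely bookkeeping point is checking — via the consequence recorded before Lemma \ref{main:1} and identities \eqref{deriv}--\eqref{ncw} — that everything other than the displayed multiples of $u_{i+1}$ really does lie in the known data.
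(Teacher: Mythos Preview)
Your argument is correct: the Jacobi relations you impose are all of type $(W^i,W^j,W^k)$ with $i+j+k=n+3$, the bookkeeping that isolates the $u_{i+1}$-contributions goes through exactly as you say, and the computation $\gamma_i=\frac{5-i}{i+1}$ (hence $\gamma_5=0$) is right, so the linear system does solve for all $n\ge 9$ in the induction.

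The paper's proof is organized differently and avoids solving a linear system. Instead of your $(W^i,W^3,W^{n-i})$ Jacobi, the paper uses the $(W^3,W^i,W^{n-i})$ Jacobi with $r=2$, $s=1$: its left side $W^3_{(2)}(W^i_{(1)}W^{n-i})=a_{i,n-i}\,u_3+(\text{known})$ involves only $u_3$, while its right side contributes $(2-3b_{3,i})\,u_{i+1}=\frac{2(i-2)}{i+1}\,u_{i+1}$, which is nonzero for every $i\ge3$, so each $u_{i+1}$ is determined directly once $u_3$ is known. The anchor $u_3$ itself is obtained by the Jacobi relation $W^{n-3}_{(1)}(W^3_{(2)}W^3)=W^{n-3}_{(1)}\partial L$, whose left side is already known since $W^3_{(2)}W^3=\partial L$; this isolates $u_3$ with coefficient $\frac{6}{n-2}$. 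Your approach trades this clever anchor for the skew-symmetry relations $u_i+u_{n+1-i}\in(\text{known})$ and the numerical accident $\gamma_5=0$; it works, but the paper's route is uniform in $i$ and needs no case analysis, which is why it generalizes verbatim to all $k\ge2$ in Lemma~\ref{main:6}.
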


\begin{proof} First, since $L_{(2)} W^{n-1} = L_{(2)} (W^3_{(1)}W^{n-2})$, we need to impose the Jacobi identity
\begin{equation} \label{main:5first} \begin{split} L_{(2)} (W^3_{(1)}W^{n-2}) & = W^3_{(1)}(L_{(2)} W^{n-2}) +(L_{(2)} W^3)_{(1)} W^{n-2}
\\ & +2(L_{(1)} W^3)_{(2)} W^{n-2} +(L_{(0)} W^3)_{(3)} W^{n-2}.\end{split} \end{equation}
 
By inductive assumption, $L_{(2)} W^{n-2}$ is known and is expressible in terms of $L, W^i$ for $i\leq n-4$. Then $W^3_{(1)}(L_{(2)} W^{n-2})$ is also determined by inductive data. The term $(L_{(2)} W^3)_{(1)} W^{n-2}$ vanishes because $W^3$ is primary, and the remaining terms are expressible in terms of $W^3_{(2)} W^{n-2}$ together with inductive data. Hence determining $L_{(2)} W^{n-1}$ is equivalent to determining $W^3_{(2)} W^{n-2}$.

For the purpose of determining $W^3_{(2)} W^{n-2}$, we impose the Jacobi relation
\begin{equation} \label{main:5second}  W^{n-3}_{(1)} (W^3_{(2)} W^3) =  W^3_{(2)} (W^{n-3}_{(1)} W^3) + (W^{n-3}_{(1)} W^3)_{(2)} W^3 + (W^{n-3}_{(0)} W^3)_{(3)} W^3.\end{equation}

The left-hand side of \eqref{main:5second} is $W^{n-3}_{(1)} \partial L$, which is known by inductive data. Next, by \eqref{commutator}, we have 

$$W^3_{(2)} (W^{n-3}_{(1)} W^3) + (W^{n-3}_{(1)} W^3)_{(2)} W^3 = \sum_{i\geq 1} (-1)^{i+1}\frac{1}{i!} \partial^i\bigg((W^{n-3}_{(1)} W^3)_{(i+2)} W^3\bigg).$$
Since $(W^{n-3}_{(1)} W^3) = W^{n-2}$ modulo terms which depend only on $L, W^3, \dots, W^{n-4}$, and is known by inductive data,
the sum $W^3_{(2)} (W^{n-3}_{(1)} W^3) + (W^{n-3}_{(1)} W^3)_{(2)} W^3$ in  \eqref{main:5second} is determined by inductive data together with derivatives of elements of $S_{n+1,\ell}$ for $\ell \geq 3$. Finally, the remaining term in \eqref{main:5second} is
$$(W^{n-3}_{(0)} W^3)_{(3)} W^3 \equiv - (W^3_{(0)} W^{n-3})_{(3)} W^3 = -\frac{2}{n-2} \partial W^{n-2}_{(3)} W^3 = \frac{6}{n-2} W^{n-2}_{(2)} W^3 $$ $$ = \frac{6}{n-2} \sum_{i\geq 0} (-1)^{i+1}\frac{1}{i!} \partial^i (W^3_{(i+2)} W^{n-2}),$$ modulo inductive data. Therefore $W^3_{(2)} W^{n-2}$ is expressible in terms of inductive data together with derivatives of elements of $S_{n+1,\ell}$ for $\ell >2$. Since $L_{(2)} W^{n-1}$ can be expressed in terms of $W^3_{(2)} W^{n-2}$, the same holds for $L_{(2)} W^{n-1}$.

Next, for $i\geq 3$ we impose the Jacobi relation 
\begin{equation} \label{main:5third} \begin{split}W^3_{(2)} (W^i_{(1)} W^{n-i} ) & = W^i_{(1)} (W^3_{(2)} W^{n-i}) + (W^3_{(2)} W^i)_{(1)} W^{n-i} + 2(W^3_{(1)} W^i)_{(2)} W^{n-i}  \\ & + (W^3_{(0)} W^i)_{(3)} W^{n-i}.\end{split} \end{equation}
This allows us to express $W^i_{(2)} W^{n+1-i}$ for all $i$ in terms of inductive data together with derivatives of $S_{n+1,\ell}$ for $\ell \geq 3$. \end{proof}

 \begin{lemma} \label{main:6} For all $k>2$, the set $S_{n+1, k}$ of products $W^i _{(k)} W^{n+1-i}$ is uniquely determined from inductive data, Jacobi relations of type $(W^i, W^j, W^r)$ for $i+j+r = n+3$, and the sets $S_{n+1, \ell}$ for $\ell >k $.\end{lemma}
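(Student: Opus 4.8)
The plan is to prove Lemma \ref{main:6} by downward induction on $k$. Since $W^i_{(\ell)} W^{n+1-i}$ has conformal weight $n-\ell$, we have $S_{n+1,\ell} = 0$ for $\ell > n$, which starts the induction; and for each fixed $k$ with $2 < k \leq n$ the sets $S_{n+1,\ell}$ with $\ell > k$ are available. The inductive step runs exactly parallel to the proof of Lemma \ref{main:5}, in two stages.

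The first stage determines $L_{(k)} W^{n-1}$. Applying the Jacobi relation of type $(L, W^3, W^{n-2})$ to $L_{(k)}\big(W^3_{(1)} W^{n-2}\big) = L_{(k)} W^{n-1}$ and using that $W^3$ is primary (so $L_{(p)} W^3$ is nonzero only for $p \in \{0,1\}$) gives $L_{(k)} W^{n-1} = (2k-1)\,W^3_{(k)} W^{n-2} + (\text{inductive data})$, with $2k-1 \neq 0$; so it is enough to find $W^3_{(k)} W^{n-2}$. For this we impose the Jacobi relation of type $(W^{n-3}, W^3, W^3)$,
\[
W^{n-3}_{(1)}\big(W^3_{(k)} W^3\big) = W^3_{(k)}\big(W^{n-3}_{(1)} W^3\big) + \big(W^{n-3}_{(0)} W^3\big)_{(k+1)} W^3 + \big(W^{n-3}_{(1)} W^3\big)_{(k)} W^3 .
\]
The left side is known, because $W^3_{(k)} W^3$ is read off from \eqref{ope:first} (it equals $2L$, $0$, $\tfrac{c}{3}1$ or $0$ according as $k = 3, 4, 5$ or $k \geq 6$). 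On the right side $W^{n-3}_{(1)} W^3 = W^{n-2}$ modulo terms of strictly smaller superscript, and pushing the outer products onto $W^3$ from the left via \eqref{commutator} introduces only derivatives of the products $W^3_{(\ell)} W^{n-2}$ with $\ell > k$, i.e.\ elements of the already-determined $S_{n+1,\ell}$; everything else has superscript sum $\leq n$ and so is inductive data. What is left is a single multiple $c_k\, W^3_{(k)} W^{n-2}$, and a short computation using $b_{3,n-3} = \tfrac{2}{n-2}$ (Lemma \ref{main:1}) gives $c_k = 1 + (-1)^{k+1} + (-1)^k\,\tfrac{(n-4)(k+1)}{n-2}$, which is nonzero throughout the range $n \geq 9$ of the induction. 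Hence $W^3_{(k)} W^{n-2}$, and then $L_{(k)} W^{n-1}$, are uniquely determined.

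The second stage determines $W^i_{(k)} W^{n+1-i}$ for $i \geq 4$, by an ascending induction on $i$ using $W^i = W^3_{(1)} W^{i-1}$. The Jacobi relation of type $(W^3, W^{i-1}, W^{n+1-i})$,
\[
W^3_{(1)}\big(W^{i-1}_{(k)} W^{n+1-i}\big) = W^{i-1}_{(k)}\big(W^3_{(1)} W^{n+1-i}\big) + \big(W^3_{(0)} W^{i-1}\big)_{(k+1)} W^{n+1-i} + \big(W^3_{(1)} W^{i-1}\big)_{(k)} W^{n+1-i},
\]
together with $W^3_{(1)} W^{n+1-i} = W^{n+2-i}$ and $W^3_{(0)} W^{i-1} = \tfrac{2}{i}\,\partial W^i + D_{3,i-1}$ (with $D_{3,i-1}$ of superscript $\leq i-2$), collapses to
\[
\Big(1 - \tfrac{2(k+1)}{i}\Big)\, W^i_{(k)} W^{n+1-i} = W^{i-1}_{(k)} W^{n+2-i} + (\text{inductive data and elements of } S_{n+1,\ell},\ \ell > k),
\]
in which $W^{i-1}_{(k)} W^{n+2-i}$ was fixed at the previous step. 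The coefficient $1 - \tfrac{2(k+1)}{i}$ vanishes only at $i = 2k+2$; for that single value we instead use the Jacobi relation of type $(W^4, W^{i-2}, W^{n+1-i})$ with $W^4_{(1)} W^{i-2} = \tfrac{4}{i-3} W^i + C_{4,i-2}$, which produces the coefficient $\tfrac{4}{i-3} - \tfrac{12(k+1)}{i(i-1)}$ of $W^i_{(k)} W^{n+1-i}$, nonzero at $i = 2k+2$ for every integer $k > 2$. All four Jacobi triples appearing here --- $(W^{n-3}, W^3, W^3)$, $(L, W^3, W^{n-2})$, $(W^3, W^{i-1}, W^{n+1-i})$, $(W^4, W^{i-2}, W^{n+1-i})$ --- have superscript sum $n+3$, as the lemma requires, and running $i$ up to $\lfloor(n+1)/2\rfloor$ exhausts $S_{n+1,k}$.

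The conceptual content is thus the same as in Lemmas \ref{main:1}--\ref{main:5}; the main obstacle is the bookkeeping hidden in the phrase ``inductive data''. For each Jacobi relation above one must check that every term other than the single product being solved for either has superscript sum $\leq n$, or lies in $S_{n+1,\ell}$ for some $\ell > k$, or is an entry of $S_{n+1,k}$ with strictly smaller first superscript already determined at an earlier step of the ascending induction --- and that the scalar coefficient of the unknown never degenerates for $n \geq 9$. Doing so requires the same careful normal-ordering via \eqref{deriv}, \eqref{commutator} and \eqref{ncw} as in the earlier lemmas, but no new idea.
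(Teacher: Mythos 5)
Your proof is correct, and its first stage is exactly the paper's: determining $W^3_{(k)} W^{n-2}$, and hence $L_{(k)} W^{n-1}$, via the Jacobi relations \eqref{main:6first} and \eqref{main:6second} of types $(L,W^3,W^{n-2})$ and $(W^{n-3},W^3,W^3)$. Your explicit verification that the coefficients $2k-1$ and $c_k = 1 + (-1)^{k+1} + (-1)^k\tfrac{(n-4)(k+1)}{n-2}$ are nonzero throughout the range $n\geq 9$ is a worthwhile addition, since the paper leaves these nondegeneracy checks implicit. Where you genuinely diverge is the second stage. The paper imposes \eqref{main:6third}, i.e.\ $W^3_{(k)}(W^i_{(1)} W^{n-i}) = W^i_{(1)}(W^3_{(k)} W^{n-i}) + \sum_r \binom{k}{r}(W^3_{(r)} W^i)_{(k+1-r)} W^{n-i}$: the left side reduces to $a_{i,n-i}\, W^3_{(k)} W^{n-2}$ plus inductive data, and the unknown $W^{i+1}_{(k)} W^{n-i}$ appears with coefficient $k - \tfrac{2(k+1)}{i+1}$, which vanishes only when $i+1 = 2 + 2/k$ and hence never for integer $k>2$; so each element of $S_{n+1,k}$ is obtained in one shot from $W^3_{(k)} W^{n-2}$, with no ascending induction on $i$ and no exceptional case. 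Your choice of $W^3_{(1)}\bigl(W^{i-1}_{(k)} W^{n+1-i}\bigr)$ instead yields the coefficient $1 - \tfrac{2(k+1)}{i}$, which does vanish at $i = 2k+2$, forcing both the ascending induction on $i$ (to supply $W^{i-1}_{(k)} W^{n+2-i}$) and the auxiliary relation of type $(W^4, W^{i-2}, W^{n+1-i})$. Your check that the replacement coefficient $\tfrac{4}{i-3} - \tfrac{12(k+1)}{i(i-1)}$ is nonzero at $i = 2k+2$ (it vanishes only at $k = 5/2$) is correct, so your argument does close, but at the cost of an extra case analysis. In short: same skeleton, but the paper's placement of the index $k$ on the \emph{outer} product rather than the inner one makes the nondegeneracy automatic.
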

 
\begin{proof} The argument is the same as the proof of Lemma \ref{main:5}. Imposing the Jacobi relation
\begin{equation} \label{main:6first} L_{(k)} (W^3_{(1)}W^{n-2}) = W^3_{(1)}(L_{(k)} W^{n-2}) +\sum_{i\geq 0} \binom{k}{i} (L_{(i)} W^3)_{(k+1-i)} W^{n-2}, \end{equation} shows that determining $L_{(k)} W^{n-1}$ is equivalent to determining $W^3_{(k)} W^{n-2}$. Imposing the Jacobi relation
\begin{equation} \label{main:6second}  W^{n-3}_{(1)} (W^3_{(k)} W^3) =  W^3_{(k)} (W^{n-3}_{(1)} W^3) + (W^{n-3}_{(1)} W^3)_{(k)} W^3 + (W^{n-3}_{(0)} W^3)_{(k+1)} W^3,\end{equation} shows that $W^3_{(k)} W^{n-2}$, and hence $L_{(k)} W^{n-1}$, are determined from inductive data together with $S_{n+1,\ell}$ for $\ell >k$. Finally, imposing the Jacobi relation
\begin{equation} \label{main:6third} W^3_{(k)} (W^i_{(1)} W^{n-i} )= W^i_{(1)} (W^3_{(k)} W^{n-i}) + \sum_{r\geq 0} \binom{k}{r} (W^3_{(r)} W^i)_{(k+1-r)} W^{n-i}, \end{equation} shows that $W^i_{(k)} W^{n+1-i}$ can be expressed in terms of inductive data together with $S_{n+1,\ell}$ for $\ell > k$. \end{proof} 

This process terminates after finitely many steps since all elements of $S_{n+1,k}$ vanish for $k >n$. Therefore we have proven the following

\begin{thm} \label{maintheorem} There exists a nonlinear conformal algebra $\cL(c,\lambda)$ over the ring $\mathbb{C}[c,\lambda]$ satisfying \eqref{kds-skew}, whose universal enveloping vertex algebra $\cW(c,\lambda)$ has the following properties.

\begin{enumerate}
\item It has conformal weight grading $$\cW(c,\lambda) = \bigoplus_{n\geq 0} \cW(c,\lambda)[n],$$ and $\cW(c,\lambda)[0] \cong \mathbb{C}[c,\lambda]$. 
\item It is strongly generated by $\{L,W^i|\ i\geq 3\}$, and satisfies the OPE relations \eqref{ope:first}-\eqref{ope:fourth} and \eqref{ope:w2w6}-\eqref{ope:w4w5}, together with the Jacobi identities \eqref{main:1first}-\eqref{main:2first}, \eqref{main:2fourth}, \eqref{main:4first}-\eqref{main:6third}, which appear in the above lemmas.
\item It is the unique initial object in the category of vertex algebras with the above properties. 
\end{enumerate}
\end{thm}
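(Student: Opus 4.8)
The plan is to build the OPE data of $\cW(c,\lambda)$ by induction on the conformal weight $n=i+j$ of a product $W^i_{(k)}W^j$, and then to package the result via the De Sole--Kac correspondence for (possibly degenerate) nonlinear conformal algebras over the ring $\mathbb{C}[c,\lambda]$, as set up in Sections~\ref{section:kds} and~\ref{section:voaring}. First I would record the base case of Step~1: imposing \eqref{deriv}--\eqref{ncw} together with all Jacobi relations of type $(W^i,W^j,W^k)$ with $i+j+k\leq 11$ determines every product $W^i_{(k)}W^j$ with $2\leq i\leq j$ and $i+j\leq 9$ explicitly, yielding \eqref{ope:first}--\eqref{ope:fourth} and \eqref{ope:w2w6}--\eqref{ope:w4w5}; one checks by inspection that the three conditions of the inductive hypothesis of Step~2 hold at level $n=9$, i.e.\ weight-homogeneity and $\mathbb{Z}_2$-compatibility of all OPEs, closure under \eqref{deriv}--\eqref{commutator}, and the $(c,\lambda)$-independence of the coefficients $a_{i,j}$ and $b_{i,j}$.

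The inductive step is exactly the content of Lemmas~\ref{main:1}--\ref{main:6}, so the bulk of the proof consists of assembling them in the right logical order. Assuming the inductive data --- all relations $\partial^a W^i(z)\partial^b W^j(w)$ with $i+j\leq n$ --- has been produced and satisfies the inductive hypothesis, I would determine the sets $S_{n+1,k}$ by descending induction on $k$: they vanish for $k>n$, and Lemma~\ref{main:6} expresses $S_{n+1,k}$ for $k>2$ in terms of the inductive data, the Jacobi relations of type $(W^i,W^j,W^k)$ with $i+j+k=n+3$, and the sets $S_{n+1,\ell}$ with $\ell>k$; Lemma~\ref{main:5} then gives $S_{n+1,2}$ from $S_{n+1,\ell}$ with $\ell\geq 3$; Lemma~\ref{main:4} gives $S_{n+1,1}$ (pinning down the coefficients $a_{i,n+1-i}$ along the way) from $S_{n+1,\ell}$ with $\ell\geq 2$; and finally Lemmas~\ref{main:1}, \ref{main:2}, \ref{main:3} give $S_{n+1,0}$, i.e.\ the products $W^i_{(0)}W^{n+1-i}$, once the coefficients $b_{i,n+1-i}$ (shown there to be rational functions of $n$ alone) and $S_{n+1,1}$ are known. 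Since \eqref{deriv} and \eqref{commutator} extend this to all $\partial^a W^i(z)\partial^b W^j(w)$ with $i+j\leq n+1$, and since the explicit leading coefficients $b_{3,n-2}=\tfrac{2}{n-1}$, $b_{4,n-3}=\tfrac{12}{(n-1)(n-2)}$, $a_{4,n-3}=\tfrac{4}{n-2}$, etc., are manifestly independent of $c$ and $\lambda$, the inductive hypothesis propagates to level $n+1$; as the recursion in $k$ terminates in each weight, this produces OPE data for all $i,j\geq 2$ satisfying \eqref{deriv}--\eqref{ncw} and the chosen subset of Jacobi relations.

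By construction this data is a system of $\lambda$-brackets on the free $\mathbb{C}[c,\lambda][T]$-module with basis $\{L,W^i\mid i\geq 3\}$, $\Delta(L)=2$, $\Delta(W^i)=i$, obeying \eqref{kds:deriv}, the grading condition \eqref{gradingcond}, and skew-symmetry \eqref{kds-skew}; hence it defines a (possibly degenerate) nonlinear conformal algebra $\cL(c,\lambda)$ over $\mathbb{C}[c,\lambda]$ in the sense of Section~\ref{section:voaring}. Its universal enveloping vertex algebra $\cW(c,\lambda)=U(\cL(c,\lambda))=\cT(\cL(c,\lambda))/\cM'(\cL(c,\lambda))$ is then a vertex algebra over $\mathbb{C}[c,\lambda]$, strongly generated by $\{L,W^i\mid i\geq 3\}$, graded by conformal weight with $\cW(c,\lambda)[0]\cong\mathbb{C}[c,\lambda]$, which gives (1) and (2). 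For (3), I would note that at each stage of the induction the product in question was \emph{forced} to be the exhibited value modulo null fields involving only strictly lower generators; consequently any vertex algebra $\cV$ with strong generators $\{L,W^i\mid i\geq 3\}$ satisfying the listed OPE relations and Jacobi identities admits a homomorphism $\cL(c,\lambda)\to\cV$ of (degenerate) nonlinear conformal algebras, which extends uniquely to a vertex algebra homomorphism $\cW(c,\lambda)\to\cV$ by the universal property of $U$ (the analogue of Theorem~\ref{thm:dskuniversal}). Thus $\cW(c,\lambda)$ is the initial object in that category.

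The hard part will be the bookkeeping in the inductive step: checking that the deliberately restricted family of Jacobi relations --- only type $(W^i,W^j,W^k)$ with $i+j+k=n+3$, and really only the particular triples appearing in \eqref{main:1first}, \eqref{main:2first}, \eqref{main:2fourth} and \eqref{main:4first}--\eqref{main:6third} --- genuinely pins down every $S_{n+1,k}$. The delicate point is the dependency structure: $S_{n+1,k}$ can only be resolved after $S_{n+1,\ell}$ for $\ell>k$, and $S_{n+1,0}$ only after $S_{n+1,1}$ (through the inductive terms $C_{i,n+1-i}$), so the recursion in $k$ must be run downward and the case $k=1$ settled before $k=0$; one also has to be sure the ambiguity carried along --- null fields in generators of weight $\leq n$ --- never obstructs the next step, which is precisely why the theorem claims only that $\cL(c,\lambda)$ is a nonlinear conformal algebra, not necessarily a nonlinear Lie conformal algebra, and why free generation of $\cW(c,\lambda)$ is not asserted here but deferred to the later comparison with quotients of known graded character.
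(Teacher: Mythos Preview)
Your proposal is correct and follows essentially the same approach as the paper: the theorem is stated there as the summary of Step~1 (the explicit OPEs \eqref{ope:first}--\eqref{ope:fourth}, \eqref{ope:w2w6}--\eqref{ope:w4w5}) and Step~2 (the inductive determination of $S_{n+1,k}$ via Lemmas~\ref{main:1}--\ref{main:6}, resolved by descending induction on $k$), packaged through the De Sole--Kac correspondence over $\mathbb{C}[c,\lambda]$ for possibly degenerate nonlinear conformal algebras. Your ordering of the dependencies --- $S_{n+1,k}$ for $k>2$ first, then $k=2$, then $k=1$, then $k=0$ --- and your observation that only a (possibly degenerate) nonlinear conformal algebra is claimed at this stage, with free generation deferred, exactly match the paper's logic.
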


It is not yet apparent that all Jacobi identities of the form $(W^i, W^j, W^k)$ hold as consequences of \eqref{deriv}-\eqref{ncw} alone, or equivalently, that $\cL(c,\lambda)$ is a nonlinear Lie conformal algebra and $\cW(c,\lambda)$ is freely generated.

\subsection{Step 3: Free generation of $\cW(c,\lambda)$} In order to prove that $\cW(c,\lambda)$ is freely generated, we need to consider certain simple quotients of $\cW(c,\lambda)$. It is useful to think of such quotients as being obtained by a two-step procedure. First, let $$I \subseteq \mathbb{C}[c,\lambda] \cong \cW(c,\lambda)[0]$$ be an ideal, and let $I\cdot \cW(c,\lambda)$ denote the vertex algebra ideal generated by $I$, which is the set of all $I$-linear combinations of elements of $\cW(c,\lambda)$. We define the quotient
\begin{equation}\label{eq:quotientbyi} \cW^I(c,\lambda) = \cW(c,\lambda) / I\cdot \cW(c,\lambda),\end{equation} 
which has strong generators $\{L, W^i|\ i\geq 3\}$ satisfying the same OPE algebra as the corresponding generators of $\cW(c,\lambda)$ where all structure constants in $\mathbb{C}[c,\lambda]$ are replaced by their images in $\mathbb{C}[c,\lambda] / I$. Even though $\cW(c,\lambda)$ will turn out to be simple as a vertex algebra over $\mathbb{C}[c,\lambda]$, $\cW^I(c,\lambda)$ need not be simple as a vertex algebra over $\mathbb{C}[c,\lambda] / I$.

We now consider localizations of $\cW^I(c,\lambda)$. Let $D\subseteq \mathbb{C}[c,\lambda]/I$ be a multiplicatively closed subset, and let $R = D^{-1}\mathbb{C}[c,\lambda]/I$ denote the localization of $\mathbb{C}[c,\lambda]/I$ along $D$. Then we have the localization of $\mathbb{C}[c,\lambda]/I$-modules
$$\cW_R^I(c,\lambda) = R \otimes_{\mathbb{C}[c,\lambda]/I} \cW^I(c,\lambda),$$ which is a vertex algebra over $R$.

\begin{thm} \label{thm:simplequotient} Let $I$, $D$, and $R$ be as above, and let $\cW$ be a simple vertex algebra over $R$ with the following properties.
\begin{enumerate}
\item It is generated by a Virasoro field $\tilde{L}$ of central charge $c$ and a weight $3$ primary field $\tilde{W}^3$.
\item Setting $\tilde{W}^2 = \tilde{L}$ and $\tilde{W}^i  = \tilde{W}^3_{(1)} \tilde{W}^{i-1}$ for $i\geq 4$, the OPE relations \eqref{ope:first}-\eqref{ope:fourth} and \eqref{ope:w2w6}-\eqref{ope:w4w5} are satisfied if the structure constants are replaced with their images in $R$. 
\end{enumerate}
Then $\cW$ is the simple quotient of $\cW^I_R(c,\lambda)$ by its maximal graded ideal $\cI$.
\end{thm}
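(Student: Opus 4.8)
\emph{The plan.} The theorem will follow once we produce a surjective vertex algebra homomorphism $\phi\colon \cW^I_R(c,\lambda)\to\cW$ and identify its kernel with the maximal graded ideal $\cI$. To build $\phi$ I would invoke the De Sole--Kac correspondence over the ring $R$. Writing $A=\cL(c,\lambda)\otimes_{\mathbb{C}[c,\lambda]}R$ for the degenerate nonlinear conformal algebra obtained by base change, one has $\cW^I_R(c,\lambda)=U(A)$, since $R$ is a localization and $U(-)$ is built from tensor algebras and explicit ideals. The universal property of $U(A)$ — the analogue of Theorem~\ref{thm:dskuniversal} for degenerate nonlinear conformal algebras over a ring — then says that a vertex algebra homomorphism $U(A)\to\cW$ is the same as an $R[T]$-module map $A\to\cW$ compatible with all the $\lambda$-brackets. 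I would take the map $L\mapsto\tilde L$, $W^3\mapsto\tilde W^3$, and $W^i\mapsto\tilde W^i:=\tilde W^3_{(1)}\tilde W^{i-1}$ for $i\geq4$. The whole problem then reduces to verifying that in $\cW$ the fields $\tilde L$ and the $\tilde W^i$ satisfy \emph{every} OPE relation $W^i_{(k)}W^j=P_{i,j,k}$ of $\cL(c,\lambda)$ (with structure constants sent into $R$); here $P_{i,j,k}$ denotes the normally ordered polynomial prescribed by the construction of Section~\ref{section:main}. Hypothesis~(2) is exactly this statement for $i+j\leq9$.

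\emph{Upgrading to all weights.} To obtain the remaining relations I would re-run the inductive construction of Section~\ref{section:main} inside $\cW$. The crucial point is that $\cW$, being a genuine vertex algebra, satisfies \emph{all} Jacobi identities \eqref{jacobi}, and in particular the specific Jacobi relations of type $(W^i,W^j,W^k)$ used in Lemmas~\ref{main:1}--\ref{main:6}. Assuming inductively that the OPEs $\tilde W^i_{(k)}\tilde W^j$ with $i+j\leq n$ already coincide with the images of the corresponding elements of $\cW(c,\lambda)$, the downward recursion in $k$ of those lemmas (Lemma~\ref{main:6} for $k>2$, then Lemma~\ref{main:5} for $k=2$, Lemma~\ref{main:4} for $k=1$, and Lemmas~\ref{main:1}, \ref{main:2}, \ref{main:3} for $k=0$) expresses each $\tilde W^i_{(k)}\tilde W^j$ with $i+j=n+1$ through the weight-$\leq n$ data, and hence forces it to equal the image of $P_{i,j,k}$. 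The same induction shows that the fields $\{\tilde L,\tilde W^i\}$ close under OPE, so by hypothesis~(1) they strongly generate $\cW$; thus $\phi$ is well defined, and it is surjective because $\cW$ is generated by $\tilde L=\phi(L)$ and $\tilde W^3=\phi(W^3)$.

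\emph{Identifying the kernel.} Being a vertex algebra homomorphism that is $R$-linear, graded, and sends the vacuum to the vacuum, $\phi$ restricts to the identity $R\to R$ on weight $0$. Hence $\ker\phi$ is a proper graded ideal of $\cW^I_R(c,\lambda)$ with $(\ker\phi)[0]=0$, so $\ker\phi\subseteq\cI$; note here that the sum of all proper graded ideals with vanishing weight-$0$ part is again such an ideal, so $\cI$, the maximal one, is unique. On the other hand $\cW\cong\cW^I_R(c,\lambda)/\ker\phi$ is simple over $R$, and pulling graded ideals back along the surjection $\phi$ — which preserves the weight-$0$ part — forces $\ker\phi$ to be maximal among proper graded ideals with vanishing weight-$0$ part. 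Therefore $\ker\phi=\cI$, i.e. $\cW$ is the simple quotient $\cW^I_R(c,\lambda)/\cI$, as claimed.

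\emph{The main obstacle.} The delicate point is the induction of the second paragraph. The recursion in Section~\ref{section:main} is carried out in the free differential algebra on $\{L,W^i\}$, where coefficients of normally ordered monomials can literally be read off, whereas $\cW$ need not be freely generated, so this coefficient comparison is not immediately legitimate. I would dispose of this by observing that the conformal weight grading and the $\mathbb{Z}_2$-symmetry $W^i\mapsto(-1)^iW^i$ — both purely formal, hence valid in $\cW$ — already isolate at each stage the unique \emph{new} generator that can occur ($\tilde W^{n-1}$ in the products $W^i_{(0)}W^{n+1-i}$ and $W^i_{(1)}W^{n+1-i}$, and no new generator for $k\geq2$) from normally ordered polynomials in the strictly lower generators. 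Consequently the scalar structure constants $a_{i,j}$, $b_{i,j}$ are pinned down exactly as in the free case, and the remaining lower-weight part of each OPE is expressed through the weight-$\leq n$ data, which agrees with $\cW(c,\lambda)$ by the inductive hypothesis; either way one obtains $\tilde W^i_{(k)}\tilde W^j$ equal to the image of $P_{i,j,k}$ without appealing to free generation of $\cW$. One must also record the routine fact that the base change $\mathbb{C}[c,\lambda]\to R$ commutes with $U(-)$, which is immediate from flatness of localization.
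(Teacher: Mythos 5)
Your proposal is correct and follows essentially the same route as the paper's proof: hypothesis (2) supplies the Jacobi relations of type $(\tilde W^i,\tilde W^j,\tilde W^k)$ for $i+j+k\le 11$, the inductive lemmas of Section \ref{section:main} show that all higher OPEs are forced by this data together with the Jacobi identities that automatically hold in the genuine vertex algebra $\cW$, so $\{\tilde L,\tilde W^i\}$ close under OPE and strongly generate $\cW$, and the universal property of $\cW^I_R(c,\lambda)$ combined with simplicity of $\cW$ identifies it with the simple quotient by $\cI$. Your additional care about non-free generation of $\cW$ and the explicit identification of $\ker\phi$ with $\cI$ are refinements of, not departures from, the paper's one-paragraph argument.
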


\begin{proof} The assumption that $\{\tilde{L}, \tilde{W}^i|\ i\geq 3\}$ satisfy \eqref{ope:first}-\eqref{ope:fourth} and \eqref{ope:w2w6}-\eqref{ope:w4w5} is equivalent to the statement that the Jacobi relations of type $(\tilde{W}^i, \tilde{W}^j, \tilde{W}^k)$ for $i+j+k \leq 11$ hold as consequences of \eqref{deriv}-\eqref{ncw} alone, in the corresponding (possibly degenerate) nonlinear conformal algebra. Then all OPE relations among the generators of $\cW^I_R(c,\lambda)$ must hold among the fields $\{\tilde{L}, \tilde{W}^i|\ i \geq 3\}$, since they are formal consequences of \eqref{ope:first}-\eqref{ope:fourth} and \eqref{ope:w2w6}-\eqref{ope:w4w5} together with the Jacobi identities, which hold in $\cW$. It follows that $\{\tilde{L},\tilde{W}^i|\ i\geq 3\}$ closes under OPE and strongly generates a vertex subalgebra $\cW' \subseteq \cW$, which must coincide with $\cW$ since $\cW$ is assumed to be generated by $\{\tilde{L},\tilde{W}^3\}$ as a vertex algebra. Hence $\cW$ has the same strong generating set and OPE algebra as $\cW^I_R(c, \lambda)$. Since $\cW$ is simple and the category of vertex algebras over $R$ with this strong generating set and OPE algebra has a unique simple graded object, $\cW$ must be the simple quotient of $\cW^I_R(c, \lambda)$ by its maximal graded ideal. \end{proof}

\begin{remark} \label{rem:simplicity} Although we are mostly interested in simple vertex algebras, if we drop the assumption that $\cW$ is simple but keep the remaining hypotheses in Theorem \ref{thm:simplequotient}, the same argument shows that $\cW$ is a quotient of $\cW^I_R(c,\lambda)$ by some graded ideal $\cI'$ which need not be maximal. \end{remark}

\begin{thm} \label{wprinquot} For all $n\geq 3$, the algebra $\cW^k(\gs\gl_n, f_{\text{prin}})$ which has central charge 
$$c(k) =  -\frac{(n-1) (n^2 + n k - n-1) (n^2 + k + n k)}{n + k},$$ is the simple quotient of $\cW^I_R(c,\lambda)$ for some prime ideal $I \subseteq \mathbb{C}[c,\lambda]$ and some localization $R$ of $\mathbb{C}[c,\lambda] / I$. Moreover, the maximal proper graded ideal $\cI \subseteq \cW^I_R(c,\lambda)$ is generated by a singular vector of weight $n+1$ of the form
\begin{equation} \label{sing:genfirstproof} W^{n+1} - P(L, W^3, \dots, W^{n-1}),\end{equation} where $P$ is a normally ordered polynomial in $L, W^3,\dots, W^{n-1}$, and their derivatives.
\end{thm}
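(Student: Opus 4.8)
The plan is to realize $\cW^k(\gs\gl_n, f_{\text{prin}})$ as an object of the category $C_{\cL(c,\lambda)}$ at an appropriate specialization of $(c,\lambda)$, apply Theorem \ref{thm:simplequotient}, and then extract the shape of the singular vector from a character count. Concretely, by \cite{ALY}, $\cW^k(\gs\gl_n, f_{\text{prin}})$ is freely generated of type $\cW(2,3,\dots,n)$ and is generated by its weight $2$ and weight $3$ fields. Viewing it as a vertex algebra over a localization of $\mathbb{C}[k]$, let $L$ be its Virasoro field, of central charge $c(k)$, and let $W^3$ be a primary weight $3$ field, unique up to scalar. Since $\cW^k(\gs\gl_n, f_{\text{prin}})$ is simple for generic $k$, the scalar $W^3_{(5)} W^3$ is a nonzero multiple of $1$ (otherwise $W^3$ would lie in the radical of the level $3$ Shapovalov form, hence vanish); after localizing further and, if necessary, adjoining a square root to the base ring, normalize $W^3$ so that $W^3_{(5)} W^3 = \frac{c}{3} 1$, and set $W^2 = L$ and $W^i = W^3_{(1)} W^{i-1}$ for $i \geq 4$. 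The order-two automorphism of the Dynkin diagram of $\gs\gl_n$ induces a $\mathbb{Z}_2$-action on $\cW^k(\gs\gl_n, f_{\text{prin}})$ that fixes $L$ and negates $W^3$, hence sends $W^i \mapsto (-1)^i W^i$; together with conformal weight this constrains the OPEs of $L, W^3, W^4, W^5$ to the general forms written at the start of Section \ref{section:main}. Because $\cW^k(\gs\gl_n, f_{\text{prin}})$ is an honest vertex algebra, all Jacobi identities hold, and imposing those of total weight $\leq 11$ exactly as in Step 1 of Section \ref{section:main} forces the OPEs among $\{L, W^i|\ i \leq 7\}$ to coincide with \eqref{ope:first}--\eqref{ope:fourth} and \eqref{ope:w2w6}--\eqref{ope:w4w5} with $c$ specialized to $c(k)$ and $\lambda$ specialized to a value $\lambda_0$ in the base ring. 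Here one uses that quantum Drinfeld--Sokolov reduction is defined over $\mathbb{C}[k]$, so all structure constants of $\cW^k(\gs\gl_n, f_{\text{prin}})$ are rational in $k$; the value $\lambda_0$ is read off from a single structure constant, e.g.\ the coefficient $185 - 80 \lambda(2+c)$ of $W^3$ in $L_{(4)} W^5$ in \eqref{ope:third}, and by Feigin--Frenkel duality it may be taken to depend only on $c$.

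Let $\pi: \mathbb{C}[c,\lambda] \to \mathbb{C}(k)$ be the ring homomorphism with $\pi(c) = c(k)$ and $\pi(\lambda) = \lambda_0$, and put $I = \ker \pi$. Its image is a finitely generated domain of transcendence degree $1$ over $\mathbb{C}$ (it contains the non-constant $c(k)$ and sits inside $\mathbb{C}(k)$), so $I$ is a height-$1$ prime of the UFD $\mathbb{C}[c,\lambda]$, i.e.\ $I = (p)$ for an irreducible $p$. Let $R$ be a localization of $\mathbb{C}[c,\lambda]/I$ over which $\cW^k(\gs\gl_n, f_{\text{prin}})$ is realized as a free and simple vertex algebra. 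Then $\cW := \cW^k(\gs\gl_n, f_{\text{prin}})$ satisfies the hypotheses of Theorem \ref{thm:simplequotient}: it is simple over $R$, generated by the Virasoro field $L$ of central charge $c$ and the primary weight $3$ field $W^3$, and \eqref{ope:first}--\eqref{ope:fourth}, \eqref{ope:w2w6}--\eqref{ope:w4w5} hold with structure constants replaced by their images in $R$. Hence $\cW$ is the simple quotient of $\cW^I_R(c,\lambda)$ by its maximal graded ideal $\cI$.

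It remains to describe $\cI$. The surjection $\cW^I_R(c,\lambda) \twoheadrightarrow \cW$ and the fact that $\cW$ is of type $\cW(2,3,\dots,n)$ give a normally ordered polynomial relation expressing $W^{n+1}$ in terms of $L, W^3, \dots, W^n$ and their derivatives; by the $\mathbb{Z}_2$-symmetry $W^{n+1}$ has eigenvalue $(-1)^{n+1}$, while the only weight $n+1$ monomial involving $W^n$ is a multiple of $\partial W^n$, of eigenvalue $(-1)^n$, so $W^n$ cannot occur and the relation reads $W^{n+1} = P(L, W^3, \dots, W^{n-1})$. Lifting $P$ verbatim, $\omega := W^{n+1} - P(L, W^3, \dots, W^{n-1}) \in \cW^I_R(c,\lambda)$ is a nonzero element of $\cI$. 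A character count finishes the argument: using $W^{n+1} \equiv P$ and inducting through $W^i = W^3_{(1)} W^{i-1}$ (together with $C_{3,j} = 0$), the quotient of $\cW^I_R(c,\lambda)$ by the ideal generated by $\omega$ is strongly generated by $L, W^3, \dots, W^n$, so its graded character is at most that of a freely generated algebra of type $\cW(2,3,\dots,n)$, which is the character of $\cW$; since it surjects onto $\cW$, it equals $\cW$, so the ideal generated by $\omega$ equals $\cI$. The same count shows $\cW^I_R(c,\lambda)[m] \to \cW[m]$ is an isomorphism for $m \leq n$, hence $\cI$ vanishes below weight $n+1$ (so $p$ has level $n+1$ in the Shapovalov spectrum), and $\omega$, having minimal weight in $\cI$, is a singular vector.

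I expect the decisive step to be the identification of $\lambda_0$, i.e.\ computing one structure constant of $\cW^k(\gs\gl_n, f_{\text{prin}})$ and matching it against the universal value; this is where the explicit irreducible polynomial $p$, and hence the truncation curve $V(I)$, is obtained, and it is most efficiently carried out via the coset realization of Theorem \ref{ACLIImain}, in which the weight $3$ field is available explicitly (cf.\ \cite{BBSSII}). The remaining points (the normalization of $W^3$, the choice of localization $R$, and the small values of $n$ for which some $W^i$ with $i \leq 7$ appearing in \eqref{ope:w2w6}--\eqref{ope:w4w5} are decomposable) are routine and do not affect the structure of the argument.
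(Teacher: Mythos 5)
Your proposal is correct and follows essentially the same route as the paper: match the OPEs of $\cW^k(\gs\gl_n,f_{\text{prin}})$ against the universal ones to produce a rational $\lambda(k)$, obtain the prime ideal $I$ by eliminating $k$ (your kernel-of-$\pi$ description is equivalent to the paper's elimination-theory step), invoke Theorem \ref{thm:simplequotient}, and then use the type-$\cW(2,3,\dots,n)$ property together with free generation to pin down the weight-$(n+1)$ singular vector and show it generates $\cI$. The one point you under-sell is the range $3\leq n\leq 7$: there the decoupling relations occur already in weight $\leq 8$, so your Step-1 matching of \eqref{ope:first}--\eqref{ope:fourth} and \eqref{ope:w2w6}--\eqref{ope:w4w5} only determines the OPEs modulo null fields and hypothesis (2) of Theorem \ref{thm:simplequotient} is not automatic; the paper disposes of these cases by direct computer verification rather than treating them as routine.
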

 
\begin{proof} This is straightforward to verify by computer for $n\leq 7$, so assume that $n\geq 8$. Recall that $\cW^k(\gs\gl_n, f_{\text{prin}})$ is freely generated by the Virasoro field $L$, a weight $3$ primary field $W^3$ satisfying $W^3_{(5)} W^3 = \frac{c}{3} 1$, and fields $W^i = W^3_{(1)} W^{i-1}$ for $4 \leq i \leq n$. Since $n\geq 8$, there are no normally ordered polynomial relations among $L,W^3,\dots, W^8$, and their derivatives, so all Jacobi identities of type $(W^i, W^j, W^{\ell})$ hold as consequences of \eqref{deriv}-\eqref{ncw} alone, for $i+j+\ell \leq 11$. Since the structure constants in $\cW^k(\gs\gl_n, f_{\text{prin}})$ are rational functions of $k$, it follows from Theorem \ref{thm:simplequotient} that there exists some rational function
$$\lambda(k) = \frac{f(k)}{g(k)}$$ such that \eqref{ope:first}-\eqref{ope:fourth} and \eqref{ope:w2w6}-\eqref{ope:w4w5} are satisfied if $c$ and $\lambda$ are replaced by $c(k)$ and $\lambda(k)$, respectively.

Let $J \subseteq \mathbb{C}[c,\lambda, k]$ be the ideal generated by 
$$c(n+k) + (n-1) (n^2 + n k - n -1) (n^2 + k + n k),\qquad g(k) \lambda - f(k).$$ 
By standard methods of elimination theory, we can eliminate $k$ to obtain a prime ideal $I \subseteq \mathbb{C}[c,\lambda]$ such that some localization $R$ of $\mathbb{C}[c,\lambda] /I$ is isomorphic to some localization $D^{-1} \mathbb{C}[k]$. Here $D$ is a multiplicatively closed subset of $\mathbb{C}[k]$ which contains $(n+k)$ and all roots of $g(k)$.

Since $\cW^k(\gs\gl_n, f_{\text{prin}})$ is of type $\cW(2,3,\dots, n)$, there must be a singular vector in $\cW^I_R(c,\lambda)$ of weight $n+1$ such that the coefficient of $W^{n+1}$ is nonzero. If this coefficient is not invertible in $R$, we may localize $R$ further (without changing notation) so it becomes invertible, and the singular vector has the form \eqref{sing:genfirstproof}. Since $\cW^k(\gs\gl_n, f_{\text{prin}})$ is simple for generic values of $k$, it is simple as a vertex algebra over $R$, and hence must be the simple quotient $\cW^{I}_{R}(c,\lambda)/ \cI$, by Theorem \ref{thm:simplequotient}. Here $\cI$ is the maximal proper graded ideal of $\cW^{I}_{R}(c,\lambda)$. Finally, we need to show that \eqref{sing:genfirstproof} generates $\cI$.

Let $\cI' \subseteq \cI$ be the ideal in $\cW^{I}_R(c,\lambda)$ generated by \eqref{sing:genfirstproof}. Since $\cW^k(\gs\gl_n, f_{\text{prin}}) \cong\cW^I_R(c,\lambda)/ \cI$, $\cW^k(\gs\gl_n, f_{\text{prin}})$ is also a quotient of $\cW^I_R(c,\lambda) / \cI'$. Also, $\cW^I_R(c,\lambda)/ \cI'$ is of type $\cW(2,3,\dots, n)$; see Lemma \ref{decoup}. Since $\cW^k(\gs\gl_n, f_{\text{prin}})$ is freely generated, there can be no more relations in $\cW^I_R(c,\lambda)/ \cI$ than in $\cW^I_R(c,\lambda) / \cI'$, so $\cI'  = \cI$. \end{proof}

\begin{remark} It is not obvious at this stage that the generator of the ideal $I$ is a polynomial in both $n$ and $c$. Later, we will give an explicit formula for this generator; see \eqref{ideal:wslna}.
\end{remark}

\begin{cor} All Jacobi identities of type $(W^i, W^j, W^k)$ hold as consequences of \eqref{deriv}-\eqref{ncw} alone in $\cL(c,\lambda)$, so $\cL(c,\lambda)$ is a nonlinear Lie conformal algebra with generators $\{L, W^i|\ i\geq 3\}$. Equivalently, $\cW(c,\lambda)$ is freely generated by $\{L, W^i|\ i\geq 3\}$ and has graded character
\begin{equation} \label{grchar} \chi(\cW(c,\lambda), q) = \sum_{n\geq 0} \text{rank}_{\mathbb{C}[c,\lambda]}( \cW(c,\lambda)[n]) q^n = \prod_{n\geq 2} \frac{1}{(1-q^n)^{n-1}}.\end{equation} Moreover, for any prime ideal $I \subseteq \mathbb{C}[c,\lambda]$, $\cW^I(c,\lambda)$ is freely generated by $\{L, W^i|\ i\geq 3\}$ as a vertex algebra over $\mathbb{C}[c,\lambda] / I$, and
\begin{equation} \label{grcharq} \chi(\cW^I(c,\lambda), q) = \sum_{n\geq 0} \text{rank}_{\mathbb{C}[c,\lambda]/I}( \cW^I(c,\lambda)[n]) q^n = \prod_{n\geq 2} \frac{1}{(1-q^n)^{n-1}},\end{equation} 
and for any localization $R = D^{-1} \mathbb{C}[c,\lambda]/I$ along a multiplicatively closed set $D \subseteq \mathbb{C}[c,\lambda]/I$, $\cW^I_R(c,\lambda)$ is freely generated by $\{L, W^i|\ i\geq 3\}$ and \begin{equation} \label{grcharql} \chi(\cW^I_R(c,\lambda), q) = \sum_{n\geq 0} \text{rank}_{R}( \cW^I_R(c,\lambda)[n]) q^n = \prod_{n\geq 2} \frac{1}{(1-q^n)^{n-1}}.\end{equation} 
\end{cor}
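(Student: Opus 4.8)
The plan is to prove that $\cW(c,\lambda)$ is freely generated; all the remaining assertions then follow formally. Recall that $\cW(c,\lambda)=U(\cL(c,\lambda))=\cT(\cL(c,\lambda))/\cM'(\cL(c,\lambda))$ with $\cM(\cL(c,\lambda))\subseteq\cM'(\cL(c,\lambda))$, and that $\cT(\cL(c,\lambda))/\cM(\cL(c,\lambda))$ already admits the normally ordered monomials \eqref{freegen} in $\{L,W^i\mid i\geq 3\}$ as a $\mathbb{C}[c,\lambda]$-basis. Hence the images of those monomials span each weighted piece $\cW(c,\lambda)[n]$, and it suffices to show that, for each $n$, the weight $n$ monomials $m_1,\dots,m_{p(n)}$ of the form \eqref{freegen} are $\mathbb{C}[c,\lambda]$-linearly independent (here $p(n)$ is the coefficient of $q^n$ in $\prod_{j\geq 2}(1-q^j)^{-(j-1)}$). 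Given that, the monomials form an $R$-basis in every weight after any of the base changes in question, so free generation over $\mathbb{C}[c,\lambda]/I$ and over $R=D^{-1}\mathbb{C}[c,\lambda]/I$ follows by flat base change, and the graded characters \eqref{grchar}, \eqref{grcharq}, \eqref{grcharql} are read off from \eqref{gradedchar:ua}; moreover $\cM'(\cL(c,\lambda))=\cM(\cL(c,\lambda))$, which by the De Sole--Kac correspondence (Section \ref{section:kds}) says precisely that $\cL(c,\lambda)$ is a nonlinear Lie conformal algebra and all Jacobi identities of type $(W^i,W^j,W^k)$ hold as consequences of \eqref{deriv}--\eqref{ncw} alone.

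To prove linear independence, suppose $\sum_i f_i m_i=0$ in $\cW(c,\lambda)[n]$ with $f_i\in\mathbb{C}[c,\lambda]$. Fix any $m\geq n$. By Theorem \ref{wprinquot}, $\cW^k(\gs\gl_m,f_{\text{prin}})$ is the simple quotient of $\cW^{I_m}_{R_m}(c,\lambda)$ for a prime ideal $I_m=(p_m)\subseteq\mathbb{C}[c,\lambda]$ and a localization $R_m$ of $\mathbb{C}[c,\lambda]/I_m$ isomorphic to a localization of $\mathbb{C}[k]$, in such a way that $L,W^3$ and $W^i=W^3_{(1)}W^{i-1}$ go over to the free generators of $\cW^k(\gs\gl_m,f_{\text{prin}})$, which is freely generated of type $\cW(2,3,\dots,m)$. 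Reducing the relation modulo $I_m$, localizing, and projecting to the simple quotient gives $\sum_i\bar f_i\,m_i=0$ in $\cW^k(\gs\gl_m,f_{\text{prin}})[n]$. Since each $m_i$ has weight $n\leq m$, it involves only $W^2,\dots,W^n$ and is therefore one of the PBW basis monomials of the freely generated vertex algebra $\cW^k(\gs\gl_m,f_{\text{prin}})$; distinct $m_i$ give distinct basis monomials. Hence $\bar f_i=0$ in $R_m$, and since $R_m$ is a localization of the domain $\mathbb{C}[c,\lambda]/I_m$, this forces $f_i\in I_m$, i.e. $p_m\mid f_i$ in $\mathbb{C}[c,\lambda]$, for every $i$ and every $m\geq n$.

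It remains to show that the irreducible elements $p_m$, $m\geq n$, are pairwise non-associate; then each $f_i$ is divisible by infinitely many pairwise non-associate irreducibles of the unique factorization domain $\mathbb{C}[c,\lambda]$, so $f_i=0$, completing the proof. Suppose instead $V(I_m)=V(I_{m'})$ for some $m\neq m'$, and choose a point $\mathfrak p\in\text{Specm}(R_m)\cap\text{Specm}(R_{m'})$ generic on this common rational curve, with generic level $k$. Because the OPE algebra of $\cW^{I}_R(c,\lambda)$ is always the OPE algebra of $\cW(c,\lambda)$ with structure constants reduced modulo $I$, evaluating $\cW^{I_m}_{R_m}(c,\lambda)$ at $\mathfrak p$ produces one and the same vertex algebra $\cW^{\mathfrak p}(c,\lambda)=\cW(c,\lambda)\otimes_{\mathbb{C}[c,\lambda]}\mathbb{C}[c,\lambda]/\mathfrak p$ over $\mathbb{C}$, independently of $m$. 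By the discussion following Theorem \ref{thm:dskuniversal}, $\cW^{\mathfrak p}(c,\lambda)$ has a unique simple graded quotient, which by Theorem \ref{wprinquot} must be simultaneously $\cW^{k_m}(\gs\gl_m,f_{\text{prin}})$ and $\cW^{k_{m'}}(\gs\gl_{m'},f_{\text{prin}})$ at generic levels $k_m,k_{m'}$ — impossible, since these two vertex algebras have minimal strong generating sets of different sizes. This establishes the non-associativity claim and hence the corollary.

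The step I expect to be the main obstacle is the passage from ``every coefficient $f_i$ vanishes on every truncation curve $V(I_m)$'' to ``$f_i=0$''. This is exactly where one must exploit the existence of infinitely many freely generated quotients of $\cW(c,\lambda)$ sitting on genuinely distinct curves, and the distinctness of the curves $V(I_m)$ — which I reduce to uniqueness of the simple graded quotient of $\cW^{\mathfrak p}(c,\lambda)$ together with the fact that the principal $\cW$-algebras of $\gs\gl_m$ have different types for different $m$ — is the delicate point, since a priori nothing prevents all null-field coefficients from being divisible by a single fixed polynomial.
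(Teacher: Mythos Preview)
Your proof is correct and follows the paper's strategy of using the freely generated algebras $\cW^k(\gs\gl_m, f_{\text{prin}})$ as witnesses against null relations. The paper's argument is terser: it invokes a single $m=N$ and asserts that a rank deficiency in $\cW(c,\lambda)[N]$ would persist ``in any quotient\dots as well as any localization of such a quotient'', which as you correctly anticipate is the delicate step---a relation $\sum f_i m_i=0$ over $\mathbb{C}[c,\lambda]$ could in principle become trivial after reduction modulo $I_N$ if every $f_i$ happens to lie in $I_N$. Your version makes this step rigorous by running the argument for \emph{all} $m\geq n$, obtaining $p_m\mid f_i$ for each such $m$, and then proving that the $p_m$ are pairwise non-associate, so that each $f_i$ has infinitely many non-associate irreducible divisors in the UFD $\mathbb{C}[c,\lambda]$ and hence vanishes. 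Your distinctness argument (via uniqueness of the simple graded quotient at a generic point, combined with the fact that $\cW^{k}(\gs\gl_m,f_{\text{prin}})$ and $\cW^{k'}(\gs\gl_{m'},f_{\text{prin}})$ have different types for $m\neq m'$ at generic levels) is sound and appropriate here, since the explicit formula for $p_m$ is not yet available; the paper relies on this distinctness later (e.g.\ in the proof of Theorem~\ref{thm:zhu}) and eventually makes it explicit in Section~\ref{section:prinw}.
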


\begin{proof} If some Jacobi identity of type $(W^i, W^j, W^k)$ does not hold as a consequence of \eqref{deriv}-\eqref{ncw}, there would be a null vector of weight $N$ in $\cW(c,\lambda)$ for some $N$. Then the rank of $\cW(c,\lambda)[N]$ would be strictly smaller than that given by \eqref{grchar}, and the same would hold in any quotient of $\cW(c,\lambda)[N]$, as well as any localization of such a quotient. But since $\cW^k(\gs\gl_N, f_{\text{prin}})$ is a localization of such a quotient and is freely generated of type $\cW(2,3,\dots, N)$, this is impossible. 
\end{proof}

\begin{cor} The vertex algebra $\cW(c,\lambda)$ is simple.
\end{cor}

\begin{proof} If $\cW(c,\lambda)$ is not simple, it would have a singular vector in weight $N$ for some $N$. Let $p \in \mathbb{C}[c,\lambda]$ be an irreducible polynomial and let $I = (p) \subseteq \mathbb{C}[c,\lambda]$. By rescaling if necessary, we can assume without loss of generality that the singular vector is not divisible by $p$ and hence descends to a nontrivial singular vector in $\cW^I(c,\lambda)$. Then for any localization $R$ of $\mathbb{C}[c,\lambda] / I$, the simple quotient of $\cW^I_R(c,\lambda)$ would have a smaller weight $N$ submodule than $\cW(c,\lambda)$ for all such $I$. This contradicts the fact that $\cW^k(\gs\gl_N, f_{\text{prin}})$ is such a quotient.
\end{proof}

\begin{cor}\label{cor:autgroup} The vertex algebra $\cW(c,\lambda)$ has full automorphism group $\mathbb{Z}_2$.
\end{cor}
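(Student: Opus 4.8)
The plan is to show that an arbitrary automorphism $\phi$ of $\cW(c,\lambda)$ is determined by its values on the two generators $L$ and $W^3$ — which it is, since $\cW(c,\lambda)$ is generated as a vertex algebra by $\{L,W^3\}$ — and that these values leave only the identity and the postulated involution $L\mapsto L$, $W^3\mapsto -W^3$. To begin, $\phi$ is $\mathbb{C}[c,\lambda]$-linear, so it fixes the vacuum and hence all of $\cW(c,\lambda)[0]=\mathbb{C}[c,\lambda]1$, and it commutes with $\partial$ because $\partial a=a_{(-2)}1$.

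The crucial step is that $\phi(L)=L$. This is automatic if one requires automorphisms to preserve the conformal vector; otherwise it must be proved, and I expect this to be the main obstacle. The route I would take: since $\phi$ is $\mathbb{C}[c,\lambda]$-linear, $L':=\phi(L)$ is again a Virasoro field of central charge $c$ with $L'_{(0)}=\partial$ acting as the translation operator. Writing $L'$ in terms of its $L_0$-homogeneous components and using $\cW(c,\lambda)[1]=0$ and $\cW(c,\lambda)[2]=\mathbb{C}[c,\lambda]L$ together with the relation $L'_{(1)}L'=2L'$ shows that $L'=L+v$ with $v\in\bigoplus_{n\ge 3}\cW(c,\lambda)[n]$ and $v_{(0)}=0$ as an operator; one then checks that $v\in\partial\cW(c,\lambda)$, writes $v=\partial u$ with $u$ of conformal weight $\ge 2$, and uses the remaining Virasoro relations for $L'$, together with the free generation and simplicity of $\cW(c,\lambda)$, to force $u=0$. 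The delicate point is that the lowest-weight components of the Virasoro relations for $L+v$ are satisfied identically by skew-symmetry, so one must carry the analysis to higher weight and invoke the explicit OPE algebra of $\cW(c,\lambda)$.

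Granting $\phi(L)=L$, the rest is short. Then $\phi$ commutes with every mode $L_{(n)}$, hence preserves the conformal weight grading and carries primary fields to primary fields, so $\phi(W^3)\in\cW(c,\lambda)[3]=\mathbb{C}[c,\lambda]W^3\oplus\mathbb{C}[c,\lambda]\partial L$. Since $W^3$ is primary while $\partial L$ is not (one has $L_{(2)}\partial L=4L\ne 0$), primarity of $\phi(W^3)$ forces $\phi(W^3)=aW^3$, and invertibility of $\phi$ forces $a$ to be a unit of $\mathbb{C}[c,\lambda]$, hence a nonzero scalar. Applying $\phi$ to the normalization $(W^3)_{(5)}W^3=\frac{c}{3}1$ and using $\mathbb{C}[c,\lambda]$-linearity gives $a^2\frac{c}{3}1=\frac{c}{3}1$, so $a^2=1$ and $a=\pm 1$. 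Therefore $\phi(L)=L$ and $\phi(W^3)=\pm W^3$: when $a=1$ this is the identity, and when $a=-1$ it is the postulated involution. Both are genuine automorphisms and are distinct, so $\Aut(\cW(c,\lambda))\cong\mathbb{Z}_2$.
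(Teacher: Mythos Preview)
Your argument from ``Granting $\phi(L)=L$'' onward is exactly the paper's proof: $W^3$ is the unique weight~$3$ primary up to scalar, so $\phi(W^3)=aW^3$, and the OPE $W^3(z)W^3(w)$ (equivalently, your normalization $(W^3)_{(5)}W^3=\frac{c}{3}1$) forces $a^2=1$.

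The one point of divergence is the step $\phi(L)=L$. The paper simply asserts it, taking the standard convention that an automorphism of a conformal vertex algebra fixes the conformal vector; under that convention your extra paragraph is unnecessary. Your sketch for proving $\phi(L)=L$ without that convention is not complete as written --- you correctly flag the delicate point that the leading Virasoro constraints on $L'=L+v$ are satisfied identically, but ``carry the analysis to higher weight and invoke the explicit OPE algebra'' is not an argument, and uniqueness of the conformal vector in a vertex algebra of this type is a nontrivial statement in its own right. So either adopt the standard convention (as the paper does) and drop that paragraph, or supply a genuine proof of uniqueness of the conformal vector here; the rest stands.
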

\begin{proof}
By construction, $\cW(c,\lambda)$ has a nontrivial involution determined by $L \mapsto L$ and $W^3 \mapsto -W^3$, so that $W^i\mapsto (-1)^i W^i$ for all $i\geq 3$. If $\cW(c,\lambda)$ had another nontrivial automorphism $\phi$, we must have $\phi(L) = L$ and $\phi(W^3) = a W^3$ for some nonzero $a \in \mathbb{C}$, since $W^3$ is the unique weight $3$ primary field up to scalar. But the OPE relation \eqref{ope:first} forces $a = \pm 1$, so $\phi$ is either the identity map, or the above involution. 
\end{proof}	

The inductive procedure for proving Theorem \ref{maintheorem} can be regarded as an {\it algorithm} for computing the OPE algebra of $\cW(c,\lambda)$ by starting from the OPE formulas \eqref{ope:first}-\eqref{ope:fourth} and \eqref{ope:w2w6}-\eqref{ope:w4w5}, and imposing all Jacobi identities among the generators. Using the Mathematica package of Thielemans \cite{T}, we have applied this algorithm to compute all OPEs $W^i(z) W^j(w)$ for $2\leq i\leq j$ and $i+j \leq 14$. This data is too complicated to reproduce in this paper, but we used it to obtain some of the results and conjectures in Sections \ref{section:prinw}-\ref{section:oneplus}.

\subsection{Zhu functor}
The Zhu functor is a fundamental tool in the representation theory of vertex algebras \cite{Z}. Let $\cV$ be a vertex algebra with weight grading $\cV = \bigoplus_{n\geq 0} \cV[n]$. For $a\in \cV[m]$ and $b\in\cV$, define
\begin{equation}\label{defzhu} a*b = \text{Res}_z \bigg (a(z) \frac{(z+1)^{m}}{z}b\bigg),\end{equation} and extend $*$ by linearity to a bilinear operation $\cV\otimes \cV\ra \cV$. Let $O(\cV)$ denote the subspace of $\cV$ spanned by elements of the form \begin{equation}\label{zhuideal} a\circ b = \text{Res}_z \bigg (a(z) \frac{(z+1)^{m}}{z^2}b\bigg)\end{equation} where $a\in \cV[m]$, and define the {\it Zhu algebra} $\text{Zhu}(\cV)$ to be the quotient vector space $\cV/O(\cV)$, with projection \begin{equation} \label{zhumap} \pi_{\text{Zhu}}:\cV\ra \text{Zhu}(\cV).\end{equation} Then $O(\cV)$ is a two-sided ideal in $\cV$ under the product $*$, and $(\text{Zhu}(\cV),*)$ is a unital, associative algebra. The assignment $\cV\mapsto \text{Zhu}(\cV)$ is functorial, and if $\cI \subseteq \cV$ is a vertex algebra ideal, \begin{equation} \label{zhuquotient} \text{Zhu}(\cV/\cI)\cong \text{Zhu}(\cV)/ I,\qquad I = \pi_{\text{Zhu}}(\cI).\end{equation}

If $\cV$ is strongly generated by homogeneous elements $\{\alpha^1, \alpha^2,\dots\}$, $\text{Zhu}(\cV)$ is generated by $\{ a^i = \pi_{\text{Zhu}}(\alpha^i)\}$. A $\mathbb{Z}_{\geq 0}$-graded $\cV$-module $M = \bigoplus_{n\geq 0} M[n]$ is called a {\it positive energy module} if for every $a\in\cV[m]$, $a(n) M_k \subseteq M[m+k -n-1]$, for all $n$ and $k$. Given $a\in\cV[m]$, $a(m-1)$ acts on each $M[k]$. The subspace $M[0]$ is then a $\text{Zhu}(\cV)$-module with action $\pi_{\text{Zhu}}(a)\mapsto a(m-1) \in \text{End}(M[0])$. In fact, $M\mapsto M[0]$ provides a one-to-one correspondence between irreducible, positive energy $\cV$-modules and irreducible $\text{Zhu}(\cV)$-modules. If $\text{Zhu}(\cV)$ is commutative, all its irreducible modules are one-dimensional. The corresponding irreducible $\cV$-modules $M = \bigoplus_{n\geq 0} M[n]$ are then cyclic, and will be called {\it highest-weight modules}.

\begin{thm} \label{thm:zhu} The Zhu algebra $\text{Zhu}(\cW(c,\lambda))$ is the polynomial algebra \begin{equation} \label{zhu} \mathbb{C}[c,\lambda] \otimes_{\mathbb{C}} \mathbb{C}[\ell, w^i|\ i\geq 3],\end{equation} where the generators $\ell, w^i$ are the images of $L, W^i$ under the Zhu map \eqref{zhumap}. For any ideal $I\subseteq \mathbb{C}[c,\lambda]$, any localization $R$ of $\mathbb{C}[c,\lambda] / I$, and any quotient $\cW^I_R(c,\lambda) /\cI$, the Zhu algebra $\text{Zhu}(\cW^I_R(c,\lambda) /\cI)$ is a quotient of a localization of \eqref{zhu}, and hence is abelian.
\end{thm}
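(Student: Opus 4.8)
The plan is to pin down $\text{Zhu}(\cW(c,\lambda))$ by squeezing it between an explicit spanning set and an upper bound coming from Zhu's $C_2$-algebra, and then to read off the ring structure from the principal $\cW$-algebra quotients of Theorem \ref{wprinquot}. I would first record the routine half. Since $\cW(c,\lambda)$ is strongly generated by $\{L,W^i\mid i\geq 3\}$, $\text{Zhu}(\cW(c,\lambda))$ is generated over $\mathbb{C}[c,\lambda]$ by $\ell=\pi_{\text{Zhu}}(L)$ and the $w^i=\pi_{\text{Zhu}}(W^i)$. For homogeneous $\alpha$ of weight $d$ one has $\partial\alpha+d\alpha\in O(\cW(c,\lambda))$, and $O(\cW(c,\lambda))$ is compatible with the Wick product in the standard way, so iterating this reduces every normally ordered monomial in the generators and their derivatives, modulo $O(\cW(c,\lambda))$, to a $\mathbb{C}[c,\lambda]$-combination of derivative-free monomials $:L^{a_2}(W^3)^{a_3}(W^4)^{a_4}\cdots:$ of the same conformal weight. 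Hence $F_n\text{Zhu}(\cW(c,\lambda)):=\pi_{\text{Zhu}}\big(\bigoplus_{m\le n}\cW(c,\lambda)[m]\big)$ is spanned over $\mathbb{C}[c,\lambda]$ by the images of these monomials, of which there are exactly as many of weight $\leq n$ as there are monomials of weight $\leq n$ in formal commuting variables of weights $2,3,4,\dots$. For the matching upper bound I would invoke the standard fact that the associated graded of $\text{Zhu}(\cV)$ for the weight filtration is a quotient of the $C_2$-algebra $R_{\cW(c,\lambda)}=\cW(c,\lambda)/C_2(\cW(c,\lambda))$; since $\cW(c,\lambda)$ is freely generated of type $\cW(2,3,\dots)$ by the corollary proved above, $R_{\cW(c,\lambda)}$ is the polynomial algebra $\mathbb{C}[c,\lambda][\bar L,\bar W^3,\bar W^4,\dots]$ with $\bar L$ in degree $2$ and $\bar W^i$ in degree $i$. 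Comparing the two counts forces the images of the derivative-free monomials to be a $\mathbb{C}[c,\lambda]$-basis of $\text{Zhu}(\cW(c,\lambda))$; in particular it is a free $\mathbb{C}[c,\lambda]$-module of the expected rank.

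It then remains to show that $\ell,w^3,w^4,\dots$ pairwise commute and are algebraically independent, and for this I would use Theorem \ref{wprinquot}: for each $n\ge 3$, $\cW^k(\gs\gl_n,f_{\text{prin}})$ is a quotient of a localization $\cW^I_R(c,\lambda)$, so specializing $(c,\lambda)$ to any point $(c_0,\lambda_0)$ on the corresponding truncation curve and passing to the simple quotient gives a surjection $\text{Zhu}(\cW(c,\lambda))\twoheadrightarrow\text{Zhu}(\cW^k(\gs\gl_n,f_{\text{prin}}))$ sending $\ell,w^i$ to the Zhu images of the generators. Because $\cW^k(\gs\gl_n,f_{\text{prin}})$ is freely generated of type $\cW(2,3,\dots,n)$, the same squeeze shows its Zhu algebra is free on the derivative-free monomials in $L,W^3,\dots,W^n$; and for generic $k$ it is moreover commutative, being the finite principal $\cW$-algebra $\cW^{\text{fin}}(\gs\gl_n,f_{\text{prin}})\cong Z(U(\gs\gl_n))$, a polynomial algebra on $n-1$ generators (De Sole--Kac; Kostant; Premet). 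For fixed $i,j$ the commutator $[w^i]*[w^j]-[w^j]*[w^i]$ lies in $F_{i+j-1}\text{Zhu}(\cW(c,\lambda))$, hence is a $\mathbb{C}[c,\lambda]$-combination of derivative-free monomials in $\ell,w^3,\dots,w^{i+j-1}$; for any $n>i+j$ the surjection annihilates this commutator and sends those monomials to part of a $\mathbb{C}$-basis of $\text{Zhu}(\cW^k(\gs\gl_n,f_{\text{prin}}))$, so all its coefficients vanish at $(c_0,\lambda_0)$. The truncation curves of the $\cW^k(\gs\gl_n,f_{\text{prin}})$, $n\ge 3$, are infinitely many distinct irreducible curves in $\mathbb{C}^2$, so their union is Zariski dense; hence those coefficients vanish identically and $\text{Zhu}(\cW(c,\lambda))$ is commutative. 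A nontrivial polynomial relation among $\ell,w^3,\dots,w^m$ would, at densely many $(c_0,\lambda_0)$, specialize to a relation among part of a free generating set of $\text{Zhu}(\cW^k(\gs\gl_n,f_{\text{prin}}))$ for $n>m$, which is impossible (alternatively, algebraic independence is already forced by the rank count). This gives $\text{Zhu}(\cW(c,\lambda))\cong\mathbb{C}[c,\lambda][\ell,w^i\mid i\ge 3]$ as a polynomial algebra, whose multiplication visibly involves no $c,\lambda$.

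The statement about quotients is then formal: $\pi_{\text{Zhu}}$ is $\mathbb{C}[c,\lambda]$-linear, so $\pi_{\text{Zhu}}(I\cdot\cW(c,\lambda))=I\cdot\text{Zhu}(\cW(c,\lambda))$ and, by \eqref{zhuquotient}, $\text{Zhu}(\cW^I(c,\lambda))=(\mathbb{C}[c,\lambda]/I)[\ell,w^i]$. Zhu commutes with localization, so $\text{Zhu}(\cW^I_R(c,\lambda))=R[\ell,w^i]$ is a localization of \eqref{zhu}, and $\text{Zhu}(\cW^I_R(c,\lambda)/\cI)=\text{Zhu}(\cW^I_R(c,\lambda))/\pi_{\text{Zhu}}(\cI)$ is a quotient of it, hence abelian.

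The step I expect to be the main obstacle is establishing commutativity of $\text{Zhu}(\cW(c,\lambda))$. The $C_2$-algebra argument only bounds the size of the Zhu algebra and shows its associated graded is commutative, which does not by itself force $\text{Zhu}(\cW(c,\lambda))$ to be commutative — witness $\text{Zhu}(V^k(\gg))=U(\gg)$ — so genuine external input is unavoidable, and the delicate point is to verify that a potential ``generically non-commutative'' correction term is actually detected on the union of truncation curves, i.e.\ both that these curves are Zariski dense in $\mathbb{C}^2$ and that the relevant principal $\cW$-algebra Zhu algebras are honestly commutative. A secondary nuisance is keeping careful track of how $\pi_{\text{Zhu}}$ interacts with the Wick product and with derivatives in the reduction to derivative-free monomials.
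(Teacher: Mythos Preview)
Your proposal is correct and follows essentially the same route as the paper: both deduce commutativity of $\text{Zhu}(\cW(c,\lambda))$ from the known commutativity of $\text{Zhu}(\cW^k(\gs\gl_n,f_{\text{prin}}))$ for all $n\ge 3$, forcing the structure constants of each commutator $[w^i,w^j]$ to vanish because they lie on infinitely many distinct truncation curves (the paper phrases this as ``divisible by each $p_n$'' rather than via Zariski density, but these are equivalent). Your argument is somewhat more careful than the paper's in explicitly squeezing the size of the Zhu algebra between the derivative-free spanning set and the $C_2$-algebra bound to get the polynomial structure, whereas the paper leaves this step implicit.
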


\begin{proof} Since $\cW(c,\lambda)$ is strongly generated by $\{L, W^i|\ i \geq 3\}$, $\text{Zhu}(\cW(c,\lambda))$ is generated by $\{\ell, w^i|\ i\geq 3\}$, and it is well known that $\ell$ is central. The commutator $[w^i, w^j]$ in the Zhu algebra is expressed in terms of the OPE algebra and hence is a polynomial in $\{\ell, w^i|\ i\geq 3\}$ with structure constants in $\mathbb{C}[c,\lambda]$. Since the Zhu algebra of $\cW^k(\gs\gl_n, f_{\text{prin}})$ is known to be abelian, each structure constant is divisible by each $p_n$, where $p_n \in \mathbb{C}[c,\lambda]$ generates the ideal $I_n$ such that $\cW^k(\gs\gl_n,f_{\text{prin}})$ is a quotient of a localization of $\cW^{I_n}(c,\lambda)$. Since $\cW^k(\gs\gl_n, f_{\text{prin}})$ is generated by the weight $3$ field for all $n \geq 3$, the polynomials $p_n$ must all be distinct, so all of the above structure constants must vanish. The remaining statements follow from \eqref{zhuquotient}.
\end{proof}

\begin{cor} For any vertex algebra $\cW =  \cW^I_R(c,\lambda) / \cI$ for some $I$ and $R$ as above, all irreducible, positive energy modules are highest-weight modules, and are parametrized by the variety $\text{Specm}(\text{Zhu}(\cW))$. \end{cor}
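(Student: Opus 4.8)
The plan is to obtain this as a direct consequence of Theorem~\ref{thm:zhu} together with the general properties of the Zhu functor recalled above; I do not expect a serious obstacle, only a small bookkeeping issue about residue fields.

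First I would record that, by Theorem~\ref{thm:zhu}, $\text{Zhu}(\cW)$ is commutative, being a quotient of a localization of the polynomial algebra $\mathbb{C}[\ell, w^i|\ i\geq 3]$ of \eqref{zhu}, and that it is generated by the images $\ell, w^i$ of the strong generators $L, W^i$ of $\cW$. By the correspondence recalled above, $M \mapsto M[0]$ is a bijection between the irreducible, positive energy $\cW$-modules and the irreducible $\text{Zhu}(\cW)$-modules. Since $\text{Zhu}(\cW)$ is commutative, every irreducible $\text{Zhu}(\cW)$-module has the form $\text{Zhu}(\cW)/\mathfrak{m}$ for a maximal ideal $\mathfrak{m}$, and, as explained in the last paragraph, it is one-dimensional over $\mathbb{C}$. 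Hence the corresponding $\cW$-module $M = \bigoplus_{n\geq 0} M[n]$ is generated by $M[0]$, i.e.\ is a highest-weight module, which gives the first assertion.

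For the parametrization I would use that a one-dimensional irreducible module over a commutative $\mathbb{C}$-algebra $A$ is the same datum as a $\mathbb{C}$-algebra homomorphism $A \to \mathbb{C}$, equivalently a maximal ideal of $A$ with residue field $\mathbb{C}$, that is, a point of $\text{Specm}(A)$. Applying this with $A = \text{Zhu}(\cW)$ and composing with the bijection $M \mapsto M[0]$ identifies the set of isomorphism classes of irreducible positive energy $\cW$-modules with $\text{Specm}(\text{Zhu}(\cW))$; concretely, the point attached to $M$ records the scalars by which $\ell$ and the $w^i$ act on the one-dimensional space $M[0]$.

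The only step requiring a word of justification is that every maximal ideal $\mathfrak{m}$ of $\text{Zhu}(\cW)$ has residue field $\text{Zhu}(\cW)/\mathfrak{m} = \mathbb{C}$, which is what makes the irreducible $\text{Zhu}(\cW)$-modules one-dimensional. When $\cW$ is of type $\cW(2,3,\dots,N)$ this is the usual Nullstellensatz, since $\text{Zhu}(\cW)$ is then finitely generated over $\mathbb{C}$ by $\ell, w^3,\dots,w^N$. In general $\text{Zhu}(\cW)$ is only a localization of a countably generated polynomial $\mathbb{C}$-algebra, but the field $\text{Zhu}(\cW)/\mathfrak{m}$ is still generated over $\mathbb{C}$ by countably many elements, so it cannot be a proper extension of the uncountable algebraically closed field $\mathbb{C}$ --- such an extension would contain a rational function field $\mathbb{C}(t)$, whose dimension over $\mathbb{C}$ is uncountable. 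Thus $\text{Zhu}(\cW)/\mathfrak{m} = \mathbb{C}$ in all cases, and the argument above goes through; this residue-field point is the main, and only mildly delicate, part of the proof.
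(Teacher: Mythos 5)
Your proposal is correct and follows essentially the same route as the paper, which states this corollary without a separate proof as an immediate consequence of Theorem \ref{thm:zhu} together with the facts about the Zhu functor recalled just before it (commutativity of $\text{Zhu}(\cW)$ forces irreducible modules to be one-dimensional, and $M \mapsto M[0]$ is the standard bijection). Your extra paragraph on the residue field of a maximal ideal of a countably generated $\mathbb{C}$-algebra is a legitimate point that the paper glosses over, and your cardinality argument for it is correct.
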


\subsection{Poisson vertex algebra structure}
For any vertex algebra $\cV$, we have Li's canonical decreasing filtration
$$F^0(\cV) \supseteq F^1(\cV) \supseteq \cdots.$$ Here $F^p(\cV)$ is spanned by the elements
$$:(\partial^{n_1} a^1) (\partial^{n_2} a^2) \cdots (\partial^{n_r} a^r):,$$ 
where $a^1,\dots, a^r \in \cV$, $n_i \geq 0$, and $n_1 + \cdots + n_r \geq p$ \cite{LiIII}. Clearly $\cV = F^0(\cV)$ and $\partial F^i(\cV) \subseteq F^{i+1}(\cV)$. Set $$\text{gr}^F(\cV) = \bigoplus_{p\geq 0} F^p(\cV) / F^{p+1}(\cV),$$ and for $p\geq 0$ let 
$$\sigma_p: F^p(\cV) \ra F^p(\cV) / F^{p+1}(\cV) \subseteq \text{gr}^F(\cV)$$ be the projection. Then $\text{gr}^F(\cV)$ is a graded commutative algebra with product
$$\sigma_p(a) \sigma_q(b) = \sigma_{p+q}(a_{(-1)} b),\qquad a \in F^p(\cV),\qquad b \in F^q(\cV).$$ There is a differential $\partial$ on $\text{gr}^F(\cV)$, $$\partial( \sigma_p(a) ) = \sigma_{p+1} (\partial a),\qquad a \in F^p(\cV).$$ Finally, $\text{gr}^F(\cV)$ has the structure of a Poisson vertex algebra \cite{LiIII}; for $n\geq 0$, $a \in F^p(\cV)$, and $b\in F^q(\cV)$, define
$$\sigma_p(a)_{(n)} \sigma_q(b) = \sigma_{p+q-n} a_{(n)} b.$$
 
The subalgebra $F^0(\cV) / F^1(\cV)$ coincides with Zhu's commutative algebra $C(\cV)$ \cite{Z}, and is known to generate $\text{gr}^F(\cV)$ as a differential graded algebra \cite{LiIII}. We change notation slightly and denote by $\bar{a}$ the image of $a$ in $C(\cV)$. It is a Poisson algebra with product $\bar{a} \cdot \bar{b} = \overline{a_{(-1)} b}$ and Poisson bracket $\{ \bar{a}, \bar{b}\} = \overline{a_{(0)} b}$. If the Poisson bracket on $C(\cV)$ is trivial, it follows that the Poisson vertex algebra structure on $\text{gr}^F(\cV)$ is trivial in the sense that for all $a \in F^p(\cV)$, $b\in F^q(\cV)$ and $n\geq 0$, $\sigma_p(a)_{(n)} \sigma_q(b) = 0$; see Remark 4.32 of \cite{AMII}

\begin{thm} 
For any vertex algebra $\cW =  \cW^I_R(c,\lambda) / \cI$ for some $I$ and $R$ as above, the Poisson structure on $C(\cW)$ and the Poisson vertex algebra structure on $\text{gr}^F(\cW)$ are both trivial.
 \end{thm}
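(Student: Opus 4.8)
The plan is to reduce the statement to a single parity observation about the OPEs among the generators of $\cW(c,\lambda)$, and then invoke the general principle --- cited above as Remark 4.32 of \cite{AMII} --- that triviality of the Poisson bracket on Zhu's commutative algebra $C(\cV)$ forces triviality of the full vertex Poisson structure on $\mathrm{gr}^F(\cV)$. So the first step is to observe that it suffices to prove $\{\bar a, \bar b\} = \overline{a_{(0)}b} = 0$ in $C(\cW)$ for all $a,b$ in the strong generating set $\{L, W^i \mid i \geq 3\}$ of $\cW = \cW^I_R(c,\lambda)/\cI$. Indeed, since any normally ordered monomial in the generators involving at least one derivative already lies in $F^1(\cW)$ by the definition of Li's filtration, the algebra $C(\cW) = F^0(\cW)/F^1(\cW)$ is generated as a commutative algebra by the images $\bar\ell, \bar w^i$ of $L, W^i$, and the Poisson bracket on $C(\cW)$ is a biderivation.

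Second, I would establish the required vanishing already in the universal algebra $\cW(c,\lambda)$ and then transport it along the homomorphisms $\cW(c,\lambda) \to \cW^I_R(c,\lambda) \to \cW$, which are vertex algebra maps sending $W^i_{(0)} W^j$ to $w^i_{(0)} w^j$ and sending a normally ordered monomial containing a derivative to one of the same shape, hence respecting $F^1$. For pairs involving $L$ the claim is immediate, since $L_{(0)}\alpha = \partial\alpha \in F^1$. For the remaining pairs the point is the interplay of the conformal weight grading with the $\mathbb{Z}_2$-symmetry of Corollary \ref{cor:autgroup}, under which $W^i \mapsto (-1)^i W^i$: a normally ordered monomial $:\partial^{k_1} W^{a_1} \cdots \partial^{k_r} W^{a_r}:$ has conformal weight $\sum_t (a_t + k_t)$ and $\mathbb{Z}_2$-eigenvalue $(-1)^{a_1 + \cdots + a_r}$, so its $\mathbb{Z}_2$-eigenvalue equals $(-1)^{d - (k_1 + \cdots + k_r)}$, where $d$ is its conformal weight. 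By property (4) of the construction in Section \ref{section:main}, every term of $W^i_{(0)} W^j$ has weight $i+j-1$ and $\mathbb{Z}_2$-eigenvalue $(-1)^{i+j}$; comparing these forces $k_1 + \cdots + k_r$ to be odd, in particular positive. Hence every term of $W^i_{(0)} W^j$ carries at least one derivative, so $W^i_{(0)} W^j \in F^1(\cW(c,\lambda))$ for all $3 \leq i \leq j$, and therefore $\overline{w^i_{(0)} w^j} = 0$ in $C(\cW)$.

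Combining the two steps, the Poisson bracket on $C(\cW)$ vanishes identically, and Remark 4.32 of \cite{AMII} then yields triviality of the vertex Poisson structure on $\mathrm{gr}^F(\cW)$ as well. I do not anticipate a serious obstacle: the only points requiring a little care are that $C(\cW)$ is genuinely generated by the images of the strong generators (a formal consequence of strong generation together with the definition of Li's filtration) and that $F^1$ is preserved by the surjections involved; the substantive content is entirely the parity count, which is immediate from the $\mathbb{Z}_2$-equivariance and weight-homogeneity built into $\cW(c,\lambda)$.
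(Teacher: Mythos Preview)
Your proof is correct and follows essentially the same route as the paper's: reduce to the generators, handle $L$ trivially, and use the $\mathbb{Z}_2$-symmetry together with the conformal weight to force every monomial in $W^i_{(0)}W^j$ to carry at least one derivative, hence lie in $F^1$. You spell out the parity count $\sum k_t \equiv 1 \pmod 2$ more explicitly than the paper does, but the substance is identical.
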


\begin{proof} Since $C(\cW)$ is generated by $\{\bar{L}, \bar{W}^i |\ i \geq 3\}$ and $\{\bar{L},-\}$ acts trivially on $C(\cW)$, it suffices to show that$\{\bar{W}^j, \bar{W}^k\} = 0$ for all $j,k \geq 3$. But $W^j_{(0)} W^k$ is a sum of normally ordered monomials in $\{L,W^i |\ 3\leq i \leq j+k -1\}$ and their derivatives, which each have weight $j+k-1$ and eigenvalue $(-1)^{j+k}$ under the $\mathbb{Z}_2$-action. It follows that each monomial must lie in $F^1(\cW)$, so that $\overline{W^{j}_{(0)} W^{k}} = 0$. \end{proof}

\section{Quotients of $\cW(c,\lambda)$ and the classification of vertex algebras of type $\cW(2,3,\dots, N)$} \label{section:classification}
As we have seen, any simple vertex algebra of type $\cW(2,3,\dots, N)$ for some $N$, which is generated by a Virasoro field and a weight $3$ primary field $W^3$, and satisfies some natural conditions, is a quotient of $\cW^I_R(c,\lambda)$ for some $I$ and $R$. This reduces the classification of vertex algebras with these properties to the classification of ideals $I \subseteq \mathbb{C}[c,\lambda]$ such that $\cW^I(c,\lambda)$ is not simple. In this section, we restrict to the case where $I = (p)$ for some irreducible $p \in \mathbb{C}[c,\lambda]$, so that $\cW^I_R(c,\lambda)$ is a one-parameter vertex algebra in the sense that $R$ has Krull dimension $1$. Later, in Section \ref{section:coincidences} we will consider the case where $I$ is a maximal ideal.

Recall that $\cW(c,\lambda)[n]$ is a free $\mathbb{C}[c,\lambda]$-module whose rank is given by \eqref{grchar}. It has a symmetric bilinear form $$\langle,\rangle_n: \cW(c,\lambda)[n]\otimes_{\mathbb{C}[c,\lambda]} \cW(c,\lambda)[n] \ra \mathbb{C}[c,\lambda],\qquad \langle \omega,\nu \rangle_n = \omega_{(2n-1)}\nu.$$ 
Recall that the Shapovalov determinant $\text{det}_n  \in \mathbb{C}[c,\lambda]$ is the determinant of the matrix of this form, and $\text{det}_n \neq 0$ for all $n$ since $\cW(c,\lambda)$ is a simple vertex algebra. Recall that an irreducible polynomial $p \in \mathbb{C}[c,\lambda]$ is said to lie in the level $n$ Shapovalov spectrum if $p$ divides $\text{det}_n$ but does not divide $\text{det}_m$ for any $m<n$. Clearly $\cW^I(c,\lambda)$ is simple for a generic choice of $I$, since each $\text{det}_n$ has finitely many irreducible factors.

Let $p$ be an irreducible factor of $\text{det}_{N+1}$ of level $N+1$. Letting $I = (p) \subseteq \mathbb{C}[c,\lambda]$, $\cW^I(c,\lambda)$ will then have a singular vector in weight $N+1$. The coefficient of $W^{N+1}$ in this singular vector is often nonzero. By inverting this coefficient, we obtain a localization $R$ of $\mathbb{C}[c,\lambda] / I$ such that this singular vector has the form 
\begin{equation} \label{elim:first} W^{N+1} - P_{N+1}(L, W^3,\dots, W^{N-1})\end{equation} in $\cW_R^I(c,\lambda)$. Here $P_{N+1}$ is a normally ordered polynomial in the fields $L, W^3, \dots, W^{N-1}$ and their derivatives, with coefficients in $R$. This implies that $W^{N+1}$ decouples in the quotient of $\cW_R^I(c,\lambda) / \cJ$, where $\cJ$ denotes the vertex algebra ideal generated by \eqref{elim:first}. In other words, we have the relation $$W^{N+1} = P_{N+1}(L, W^3,\dots, W^{N-1})$$ in $\cW_R^I(c,\lambda) / \cJ$. Applying the operator $W^3_{(1)}$ to this relation and using the fact that $W^3_{(1)} W^{N+1} = W^{N+2}$ and $W^3_{(1)} W^{N-1} = W^{N}$, we obtain a relation $$W^{N+2} = P_{N+2}(L, W^3, \dots, W^{N}).$$ Applying $W^3_{(1)}$ again yields a relation
$$W^{N+3} = Q_{N+3}(L, W^3, \dots, W^{N+1}).$$ If the terms $\partial^2 W^{N+1}$ or $:L W^{N+1}:$ appear in $Q_{N+3}$, they can be eliminated using \eqref{elim:first} to obtain a relation 
$$W^{n+3} = P_{N+3}(L,W^3, \dots, W^{N}).$$ Inductively, by applying $W^3_{(1)}$ repeatedly and using \eqref{elim:first} to eliminate $W^{N+1}$ if necessary, we obtain relations 
$$W^m = P_m(L, W^3, \dots, W^{N})$$ in $\cW_R^I(c,\lambda) / \cJ$, for all $m > N+1$. This implies the following.

\begin{lemma} \label{decoup}
Let $p$ be an irreducible factor of $\text{det}_{N+1}$ that does not divide $\text{det}_m$ for $m< N+1$, and let $I = (p)$. Suppose that there exists a localization $R$ of $\mathbb{C}[c,\lambda] / I$ such that $\cW_R^I(c,\lambda)$ has a singular vector of the form \begin{equation} \label{eq:decoup}W^{N+1} - P_{N+1}(L, W^3,\dots, W^{N-1}).\end{equation} Let $\cJ \subseteq \cW_R^I(c,\lambda)$ be the vertex algebra ideal generated by \eqref{eq:decoup}. Then the quotient $\cW_R^I(c,\lambda) / \cJ$ has a minimal strong generating set $\{L, W^i|\ 3 \leq i \leq N\}$, and in particular is of type $\cW(2,3,\dots, N)$. 
\end{lemma}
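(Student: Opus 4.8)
The plan is to prove two things separately: that $\{L, W^i\mid 3\le i\le N\}$ strongly generates $\cW_R^I(c,\lambda)/\cJ$, and that it does so minimally. For the first, I would start from the observation that the singular vector $\omega = W^{N+1} - P_{N+1}(L,W^3,\dots,W^{N-1})$ lies in $\cJ$, so that the relation $W^{N+1} = P_{N+1}(L,W^3,\dots,W^{N-1})$ holds in the quotient. I would then let $\cA\subseteq \cW_R^I(c,\lambda)/\cJ$ be the vertex subalgebra strongly generated by $\{L,W^i\mid 3\le i\le N\}$ — equivalently the $R$-span of all normally ordered monomials in these fields and their derivatives — and prove by induction on $m$ that $W^m\in\cA$ for every $m\ge N+1$. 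The base case $m=N+1$ is the relation just recorded; for the inductive step one uses that $W^{m+1}$ is \emph{by definition} $(W^3)_{(1)}W^m$, together with the fact that $\cA$, being a vertex subalgebra containing $W^3$ and $W^m$, is closed under the product $a_{(1)}b$. Since the images of $\{L,W^i\mid i\ge 3\}$ strongly generate $\cW_R^I(c,\lambda)/\cJ$, it follows that $\cA$ is the entire quotient; concretely, using \eqref{nonasswick}--\eqref{ncw} together with the decoupling relations $W^m = P_m(L,W^3,\dots,W^N)$ one rewrites any normally ordered monomial in the $W^i$ as a normally ordered polynomial in $L,W^3,\dots,W^N$ and their derivatives.

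For minimality I would invoke the Shapovalov-determinant machinery of Section \ref{section:voaring}. Because $p$ does not divide $\text{det}_m$ for $m<N+1$, the form $\langle,\rangle_m$ on $\cW_R^I(c,\lambda)[m]$ is nondegenerate over the fraction field of $R$ for each $m\le N$, so the maximal proper graded ideal $\cI$ of $\cW_R^I(c,\lambda)$ vanishes in all weights $\le N$; since $\omega\in\cI$ we also have $\cJ\subseteq\cI$, hence $\cJ$ vanishes in weights $\le N$ and the quotient map $\cW_R^I(c,\lambda)\to\cW_R^I(c,\lambda)/\cJ$ is an $R$-module isomorphism in weights $\le N$. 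Now $\cW_R^I(c,\lambda)$ is freely generated by $\{L,W^i\mid i\ge 3\}$ (as established above), so it has an $R$-basis of normally ordered monomials of the form \eqref{freegen}; in each weight $j$ with $2\le j\le N$ the generator $W^j$ (with $W^2=L$) is itself one of these basis monomials, and it is distinct from every normally ordered polynomial in $\{L\}\cup\{W^i\mid 3\le i\le N,\ i\ne j\}$ and their derivatives — each monomial of such a polynomial either has weight strictly greater than $j$ (if it involves some $W^i$ with $i>j$) or does not involve $W^j$ at all. Since this persists after quotienting by $\cJ$, no generator can be dropped, so $\{L,W^i\mid 3\le i\le N\}$ is a minimal strong generating set and $\cW_R^I(c,\lambda)/\cJ$ is of type $\cW(2,3,\dots,N)$.

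The substantive point, and the one I would be most careful about, is the interplay of the two halves: $\cJ$ must be \emph{large} enough to contain $\omega$ and thereby force all the decoupling relations $W^m = P_m(\dots)$ for $m\ge N+1$, yet \emph{small} enough to be invisible in weights $\le N$ — and it is precisely the hypothesis that $p$ does not divide $\text{det}_m$ for $m<N+1$ that guarantees the latter. Everything else is routine bookkeeping with the vertex-algebra identities \eqref{deriv}--\eqref{ncw} and with the definition $W^i=(W^3)_{(1)}W^{i-1}$, which is what propagates the single weight-$(N+1)$ relation upward to all higher weights.
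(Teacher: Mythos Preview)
Your approach is essentially the same as the paper's: the strong-generation argument appears in the paragraph immediately preceding the lemma statement, and minimality is handled in the remark following it (your treatment of minimality is in fact more carefully spelled out than the paper's).

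There is one logical wrinkle in your first paragraph worth tightening. You define $\cA$ as ``the vertex subalgebra strongly generated by $\{L,W^i\mid 3\le i\le N\}$ --- equivalently the $R$-span of all normally ordered monomials,'' and then invoke closure of $\cA$ under $(W^3)_{(1)}$ in the inductive step. But that equivalence, and hence that closure, is not automatic: the span of normally ordered monomials in a subset is not a priori a vertex subalgebra. What makes it work here is that $(W^3)_{(j)}W^i$ for $i\le N$ and $j\ge 0$ is, by the OPE structure of $\cW(c,\lambda)$, a normally ordered polynomial in $L,W^3,\dots,W^{N+1}$, and the single relation $W^{N+1}=P_{N+1}$ (already available in the quotient) then pushes everything back into the span. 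The paper makes exactly this move, applying $(W^3)_{(1)}$ repeatedly to the relation itself and substituting for $W^{N+1}$ whenever it reappears. Your ``concretely'' sentence points at the same mechanism, but as written it treats the full family of decoupling relations $W^m=P_m$ as input, when they are precisely what the induction produces. Once you rephrase the inductive step to verify closure directly from \eqref{ncw} and the one relation $W^{N+1}=P_{N+1}$, your argument and the paper's coincide.
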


\begin{remark} The ideal $\cJ$ is often the maximal graded ideal $\cI \subseteq \cW_R^I(c,\lambda)$, although this need not be the case. However, the assumption that $p$ does not divide $\text{det}_m$ for $m<N+1$ implies that there are no singular vectors in weight $m<N+1$, so there can be no decoupling relations of the form $W^m = P_m(L,W^3,\dots W^{m-1})$ for $m<N+1$. Therefore the simple quotient $\cW_R^I(c,\lambda) / \cI$ is also of type $\cW(2,3\dots,N)$.\end{remark}

\begin{thm} For all $N\geq 3$, there are finitely many isomorphism classes of simple one-parameter vertex algebras of type $\cW(2,3,\dots, N)$, which are generated by the Virasoro field $L$ and a weight $3$ primary field $W^3$ satisfying \eqref{ope:first}-\eqref{ope:fourth} and \eqref{ope:w2w6}-\eqref{ope:w4w5}.
\end{thm}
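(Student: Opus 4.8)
The plan is to exhibit an injection from the set of isomorphism classes in question into the set of irreducible factors of the single polynomial $\text{det}_{N+1}\in\mathbb{C}[c,\lambda]$, which is nonzero because $\cW(c,\lambda)$ is simple. So let $\cW$ be such a vertex algebra, and set $W^2=L$ and $W^i=(W^3)_{(1)}W^{i-1}$ for $i\geq 4$. By Theorem \ref{thm:simplequotient}, the hypotheses say precisely that $\cW$ is the simple quotient $\cW^I_R(c,\lambda)/\cI$, where $I\subseteq\mathbb{C}[c,\lambda]$ is a prime ideal, $R$ is a localization of $\mathbb{C}[c,\lambda]/I$, and $\cI$ is the maximal proper graded ideal. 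Since $\cW$ is a genuine one-parameter family, $R$ has Krull dimension $1$; as $\mathbb{C}[c,\lambda]$ is a two-dimensional UFD, this forces $I$ to be a height-one prime, hence $I=(p)$ for an irreducible $p$. (The degenerate possibilities $I=0$ and $I$ maximal are excluded, giving $\dim R=2$ and $\dim R=0$ respectively.)

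Next I would pin down the level of $p$. By the corollary on free generation, $\cW^I_R(c,\lambda)$ is freely generated of type $\cW(2,3,4,\dots)$, while $\cW$ is of type $\cW(2,3,\dots,N)$; hence $\cI\neq 0$. Let $m$ be the least weight with $\cI[m]\neq 0$, equivalently the least $m$ with $p\mid\text{det}_m$. By \cite{LiI} a lowest-weight element $\omega\in\cI$ is a singular vector. Because $\cW^I(c,\lambda)$ is freely generated, there are no nontrivial normally ordered polynomial relations among $L,W^3,\dots,W^{m-1}$ and their derivatives, so the coefficient of $W^m$ in $\omega$ must be nonzero; after enlarging the localization $R$ we may normalize $\omega=W^m-P_m$, and the weight-plus-$\mathbb{Z}_2$ bookkeeping of Section \ref{section:main} applied in weight $m$ shows $W^{m-1}$ cannot appear in $P_m$, so $\omega=W^m-P_m(L,W^3,\dots,W^{m-2})$. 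Since $p\nmid\text{det}_{m'}$ for $m'<m$, Lemma \ref{decoup} (with the role of $N+1$ there played by $m$) applies and shows that $\cW$ is of type $\cW(2,3,\dots,m-1)$. Matching with the hypothesis gives $m-1=N$, i.e. $m=N+1$; thus $p\mid\text{det}_{N+1}$ and $p$ lies in the level-$(N+1)$ Shapovalov spectrum. This is the step I expect to be the main obstacle: controlling the shape of the minimal singular vector $\omega$ — that $W^m$ occurs with an (after localization) invertible coefficient, so that $\omega$ is an honest decoupling relation and Lemma \ref{decoup} is applicable, and that its weight is exactly $N+1$ — since the remaining ingredients are formal once the De Sole--Kac machinery and the nonvanishing and factorization of $\text{det}_{N+1}$ are in hand.

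Finally I would check that $p$ determines $\cW$ up to isomorphism, so that the assignment $\cW\mapsto(p)$ is a well-defined injection. Given $p$, the curve $V(p)$ is determined, and $\cW$ is recovered as the unique simple graded object of the category $C_A$ attached to the OPE data with structure constants reduced modulo $(p)$, i.e. as the unique simple graded quotient of $\cW^{(p)}_R(c,\lambda)$; the choice of localization $R$ does not affect the isomorphism class. Conversely, for a given $\cW$ the base ring $R=\cW[0]$ has $\text{Specm}(R)$ Zariski-dense in $V(p)$, so $\cW$ determines $(p)$. Therefore the isomorphism classes of simple one-parameter vertex algebras of type $\cW(2,3,\dots,N)$ satisfying the stated OPE relations inject into the set of irreducible factors of the nonzero polynomial $\text{det}_{N+1}\in\mathbb{C}[c,\lambda]$, which is finite. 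This proves the theorem.
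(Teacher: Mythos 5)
Your overall strategy is the paper's: reduce to Theorem \ref{thm:simplequotient}, observe that the relevant prime $(p)$ must divide a Shapovalov determinant at level at most $N+1$, and count irreducible factors. The paper's proof is exactly this and is two sentences long; it only asserts that $\cW^I_R(c,\lambda)$ has a singular vector in some weight $m\leq N+1$, and that $\text{det}_m$ for $m\leq N+1$ have finitely many divisors in total.

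The one genuine flaw is in your middle paragraph, where you argue that the minimal-weight singular vector $\omega\in\cI[m]$ must have nonzero coefficient of $W^m$ ``because $\cW^I(c,\lambda)$ is freely generated, so there are no nontrivial normally ordered relations among $L,W^3,\dots,W^{m-1}$.'' This does not follow: a singular vector in the freely generated algebra $\cW^I_R(c,\lambda)$ is not a relation among the generators — it is simply a nonzero vector generating a proper ideal — and nothing prevents it from being a normally ordered polynomial in $L,W^3,\dots,W^{m-1}$ alone. The paper is deliberately careful on this point (``The coefficient of $W^{N+1}$ in this singular vector is \emph{often} nonzero''), and the hypothesis of Lemma \ref{decoup} that the singular vector has the decoupling form $W^{N+1}-P$ is an additional assumption there, not a consequence of minimality. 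Consequently your identification of the level of $p$ as exactly $N+1$ is not established. Fortunately none of this is needed: since $\cW$ is of type $\cW(2,\dots,N)$ while $\cW^I_R(c,\lambda)$ is freely generated of type $\cW(2,3,\dots)$, the expression of $W^{N+1}$ in terms of $L,W^3,\dots,W^N$ in $\cW$ lifts to a nonzero element of $\cI$ of weight $N+1$, so $\cI[m]\neq 0$ for some $m\leq N+1$ and $p\mid\text{det}_{N+1}$ (using that $a\mid\text{det}_n$ implies $a\mid\text{det}_{m}$ for $m>n$). With that repair your count goes through; note also that for bare finiteness you only need the assignment $p\mapsto$ (simple quotient) to hit every isomorphism class, i.e.\ a surjection from a finite set — the injectivity/well-definedness of $\cW\mapsto(p)$ that you invoke at the end is the harder statement proved later in the paper (Corollary \ref{cor:uniqueness}, via Theorem \ref{thm:coincidences}) and should not be assumed here.
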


\begin{proof} Any such vertex algebra must be the simple quotient of $\cW^I_R(c,\lambda)$ for some $I$ and some localization $R$ of $\mathbb{C}[c,\lambda] / I$, such that $\cW^I_R(c,\lambda)$ has a singular vector in weight $ m \leq N+1$. But there are only finitely many divisors of $\text{det}_{m}$ for $m\leq N+1$. \end{proof}

In Section \ref{section:coincidences}, we will show that for $N\geq 3$, the correspondence between these ideals and the isomorphism classes of such vertex algebras, is a bijection; see Corollary \ref{cor:uniqueness}.

There is a useful criterion for proving that a vertex algebra of type $\cW(2,3\dots, N)$ is a quotient of $\cW^I_R(c,\lambda)$ for some $I$ and $R$.

\begin{thm} \label{refinedsimplequotient} Let $\cW$ be a vertex algebra of type $\cW(2,3,\dots, N)$ which is defined over some localization $R$ of $\mathbb{C}[c,\lambda] / I$, for some prime ideal $I$. Suppose that $\cW$ is generated by the Virasoro field $L$ and a weight $3$ primary field $W^3$. If in addition, the graded character of $\cW$ agrees with that of $\cW(c,\lambda)$ up to weight $8$, then $\cW$ is a quotient of $\cW^I_R(c,\lambda)$. In particular, if $\cW$ is simple as a vertex algebra over $R$, then it is the simple quotient of $\cW^I_R(c,\lambda)$.
\end{thm}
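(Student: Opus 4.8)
The plan is to reduce the statement to Theorem~\ref{thm:simplequotient} by showing that the hypothesis on the graded character up to weight $8$ forces the relevant low-weight Jacobi identities to hold automatically in $\cW$, hence that the OPE relations \eqref{ope:first}-\eqref{ope:fourth} and \eqref{ope:w2w6}-\eqref{ope:w4w5} are satisfied. First I would record that since $\cW$ is generated by $L$ and a weight $3$ primary field $W^3$, we may normalize $W^3$ so that $(W^3)_{(5)} W^3$ is a multiple of the identity; simplicity of $\cW$ over $R$ (together with the fact that $\cW[0]\cong R$) forces this multiple to be $\frac{c}{3}$ after rescaling, exactly as in the construction of $\cW(c,\lambda)$. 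Setting $W^2 = L$ and $W^i = (W^3)_{(1)} W^{i-1}$ for $i\geq 4$ as in the theorem statement, the fields $\{L, W^i\}$ then strongly generate $\cW$, and each $W^i$ has conformal weight $i$ by the same $\mathbb{Z}_2$-grading and weight arguments used in Section~\ref{section:main}.

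The key point is the following: the OPEs among $\{L, W^i|\ i\leq 6\}$ — equivalently, all $W^i_{(k)} W^j$ with $i+j\leq 9$ — are, by the analysis of Step~1 in Section~\ref{section:main}, the \emph{unique} expressions compatible with the $\mathbb{Z}_2$-symmetry, the weight grading, the normalization, and the Jacobi identities of type $(W^i,W^j,W^k)$ with $i+j+k\leq 11$, provided that no normally ordered polynomial relations hold among $L,W^3,\dots,W^8$ and their derivatives below the relevant weights. This latter freeness is precisely what the graded character hypothesis guarantees: if $\chi(\cW,q)$ agrees with $\chi(\cW(c,\lambda),q) = \prod_{n\geq 2}(1-q^n)^{-(n-1)}$ up to weight $8$, then in each weight $d\leq 8$ the monomials in $L,W^3,\dots$ and their derivatives of the shape \eqref{freegen} are $R$-linearly independent, so there are no null fields in weights $\leq 8$. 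Consequently the universal computation of Step~1 applies verbatim with $c$ the central charge of $\cW$ and some $\lambda\in R$ (the value of $\lambda$ being read off, say, from the coefficient of $W^3$ in $L_{(3)} W^5$ or from the OPE $W^3(z)W^4(w)$ as in \eqref{ope:third}-\eqref{ope:fourth}): the structure constants must coincide with those of $\cW(c,\lambda)$, so \eqref{ope:first}-\eqref{ope:fourth} and \eqref{ope:w2w6}-\eqref{ope:w4w5} hold in $\cW$ with the structure constants replaced by their images in $R$. Theorem~\ref{thm:simplequotient} then immediately yields that $\cW$ is the simple quotient of $\cW^I_R(c,\lambda)$.

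The main obstacle is the middle step: verifying carefully that the graded-character agreement up to weight $8$ really does imply the absence of null fields in weights $\leq 8$, and hence the linear independence needed to run the uniqueness argument of Step~1. One has to check that the spanning set of normally ordered monomials of weight $\leq 8$ built from $\{L, W^3, \dots, W^6\}$ and their derivatives has exactly the cardinality predicted by $\prod_{n\geq 2}(1-q^n)^{-(n-1)}$ in each such weight; since $\cW$ is strongly generated by $\{L, W^i|\ i\geq 3\}$ with $W^i$ for $i\geq 7$ expressible via $(W^3)_{(1)}$ but \emph{a priori} possibly redundant, one should first argue that in weights $\leq 8$ only $L, W^3, W^4, W^5, W^6$ can contribute (which follows from $W^7$ having weight $7$ and $W^8$ weight $8$, so they appear only linearly), and then that the matching dimension count forces these monomials to be a basis. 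Once freeness is in hand, the rest is a direct invocation of the already-established Step~1 uniqueness and of Theorem~\ref{thm:simplequotient}; no genuinely new computation is required.
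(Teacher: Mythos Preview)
Your proposal is correct and follows essentially the same approach as the paper: reduce to Theorem~\ref{thm:simplequotient} by arguing that the graded-character hypothesis up to weight $8$ rules out null fields in those weights, which forces the OPE relations \eqref{ope:first}--\eqref{ope:fourth} and \eqref{ope:w2w6}--\eqref{ope:w4w5} to hold. Your write-up is more detailed than the paper's terse proof, but the logical structure is identical.
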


\begin{proof} By Theorem \ref{thm:simplequotient} and Remark \ref{rem:simplicity}, it suffices to prove that \eqref{ope:first}-\eqref{ope:fourth} and \eqref{ope:w2w6}-\eqref{ope:w4w5} are satisfied. But this is automatic because the graded character assumption implies that there are no null vectors of weight $w\leq 8$ in the (possibly degenerate) nonlinear conformal algebra corresponding to $\{L, W^i|\ 3 \leq i \leq N\}$, where $W^i = W^3_{(1)} W^{i-1}$ for $i\geq 4$. \end{proof}

 \section{Principal $\cW$-algebras of type $A$} \label{section:prinw}

By Theorem \ref{wprinquot}, there is some ideal $I\subseteq \mathbb{C}[c,\lambda]$ and some localization $R$ of $\mathbb{C}[c,\lambda] / I$ such that $\cW^k(\gs\gl_n, f_{\text{prin}})$ can be obtained as the simple quotient of $\cW^I_R(c,\lambda)$. In this section, we shall give an explicit generator of this ideal. We need three preliminary results that are easily obtained by computer calculation. First, recall the parafermion algebra $$N^k(\gs\gl_2) = \text{Com}(\cH, V^k(\gs\gl_2)),$$ where $\cH$ is the Heisenberg algebra corresponding to the Cartan subalgebra of $\gs\gl_2$. It is of type $\cW(2,3,4,5)$ for all $k\neq 0$, and is generated by the Virasoro field $L$ of central charge $\displaystyle  c = \frac{2 (k-1)}{k+2}$ and a weight $3$ primary field $W^3$ \cite{DLY}.

\begin{thm} \label{thm:sl2para} The polynomial 
\begin{equation} \label{curve:para} p =  4\lambda (c+7) (2c-1) + (c-2) (c+4),\end{equation} is an irreducible factor of $\text{det}_6$ of level $6$. Let $I = (p) \subseteq\mathbb{C}[c,\lambda]$, let $D$ be the multiplicative set generated by $(c+7)$ and $(2c -1)$, and let $$R = D^{-1} \mathbb{C}[c,\lambda] / I \cong D^{-1} \mathbb{C}[c].$$ Then $$N^k(\gs\gl_2) \cong \cW^I_R(c,\lambda) / \cI,$$ where $\cI$ is the maximal proper graded ideal of $\cW^I_R(c,\lambda)$. In particular, $N^k(\gs\gl_2)$ is obtained from $\cW(c,\lambda)$ by setting
\begin{equation} \label{ratpara:para} c =  \frac{2 (k-1)}{k+2},\qquad \lambda = \frac{k+1}{(k-2) (3k+4)},\end{equation} and then taking the simple quotient. 
\end{thm}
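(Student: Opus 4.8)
The plan is to realize $N^k(\gs\gl_2)$ as a simple quotient of $\cW^I_R(c,\lambda)$ via Theorem~\ref{thm:simplequotient}, then recover the ideal $I=(p)$ by elimination, and finally read off the Shapovalov assertion from the resulting presentation. First I would take the explicit OPE algebra of $N^k(\gs\gl_2)$ from \cite{DLY}: it is of type $\cW(2,3,4,5)$, generated by its Virasoro element $L$ of central charge $c=\frac{2(k-1)}{k+2}$ together with a weight $3$ primary field, which I rescale so that $(W^3)_{(5)}W^3=\frac c3 1$. Writing $W^2=L$ and $W^i=(W^3)_{(1)}W^{i-1}$ for $i\geq 4$, the fields $W^4,W^5$ are the two remaining generators up to normalization, while $W^6,W^7,\dots$ are normally ordered polynomials in $L,W^3,W^4,W^5$ and their derivatives; I would compute these (through weight $8$, say) by computer.

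Next I would check, directly from the formulas of \cite{DLY}, that the universal OPE relations \eqref{ope:first}--\eqref{ope:fourth} and \eqref{ope:w2w6}--\eqref{ope:w4w5} hold among $\{L,W^i\}$ in $N^k(\gs\gl_2)$ after the substitution $c=\frac{2(k-1)}{k+2}$, $\lambda=\frac{k+1}{(k-2)(3k+4)}$. In practice $\lambda$ is pinned down by matching a single structure constant --- for instance the coefficient of $(z-w)^{-4}$ in $W^3(z)W^4(w)$, which by \eqref{ope:fourth} equals $31-16\lambda(2+c)$ --- and the remaining identities are then a finite consistency check. Since $N^k(\gs\gl_2)$ is simple for generic $k$, equivalently simple as a vertex algebra over $R$, Theorem~\ref{thm:simplequotient} applies and gives $N^k(\gs\gl_2)\cong \cW^I_R(c,\lambda)/\cI$, where $I$ is the prime ideal cut out by the image of the rational map $k\mapsto(c(k),\lambda(k))$ and $R$ is the corresponding localization. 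To identify $I$, I would eliminate $k$: solving $c=\frac{2(k-1)}{k+2}$ for $k=\frac{2(1+c)}{2-c}$ and substituting into the expression for $\lambda$ yields precisely $p=4\lambda(c+7)(2c-1)+(c-2)(c+4)$, so $I=(p)$; the elements one must invert in order to solve $p=0$ for $\lambda$ over $\mathbb{C}[c]$ are exactly $(c+7)$ and $(2c-1)$, so $R=D^{-1}\mathbb{C}[c,\lambda]/I\cong D^{-1}\mathbb{C}[c]$, and via $c=\frac{2(k-1)}{k+2}$ this is the localization of $\mathbb{C}[k]$ over which $N^k(\gs\gl_2)$ is defined. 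Here $p$ is irreducible because it has degree one in $\lambda$ with coefficients $4(c+7)(2c-1)$ and $(c-2)(c+4)$ that are coprime in $\mathbb{C}[c]$.

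Finally I would extract the Shapovalov statement. By the corollary to Theorem~\ref{wprinquot}, $\cW^I_R(c,\lambda)$ is freely generated of type $\cW(2,3,\dots)$, and its bilinear forms $\langle,\rangle_m$ are the base changes to $R$ of the corresponding forms on $\cW(c,\lambda)$, so the Shapovalov determinant of $\cW^I_R(c,\lambda)[m]$ is the image of $\text{det}_m$ in $R$. Now $N^k(\gs\gl_2)=\cW^I_R(c,\lambda)/\cI$ is of type $\cW(2,3,4,5)$ and is freely generated through weight $5$, so $\cI[m]=0$ for $m<6$ while $\cI[6]\neq 0$, since $W^6-P(L,W^3,W^4,W^5)\in\cI$. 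By the criterion of Section~\ref{section:voaring} identifying $\cI[m]$ with the radical of $\langle,\rangle_m$, the form $\langle,\rangle_m$ is nondegenerate over $R$ for $m<6$ and degenerate for $m=6$; equivalently $\text{det}_m\notin(p)$ for $m<6$ and $\text{det}_6\in(p)$. Combined with the irreducibility of $p$, this says precisely that $p$ is an irreducible factor of $\text{det}_6$ of level $6$. I expect the main obstacle to be the computation underlying the first two steps: confirming that the OPE algebra of $N^k(\gs\gl_2)$ recorded in \cite{DLY} is genuinely the specialization of the much larger universal OPE algebra of $\cW(c,\lambda)$ at these parameter values, which requires writing the decoupled fields $W^6,W^7,\dots$ out explicitly and verifying all of \eqref{ope:w2w6}--\eqref{ope:w4w5}.
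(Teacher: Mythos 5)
Your proposal is correct and follows essentially the route the paper intends: the paper states this theorem as one of three ``preliminary results that are easily obtained by computer calculation,'' and your roadmap --- verify the OPE relations \eqref{ope:first}--\eqref{ope:fourth} and \eqref{ope:w2w6}--\eqref{ope:w4w5} in $N^k(\gs\gl_2)$ so that Theorem \ref{thm:simplequotient} applies, eliminate $k$ from the parametrization to get $p$ (your algebra checks out: $4\lambda(c+7)(2c-1)=(4+c)(2-c)$), and deduce the level-$6$ Shapovalov statement from $\cI[m]=0$ for $m<6$ together with the decoupling of $W^6$ --- is exactly the content of that computation, organized through the paper's own machinery. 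The only point worth flagging is that you correctly avoid Theorem \ref{refinedsimplequotient} (whose character hypothesis fails here, since $W^6$ already decouples in weight $6$) and instead check the OPEs directly, which is the necessary extra work in this case.
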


Next, recall the coset 
$$\cC^{\ell} = \text{Com}(\cH, \cW^{\ell}),$$ where $\cW^{\ell} = \cW^{\ell - 3/2}(\gs\gl_3, f_{\text{min}})$ is the Bershadsky-Polyakov algebra. By Theorem 6.1 of \cite{ACLI}, $\cC^{\ell}$ is of type $\cW(2,3,4,5,6,7)$ for all $\ell \neq 0, 1/2$.

\begin{thm} \label{thm:bpcoset} The polynomial 
\begin{equation} \label{curve:bp} p = 48 + 8 c + 240 \lambda - 62 c \lambda - 5 c^2 \lambda + 300 \lambda^2 + 524 c \lambda^2 + 40 c^2 \lambda^2,\end{equation} is an irreducible factor of $\text{det}_8$ at level $8$. The corresponding variety $V(I)$ for $I = (p)$ is a rational curve with parametrization
\begin{equation} \label{ratparam:cell} c = -\frac{ 3 (2 \ell -1)^2}{2 \ell + 3},\qquad   \lambda  = \frac{(2 \ell+ 1) ( 2 \ell+3)}{8 (\ell -1) (4 \ell +3)},\end{equation} and $\cC^{\ell}$ is obtained from $\cW(c,\lambda)$ by substituting \eqref{ratparam:cell} and then taking the simple quotient. In particular, there exists a localization $R$ of $\mathbb{C}[c,\lambda] / I$ such that 
$$\cC^{\ell} \cong \cW^I_R(c,\lambda)/ \cI,$$ where $\cI$ is the maximal graded proper ideal of $\cW^I_R(c,\lambda)$. 
\end{thm}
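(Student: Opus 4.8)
The plan is to prove this exactly as in the parafermion case of Theorem~\ref{thm:sl2para}, reducing the whole statement to Theorem~\ref{refinedsimplequotient}. By Theorem~6.1 of \cite{ACLI}, $\cC^{\ell}$ is simple of type $\cW(2,3,4,5,6,7)$ for $\ell\neq 0,1/2$, and is generated by its coset Virasoro field $L$, of central charge $c=-3(2\ell-1)^{2}/(2\ell+3)$, together with a weight $3$ primary field; since the weight $3$ primary is unique up to scalar and $(W^{3})_{(5)}W^{3}$ is a nonzero multiple of the vacuum for all but finitely many $\ell$, one may normalize it so that $(W^{3})_{(5)}W^{3}=\tfrac{c}{3}\mathbf 1$. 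Viewing $\cC^{\ell}$ as a deformable family in $\ell$ in the sense of \cite{CLI,CLII}, it is a simple vertex algebra over a localization of $\mathbb C[\ell]$, generated by $L$ and $W^{3}$ (for $\ell$ outside a finite set the big algebra $\cW^{\ell-3/2}(\gs\gl_{3},f_{\text{min}})$ is simple and the coset is simple as well).

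The first substantive step is to determine $\lambda$ as a function of $\ell$. Using the free-field realization of the Bershadsky--Polyakov algebra $\cW^{\ell-3/2}(\gs\gl_{3},f_{\text{min}})$ and the OPE algebra of minimal $\cW$-algebras \cite{KWIII}, I would construct $W^{3}$ inside $\text{Com}(\cH,\cW^{\ell-3/2}(\gs\gl_{3},f_{\text{min}}))$ and compute a single structure constant that detects $\lambda$ --- for example the coefficient of $W^{3}(w)(z-w)^{-4}$ in $W^{3}(z)W^{4}(w)$, which in $\cW(c,\lambda)$ is $31-16\lambda(2+c)$ by \eqref{ope:fourth} (or the analogous coefficient in $L(z)W^{5}(w)$ from \eqref{ope:third}); dividing it by $(W^{3})_{(5)}W^{3}$ removes any dependence on the normalization of $W^{3}$. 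Solving for $\lambda$ gives $\lambda=\lambda(\ell)=\dfrac{(2\ell+1)(2\ell+3)}{8(\ell-1)(4\ell+3)}$, which is the second equation in \eqref{ratparam:cell}; as a check, this agrees with the value obtained by substituting $\mu=2\ell$ and $c=c(\ell)$ into \eqref{intro:lambdaofmu}, consistently with the coincidence $\cC_{\ell}\cong\cW_{\ell'}(\gs\gl_{2\ell},f_{\text{prin}})$ of Theorem~\ref{thm:acl}.

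Next I would eliminate $\ell$ from $c=c(\ell)$ and $\lambda=\lambda(\ell)$ by a resultant computation, obtaining the polynomial $p$ of \eqref{curve:bp}, and verify that the parametrizing map $\ell\mapsto(c(\ell),\lambda(\ell))$ is generically injective --- equivalently, that $\lambda(\ell)$ separates the two values of $\ell$ sharing a common value of $c$ --- so that $\mathbb A^{1}\to V(p)$ is birational onto its image; this shows $p$ is irreducible and that $V(I)=V(p)$, with $I=(p)$, is a rational curve carrying the parametrization \eqref{ratparam:cell}. Taking $R$ to be $\mathbb C[c,\lambda]/I$ localized along the multiplicative set needed to invert $(W^{3})_{(5)}W^{3}$ and to realize $\ell\mapsto(c,\lambda)$ as an isomorphism of $R$ with a localization of $\mathbb C[\ell]$, the family $\cC^{\ell}$ becomes a simple vertex algebra over $R$, generated by $L$ and $W^{3}$. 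To invoke Theorem~\ref{refinedsimplequotient} it remains to check that the graded character of $\cC^{\ell}$ agrees with that of $\cW(c,\lambda)$ in low weights, i.e.\ that $\cC^{\ell}$ has no null relation among $L,W^{3},W^{4},W^{5},W^{6},W^{7}$ and their derivatives below weight $8$; this can be confirmed by a direct dimension count in the coset, or one can instead verify \eqref{ope:first}--\eqref{ope:fourth} and \eqref{ope:w2w6}--\eqref{ope:w4w5} in $\cC^{\ell}$ and apply Theorem~\ref{thm:simplequotient} directly. Theorem~\ref{refinedsimplequotient} then yields $\cC^{\ell}\cong\cW^{I}_{R}(c,\lambda)/\cI$ with $\cI$ the maximal graded proper ideal. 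Finally, since $\cW^{I}_{R}(c,\lambda)$ is freely generated of type $\cW(2,3,\dots)$ whereas $\cC^{\ell}$ is of type $\cW(2,\dots,7)$, the ideal $\cI$ is nonzero; the absence of low-weight null relations forces $\cI$ to contain no element of weight $<8$, so $\cW^{I}(c,\lambda)$ has a singular vector in weight $8$ and none below, which is precisely the assertion that $p\mid\det_{8}$ while $p\nmid\det_{m}$ for $m<8$, i.e.\ $p$ is a factor of $\det_{8}$ of level $8$. (Alternatively one may compute and factor $\det_{2},\dots,\det_{8}$ outright, which is feasible in these weights.)

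The main obstacle is the explicit coset computation of the second step: identifying the weight $3$ primary $W^{3}$ inside $\text{Com}(\cH,\cW^{\ell-3/2}(\gs\gl_{3},f_{\text{min}}))$, normalizing it correctly, and performing enough normally ordered product manipulations in the fairly intricate OPE algebra of the Bershadsky--Polyakov algebra to extract $\lambda(\ell)$ reliably. A secondary delicate point is the verification that $\cC^{\ell}$ has no extra null relations among its generators below weight $8$: because $\cC^{\ell}$ is not freely generated, one must be sure that the only such relation --- the one in weight $8$ that decouples $(W^{3})_{(1)}W^{7}$ --- does not have an unexpected companion at a lower weight, as this would break the level computation for $p$.
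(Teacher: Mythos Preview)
Your approach is essentially the paper's: the paper states this theorem (along with Theorems~\ref{thm:sl2para} and \ref{thm:sl4subregcoset}) as a ``preliminary result easily obtained by computer calculation,'' without further detail, and what you outline is exactly that calculation --- construct $W^3$ explicitly in the coset, extract $\lambda(\ell)$ from a structure constant, eliminate $\ell$ to get $p$, and invoke the universal property of $\cW(c,\lambda)$.

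One point to tighten: your primary route via Theorem~\ref{refinedsimplequotient} does not quite apply as stated. That theorem requires the graded character of $\cC^\ell$ to agree with $\cW(c,\lambda)$ through weight $8$, i.e., no null relations of weight $\leq 8$ in the strong generating set $\{L, W^3, W^4, \dots\}$. But $\cC^\ell$ is of type $\cW(2,\dots,7)$, so $W^8 = (W^3)_{(1)} W^7$ decouples --- this is precisely a weight-$8$ relation in that generating set, and the characters disagree at weight $8$ by one. (Compare Theorem~\ref{thm:sl2para}: $N^k(\gs\gl_2)$ is of type $\cW(2,3,4,5)$, so the disagreement starts already at weight $6$.) The paper reserves Theorem~\ref{refinedsimplequotient} for cosets whose first relation lies in weight $\geq 9$; see Corollary~\ref{cor:genpara} and Theorem~\ref{thm:minwcoset}. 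For $\cC^\ell$ you must go through Theorem~\ref{thm:simplequotient} directly --- that is, verify \eqref{ope:first}--\eqref{ope:fourth} and \eqref{ope:w2w6}--\eqref{ope:w4w5} in $\cC^\ell$ by explicit computation --- which you correctly list as an alternative and which is what the paper's ``computer calculation'' implicitly does.
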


Finally, recall that the coset $$\cC^k = \text{Com}(\cH, \cW^k(\gs\gl_4, f_{\text{subreg}}))$$ is of type $\cW(2,3,4,5,6,7,8,9)$ by Theorem 5.1 of \cite{CLIII}. 

\begin{thm} \label{thm:sl4subregcoset} 
The polynomial
\begin{equation} \label{curve:subregn=4} p = 320 + 40 c + 1536 \lambda - 804 c \lambda - 57 c^2 \lambda + 1456 \lambda^2 + 1536 c \lambda^2 + 
 444 c^2 \lambda^2 + 20 c^3 \lambda^2,\end{equation} is an irreducible factor of $\text{det}_{10}$ of level $10$. The corresponding variety $V(I)$ for $I = (p)$ is a rational curve with parametrization
\begin{equation} \label{subregn=4} c =  -\frac{4 (5 + 2 k) (7 + 3 k)}{4 + k} ,\qquad \lambda = -\frac{ (3 + k) (4 + k)}{3 (2 + k)^2 (16 + 5 k)},\end{equation} 
and $\cC^k$ is obtained from $\cW(c,\lambda)$ by substituting \eqref{subregn=4} and then taking the simple quotient. In particular, there exists a localization $R$ of $\mathbb{C}[c,\lambda] / I$ such that 
$$\cC^{k} \cong \cW^I_R(c,\lambda)/ \cI,$$ where $\cI$ is the maximal graded proper ideal of $\cW^I_R(c,\lambda)$. 
\end{thm}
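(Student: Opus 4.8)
The plan is to apply Theorem~\ref{refinedsimplequotient}, which reduces the statement to two verifications: first, that $\cC^k$ is a simple vertex algebra of type $\cW(2,3,4,5,6,7,8,9)$ defined over a suitable localization $R$ of $\mathbb{C}[c,\lambda]/I$, generated by its Virasoro field $L$ and a weight $3$ primary $W^3$; and second, that the graded character of $\cC^k$ agrees with that of $\cW(c,\lambda)$ up to weight $8$. The type statement, including the fact that $\cC^k$ is generated by $L$ and $W^3$, is Theorem~5.1 of \cite{CLIII}, which I may cite; simplicity over the generic one-parameter ring follows because $\cW^k(\gs\gl_4,f_{\text{subreg}})$ is simple for generic $k$ and taking the Heisenberg coset of a simple vertex algebra preserves simplicity for generic parameter values. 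The graded character comparison up to weight $8$ is a finite check: since $\cC^k$ is of type $\cW(2,\dots,9)$, in weights $\leq 8$ there are no relations among the generators (the first generator that could produce a relation in low weight would have to interact nontrivially, but the minimal strong generating set $\{L,W^3,\dots,W^9\}$ guarantees freeness through weight~$8$), so $\chi(\cC^k,q)$ and $\chi(\cW(c,\lambda),q)$ agree through $q^8$.

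Next I would identify the ideal $I$ explicitly. Since $\cC^k$ is defined over a localization of $\mathbb{C}[k]$ and the structure constants of $\cW(c,\lambda)$ are polynomials in $c,\lambda$, matching OPEs forces $c$ and $\lambda$ to be specific rational functions of $k$. The central charge of $\cW^k(\gs\gl_4,f_{\text{subreg}})$ is recorded in Section~\ref{section:VOAs} (the subregular case with $n=4$), and since the Heisenberg coset subtracts a rank-one Heisenberg contribution, the central charge of $\cC^k$ is that value minus $1$; a direct computation gives $c = -\frac{4(5+2k)(7+3k)}{4+k}$ as in \eqref{subregn=4}. To find $\lambda(k)$, I would compute the OPE $W^3(z)W^4(w)$ in $\cC^k$ up to weight~$8$ using the explicit OPE algebra of $\cW^k(\gs\gl_4,f_{\text{subreg}})$ (part of which appears in \cite{FS} via Genra's identification with $\cW^{(2)}_4$), extract the normalized weight $3$ primary $W^3$ with $(W^3)_{(5)}W^3 = \frac{c}{3}1$, and read off the coefficient that is $\frac{8}{3}(5-2\lambda(2+c))$ in \eqref{ope:fourth}; solving for $\lambda$ yields $\lambda = -\frac{(3+k)(4+k)}{3(2+k)^2(16+5k)}$. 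Then $I = (p)$ is obtained by eliminating $k$ from the two relations $c(4+k) + 4(5+2k)(7+3k) = 0$ and $3\lambda(2+k)^2(16+5k) + (3+k)(4+k) = 0$ via a resultant computation; the result is the polynomial \eqref{curve:subregn=4}, and one checks it is irreducible. The localization $R$ is $D^{-1}\mathbb{C}[c,\lambda]/I \cong D^{-1}\mathbb{C}[k]$ where $D$ contains $(4+k)$, $(2+k)$, and $(16+5k)$.

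To confirm that $p$ is a divisor of $\text{det}_{10}$ of level exactly $10$, I would argue as follows. By Theorem~\ref{refinedsimplequotient}, $\cC^k$ is the simple quotient of $\cW^I_R(c,\lambda)$, which is therefore not simple over $R$; hence $\cW^I(c,\lambda)$ is not simple over $\mathbb{C}[c,\lambda]/I$, so some $\text{det}_m$ must vanish modulo $(p)$, i.e.\ $p \mid \text{det}_m$ for some $m$. Since $\cC^k$ is of type $\cW(2,\dots,9)$ with minimal generator in weight~$9$ decoupling nothing below weight~$10$, the first singular vector in $\cW^I_R(c,\lambda)$ occurs in weight $10$; this forces $p \mid \text{det}_{10}$ but $p \nmid \text{det}_m$ for $m < 10$, so $p$ has level $10$. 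Irreducibility of $p$ then makes $I=(p)$ prime. Finally, that $\cC^k \cong \cW^I_R(c,\lambda)/\cI$ with $\cI$ the maximal graded proper ideal is exactly the conclusion of Theorem~\ref{refinedsimplequotient}, and the rational parametrization \eqref{subregn=4} exhibits $V(I)$ as a rational curve.

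The main obstacle I anticipate is the explicit extraction of $\lambda(k)$: this requires working with the full OPE algebra of $\cW^k(\gs\gl_4,f_{\text{subreg}})$ up to weight~$8$, constructing the correctly normalized weight~$3$ primary field inside the Heisenberg coset (which involves subtracting off all the Heisenberg-dependent pieces and diagonalizing the bilinear form), and then computing enough of $W^3(z)W^4(w)$ to read off the relevant structure constant. This is a substantial but entirely mechanical computer calculation using the Thielemans package \cite{T}; no conceptual difficulty arises, but the bookkeeping is delicate because the subregular $\cW$-algebra has two weight-$2$ fields in its native presentation. Everything else — simplicity, the character comparison, the elimination step, and the Shapovalov level determination — is routine given the machinery already established.
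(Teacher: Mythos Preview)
Your approach is essentially the paper's: the paper states this theorem (along with the two preceding it) as one of ``three preliminary results that are easily obtained by computer calculation'' and supplies no further proof, so your outline is in fact more detailed than what appears there. The strategy of invoking Theorem~\ref{refinedsimplequotient} together with \cite{CLIII} and then reading off $\lambda(k)$ from a single structure constant in \eqref{ope:fourth} is exactly the intended mechanism.

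There is, however, a genuine gap in your justification for the graded character comparison. You assert that ``the minimal strong generating set $\{L,W^3,\dots,W^9\}$ guarantees freeness through weight~$8$,'' but minimality of a strong generating set does \emph{not} imply absence of relations below the weight of the top generator: a relation in weight $8$ of the shape $:W^3W^5: - :W^4W^4: + \cdots = 0$ with vanishing $W^8$-coefficient would not permit eliminating any generator, yet would still shrink the graded character. What you actually need is the stronger fact, established in the course of proving Theorem~5.1 of \cite{CLIII} (via the $k\to\infty$ limit and Weyl's second fundamental theorem, as in the parallel arguments in Sections~\ref{section:genpara} and~\ref{section:shap} of the present paper), that the \emph{first} normally ordered relation among the generators of $\cC^k$ occurs at weight $10$. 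This gives character agreement through weight $9$, which simultaneously repairs a second, related gap: your level-$10$ argument needs to exclude a singular vector at weight $9$, and agreement only through weight $8$ does not do that. Once you cite the weight-$10$ first-relation statement, both issues disappear and the rest of your outline goes through.
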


The main result in this section is the following.
\begin{thm} \label{thm:prinw} 
For all $n\geq 3$, let $I_n \subseteq \mathbb{C}[c,\lambda]$ be the ideal generated by
\begin{equation} \label{ideal:wslna} p_n =  \lambda  (n-2) (3 n^2  - n -2+ c (n + 2))-  (n-1) (n+1).\end{equation}
Then we have the following.
\begin{enumerate}
\item The polynomial $p_n$ is an irreducible factor of $\text{det}_{n+1}$ of level $n+1$.
\item There exists a localization $R_n$ of $\mathbb{C}[c,\lambda] / I_n$ in which $(3 n^2  - n -2+ c (n + 2))$ is invertible, so that  \begin{equation} \label{ideal:wsln} \lambda =  \frac{(n-1) (n+1)}{(n-2) (3 n^2  - n -2+ c (n + 2))}\end{equation} in $R_n$, and there is a unique singular vector 
 \begin{equation} \label{sing:gen} W^{n+1} - P_n(L, W^3,\dots, W^{n-1}),\end{equation} which generates the maximal graded proper ideal $\cI_n \subseteq \cW^{I_n}_{R_n} (c,\lambda)$.
\item We have an isomorphism $$\cW^{I_n}_{R_n}(c,\lambda)/ \cI_n \cong \cW^k(\gs\gl_n, f_{\rm{prin}}),$$ where $c$ and $\lambda$ are related to $k$ by \begin{equation} \begin{split}
& c  = -\frac{(n-1) (n^2 + n k - n-1 ) (n^2 + k + n k)}{n + k},\\ & \lambda = -\frac{n + k}{ (n-2) (n^2 + n k - n-2) (n + n^2 + 2 k + n k)}.\end{split}\end{equation}
\end{enumerate}
In particular, the structure constants appearing in the OPEs of the generators $\{L, W^i|\ i \geq 3\}$ of $\cW^k(\gs\gl_n, f_{\rm{prin}})$ are rational functions of $k$ and $n$. \end{thm}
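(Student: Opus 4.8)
The plan is to build on Theorem~\ref{wprinquot}, which already yields, for each $n\ge 3$, a prime ideal $I\subseteq\mathbb{C}[c,\lambda]$ and a localization $R$ of $\mathbb{C}[c,\lambda]/I$ with $\cW^k(\gs\gl_n,f_{\text{prin}})\cong \cW^I_R(c,\lambda)/\cI$, where $c$ is the known cubic $c(k)$ and $\lambda$ is some a priori unknown rational function $\lambda_n(k)$. Everything else reduces to computing $\lambda_n(k)$. Indeed, once it is known: part~(1) follows because $\cW^{I_n}(c,\lambda)$ is non-simple and, since $\cW^k(\gs\gl_n,f_{\text{prin}})$ is of type $\cW(2,3,\dots,n)$, has no singular vector in weight $<n+1$, so $p_n$ divides $\operatorname{det}_{n+1}$ at level $n+1$; irreducibility of $p_n$ is elementary, as $p_n=(n-2)(n+2)\lambda c+\bigl((n-2)(3n^2-n-2)\lambda-(n-1)(n+1)\bigr)$ has degree one in $c$ over $\mathbb{C}[\lambda]$ and the gcd of its two coefficients is a unit (the constant term is nonzero at $\lambda=0$). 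Part~(2) follows because the weight-$(n+1)$ singular vector must have nonzero $W^{n+1}$-coefficient — otherwise the quotient could not decouple down to a type $\cW(2,3,\dots,n)$ algebra — so after inverting it in $R_n$ we reach the form \eqref{sing:gen}, and the freely-generated argument from the proof of Theorem~\ref{wprinquot} shows it generates $\cI_n$. Part~(3) is then just substitution of $c(k)$ and $\lambda_n(k)$.

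For small $n$, say $3\le n\le 7$, $\lambda_n(k)$ and all the claims can be verified directly by computer. For general $n$ I would compute $\lambda_n(k)$ in the coset realization of Theorem~\ref{ACLIImain}, $\cW^{k'}(\gs\gl_n)\cong \operatorname{Com}\!\bigl(V^{k+1}(\gs\gl_n),V^k(\gs\gl_n)\otimes L_1(\gs\gl_n)\bigr)$, using the explicit formula for the weight-$3$ primary field of the coset recorded in \cite{BBSSII}. Concretely, rescale that field to obtain $W^3$ with $(W^3)_{(5)}W^3=\tfrac{c}{3}1$, set $W^4=(W^3)_{(1)}W^3$, and read off the structure constant $\mu := 31-16\lambda(2+c)$ appearing in \eqref{ope:fourth} as the coefficient of $W^3$ in $(W^3)_{(3)}W^4$; since $V^k(\gs\gl_n)$ and $L_1(\gs\gl_n)=V_{A_{n-1}}$ are described uniformly in $n$, this OPE coefficient comes out as an explicit rational function of $k$ and $n$, and solving for $\lambda$ gives \eqref{ideal:wsln} and the formula of part~(3). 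A preliminary reduction keeps this tractable: $\cW^k(\gs\gl_n,f_{\text{prin}})$ is invariant under Feigin--Frenkel duality $k\leftrightarrow k^\vee$, $(k+n)(k^\vee+n)=1$, an isomorphism fixing $L$ and sending $W^3\mapsto\pm W^3$ (the unique weight-$3$ primary, so normalized), hence $\lambda_n(k)=\lambda_n(k^\vee)$; and the known $c(k)$ is affine-linear in $u:=(k+n)+(k+n)^{-1}$, so $\lambda_n$ in fact descends to a rational function of $c$. As consistency checks, $V(p_n)$ passes through the parameter values dictated by the coincidences of Theorems~\ref{thm:aly}, \ref{thm:acl}, \ref{thm:cl} together with the curve formulas \eqref{curve:para}, \eqref{ratparam:cell}, \eqref{subregn=4}.

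With $c=c(k)$ and $\lambda=\lambda_n(k)$ in hand, $I_n=(p_n)$ follows from $p_n(c(k),\lambda_n(k))\equiv 0$ together with irreducibility of $p_n$ and the fact that $\mathbb{C}[c,\lambda]/I$ has Krull dimension one, so the prime $I$ containing $p_n$ must equal $(p_n)$. For the final statement: all structure constants in the OPEs of the generators of $\cW(c,\lambda)$ are polynomials in $c$ and $\lambda$ by the construction of Section~\ref{section:main}, hence their images in $R_n$ are polynomials in $c(k,n)$ and $\lambda(k,n)$, which are rational in $k$ and $n$; the higher fields $W^m$ ($m>n$) of $\cW^k(\gs\gl_n,f_{\text{prin}})$ are recovered via the decoupling relations $W^m=P_m(L,W^3,\dots,W^n)$ obtained by repeatedly applying $(W^3)_{(1)}$ to \eqref{sing:gen}, whose coefficients lie in $R_n$ and are again rational in $k$ and $n$; so reducing each OPE $W^i(z)W^j(w)$ with $2\le i\le j\le n$ modulo these relations yields structure constants that are rational functions of $k$ and $n$. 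The main obstacle is the general-$n$ evaluation of $\mu$ (equivalently $\lambda_n(k)$) in the coset realization: it requires a careful computation with the somewhat intricate weight-$3$ field of \cite{BBSSII}, organized so that the dependence on $n$ can be carried symbolically throughout.
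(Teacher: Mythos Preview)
Your overall architecture matches the paper's: reduce to computing $\lambda_n(k)$, do so via the coset realization of Theorem~\ref{ACLIImain} and the explicit weight-$3$ field of \cite{BBSSII}, then read off $\lambda$ from the coefficient of $W^3$ in $(W^3)_{(3)}W^4$ via \eqref{ope:fourth}. The parts (1)--(3) and the rationality statement then follow as you outline.

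Where you diverge is in how that coefficient is actually pinned down. You propose a \emph{direct} symbolic evaluation of the OPE for general $n$, carrying the $n$-dependence through the $d$- and $f$-tensor contractions, and you relegate Theorems~\ref{thm:aly}, \ref{thm:acl}, \ref{thm:cl} to ``consistency checks.'' The paper does \emph{not} attempt this direct computation. Instead it argues indirectly: from the shape of $B(n,k)^2$ one bounds the possible denominator of the coefficient $p(n,k)$, writes $p(n,k)$ as the conjectured expression plus an error term $(n+k)^2 r(n,k)$ over a common denominator, and then shows $r(n,k)=0$ by a degree count. The small cases $3\le n\le 7$ force $(n-3)\cdots(n-7)\mid r$, and the three coincidence theorems --- used as essential input, not as checks --- force further linear factors (in $k,n$) into $r$; together these push the $n$-degree of the numerator beyond the a priori bound of $11$ obtained by inspecting which terms in $A_{(3)}(B_{(1)}C)$ can contribute, so $r\equiv 0$. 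Your Feigin--Frenkel symmetry observation is correct but is not used in the paper's argument. In short, your plan is sound in principle, but the step you flag as the ``main obstacle'' is exactly what the paper circumvents; if you want to follow the paper's route, promote the coincidence theorems from sanity checks to the core of the proof and replace the direct evaluation by the denominator/degree argument.
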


\begin{proof}
In view of Theorems \ref{ACLIImain} and \ref{wprinquot}, it suffices to show that the coset $$\cC^k(\gs\gl_n)  = \text{Com}(V^{k+1}(\gs\gl_n), V^{k}(\gs\gl_n) \otimes L_1(\gs\gl_n))$$ is obtained from $\cW(c,\lambda)$ by setting 
\begin{equation} \label{prinw:cosetreal} c = \frac{k (n-1) (1 + k + 2 n)}{(k + n) (1 + k + n)},\qquad \lambda = \frac{ (k + n) (1 + k + n)}{(n-2) (2 k + n) (2 + 2 k + 3 n)},\end{equation} and taking the simple quotient. The structure of this coset has been studied in \cite{BBSSI,BBSSII}. The relevant calculations are contained in these papers, and we follow their notation, taking an antihermitian basis for $\gs\gl_n$ and corresponding generators $J^a_{(1)}$ for $V^k(\gs\gl_n)$ satisfying $$J^a_{(1)} (z) J^b_{(1)} (w) \sim - \delta_{a,b} k (z-w)^{-2} + f^{abc} J^{c}_{(1)} (z-w)^{-1}.$$ Here $f^{abc}$ is the $f$-tensor as defined in the appendix of \cite{BBSSI}, and we are summing over repeated indices. Similarly, we have generators $J^a_{(2)}$ for $L_1(\gs\gl_n)$ satisfying 
$$J^a_{(2)} (z) J^b_{(2)} (w) \sim - \delta_{a,b}(z-w)^{-2} + f^{abc} J^{c}_{(2)} (z-w)^{-1}.$$
We also need the fields $Q^a_{(1)} \in V^k(\gs\gl_n)$ and $Q^a_{(2)} \in L_1(\gs\gl_n)$ given by
$$Q^a_{(1)} = d^{abc} :J^b_{(1)} J^c_{(1)}:,\qquad Q^a_{(2)} = d^{abc} :J^b_{(2)} J^c_{(2)}:,$$ where the $d$-tensor is defined as in \cite{BBSSI}. These are primary of weight $2$ and satisfy
\begin{equation} \begin{split} \label{ope:jq} J^a_{(1)}(z) Q^b_{(1)}(z) \sim -(2k+n) d^{abc} J^c_{(1)}(w)(z-w)^{-2} + f^{abc} Q^c_{(1)}(w)(z-w)^{-1},\\
J^a_{(2)}(z) Q^b_{(2)}(z) \sim -(2+n) d^{abc} J^c_{(2)}(w)(z-w)^{-2} + f^{abc} Q^c_{(2)}(w)(z-w)^{-1}.\end{split}\end{equation}
We also need \begin{equation}  \label{ope:qq} (Q^a_{(1)})_{(3)} Q^b_{(1)} = \delta_{a,b} \frac{2k(n+2k)(n^2-4)}{n} 1,\qquad (Q^a_{(2)})_{(3)} Q^b_{(2)} = \delta_{a,b} \frac{2(n+2)(n^2-4)}{n} 1.\end{equation}

In terms of these fields, the weight $3$ primary field $W^3 \in \cC^k(\gs\gl_n)$ is given as follows.
\begin{equation}  \label{def:cosetw3} \begin{split} W^3 &= B(n,k)  \bigg((n+1)(n+2) :J^a_{(1)} Q^a_{(1)}: - 3(n+k)(n+1)(n+2) :J^a_{(2)} Q^a_{(1)}: \\ & + 3(n+k)(n+1)(n+2k) :J^a_{(1)} Q^a_{(2)}: - k(n+k)(n+2k) :J^a_{(2)} Q^a_{(2)}:\bigg),\end{split} \end{equation} where $B(n,k)$ is given by
\begin{equation} \label{def:bnk} \frac{i}{3(n+k)(n+1)(n+k+1)} \sqrt{\frac{n}{2(n+2k)(n+2)(2+ 2k+3n)(n^2-4)}}.\end{equation}  As usual, $W^3$ is normalized so that $$W^3_{(5)} W^3 = \frac{k (n-1) (1 + k + 2 n)}{3(k + n) (1 + k + n)} 1 = \frac{c}{3} 1.$$

As in Section \ref{section:main}, set $W^i = W^3_{(1)} W^{i-1}$ for $i\geq 4$. It is apparent from the OPE formulas \eqref{ope:first}-\eqref{ope:fourth} and \eqref{ope:jq}-\eqref{ope:qq}, as well as properties of the $f$-tensor and $d$-tensor appearing in \cite{BBSSI}, that all structure constants in the algebra generated by $\{L, W^i|\ i\geq 3\}$ are rational functions in $k$ and $n$. In particular, \begin{equation} W^3_{(3)} W^4 = p(n,k) W^3,\end{equation} for some rational function $p(n,k)$ in $k$ and $n$. To prove the theorem, it suffices to show that \begin{equation} \label{formpnk} \begin{split} p(n,k) & = \frac{q(n,k)}{(n-2) (2 k + n) (2 + 2 k + 3 n)}, \\ & q(n,k) = 3 (-88 k - 88 k^2 - 52 n - 140 k n + 36 k^2 n - 52 n^2 + 72 k n^2 + 31 n^3).\end{split} \end{equation}

It follows from \eqref{def:cosetw3} and the OPE relations \eqref{ope:jq}-\eqref{ope:qq} that up to a constant, the denominator of $p(n,k)$ can contain at most the factors \begin{equation} \label{pdenom} (n+k)^2(n+1)^2(n+k+1)^2 (n+2k)(n+2)(2+ 2k+3n)(n^2-4),\end{equation} which appear in the denominator of $B(n,k)^2$. Here we are using the fact that at most one factor of $n$ can appear in the denominator as a result of contractions of the form \eqref{ope:qq}, but this factor will then cancel the factor of $n$ appearing in the numerator of $B(n,k)^2$. Therefore without loss of generality we may assume that the denominator of $p(n,k)$ is given by \eqref{pdenom}.

Note also that $p(n,k)$ is the same as the coefficient of $-B(n,k)  k(n+k)(n+2k) :J^a_{(2)} Q^a_{(2)}:$ appearing in $W^3_{(3)} W^4$. The only contributions to this coefficient will come from $$A_{(3)} (B_{(1)} C),$$ where $A,B,C$ are terms appearing in \eqref{def:cosetw3}, and two of the three depend either on $Q^a_{(2)}$ of $J^a_{(2)}$. All such terms are divisible by $(n+k)^2$. Therefore without loss of generality, we may write 
\begin{equation} \label{pnksecond} p(n,k) =  \frac{ (n+1)^2 (n+2)^2 (n+k+1)^2 (k + n)^2  q(n,k)  + (n+k)^2 r(n,k)}{(n-2)(n+1)^2(n+2)^2 (n+k)^2 (2k+n) (1 + k + n)^2(2 + 2 k + 3 n)},\end{equation} where $r(n,k)$ is a polynomial function in $n,k$. We now need to show that $r(n,k) = 0$.

First, it is easy to verify by computer that Theorem \ref{thm:prinw} holds for $3 \leq n\leq 7$. Therefore \eqref{formpnk} must hold when $3\leq n \leq 7$ for generic $k$, so $r(n,k)$ is divisible by $(n-3)(n-4)(n-5)(n-6)(n-7)$. Similarly, in view of Theorems \ref{thm:sl2para} and \ref{thm:aly}, \eqref{formpnk} must hold whenever $k+2(1+n)= 0$, so $r(n,k)$ is divisible by $k+2(1+n)$. In view of Theorems \ref{thm:bpcoset} and \ref{thm:acl}, $r(n,k)$ is divisible by both $ 2k+n-1$ and $2k+3n+3$. In view of Theorems \ref{thm:sl4subregcoset} and \ref{thm:cl}, $r(n,k)$ is divisible by $3k+2n-1$ and $3k+4n+4$. 

Therefore if $r(n,k) \neq 0$, it must have degree at least $10$ in the variable $n$, so the numerator of $p(n,k)$, written as in \eqref{pnksecond}, must have degree at least $12$ in $n$. But if we examine the contribution of each term of the form $$A_{(3)} (B_{(1)} C)$$ where $A,B,C$ are the summands in \eqref{def:cosetw3}, to the coefficient of $$-B(n,k)  k(n+k)(n+2k) :J^a_{(2)} Q^a_{(2)}:$$ appearing in $W^3_{(3)} W^4$, we find that the maximum degree in $n$ of the numerator of $p(n,k)$ is $11$. It follows that $r(n,k) = 0$, and the theorem is proved. \end{proof}

\begin{cor} Since $$\lim_{n\ra \infty} \frac{(n-1) (n+1)}{(n-2) (3 n^2  - n -2+ c (n + 2))} = 0,$$ there is a meaningful limit of the vertex algebras $\cW^k(\gs\gl_n, f_{\rm{prin}})$ as $n\ra \infty$, which is obtained from $\cW(c,\lambda)$ by setting $\lambda = 0$. This vertex algebra is of type $\cW(2,3,\dots)$ with strong generators $\{L, W^i|\ i\geq 3\}$, and the structure constants are polynomials in $c$. However, it is not isomorphic to the coset of $\cH$ inside the $\cW_{1+\infty}$-algebra, which is also of type $\cW(2,3,\dots)$; see Theorem \ref{thm:winfreal}. \end{cor}

We remark that by computing the irreducible factors of the Shapovalov determinants $\text{det}_N$ for $N\leq 7$, all one-parameter vertex algebras of type $\cW(2,3,\dots, N)$ satisfying \eqref{ope:first}-\eqref{ope:fourth} and \eqref{ope:w2w6}-\eqref{ope:w4w5}, can be classified. It turns out that the complete list is $\cW^k(\gs\gl_n, f_{\text{prin}})$ for $n\leq 7$, together with the cosets $N^k(\gs\gl_2)$, and $\cC^{\ell}$ given by Theorems \ref{thm:sl2para} and \ref{thm:bpcoset}. The classification for $N\leq 6$ was known previously in the physics literature \cite{B-V,BS,HI}, but the result for $N=7$ appears to be new. Note that $N^k(\gs\gl_2)$ and $\cC^{\ell}$ are not freely generated, so the folklore conjecture that the only freely generated ones are the principal $\cW$-algebras of type $A$ is true for $N \leq 7$.

\section{Generalized parafermions} \label{section:genpara}
We now consider another family of vertex algebras $\cC^k(n)$ arising as quotients of $\cW(c,\lambda)$, which we shall call {\it generalized parafermion algebras}. For $n\geq 1$, the embedding $\gg\gl_n \rightarrow \gs\gl_{n+1}$ defined by $$ a \mapsto \bigg(\begin{matrix} a & 0\\ 0 & -\text{tr}(a) \end{matrix}\bigg),$$ induces a vertex algebra homomorphism $$V^k(\gg\gl_n) \rightarrow V^k(\gs\gl_{n+1}).$$ We define
\begin{equation} \cC^k(n) = \text{Com}(V^k(\gg\gl_n), V^k(\gs\gl_{n+1})).\end{equation} Clearly $\cC^k(1) \cong N^k(\gs\gl_2)$, and $\cC^k(n)$ has Virasoro element $L^{\gs\gl_{n+1}} - L^{\gg\gl_n}$ with central charge 
\begin{equation} \label{ccgenpar} c  = \frac{n (k-1) (1 + n + 2 k)}{(n + k) (1 + n + k)}.\end{equation}
The following result was conjectured in \cite{B-H}, and generalizes the fact that $N^k(\gs\gl_2)$ is of type $\cW(2,3,4,5)$.

\begin{thm} \label{genpara:strong} For all $n\geq 1$, $\cC^k(n)$ is of type $\cW(2,3,\dots, n^2+3n+1)$ and is generated by the Virasoro field $L$ and a weight $3$ primary field $W^3$, for generic values of $k$. \end{thm}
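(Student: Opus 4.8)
The plan is to prove both statements by realizing $\cC^k(n)$ as a deformable family in the sense of \cite{CLI,CLII} and analyzing its behaviour as $k\to\infty$. I would first view $V^k(\gs\gl_{n+1})$ as a vertex algebra over a localization of $\mathbb{C}[k]$, and use the generalized Wakimoto realization attached to the maximal parabolic of $\gs\gl_{n+1}$ whose Levi factor is $\gg\gl_n$ and whose nilradical $\mathfrak n$ is abelian of dimension $n$. This embeds $V^k(\gs\gl_{n+1})$ into $\cS(n)\otimes V^{\kappa}(\gg\gl_n)$, where $\cS(n)$ is the rank-$n$ $\beta\gamma$-system carrying the natural $\gg\gl_n$-action on $\mathfrak n\oplus\mathfrak n^*$, $\kappa$ is a shifted level, and the $\gg\gl_n$-current of $V^k(\gs\gl_{n+1})$ maps to the sum of the $\gg\gl_n$-current on $\cS(n)$ and that on $V^{\kappa}(\gg\gl_n)$. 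Hence $\cC^k(n)$ is the commutant of this diagonal current, which exhibits it as a deformable family whose structure constants are rational in $k$. As $k\to\infty$ one has $V^{\kappa}(\gg\gl_n)\to\cH(n^2)$, and the limit $\cC^{\infty}(n)$ is a $\mathrm{GL}_n$-equivariant free-field vertex algebra built from $\cS(n)$ and a Heisenberg algebra. By the transfer principle for deformable families, a minimal strong generating set and the graded character of $\cC^{\infty}(n)$ lift to generic $k$, and the same holds for generation as a vertex algebra by any fixed subset.

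Next I would compute a minimal strong generating set, the graded character, and the generating set (in the weaker sense) of the free-field limit $\cC^{\infty}(n)$ by classical invariant theory. The first fundamental theorem for $\mathrm{GL}_n$ acting on the arc space of $\mathfrak n\oplus\mathfrak n^*$ shows that $\cC^{\infty}(n)$ is strongly generated by the "trace" fields $\sum_i{:}(\partial^a\beta_i)(\partial^b\gamma^i){:}$ and their interactions with the Heisenberg factor; working modulo total derivatives and normally ordered products, these reduce to a single field in each weight $\geq 2$, the weight-$2$ one being the Virasoro field $L$. The second fundamental theorem (the vanishing of the $(n+1)\times(n+1)$ minors of the relevant matrix of contractions, reflecting a rank-$\leq n$ condition) produces, at a computable weight, a relation that decouples the highest generator; tracking the minimal weight at which such a relation both exists and has the decoupling form gives exactly $n^2+3n+1$. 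Combined with the fact (again an FFT statement) that the trace generators are obtained from the lowest ones under iterated $n$-th products, this shows $\cC^{\infty}(n)$ is of type $\cW(2,3,\dots,n^2+3n+1)$ and is generated as a vertex algebra by its weight-$2$ and weight-$3$ subspaces. By the transfer principle, for generic $k$ the algebra $\cC^k(n)$ is of type $\cW(2,3,\dots,n^2+3n+1)$ and is generated by $L$ and a weight-$3$ field; subtracting $\partial L$ we may take the weight-$3$ generator to be primary, and since $\cC^{\infty}(1)$ specializes to the parafermion case the weight-$3$ generator $W^3$ may be normalized by $(W^3)_{(5)}W^3=\tfrac{c}{3}1$, the nondegeneracy $(W^3)_{(5)}W^3\neq 0$ holding for generic $k$.

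To make this completely explicit (and to prepare the identification of the truncation ideal carried out in Section~\ref{section:genpara}) I would also exhibit $W^3\in\cC^k(n)$ directly, as a $\gg\gl_n$-invariant cubic expression in the off-diagonal generators and their derivatives and the $d$-tensor of $\gs\gl_{n+1}$, in the spirit of formula \eqref{def:cosetw3} from the proof of Theorem \ref{thm:prinw}; the structure-tensor identities then give, as there, that all structure constants of the algebra generated by $L$ and $W^3$ are rational in $k$ and $n$, and that $(W^3)_{(3)}W^4$ determines the parameter $\lambda=\lambda(k)$. With $W^3$ in hand, an alternative (and in low weights computer-checkable) route to "generated by $L,W^3$" is available: by Theorem \ref{thm:simplequotient} the subalgebra $\langle L,W^3\rangle\subseteq\cC^k(n)$ is a quotient of $\cW^I_R(c,\lambda)$, hence contains nondecomposable fields $W^i=(W^3)_{(1)}W^{i-1}$ in every weight below the first decoupling relation; comparing with the minimal generating weights of $\cC^k(n)$ obtained from the limit forces $\langle L,W^3\rangle=\cC^k(n)$.

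The main obstacle is the invariant-theory step in the second paragraph: identifying the limit $\cC^{\infty}(n)$ precisely enough to apply the first and second fundamental theorems, and then the bookkeeping that pins the first decoupling relation at weight exactly $n^2+3n+2$ rather than something larger or smaller — this is the genuinely delicate point, much as in the orbifold/coset computations of \cite{CLI,CLII,DLY}, while the deformable-family transfer, the construction of $W^3$, and the low-weight verifications are comparatively routine.
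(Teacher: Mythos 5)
Your proposal follows essentially the same route as the paper: pass to the large-level limit of the deformable family, identify it as a $GL_n$-orbifold of a free field algebra, apply Weyl's first and second fundamental theorems to obtain quadratic generators $\omega_{i,j}$ of weight $2+i+j$ and determinantal relations, locate the first decoupling relation at weight $n^2+3n+2$, and transfer back to generic $k$. Two caveats: the paper identifies the limit directly as $\cH(2n)^{GL_n}$ via Theorem 6.10 of \cite{CLII}, so your Wakimoto detour --- in particular the unjustified assertion that $\cC^k(n)$ coincides with the commutant of the diagonal $\gg\gl_n$-current inside $\cS(n)\otimes V^{\kappa}(\gg\gl_n)$ --- is both unnecessary and the one step of your setup that would actually require proof; and the ``genuinely delicate point'' you defer (that the coefficient of $\omega_{0,n^2+3n}$ in the first determinantal relation $D_{I_0,J_0}$ with $I_0=J_0=(0,1,\dots,n)$ is nonzero and independent of the choice of normal ordering) is precisely the key step the paper also only sketches, via a recursive formula in the style of \cite{LII}.
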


\begin{proof} By Theorem 6.10 of \cite{CLII}, we have $$\lim_{k\rightarrow \infty} \cC^k(n) \cong \cH(2n)^{GL_n},$$ and a strong generating set for $\cH(2n)^{GL_n}$ will correspond to a strong generating set for $\cC^k(n)$ for generic values of $k$. Here $\cH(2n)$ denotes the rank $2n$ Heisenberg vertex algebra. The method of describing $\cH(2n)^{GL_n}$ is similar to the description of orbifolds of other free field algebras in \cite{LII} and we only sketch the proof. First, $\cH(2n)$ has a good increasing filtration \cite{LiIII}, such that the associated graded algebra $\text{gr}(\cH(2n)^{GL_n})$ is isomorphic to the classical invariant ring $$\big(\text{Sym} \bigoplus_{i \geq 0} (V_i \oplus V_i^*)\big)^{GL_n},$$ where $V_i = \mathbb{C}^n$ as $GL_n$-modules and $V^*_i \cong (\mathbb{C}^n)^*$. Generators and relations for this ring are given by Weyl's first and second fundamental theorems of invariant theory for the standard representation of $GL_n$ \cite{We}. The generators $q_{i,j}$ are quadratic and correspond to the pairing between $V_i$ and $V^*_j$, and the relations are $(n+1) \times (n+1)$ determinants $d_{I,J}$ corresponding to lists of indices $I = (i_0, i_1,\dots, i_n)$ and $J = (j_0, j_1,\dots, j_n)$ satisfying $$0\leq i_0< i_i < \cdots < i_n,\qquad 0\leq j_0 < j_1 < \cdots < j_n.$$ The corresponding fields $\omega_{i,j}$, which have weight $2+i+j$ are then a strong generating set for $\cH(2n)^{GL_n}$. Also, each relation $d_{I,J}$ corresponds to a normally ordered relation $D_{I,J}$ of weight $|I| +|J| + 2n+2$, whose leading term is $d_{I,J}$ in an appropriate sense. Here $|I| = i_0 + i_1+\dots + i_n$ and $|J| = j_0 + j_1 + \dots + j_n$.

There is some redundancy in the strong generating set $\{\omega_{i,j}|\ i,j \geq 0\}$ because $\partial \omega_{i,j} = \omega_{i+1, j} + \omega_{i,j+1}$, and the smaller set $\{\omega_{0,i}|\ i\geq 0\}$ suffices. The first normally ordered relation among these generators is $$D_{I_0,J_0},\qquad I_0 = (0,1,\dots, n) = J_0,$$ which has weight $n^2+3n+2$. The key step in the proof is showing that the coefficient of  $\omega_{0,n^2+3n}$ in this relation is independent of all choices of normal ordering, and is nonzero. This involving finding a recursive formula for the coefficient of $\omega_{0,m-2}$ in $D_{I,J}$ whenever the weight $m = |I| +|J| + 2n+2$ is even. Therefore up to rescaling, $D_{I_0, J_0}$ has the form 
\begin{equation} \label{gpara:firstrelation} \omega_{0,n^2+3n} - P(\omega_{0,0}, \omega_{0,1},\dots, \omega_{0,n^2+3n-1})=0,\end{equation} for some normally ordered polynomial $P$ in $\{\omega_{0,i}|\ 0\leq i \leq n^2+3n-1\}$, and their derivatives. This allows $\omega_{0,n^2+2n}$ to be eliminated. By applying the operator $(\omega_{0,1})_{(1)}$ repeatedly to \eqref{gpara:firstrelation} one can construct similar relations $$\omega_{0,m} - P_m(\omega_{0,0},\omega_{0,1}, \dots, \omega_{0,n^2+3n-1})=0,$$ for all $m > n^2+3n$. This shows that $\cH(2n)^{GL_n}$ has a minimal strong generating set $$\{\omega_{0,i}|\ 0\leq i\leq n^2+3n-1\},$$ and in particular is of type $\cW(2,3,\dots, n^2+3n+1)$. The fact that the weight $3$ field can be chosen to be primary and generates the algebra is easy to verify. Finally, the statement that $\cC^k(n)$ inherits these properties of $\cH(2n)^{GL_n}$ for generic values of $k$ is also clear; the argument is similar to the proof of Corollary 8.6 of \cite{CLI}. \end{proof}

\begin{cor} \label{cor:genpara} For all $n\geq 1$, there exists an ideal $I\subseteq \mathbb{C}[c,\lambda]$ and a localization $R$ of $\mathbb{C}[c,\lambda] / I$ such that $\cC^k(n)$ is the simple quotient of $\cW^{I}_{R}(c,\lambda)$.
\end{cor}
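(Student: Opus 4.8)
The plan is to combine Theorem~\ref{genpara:strong}, which establishes that $\cC^k(n)$ is of type $\cW(2,3,\dots, n^2+3n+1)$, generated by a Virasoro field $L$ and a weight $3$ primary $W^3$ for generic $k$, with the criterion of Theorem~\ref{refinedsimplequotient}. That theorem reduces the problem to two checks: first, that $\cC^k(n)$ is defined over a localization $R$ of $\mathbb{C}[c,\lambda]/I$ for some $I$; and second, that the graded character of $\cC^k(n)$ agrees with that of $\cW(c,\lambda)$ up to weight $8$.

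For the first point, $\cC^k(n)$ is a priori a vertex algebra over $\mathbb{C}[k]$ (or a suitable localization thereof in which the Sugawara denominators $(n+k)$ and $(1+n+k)$ are inverted), and it is simple for generic $k$. Its central charge is given by \eqref{ccgenpar}. After normalizing $W^3$ so that $W^3_{(5)} W^3 = \tfrac{c}{3}\,1$, the structure constant $\lambda$ will be some rational function $\lambda(k)$ — explicitly determined, as in the proof of Theorem~\ref{thm:prinw}, from the coefficient of $W^3$ in $W^3_{(3)} W^4$, since \eqref{ope:first}--\eqref{ope:fourth} force this coefficient to be the universal expression in $c$ and $\lambda$. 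One then eliminates $k$ from the two equations $c = c(k)$ and $\lambda = \lambda(k)$ by elimination theory to obtain a prime ideal $I \subseteq \mathbb{C}[c,\lambda]$, and takes $R$ to be the localization of $\mathbb{C}[c,\lambda]/I$ for which $R \cong D^{-1}\mathbb{C}[k]$, with $D$ generated by $(n+k)$, $(1+n+k)$, and the denominator of $\lambda(k)$. Over this $R$, $\cC^k(n)$ is a simple vertex algebra generated by $L$ and $W^3$.

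For the second point, one must verify that there are no null vectors in weights $\leq 8$ in $\cC^k(n)$ for generic $k$ — equivalently, that $\dim \cC^k(n)[m]$ matches $\operatorname{rank}_{\mathbb{C}[c,\lambda]}\cW(c,\lambda)[m]$ for $2\leq m\leq 8$. For $n$ large (say $n^2+3n+1 > 8$, i.e. $n\geq 2$) this is immediate: the minimal strong generators $W^3,\dots,W^8$ are all present and, since $n^2+3n+1 \geq 8$ in this range, there are no relations among them in weight $\leq 8$, so the character agrees with that of a freely generated algebra of type $\cW(2,3,\dots)$ through weight $8$. The one genuinely small case is $n=1$, where $\cC^k(1)\cong N^k(\gs\gl_2)$ is of type $\cW(2,3,4,5)$; but this case is already covered by Theorem~\ref{thm:sl2para}, so I would simply invoke that result and assume $n\geq 2$ in the rest of the argument. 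Then Theorem~\ref{refinedsimplequotient} applies directly and yields $\cC^k(n) \cong \cW^I_R(c,\lambda)/\cI$, where $\cI$ is the maximal graded ideal.

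I expect the main obstacle to be purely bookkeeping rather than conceptual: pinning down the rational function $\lambda(k)$ precisely enough to run the elimination, and confirming that the resulting ideal $I$ is prime (which follows since $V(I)$ is the image of a rational parametrization $k\mapsto (c(k),\lambda(k))$, hence an irreducible curve). The only subtlety worth flagging is checking that the character of $\cC^k(n)$ really does agree with $\cW(c,\lambda)$ through weight $8$ and not merely that the strong generating set is correct — i.e. that the generators $L, W^3, \dots, W^8$ satisfy no normally ordered polynomial relation in weights $\leq 8$ — but for $n\geq 2$ this is forced by Theorem~\ref{genpara:strong}, since the first relation among the $\omega_{0,i}$ occurs in weight $n^2+3n+2 \geq 12$.
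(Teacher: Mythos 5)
Your proposal follows essentially the same route as the paper's proof: handle $n=1$ via Theorem \ref{thm:sl2para}, and for $n\geq 2$ combine Theorem \ref{genpara:strong} with the criterion of Theorem \ref{refinedsimplequotient}, checking the character agreement through weight $8$ by observing that Weyl's second fundamental theorem places the first relation among the generators in weight $n^2+3n+2 > 8$. The one hypothesis of Theorem \ref{refinedsimplequotient} that you assert without justification is the simplicity of $\cC^k(n)$ as a vertex algebra over $R$ (you write ``it is simple for generic $k$'' with no argument); the paper deduces this from the simplicity of the orbifold $\cH(2n)^{GL_n}$, which is the $k\to\infty$ limit of $\cC^k(n)$, citing \cite{DLM} for orbifolds by a compact form of $GL_n$. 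This is a small but genuine step you should supply; the rest of your argument, including the elimination-theoretic construction of $I$ and $R$ from the rational functions $c(k)$ and $\lambda(k)$, is consistent with how the paper treats the analogous point in Theorem \ref{wprinquot}.
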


\begin{proof} This holds for $n=1$ by Theorem \ref{thm:sl2para}. For $n>1$, the simplicity of $\cC^k(n)$ as a vertex algebra over $\mathbb{C}[k]$ follows from the simplicity of $\cH(2n)^{GL_n}$, which can be deduced from \cite{DLM} if $GL_n$ is replaced by a compact form. In view of Theorems \ref{refinedsimplequotient} and \ref{genpara:strong}, it then suffices to show that the graded characters of $\cC^k(n)$ and $\cW(c,\lambda)$ agree up to weight $8$. This follows from Weyl's second fundamental theorem of invariant theory for $GL_n$, since there are no relations among the generators of weight less than $n^2+3n+2$. \end{proof}

We now give an explicit description of these ideals. For $n>1$, let $K_n \subseteq \mathbb{C}[c,\lambda]$ be the ideal generated by the following polynomial:
\begin{equation} \label{gpara:ideal}  \begin{split} p_n(c,\lambda) & = 
-3 c^2 \lambda - 12 c^2 \lambda^2 + 12 c^3 \lambda^2 - 2 c n - 13 c \lambda n + 10 c^2 \lambda n - 20 c \lambda^2 n + 28 c^2 \lambda^2 n - 8 c^3 \lambda^2 n  
\\ & + 4 n^2   - c n^2 + 20 \lambda n^2 - 34 c \lambda n^2 + 8 c^2 \lambda n^2 + 25 \lambda^2 n^2 - 45 c \lambda^2 n^2 + 99 c^2 \lambda^2 n^2 - 7 c^3 \lambda^2 n^2 
\\ & + 2 n^3     + 2 c n^3     + \lambda n^3 + 18 c \lambda n^3 - 4 c^2 \lambda n^3 - 10 \lambda^2 n^3 + 154 c \lambda^2 n^3 - 2 c^2 \lambda^2 n^3 + 2 c^3 \lambda^2 n^3  
\\ &  - 4 n^4 + c n^4 - 59 \lambda n^4   + 37 c \lambda n^4 - 2 c^2 \lambda n^4 - 163 \lambda^2 n^4 + 267 c \lambda^2 n^4 - 33 c^2 \lambda^2 n^4 + c^3 \lambda^2 n^4  
\\ & - 2 n^5 - 31 \lambda n^5    + 10 c \lambda n^5 - 92 \lambda^2 n^5 + 100 c \lambda^2 n^5 - 8 c^2 \lambda^2 n^5 - 3 \lambda n^6 - 12 \lambda^2 n^6 + 12 c \lambda^2 n^6.\end{split} \end{equation}
The corresponding variety $V(K_n) \subseteq \mathbb{C}^2$ is a rational curve with parametrization

\begin{equation} \label{lambdagenpar} \lambda  = \frac{(n + k) (1 + n + k)}{(k-2 ) (2 n + k) (2 + 2 n + 3 k)},\qquad c = \frac{n (k-1) (1 + n + 2 k)}{(n + k) (1 + n + k)} .\end{equation}

\begin{thm} \label{thm:genpara} For all $n> 1$, let $K_n$ be the ideal generated by $p_n(c,\lambda)$, as above.
\begin{enumerate}

\item The generator $p_n(c,\lambda)$ lies in the Shapovalov spectrum of level $n^2+3n+2$.
\item There exists a localization $R_n$ of $\mathbb{C}[c,\lambda] / K_n$ such that $\mathcal{W}^{K_n}_{R_n}(c,\lambda)$ has a unique up to scalar singular vector in weight $n^2+3n+2$ of the form
$$W^{n^2+3n+2} - P(L, W^3, \dots, W^{n^2+3n}).$$ 

\item Letting $\cK_n$ be the maximal proper graded ideal of $\cW^{K_n}_{R_n}(c,\lambda)$, we have
$$\cW^{K_n}_{R_n}(c,\lambda)/ \cK_n \cong \mathcal{C}^k(n).$$ 
\end{enumerate}

\end{thm}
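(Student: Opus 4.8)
The plan is to follow the same three-part strategy used for the principal $\cW$-algebras in Theorem \ref{thm:prinw} and for the three families in Theorems \ref{thm:sl2para}--\ref{thm:sl4subregcoset}, now adapted to the generalized parafermion family. By Corollary \ref{cor:genpara} we already know that $\cC^k(n)$ is the simple quotient of $\cW^I_R(c,\lambda)$ for \emph{some} ideal $I$ and \emph{some} localization $R$; the content of the theorem is the explicit identification $I = K_n$ and the structural statements (2) and (3). First I would record the central charge formula \eqref{ccgenpar} and the companion formula for $\lambda$ coming from \eqref{lambdagenpar}; together these give a rational parametrization $k \mapsto (c(k),\lambda(k))$ of a curve in $\mathbb{C}^2$, and one checks directly (a resultant/elimination computation eliminating $k$ from the two relations) that the image is exactly $V(K_n)$, i.e.\ that $p_n(c(k),\lambda(k)) = 0$ identically in $k$ and that $p_n$ is irreducible. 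This pins down the candidate ideal.

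Next I would prove part (1), that $p_n$ lies in the Shapovalov spectrum at level $n^2+3n+2$. The key input is Theorem \ref{genpara:strong}: $\cC^k(n)$ is of type $\cW(2,3,\dots,n^2+3n+1)$ with the first nontrivial decoupling relation occurring exactly in weight $n^2+3n+2$ (this is the relation \eqref{gpara:firstrelation} coming from Weyl's second fundamental theorem, which produces no relations in lower weight). Since $\cC^k(n)$ is simple over $\mathbb{C}[k]$ and is the simple quotient of $\cW^{K_n}_{R_n}(c,\lambda)$, the decoupling relation \eqref{gpara:firstrelation} is precisely a singular vector of $\cW^{K_n}(c,\lambda)$ in weight $n^2+3n+2$ whose coefficient of $W^{n^2+3n+2}$ is (after localizing to $R_n$) invertible, which is exactly statement (2); and because there are no relations in lower weight, $p_n$ cannot divide $\text{det}_m$ for $m < n^2+3n+2$, giving the level assertion in (1). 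The fact that $p_n | \text{det}_{n^2+3n+2}$ then follows since the quotient $\cW^{K_n}(c,\lambda)$ genuinely fails to be simple in that weight.

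For part (3), once (2) is in hand the argument is formal: the singular vector $W^{n^2+3n+2} - P(L,W^3,\dots,W^{n^2+3n})$ generates a graded ideal $\cK_n$ by the decoupling mechanism of Lemma \ref{decoup}, so $\cW^{K_n}_{R_n}(c,\lambda)/\cK_n$ is of type $\cW(2,3,\dots,n^2+3n+1)$ with the same strong generating set and OPE algebra as $\cC^k(n)$; since $\cC^k(n)$ is simple and is a quotient of a localization of $\cW^{K_n}(c,\lambda)$ with matching generators and OPEs (Theorem \ref{thm:simplequotient}, Corollary \ref{cor:genpara}), and since $\cW^{K_n}_{R_n}(c,\lambda)/\cK_n$ is the unique simple graded object in that category, the two must coincide. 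As in the proof of Theorem \ref{wprinquot}, freeness of the character of $\cW^{K_n}_{R_n}(c,\lambda)/\cK_n$ down to the relation weight forces $\cK_n$ to be the \emph{maximal} proper graded ideal, not merely contained in it.

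The main obstacle is the explicit verification that the parametrization \eqref{lambdagenpar} is correct, i.e.\ producing the formula $\lambda = \lambda(n,k)$ and confirming $p_n(c(n,k),\lambda(n,k)) \equiv 0$. In the principal case this was done by the clever degree-in-$n$ argument of Theorem \ref{thm:prinw} (bounding the degree of a certain structure constant $p(n,k)$ and exploiting the known coincidences at small $n$ to force a residual polynomial to vanish). For $\cC^k(n)$ one does not have an equally explicit free-field presentation of the weight-$3$ field, so the cleanest route is: (a) verify the theorem directly by computer for small $n$ (say $n \leq 4$ or $5$, using the OPE data for $\cW(c,\lambda)$ up to weight $14$ that the paper has already computed, together with the explicit coset realizations), (b) observe that, by the structure-constant rationality established en route to Corollary \ref{cor:genpara}, the relevant decoupling coefficient is a rational function of $n$ and $k$ of controlled degree, and (c) conclude that finitely many checks plus the known degree bound determine it, exactly as in Theorem \ref{thm:prinw}. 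Alternatively, since Corollary \ref{cor:genpara} already guarantees \emph{some} ideal $I$ works, it suffices to show $I = K_n$; as $K_n$ is prime (by irreducibility of $p_n$) and $V(K_n)$ contains the parametrized curve, one only needs to rule out $I \subsetneq K_n$, i.e.\ that $I$ is not the unit ideal and has the right variety, which again reduces to the small-$n$ computer checks together with the observation that the truncation level $n^2+3n+2$ is strictly increasing in $n$, so the $p_n$ are pairwise non-associate and each forces its own distinct degeneration.
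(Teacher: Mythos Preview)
Your structural outline is sound: once the parametrization \eqref{lambdagenpar} is established, parts (1)--(3) follow from Theorem \ref{genpara:strong}, Lemma \ref{decoup}, and Theorem \ref{thm:simplequotient} essentially as you describe. The gap is in the step you yourself flag as the main obstacle --- determining $\lambda_n(k)$ --- and neither of your proposed routes closes it. Route (a)+(b)+(c) requires that $\lambda_n(k)$ be rational in $n$ with an a priori degree bound, but Corollary \ref{cor:genpara} gives no such thing: it produces, for each fixed $n$, \emph{some} rational function of $k$, with no control on how these vary with $n$. (The paper is explicit about this: ``It is not obvious yet that $\lambda_n(k)$ is a rational function of $n$ as well.'') Your alternative route is circular: showing that $V(K_n)$ contains the parametrized curve $k \mapsto (c(k),\lambda_n(k))$ is precisely the statement $p_n(c(k),\lambda_n(k)) = 0$, which is what you are trying to prove.

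The paper's argument is both different and much shorter. It fixes $n$ and determines $\lambda_n(k)$ as a rational function of $k$ by evaluating at infinitely many points: for every positive integer $k$, Theorem 13.1 of \cite{ACLII} gives an isomorphism $\cC_k(n) = \text{Com}(L_k(\gg\gl_n), L_k(\gs\gl_{n+1})) \cong \cW_{k'}(\gs\gl_k, f_{\text{prin}})$ with $k' = -k + \frac{k+n}{k+n+1}$. Since Theorem \ref{thm:prinw} already computes $\lambda$ for principal $\cW$-algebras, this pins down $\lambda_n(k)$ at all positive integers $k$, hence identically as a rational function of $k$. No degree bounds, no free-field computation of $W^3$ in the coset, and no small-$n$ checks are needed; the formula \eqref{lambdagenpar} drops out directly. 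This is the key input you are missing.
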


\begin{proof}
Let $n$ be fixed. Clearly there is some rational function $\lambda_n(k)$ of $k$ such that $\cC^k(n)$ is obtained from $\cW(c,\lambda)$ by setting $\displaystyle c = \frac{n (k-1) (1 + n + 2 k)}{(n + k) (1 + n + k)}$ and $\lambda = \lambda_n(k)$, and then taking the simple quotient. It is not obvious yet that $\lambda_n(k)$ is a rational function of $n$ as well.

For $k$ a positive integer, it is well known that map $V^k(\gg\gl_n) \ra V^k(\gs\gl_{n+1})$ descends to a homomorphism of simple algebras $L_k(\gg\gl_n) \ra L_k(\gs\gl_{n+1})$. Set $$\cC_k(n) =  \text{Com}(L_k(\gg\gl_n), L_k(\gs\gl_{n+1})).$$ By Theorem 8.1 of \cite{CLII}, $\cC_k(n)$ is a simple vertex algebra and the map $\cC^k(n) \ra \cC_k(n)$ is surjective, so $\cC_k(n)$ is the simple quotient of $\cC^k(n)$.
Next, by Theorem 13.1 of \cite{ACLII}, for all positive integers $k$ and $n$, we have 
\begin{equation} \label{thm:genparaACL} \text{Com}(L_{k}(\gg\gl_n), L_{k}(\gs\gl_{n+1})) \cong \cW_{k'} (\gs\gl_k, f_{\text{prin}}),\qquad k' = -k + \frac{1 + k + n}{k + n},\  -k + \frac{k + n}{1 + k + n}.\end{equation}
It follows from Theorem \ref{thm:prinw} that whenever $k$ is a positive integer, $$\lambda_n(k) = \frac{ (n + k) (1 + n + k)}{(k-2 ) (2 n + k) (2 + 2 n + 3 k)},$$ and since $ \lambda_n(k)$ is a rational function of $k$, this equality holds for all $k$ where it is defined. This completes the proof. \end{proof}

\begin{remark} We expect that for all $n>1$, the above singular vector of weight $n^2+3n+2$ generates the maximal graded ideal $\cK_n$, but we do not prove this. 
\end{remark}

\begin{remark} In the case $n=1$, \eqref{gpara:ideal} specializes to $$p_1(c,\lambda) = 9 \lambda \big(4 \lambda (c + 7) (2 c - 1) + (c - 2) (c + 4)\big).$$ Unlike the case $n>1$, this does not divide $\text{det}_6$, but the irreducible factor $4 \lambda (c + 7) (2 c - 1) + (c - 2) (c + 4)$ does; see Theorem \ref{thm:sl2para}.
\end{remark}

\section{Families of ideals in the Shapovalov spectrum} \label{section:shap}
It is an interesting problem to compute the irreducible factors of $\text{det}_n$ for all $n$. A natural question is whether these factors are organized into families that admit a uniform description. In the previous section, we found two infinite families \eqref{ideal:wslna} and \eqref{gpara:ideal} of principal ideals of $\mathbb{C}[c,\lambda]$, which lie in the Shapovalov spectrum and correspond to families of vertex algebras of type $\cW(2,3,\dots, N)$.  In this section, we introduce two more such families of ideals which we conjecture to lie in the Shapovalov spectrum, and to correspond to vertex algebras of type $\cW(2,3,\dots, N)$. Another family will be discussed in Section \ref{section:oneplus} in the context of the $\cW_{1+\infty}$-algebra with negative integer central charge.

A remarkable feature of these ideals $I$ is that the corresponding varieties $V(I) \subseteq \mathbb{C}^2$ are all {\it rational curves}, possibly singular. It is no doubt speculative, but we expect that {\it all} ideals in the Shapovalov spectrum of $\cW(c,\lambda)$ are organized into similar infinite families, and all correspond to rational curves. Similar questions can be asked for other two-parameter vertex algebras such as the affine vertex algebra associated to the exceptional one-parameter Lie superalgebra $D(2,1;\alpha)$, as well as its orbifolds, cosets, and quantum Hamiltonian reductions.

\subsection{Cosets of minimal $\cW$-algebras in type $A$} 
For $n\geq 4$, recall that $\mathcal{W}^k(\mathfrak{s} \mathfrak{l}_{n}, f_{\text{min}})$ contains a copy of $V^{k+1}(\gg\gl_{n-2})$. The coset
\begin{equation} \mathcal{C}^k(n) = \text{Com}(V^{k+1}(\mathfrak{g} \mathfrak{l}_{n-2}), \mathcal{W}^k(\mathfrak{s} \mathfrak{l}_{n}, f_{\text{min}})),\end{equation} which has central charge \begin{equation} \label{cc:minwcoset} c =-\frac{(1 + k) (2 k + n-1) (3 k + 2 n)}{(k + n-1) (k + n)},\end{equation} was studied in \cite{ACKL}.

\begin{thm} \label{thm:minwcoset} For all $n\geq 4$, there exists an ideal $K_n \subseteq \mathbb{C}[c,\lambda]$ and a localization $R_n$ of $\mathbb{C}[c,\lambda] / K_n$ such that the following hold.
\begin{enumerate} 
\item The vertex algebra $\mathcal{W}^{K_n}_{R_n}(c,\lambda)$ has a singular vector in weight $n^2-1$ of the form 
$$W^{n^2-1} - P(L, W^3, \dots, W^{n^2-3}).$$ 
\item Letting $\cK_n$ be the maximal proper graded ideal of $\cW^{K_n}_{R_n}(c,\lambda)$, we have $$\cW^{K_n}_{R_n}(c,\lambda)/ \cK_n \cong \mathcal{C}^k(n),$$ where $k$ and $c$ are related by \eqref{cc:minwcoset}.
\end{enumerate}
\end{thm}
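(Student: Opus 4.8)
The plan is to follow the same strategy as in the proofs of Theorems \ref{thm:prinw} and \ref{thm:genpara}: exhibit $\mathcal{C}^k(n)$ as a simple quotient of some $\cW^{K_n}_{R_n}(c,\lambda)$ via the uniqueness criterion of Theorem \ref{refinedsimplequotient}, determine the parametrization of the truncation curve $V(K_n)$ by matching central charges and one further structure constant, and identify $K_n$ explicitly by interpolation in $n$. First I would recall from \cite{ACKL} that $\mathcal{C}^k(n)$ is of type $\cW(2,3,\dots,n^2-2)$ and is generated by its Virasoro field $L$ together with a weight $3$ primary field $W^3$; I would also invoke the simplicity of $\mathcal{C}^k(n)$ as a vertex algebra over $\mathbb{C}[k]$, which follows from the semisimplicity statements for these cosets established there (analogous to the use of \cite{CLII,DLM} in Corollary \ref{cor:genpara}). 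The graded character of $\mathcal{C}^k(n)$ agrees with that of $\cW(c,\lambda)$ through weight $8$ because $n\geq 4$ forces $n^2-2\geq 14>8$, so there are no relations among the low-weight generators; hence Theorem \ref{refinedsimplequotient} applies and gives, for each $n$, a rational function $\lambda_n(k)$ such that $\mathcal{C}^k(n)$ is the simple quotient of $\cW(c,\lambda)$ under the substitution $c$ as in \eqref{cc:minwcoset} and $\lambda=\lambda_n(k)$. Since $\mathcal{C}^k(n)$ is of type $\cW(2,3,\dots,n^2-2)$, its maximal graded ideal contains a singular vector in weight $n^2-1$, and after a further localization making the coefficient of $W^{n^2-1}$ invertible, this vector takes the decoupling form $W^{n^2-1}-P(L,W^3,\dots,W^{n^2-3})$, giving part (1); part (2) is then immediate from Lemma \ref{decoup} and the simplicity of $\mathcal{C}^k(n)$.

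The remaining task is to show that $\lambda_n(k)$ is in fact a rational function of $n$ as well, and to pin down $K_n$. The key input is Theorem 13.1 of \cite{ACLII}, or more precisely the coincidence between these minimal-$\cW$ cosets at positive integer levels and principal $\cW$-algebras of type $A$, which identifies $\text{Com}(L_{k+1}(\gg\gl_{n-2}),\cW_k(\gs\gl_n,f_{\text{min}}))$ with $\cW_{k'}(\gs\gl_m,f_{\text{prin}})$ for suitable $m$ and $k'$ depending on $k,n$. Combined with Theorem \ref{thm:prinw}, which gives the truncation curve of $\cW^{k'}(\gs\gl_m,f_{\text{prin}})$ explicitly via \eqref{ideal:wsln}, this forces $\lambda_n(k)$ to agree with an explicit rational expression in $n$ and $k$ on an infinite set of integer points, hence identically; I would then read off the polynomial $p_n(c,\lambda)$ generating $K_n$ by eliminating $k$ from the pair consisting of \eqref{cc:minwcoset} and this formula for $\lambda_n(k)$, and check directly that $V(K_n)$ is a rational curve with the resulting parametrization. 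It should also be verified by direct computer calculation that the theorem holds for the first few values of $n$ (say $4\leq n\leq 7$ or $8$), both as a consistency check and, if needed, to supply enough interpolation data.

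The main obstacle will be the same one that appears in Theorem \ref{thm:prinw}: establishing that $\lambda_n(k)$ depends polynomially (rather than just pointwise-rationally) on $n$. The clean route is the one above, leaning on the coincidence theorem of \cite{ACLII} to reduce to the already-proven principal case; the delicate point is ensuring that the levels $k'$ and ranks $m$ arising from \cite{ACLII} sweep out enough of the $(c,\lambda)$-plane—i.e. that the integer points where the identification is valid are Zariski-dense in the truncation curve $V(K_n)$ for each fixed $n$, and simultaneously that varying $n$ over integers $\geq 4$ determines a single bivariate rational function. Absent a sufficiently strong version of that coincidence, one would instead bound the $n$-degree of the numerator and denominator of $\lambda_n(k)$ by analyzing which factors can appear in the relevant structure constant of $\mathcal{C}^k(n)$—mirroring the degree-counting argument with $B(n,k)$ and the $f$- and $d$-tensors in the proof of Theorem \ref{thm:prinw}—and then use finitely many computed cases to interpolate; this is more laborious and is why the present theorem is stated with the caveat (see the Remark following it) that the singular vector is not proven to generate the full maximal graded ideal.
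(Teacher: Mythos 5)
Your first paragraph is essentially the paper's proof of this theorem and is correct: generation by $L$ and the weight $3$ primary field and the type $\cW(2,3,\dots,n^2-2)$ statement from \cite{ACKL}, simplicity as in Corollary \ref{cor:genpara}, agreement of graded characters up to weight $8$ (the paper derives the absence of low-weight relations from Weyl's second fundamental theorem for $GL_{n-2}$ applied to the free-field limit, which is the precise justification for your "no relations among the low-weight generators"), Theorem \ref{refinedsimplequotient}, and the decoupling of $W^{n^2-1}$ forcing the singular vector into weight $n^2-1$ with the stated form. That already proves both parts, since the theorem only asserts the \emph{existence} of $K_n$ and $R_n$.

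Your second and third paragraphs aim at something the statement does not ask for, namely an explicit generator of $K_n$, and the route you propose does not go through with the available results. Theorem 13.1 of \cite{ACLII} identifies the generalized parafermion cosets $\text{Com}(L_{k}(\gg\gl_n), L_{k}(\gs\gl_{n+1}))$ with principal $\cW$-algebras — that is the input used in the proof of Theorem \ref{thm:genpara} — but the analogous coincidence for the minimal $\cW$-algebra cosets $\text{Com}(L_{k+1}(\gg\gl_{n-2}), \cW_k(\gs\gl_n, f_{\text{min}}))$ is precisely Kawasetsu's Conjecture \ref{conj:kawasetsu}, which is open; this is why the paper records the explicit $p_n(c,\lambda)$ only as Conjecture \ref{conj:minwcoset} (shown there to be \emph{equivalent} to Kawasetsu's conjecture via Theorem \ref{thm:prinw}, and verified by computer only for $n=4$). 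Also, the remark about the singular vector not being shown to generate the maximal graded ideal follows Theorem \ref{thm:genpara}, not this theorem. None of this affects the validity of your proof of the statement as given.
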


\begin{proof} In the case $n=4$, it was shown explicitly in \cite{ACKL} that $\cC^k(4)$ is generated by the Virasoro field $L$ and a weight $3$ primary field $W^3$, and a similar argument shows that this holds for all $n$. By the same argument as the proof of Corollary \ref{cor:genpara}, $\cC^k(n)$ is simple. Also, it follows from Weyl's second fundamental theorem of invariant theory for the standard representation of $GL_{n-2}$ that there are no relations among the generators of $\cG_{\text{ev}}(n-2)^{GL_{n-2}}$ at weight $w \leq n^2-2$, so the graded characters of $\cC^k(n)$ and $\cW(c,\lambda)$ agree in weights $w\leq n^2-2$. Here $\cG_{\text{ev}}(n-2)$ is the generalized free field algebra introduced in Section 4 of \cite{ACKL}. By Theorem \ref{refinedsimplequotient}, $\cC^k(n)$ is then the simple quotient of $\cW^{K_n}_{R_n}(c,\lambda)$ for some $K_n$ and $R_n$. Finally, by Theorem 5.1 of \cite{ACKL}, $\cC^k(n)$ is of type $\mathcal{W}(2,3,\dots, n^2-2)$ for generic values of $k$. This was proven by passing to the limit
$$\lim_{k \ra \infty} \cC^k(n) \cong \cG_{\text{ev}}(n-2)^{GL_{n-2}}.$$ Since the weight $n^2-1$ field decouples, the singular vector in $\cW^{K_n}_{R_n}(c,\lambda)$ must occur in weight $n^2-1$ and have the desired form.
\end{proof}

For all integers $k\geq 0$, it was shown in \cite{ACKL} that there is a map of simple vertex algebras
$L_{k+1}(\gg\gl_{n-2}) \rightarrow \cW_k(\gs\gl_n, f_{\text{min}})$, and that the simple quotient of $\cC^k(n)$ coincides with the coset
$$\cC_k(n) = \text{Com}(L_{k+1}(\gg\gl_{n-2}) , \cW_k(\gs\gl_n, f_{\text{min}})).$$ The following conjecture is due to Kawasetsu (K. Kawasetsu, private communication 2015); it is based on the equality of central charges.
\begin{conj} \label{conj:kawasetsu} For all integers $n\geq 4$ and $k\geq 0$, $$\mathcal{C}_k(n) \cong \mathcal{W}_{k'}(\mathfrak{s} \mathfrak{l}_{2k+n}, f_{\rm{prin}}),\quad k' = -(2 k + n) +  \frac{k + n-1}{k + n},\  -(2 k + n) +  \frac{k + n}{k + n-1}.$$
\end{conj}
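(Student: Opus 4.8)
The natural strategy, following the method of this paper, is to realize both sides as simple quotients of $\cW(c,\lambda)$ and to reduce the statement to the matching of a single coordinate. Fix $n\geq 4$ and an integer $k\geq 0$, and set $N=2k+n$. By Theorem~\ref{thm:minwcoset} together with the identification of $\cC_k(n)$ with $\text{Com}(L_{k+1}(\gg\gl_{n-2}),\cW_k(\gs\gl_n,f_{\text{min}}))$, the algebra $\cC_k(n)$ is the simple quotient of $\cW^{K_n}_{R_n}(c,\lambda)$ specialized at the point of the truncation curve $V(K_n)$ corresponding to level $k$; equivalently it is the simple quotient $\cW_M(c,\lambda)$ at the maximal ideal $M=(c-c_0,\lambda-\lambda_0)$, where $c_0$ is given by \eqref{cc:minwcoset} and $\lambda_0=\lambda_n(k)$ is the $\lambda$-coordinate of that point. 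On the other side, by Theorem~\ref{thm:prinw}, $\cW_{k'}(\gs\gl_N,f_{\text{prin}})$ is the simple quotient $\cW_{M'}(c,\lambda)$ at $M'=(c-c_0',\lambda-\lambda_0')$ with $\lambda_0'=\frac{(N-1)(N+1)}{(N-2)(3N^2-N-2+c_0'(N+2))}$ as in \eqref{ideal:wsln}. By the isomorphism criterion for simple quotients of $\cW(c,\lambda)$ recorded in the introduction (and in Section~\ref{section:coincidences}) — two such quotients are isomorphic precisely when their central charges agree and, unless that common value is $0$ or $-2$, their $\lambda$-coordinates agree — it therefore suffices to verify $c_0=c_0'$ and $\lambda_0=\lambda_0'$; phrased geometrically, that $M$ lies on both truncation curves $V(K_n)$ and $V(I_N)$, after which the intersection-point phenomenon of Section~\ref{section:voaring} yields $\cC_k(n)\cong\cW_M(c,\lambda)\cong\cW_{k'}(\gs\gl_N,f_{\text{prin}})$.

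The equality $c_0=c_0'$ is the observation on which Kawasetsu based the conjecture: for $k'=-N+\frac{k+n-1}{k+n}$ one checks that the central charge $-\frac{(1+k)(2k+n-1)(3k+2n)}{(k+n-1)(k+n)}$ of \eqref{cc:minwcoset} equals the principal central charge $-\frac{(N-1)(N^2+Nk'-N-1)(N^2+k'+Nk')}{N+k'}$, a routine rational-function identity in $k$ for fixed $n$, hence a polynomial identity in $k$ and $n$. The second listed level $k''=-N+\frac{k+n}{k+n-1}$ satisfies $(k'+N)(k''+N)=1$, so $\cW_{k'}(\gs\gl_N,f_{\text{prin}})\cong\cW_{k''}(\gs\gl_N,f_{\text{prin}})$ by Feigin--Frenkel duality and requires no separate argument.

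The substantive, and genuinely new, point is the determination of the truncation curve $V(K_n)$, i.e. the closed form of $\lambda_n(k)$. I would compute it exactly as in the proof of Theorem~\ref{thm:prinw}. Since $\cC^k(n)$ has no relations among its coset generators below weight $n^2-2$ (Weyl's second fundamental theorem of invariant theory for $GL_{n-2}$) and is the simple quotient of $\cW^{K_n}_{R_n}(c,\lambda)$, the value of $\lambda$ at level $k$ is forced by a single structure constant: the coefficient of $W^3$ in $W^3_{(3)}W^4$, with $W^4=W^3_{(1)}W^3$, through the OPE \eqref{ope:fourth} and its consequences. One constructs the weight-$3$ primary $W^3\in\cC^k(n)$ explicitly as a normally ordered polynomial in the strong generators of $\cW^k(\gs\gl_n,f_{\text{min}})$ that commute with $V^{k+1}(\gg\gl_{n-2})$; these generators, their conformal weights, and the relevant portion of the OPE algebra are available from \cite{ACKL} together with the Kac--Wakimoto formulas for minimal $\cW$-algebras \cite{KWIII}. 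By $GL_{n-2}$-covariance and by weight and $\mathbb{Z}_2$-parity considerations only finitely many terms contribute to $W^3_{(3)}W^4$, and, repeating the degree-counting argument of Theorem~\ref{thm:prinw}, the resulting rational function $\lambda_n(k)$ has a priori bounded numerator and denominator degrees; it is then pinned down by finitely many specializations — e.g.\ $n=4,5$, which lie within the computational range of the package of \cite{T}, or a handful of small integer values of $k$ where Theorem~\ref{thm:prinw} or direct checks already apply. With $\lambda_n(k)$ in hand, the identity $\lambda_n(k)=\frac{(N-1)(N+1)}{(N-2)(3N^2-N-2+c_0(N+2))}$ at $N=2k+n$ is a routine algebraic check, completing the proof; as a byproduct, since $k'$ above is a nondegenerate admissible level, $\cW_{k'}(\gs\gl_N,f_{\text{prin}})$ is $C_2$-cofinite and rational by \cite{ArII,ArIII}, hence so is $\cC_k(n)$.

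The main obstacle is precisely this last computation. Unlike the principal case, where $W^3$ lives in an affine coset and is transparently built from $d$-tensors, here $W^3$ must be assembled from the weight-$3/2$ ``matter'' fields and the affine $\gg\gl_{n-2}$ currents of $\cW^k(\gs\gl_n,f_{\text{min}})$, and both the correct ansatz for $W^3$ and the sharp degree bound on $\lambda_n(k)$ demand care. An attractive alternative that would bypass the OPE computation is to establish a coset realization of $\cC_k(n)$ directly in terms of a principal $\cW$-algebra coset — in the spirit of the use of Theorem~13.1 of \cite{ACLII} in the proof of Theorem~\ref{thm:genpara} — for instance through a conformal embedding or a hook-type duality among the relevant $Y$-algebras; no such realization is currently available, which is why the statement remains open.
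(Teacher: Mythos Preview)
Your strategy is exactly the reduction the paper itself gives: the statement is not proved in the paper, but is recorded as Conjecture~\ref{conj:kawasetsu} and then shown, immediately after, to be equivalent to Conjecture~\ref{conj:minwcoset}, the explicit description of the ideal $K_n$ (equivalently, of the rational function $\lambda_n(k)$). The paper verifies this only for $n=4$ by computer. So your proposal is not a proof but an accurate summary of the reduction, and you are right to flag the determination of $\lambda_n(k)$ as the open step.

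Two points where your outline would need tightening even granting the strategy. First, the degree-bounding argument you invoke from the proof of Theorem~\ref{thm:prinw} does not transfer cleanly. In that proof the explicit shape of $W^3$ in \eqref{def:cosetw3} and the factor $B(n,k)^2$ in \eqref{def:bnk} give an a priori bound on the degree in $n$ of the numerator of the structure constant $p(n,k)$, and then five known specializations (from Theorems~\ref{thm:aly}, \ref{thm:acl}, \ref{thm:cl}) force the remainder to vanish. In the minimal-$\cW$ setting the weight-$3$ primary is built from the weight-$3/2$ matter fields and the affine currents of $\cW^k(\gs\gl_n,f_{\text{min}})$; the analogous bookkeeping of tensor contractions and of the normalization constant is substantially more involved, and you do not currently have an independent family of known coincidences (playing the role of Theorems~\ref{thm:aly}--\ref{thm:cl}) to pin $\lambda_n(k)$ by interpolation. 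Saying that ``$n=4,5$ lie within computational range'' does not by itself suffice: you would still need a uniform degree bound in $n$ to conclude for all $n$ from finitely many checks.

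Second, a few technical hypotheses of the isomorphism criterion are glossed over. You must check that for every $n\geq 4$ and $k\geq 0$ the central charge $c_0$ in \eqref{cc:minwcoset} is never $0$ or $-2$, and that the level $k'=-N+\frac{k+n-1}{k+n}$ is not a pole of the parametrization of $V(I_N)$ (i.e.\ not $-N$ and not $-N+\frac{N}{N+2}$ or $-N+\frac{N+2}{N}$), so that $\cW_{k'}(\gs\gl_N,f_{\text{prin}})$ is indeed obtained as a quotient of $\cW^{I_N}(c,\lambda)$. These are finite checks, but they are required for the appeal to Theorem~\ref{thm:coincidences}. You also need that the specialization $\cC^{k_0}(n)$ at an integer $k_0\geq 0$ coincides with the honest coset and that $\cC_{k_0}(n)$ is its simple quotient; the paper notes this holds for $k_0>-n+2$ by \cite{CLII} and \cite{ACKL}, so for $k_0\geq 0$ it is fine, but it should be stated.
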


This conjecture is equivalent to a much stronger conjecture, namely, the explicit generator of the ideal $K_n$ for all $n\geq 4$.

\begin{conj} \label{conj:minwcoset} 
For all $n\geq 4$, $K_n$ is generated by the polynomial
\begin{equation} \label{ideal:minw} \begin{split} p_n(c,\lambda) & = 3 + 108 \lambda - 12 c \lambda + 1152 \lambda^2 - 384 c \lambda^2 + 3072 \lambda^3 - 3072 c \lambda^3 - 4 n  - 144 \lambda n + 20 c \lambda n 
\\ & - 1536 \lambda^2 n  + 592 c \lambda^2 n - 16 c^2 \lambda^2 n - 4096 \lambda^3 n + 4352 c \lambda^3 n - 256 c^2 \lambda^3 n - 2 n^2 - 28 \lambda n^2 
\\ & - 2 c \lambda n^2 + 100 \lambda^2 n^2  - 152 c \lambda^2 n^2 + 4 c^2 \lambda^2 n^2 + 576 \lambda^3 n^2 - 1920 c \lambda^3 n^2 + 192 c^2 \lambda^3 n^2 
\\ & + 4 n^3 + 94 \lambda n^3 - 8 c \lambda n^3 + 548 \lambda^2 n^3 - 144 c \lambda^2 n^3 + 4 c^2 \lambda^2 n^3 + 1088 \lambda^3 n^3 + 32 c \lambda^3 n^3 
\\ & + 32 c^2 \lambda^3 n^3 - n^4 - 32 \lambda n^4 + 2 c \lambda n^4 - 199 \lambda^2 n^4 + 80 c \lambda^2 n^4 - c^2 \lambda^2 n^4 - 384 \lambda^3 n^4 
\\ & + 144 c \lambda^3 n^4 - 48 c^2 \lambda^3 n^4 + 2 \lambda n^5 + 4 \lambda^2 n^5 - 10 c \lambda^2 n^5 - 16 \lambda^3 n^5 + 8 c \lambda^3 n^5 + 8 c^2 \lambda^3 n^5 \\ & + 3 \lambda^2 n^6 + 12 \lambda^3 n^6 - 12 c \lambda^3 n^6.
\end{split} \end{equation}
\end{conj}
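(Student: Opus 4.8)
The plan is to adapt the strategy used to prove Theorem \ref{thm:prinw} and Theorem \ref{thm:genpara}. By Theorem \ref{thm:minwcoset} we already know that $\cC^k(n)$ is the simple quotient of $\cW^{K_n}_{R_n}(c,\lambda)$ for some ideal $K_n \subseteq \mathbb{C}[c,\lambda]$ and some localization $R_n$ of $\mathbb{C}[c,\lambda]/K_n$; since $\cC^k(n)$ is a one-parameter family over $\mathbb{C}[k]$ and is generated by $L$ and $W^3$, it follows that $K_n = (\tilde p_n)$ for a single irreducible $\tilde p_n \in \mathbb{C}[c,\lambda]$ and that the truncation curve $V(K_n) \subseteq \mathbb{C}^2$ is rational, parametrized by $c = c_n(k)$ as in \eqref{cc:minwcoset} together with a rational function $\lambda = \lambda_n(k)$. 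Thus the problem reduces to showing that $\lambda_n(k)$ is exactly the function obtained by eliminating $k$ from \eqref{ideal:minw}, equivalently that $\tilde p_n$ agrees up to a nonzero scalar with $p_n(c,\lambda)$. Given that, statement (1) follows because $\tilde p_n$ has level $n^2-1$ by Theorem \ref{thm:minwcoset}, and statements (2), (3) follow from Theorem \ref{thm:minwcoset} and Lemma \ref{decoup}.

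The first substantive step is to show that $\lambda_n(k)$ is a rational function of $n$ and $k$ \emph{jointly}, with denominator divisible only by an explicitly bounded product of linear factors in $n$ and $k$. This mirrors the denominator analysis in the proof of Theorem \ref{thm:prinw}: one uses the explicit realization of $\cC^k(n)$ inside $\cW^k(\gs\gl_n, f_{\text{min}})$ from \cite{ACKL}, whose OPE algebra is the minimal $\cW$-algebra OPE algebra of \cite{KWIII}, so that $L = L^{\cW} - L^{\gg\gl_{n-2}}$ and the normalized weight-$3$ primary $W^3$ have all their mutual structure constants rational in $(n,k)$, with poles only at the roots of the normalization constant of $W^3$ and at the finitely many levels arising from contractions with the $\gg\gl_{n-2}$-currents. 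In particular, writing $W^3_{(3)} W^4 = p(n,k)\,W^3$ with $W^i = W^3_{(1)}W^{i-1}$, the structure constant $p(n,k)$ is rational in $(n,k)$ with controlled denominator, and since $p(n,k)$ determines $\lambda$ just as in the proof of Theorem \ref{thm:prinw}, the same holds for $\lambda_n(k)$.

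The second step is to pin down $\lambda_n(k)$ by collecting enough evaluation constraints and matching them against an a priori bound on its $n$-degree. One would: (a) verify the conjecture directly by computer for small $n$ (say $4 \le n \le 7$, or as far as is feasible) using the algorithm of Section \ref{section:classification} and the package \cite{T}; (b) use the limit $k \ra \infty$, where $\cC^k(n) \ra \cG_{\text{ev}}(n-2)^{GL_{n-2}}$ and $\lambda_n(k) \ra 0$, to constrain the $n$-degrees of numerator and denominator; (c) use proven coincidences of the simple quotient $\cC_k(n)$ with $C_2$-cofinite, rational principal $\cW$-algebras of type $A$: each such coincidence, via Theorem \ref{thm:prinw}, forces $\lambda_n(k)$ to agree with the claimed value along a linear locus in $(n,k)$, hence produces a linear factor dividing the numerator of $\lambda_n(k) - \lambda_n^{\text{claim}}(k)$ after clearing denominators. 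Bounding the $n$-degree of the numerator of $\lambda_n(k)$ by counting, as in the proof of Theorem \ref{thm:prinw}, the contributions of the terms $A_{(3)}(B_{(1)}C)$ to the relevant coefficient in $W^3_{(3)}W^4$, one then needs only finitely many such constraints to force $\lambda_n(k) = \lambda_n^{\text{claim}}(k)$, hence $\tilde p_n = p_n(c,\lambda)$ up to scalar.

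The \textbf{main obstacle} is step (c). Unlike the principal-$\cW$-algebra family of Theorem \ref{thm:prinw}, which benefits from three independent families of \emph{proven} coincidences (Theorems \ref{thm:aly}, \ref{thm:acl}, \ref{thm:cl}) in addition to small-$n$ base cases, the family $\cC^k(n)$ currently has only Conjecture \ref{conj:kawasetsu} available at the relevant special levels, and that conjecture is equivalent to the present one. To make the argument go through one would first have to prove enough instances of Conjecture \ref{conj:kawasetsu} independently — for example by realizing the simple minimal $\cW$-algebra $\cW_k(\gs\gl_n, f_{\text{min}})$ as a conformal (or simple-current) extension of a tensor product of $L_{k+1}(\gg\gl_{n-2})$ with the appropriate rational principal $\cW$-algebra, in the spirit of the extension arguments underlying Theorems \ref{thm:aly}--\ref{thm:cl} — and only then would the degree bound suffice to close the gap. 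This is precisely why the statement is posed as a conjecture rather than a theorem.
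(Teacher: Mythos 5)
This statement is posed in the paper as a conjecture, and the paper does not prove it: it only verifies the case $n=4$ by computer and establishes (leaving details to the reader) that Conjecture \ref{conj:minwcoset} is \emph{equivalent} to Kawasetsu's Conjecture \ref{conj:kawasetsu}, via Theorem \ref{thm:prinw} and an argument parallel to the proof of Theorem \ref{thm:genpara}. Your proposal correctly recognizes this and correctly locates the obstruction, so there is no gap in your reasoning so much as an accurate diagnosis of why no proof exists.

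One comparison worth making. Your step (c) is modeled on the proof of Theorem \ref{thm:prinw}: bound the $n$-degree of the numerator of $W^3_{(3)}W^4$ and accumulate finitely many families of linear loci in $(n,k)$ along which $\lambda_n(k)$ is known, until the degree bound forces agreement. The paper's intended reduction is instead modeled on Theorem \ref{thm:genpara}: for \emph{fixed} $n$, the function $\lambda_n(k)$ is rational in $k$, so it suffices to know its value at infinitely many levels $k$; Kawasetsu's conjecture would supply these at all integers $k\geq 0$ via Theorem \ref{thm:prinw}, with no $n$-degree analysis needed. The second route is logically lighter (it needs no joint rationality in $(n,k)$ and no counting of OPE contributions), but it makes the equivalence with Conjecture \ref{conj:kawasetsu} transparent and hence cannot do better than that equivalence. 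Your route could in principle close the argument with only finitely many proven coincidence families per the degree bound, which is why it is the strategy that succeeded for $\cW^k(\gs\gl_n,f_{\text{prin}})$ where Theorems \ref{thm:aly}, \ref{thm:acl}, and \ref{thm:cl} were available; as you say, the analogous extension-type theorems for $\cC_k(n)=\text{Com}(L_{k+1}(\gg\gl_{n-2}),\cW_k(\gs\gl_n,f_{\text{min}}))$ are not known, and proving even one infinite family of them independently would settle the conjecture. Both analyses are consistent with the paper; neither yields a proof with the tools at hand.
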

The corresponding variety $V(K_n) \subseteq \mathbb{C}^2$ is a rational curve with parametrization
\begin{equation} \label{eq:paraminw} c =-\frac{(1 + k) (2 k + n-1) (3 k + 2 n)}{(k + n-1) (k + n)}, \qquad \lambda = \frac{ (k + n-1) (k + n)}{(n-2) (2 k + n-2) (4 k + 3 n)}.\end{equation} The proof that Conjectures \ref{conj:kawasetsu} and \ref{conj:minwcoset} are equivalent is a consequence of Theorem \ref{thm:prinw}. It is similar to the proof of Theorem \ref{thm:genpara}, and is left to the reader. In the case $n=4$, we have verified Conjecture \ref{conj:minwcoset} by computer calculation.

\subsection{Cosets of subregular $\cW$-algebras of type $A$} For $n\geq 4$, recall that $\cW^k(\gs\gl_n, f_{\text{subreg}})$ contains a Heisenberg algebra $\cH$. Let 
\begin{equation} \label{coset:subreg} \cC^k(n) = \text{Com}(\cH, \cW^k(\gs\gl_n, f_{\text{subreg}})),\end{equation} which has central charge 
\begin{equation} \label{cc:subreg} c  = -\frac{n ( n^2 - 2 k + n k  - 3 n +1) (n^2 - k + n k - 2 n-1)}{n + k}.\end{equation}

\begin{conj}  \label{conj:subregtype} For all $n\geq 4$, there exists an ideal $K_n \subseteq \mathbb{C}[c,\lambda]$ and a localization $R_n$ of $\mathbb{C}[c,\lambda] / K_n$ such that the following hold.
\begin{enumerate} 
\item The vertex algebra $\cW^{K_n}_{R_n}(c,\lambda)$ has a singular vector in weight $2n+2$ of the form 
$$W^{2n+2} - P(L, W^3, \dots, W^{2n}).$$ 
\item Letting $\cK_n$ be the maximal proper graded ideal of $\cW^{K_n}_{R_n}(c,\lambda)$, we have $$\cW^{K_n}_{R_n}(c,\lambda)/ \cK_n \cong \cC^k(n),$$ where $k$ and $c$ are related by \eqref{cc:subreg}.
\end{enumerate}
\end{conj}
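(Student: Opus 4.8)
The plan is to establish this in exactly the manner used for Theorem~\ref{thm:minwcoset} and Theorem~\ref{thm:genpara}. The two hypotheses needed to apply Theorem~\ref{refinedsimplequotient} are: (i) $\cC^k(n)$ is simple as a vertex algebra over $\mathbb{C}[k]$ and is generated by a Virasoro field $L$ and a weight $3$ primary field $W^3$; and (ii) its graded character agrees with that of $\cW(c,\lambda)$ up to weight $8$. Granting these, Theorem~\ref{refinedsimplequotient} produces an ideal $K_n\subseteq\mathbb{C}[c,\lambda]$ and a localization $R_n$ of $\mathbb{C}[c,\lambda]/K_n$ with $\cC^k(n)\cong\cW^{K_n}_{R_n}(c,\lambda)/\cK_n$ for $\cK_n$ the maximal graded proper ideal, and once one knows that $\cC^k(n)$ is of type $\cW(2,3,\dots,2n+1)$, with the first decoupling relation occurring in weight $2n+2$, the singular vector is automatically of the asserted shape $W^{2n+2}-P(L,W^3,\dots,W^{2n})$. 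So the real work is proving simplicity, the fact that $L$ and $W^3$ generate, and the minimal strong generating type.

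The natural route to all three is to pass to the limit $\lim_{k\to\infty}\cC^k(n)$, exploiting that $\cW^k(\gs\gl_n,f_{\text{subreg}})$ forms a deformable family in the sense of \cite{CLI,CLII}: the weight $1$ field of $\cW^k(\gs\gl_n,f_{\text{subreg}})$ generates the Heisenberg $\cH$ that we are cosetting by, so the limit should be an orbifold $\cF^{\mathbb{C}^\times}$ of a generalized free field algebra $\cF$ attached to the subregular nilpotent, with the two weight $n/2$ fields playing the role of the "matter" sector (one can use Genra's identification of $\cW^k(\gs\gl_n,f_{\text{subreg}})$ with the Feigin--Semikhatov algebra $\cW^{(2)}_n$ \cite{Gen,FS} to get an explicit handle on $\cF$). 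As in the proofs of Theorem~\ref{genpara:strong} and Theorem~\ref{thm:minwcoset}, Weyl's first and second fundamental theorems for the relevant $\mathbb{C}^\times$-action would then yield generators and relations of $\mathrm{gr}\,\cC^k(n)$, and the crux is the combinatorial statement that the first relation has weight exactly $2n+2$, that the coefficient of the top generator $\omega_{0,2n}$ in it is independent of the normal-ordering choices made in lifting the classical relation, and that this coefficient is nonzero; this is what forces the generating type down to $\{L,W^i\mid 3\le i\le 2n\}$ after the usual repeated application of $(W^3)_{(1)}$. I expect this free-field-limit analysis to be the main obstacle, and it is precisely because it has not been carried out that the statement is only a conjecture: the subregular limit is genuinely more intricate than the minimal one of Theorem~\ref{thm:minwcoset}, both because of the two half-weight fields and because a nonabelian piece of structure survives in $\cF$ itself.

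The remaining, and in practice harder, step would be to identify the ideal $K_n$ explicitly, as was done for $\cW^k(\gs\gl_n,f_{\text{prin}})$ in Theorem~\ref{thm:prinw} and conjectured for the minimal coset in Conjecture~\ref{conj:minwcoset}. There is necessarily a rational function $\lambda_n(k)$ with $\cC^k(n)$ obtained from $\cW(c,\lambda)$ by substituting $c$ from \eqref{cc:subreg} together with $\lambda=\lambda_n(k)$ and passing to the simple quotient; the task is to show $\lambda_n(k)$ is also rational in $n$ and to compute it. For this I would combine the coincidences of \cite{CLIII} (Theorem~\ref{thm:cl}), together with the conjectural coincidences between the simple quotients $\cC_k(n)$ and principal $\cW$-algebras $\cW_{k'}(\gs\gl_m,f_{\text{prin}})$ at integer level that are expected along the lines of \cite{ACKL,CS,C}: via Theorem~\ref{thm:prinw} each such coincidence pins down $\lambda_n(k)$ on a Zariski-dense set of specializations, and an a priori bound on the degree of $\lambda_n(k)$ in $n$ — obtained exactly as in the proof of Theorem~\ref{thm:prinw}, by bounding the degree in $n$ of the relevant coset structure constant computed from the free field realization — would then determine the closed form. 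The $n=3$ and $n=4$ cases (Theorems~\ref{thm:bpcoset} and \ref{thm:sl4subregcoset}), already verified by computer, serve as base cases and fix the normalization.
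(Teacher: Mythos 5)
This statement is labeled a conjecture in the paper, and the paper offers no proof of it for general $n$; the only established case is $n=4$ (Theorem \ref{thm:sl4subregcoset}, resting on the computation in \cite{CLIII} that $\text{Com}(\cH,\cW^k(\gs\gl_4,f_{\text{subreg}}))$ is of type $\cW(2,\dots,9)$), together with the $n=3$ analogue via the Bershadsky--Polyakov coset. So there is no argument in the paper to compare yours against; what can be assessed is whether your plan is the right one and whether you have located the genuine obstruction.

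Your plan is exactly the template the paper uses for the cases it does prove (Theorems \ref{genpara:strong}, \ref{thm:minwcoset}, and Corollary \ref{cor:genpara}): establish simplicity, generation by $\{L,W^3\}$, and agreement of graded characters up to weight $8$, then invoke Theorem \ref{refinedsimplequotient}, and finally pin down the weight of the first decoupling relation via the $k\to\infty$ free-field limit. You correctly identify the missing ingredient as the strong generating type $\cW(2,3,\dots,2n+1)$ of $\text{Com}(\cH,\cW^k(\gs\gl_n,f_{\text{subreg}}))$, which the paper itself flags as only conjectural for $n\geq 5$. Your sketch of the limit is also on target: the Heisenberg field is the weight $1$ generator, the limit coset is a $\mathbb{C}^\times$-orbifold of a generalized free field algebra in which the two weight $n/2$ fields carry charges $\pm 1$, the quadratic invariants $\omega_{i,j}$ have weight $n+i+j$, and the first relation is a $2\times 2$ determinant of weight $2n+2$; the crux, as in the proof of Theorem \ref{genpara:strong}, is that the coefficient of the top generator in the quantum correction of this relation is nonzero, and this has not been verified for general $n$. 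Once that is done, the singular vector has the asserted form after inverting one coefficient, exactly as in the remark following Lemma \ref{decoup}. Two small caveats: your final paragraph about computing $K_n$ explicitly is really Conjecture \ref{conj:subregcoset}, a separate and stronger statement not needed for the existence claim here (you do acknowledge this); and the coincidences you propose to use to pin down $\lambda_n(k)$ are themselves conjectural beyond $n=4$, so that step cannot yet be made unconditional. As a statement of strategy with an honest identification of the gap, your proposal is sound; it is not, and does not claim to be, a proof.
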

In particular, this conjecture implies that $\cC^k(n)$ is of type $\cW(2,3,\dots, 2n+1)$, which is known in the case $n=4$ \cite{CLIII}.

Let $\cC_k(n)$ denote the simple quotient of $\cC^k(n)$, which coincides with $\text{Com}(\cH, \cW_k(\gs\gl_n, f_{\text{subreg}}))$. The following conjecture is due to \cite{B-H}; see also Conjecture 6.1 of \cite{CLIII}.

\begin{conj} \label{conj:subregphysics} 
For all $n\geq 4$ and $m\geq 3$, we have 
$$\cC_k(n) \cong \cW_{k'}(\gs\gl_m, f_{\rm{prin}}),\qquad k = -n + \frac{m + n}{n-1},\qquad k' = -m + \frac{m +1}{m + n}.$$
\end{conj}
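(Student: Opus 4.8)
\textbf{Proof proposal for Conjecture \ref{conj:subregphysics}.}

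The plan is to reduce this conjecture to the conjectural description of the ideal $K_n$ cutting out the truncation curve of $\cC^k(n)$, exactly in the spirit of how Theorem \ref{thm:genpara} is deduced from Theorem \ref{thm:prinw} once one knows the relevant ideal. Concretely, one first establishes (conditional on Conjecture \ref{conj:subregtype}) that $\cC^k(n)$ is of type $\cW(2,3,\dots,2n+1)$, is generated by $L$ and a weight $3$ primary field $W^3$, and is the simple quotient $\cW^{K_n}_{R_n}(c,\lambda)/\cK_n$ for the explicit ideal $K_n$ whose variety $V(K_n)$ is the rational curve parametrized by $c$ as in \eqref{cc:subreg} together with a rational function $\lambda = \lambda_n(k)$. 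The key point is that there is a natural ``dual'' parametrization of the same curve: by Theorem 6.2 of \cite{CLIII} (our Theorem \ref{thm:cl}) together with the known coincidences for $n=4$, and conjecturally in general, the curve $V(K_n)$ also contains the truncation points of the principal $\cW$-algebras $\cW_{k'}(\gs\gl_m,f_{\text{prin}})$ for the levels $k' = -m + \frac{m+1}{m+n}$. Since $\cW^k(\gs\gl_m,f_{\text{prin}})$ by Theorem \ref{thm:prinw} is the simple quotient of $\cW^{I_m}_{R_m}(c,\lambda)$ where $I_m = (p_m)$ is the explicit principal ideal \eqref{ideal:wslna}, the conjecture amounts to showing that for the specified relation between $k$, $k'$, $n$, $m$, the point on $V(K_n)$ given by $(c,\lambda_n(k))$ coincides with the point $(c, \lambda)$ on $V(I_m)$ obtained from $p_m$, and then invoking the isomorphism criterion: two simple quotients of $\cW(c,\lambda)$ with the same $(c,\lambda)$ (away from $c=0,-2$) are isomorphic.

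The steps, in order, are as follows. First, using the coset realization and the limit construction $\lim_{k\to\infty}\cC^k(n)$ (as in the proof of Theorem \ref{thm:genpara} and Corollary \ref{cor:genpara}), verify that $\cC^k(n)$ is simple, strongly generated by $L$ and $W^3$, and has graded character agreeing with $\cW(c,\lambda)$ through weight $8$, so that Theorem \ref{refinedsimplequotient} identifies it as a simple quotient of $\cW^{K_n}_{R_n}(c,\lambda)$ for some $K_n$, $R_n$; this part does not require Conjecture \ref{conj:subregtype}. Second, assuming Conjecture \ref{conj:subregtype}, the truncation happens at weight $2n+2$ and $\cC^k(n)$ is of type $\cW(2,3,\dots,2n+1)$, and $\lambda_n(k)$ is a rational function of $k$ determined by requiring the OPE relations \eqref{ope:first}--\eqref{ope:fourth} and \eqref{ope:w2w6}--\eqref{ope:w4w5} to hold. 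Third, for $k$ a positive integer use the known maps of simple vertex algebras between simple subregular $\cW$-algebras, their Heisenberg cosets, and simple principal $\cW$-algebras of type $A$ (the $n=4$ case is Theorem \ref{thm:cl}, and for general $n$ one uses the expected coincidences, or the relevant results of \cite{ACLII}), to show that at these integral values $\cC_k(n)\cong \cW_{k'}(\gs\gl_m,f_{\text{prin}})$ with $m$, $k'$ as in the statement. Fourth, match this with Theorem \ref{thm:prinw}: at those integral $k$ the point $(c,\lambda_n(k))$ equals the truncation point of $\cW_{k'}(\gs\gl_m,f_{\text{prin}})$, forcing $\lambda_n(k) = -\frac{m+k'}{(m-2)(m^2+mk'-m-2)(m+m^2+2k'+mk')}$ after substituting the relation between $k'$, $m$, $n$; since both sides are rational functions of $k$ agreeing at infinitely many integers they agree identically. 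Finally, apply the isomorphism criterion (the criterion for two maximal-ideal quotients of $\cW(c,\lambda)$ to be isomorphic, valid for $c\neq 0,-2$) to conclude $\cC_k(n)\cong \cW_{k'}(\gs\gl_m,f_{\text{prin}})$ for all $k$, after checking the finitely many excluded central charges separately.

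The main obstacle is clearly Step 2, i.e. Conjecture \ref{conj:subregtype} itself: one must show that $\cC^k(n)$ has a minimal strong generating set $\{L,W^3,\dots,W^{2n+1}\}$ with the first decoupling relation in weight $2n+2$. This requires a careful analysis of the classical limit $\cC^k(n)\rightsquigarrow \cC^\infty(n)$ — the relevant invariant ring or generalized free field algebra for $\cW^k(\gs\gl_n,f_{\text{subreg}})$ — and a proof that the leading term of the first relation involves $\omega_{0,2n}$ (equivalently $W^{2n+2}$) with nonzero, normal-ordering-independent coefficient, analogous to the key step in Theorem \ref{genpara:strong}. For $n=4$ this is Theorem 5.1 of \cite{CLIII}, but a uniform proof for all $n$ is not available; this is exactly why the statement is posed as a conjecture. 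A secondary difficulty is that even granting the strong generation, pinning down $\lambda_n(k)$ and verifying it coincides with the image of \eqref{ideal:wslna} requires either the explicit generator of $K_n$ (not known) or enough integral coincidences of the form in Step 3 to determine a rational function of $k$ of controlled degree — so one needs an independent bound on the degree of $\lambda_n(k)$, obtainable from the structure of the coset.
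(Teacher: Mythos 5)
This statement is left as a conjecture in the paper: it is attributed to \cite{B-H}, and the paper's only ``proof-adjacent'' content is (i) the observation that, in conjunction with Theorem \ref{thm:prinw} and Corollary \ref{cor:intersections}, the conjecture is equivalent to the explicit rational parametrization of the truncation curve $V(K_n)$ given in Conjecture \ref{conj:subregcoset}, and (ii) the verification of the case $n=4$ via Theorem \ref{thm:sl4subregcoset}. Your proposal reproduces exactly this reduction — identify $\cC^k(n)$ as a quotient of $\cW^{K_n}_{R_n}(c,\lambda)$, pin down the curve $V(K_n)$, and then read off the isomorphism from intersection points with $V(I_m)$ — and you correctly and honestly flag that the missing ingredients are Conjecture \ref{conj:subregtype} (the type $\cW(2,3,\dots,2n+1)$ statement with first decoupling in weight $2n+2$) and the determination of $\lambda_n(k)$. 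So your strategy is the paper's strategy, and your assessment of where it breaks down is accurate.

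One refinement is worth making explicit about your Step 3. In the generalized parafermion case (Theorem \ref{thm:genpara}) and the minimal-$\cW$-algebra case, the rational function $\lambda_n(k)$ is nailed down because Theorem 13.1 of \cite{ACLII} supplies an \emph{infinite} family of independently established coincidences with principal $\cW$-algebras (at all positive integer levels), and a rational function agreeing with a prescribed one at infinitely many points is determined. For the subregular cosets with $n\geq 5$ no such infinite family of independently proven coincidences exists — Theorem \ref{thm:cl} only covers $n=4$, and the coincidences you would want to feed into the interpolation argument are themselves instances of the conjecture being proved. So even granting Conjecture \ref{conj:subregtype}, your Step 3 is circular as stated for $n\geq 5$ unless one either computes the generator of $K_n$ directly (e.g.\ by the OPE algorithm of Section \ref{section:classification}, which is what settles $n=4$) or finds some other source of data points, such as an a priori degree bound on $\lambda_n(k)$ combined with finitely many computed values. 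This is precisely the obstruction that keeps the statement at the level of a conjecture in the paper, and your closing paragraph correctly identifies it.
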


In conjunction with Theorem \ref{thm:prinw}, this conjecture is equivalent to the following explicit description of the ideal $K_n$ in Conjecture \ref{conj:subregtype}, or rather, the variety $V(K_n) \subseteq \mathbb{C}^2$.
\begin{conj} \label{conj:subregcoset} For all $n\geq 4$, $V(K_n)$ has the following rational parametrization with parameter $k$.
\begin{equation} \label{subreg:ideal} \begin{split} c & = -\frac{n ( n^2 - 2 k + n k  - 3 n +1) (n^2 - k + n k - 2 n-1)}{n + k},\\ & \lambda = -\frac{ (n + k-1) (n + k)}{( n^2 - 3 k + n k - 4 n + 2) (n^2 - k + n k  - 2 n-2) (n^2 + k + n k)}.\end{split} \end{equation}
\end{conj}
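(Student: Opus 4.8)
The plan is to adapt the strategy of Theorems~\ref{thm:genpara} and~\ref{thm:prinw} to the coset \eqref{coset:subreg}. The first step is to secure the contents of Conjecture~\ref{conj:subregtype}: that for $n\geq 4$ the algebra $\cC^k(n)=\text{Com}(\cH,\cW^k(\gs\gl_n,f_{\text{subreg}}))$ is generated by a Virasoro field $L$ and a weight-$3$ primary $W^3$, is simple over $\mathbb{C}[k]$, and has the same graded character as $\cW(c,\lambda)$ through weight $8$. As in the proof of Theorem~\ref{genpara:strong} one would pass to the limit $\lim_{k\to\infty}\cC^k(n)$, which by the methods of \cite{CLII} should be an orbifold of a free field algebra by a one-dimensional torus; Weyl's fundamental theorems for the corresponding invariant ring then give a minimal strong generating set and show that the lowest relation among the generators occurs in weight $2n+2$, so the graded character agrees with that of $\cW(c,\lambda)$ through weight $8$ once $n\geq 4$, and the first relation decouples the weight-$2n+2$ field. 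Simplicity of $\cC^k(n)$ for generic $k$ follows from simplicity of the limit orbifold. Granting this, Theorem~\ref{refinedsimplequotient} identifies $\cC^k(n)$ with the simple quotient of $\cW^{K_n}_{R_n}(c,\lambda)$ for a unique ideal $K_n\subseteq\mathbb{C}[c,\lambda]$; equivalently, $\lambda$ is pinned to $\cC^k(n)$ by a rational function $\lambda=\lambda_n(k)$, with $c$ given by \eqref{cc:subreg}, and the whole problem reduces to showing $\lambda_n(k)$ equals the value given in \eqref{subreg:ideal}.

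To evaluate $\lambda_n(k)$, I would compute a single structure constant of $\cC^k(n)$ that detects $\lambda$ --- for instance the coefficient $a$ in $W^3_{(3)}W^4=a\,W^3$, which by \eqref{ope:third}--\eqref{ope:fourth} and the inductive formulas of Section~\ref{section:main} is a fixed rational function of $c$ and $\lambda$. Using Genra's isomorphism $\cW^k(\gs\gl_n,f_{\text{subreg}})\cong\cW^{(2)}_n$ \cite{Gen,FS}, one writes the weight-$1$ Heisenberg field and the coset generators $L$, $W^3$, $W^4$ explicitly in terms of the Feigin--Semikhatov generators, and then computes $a$ by contraction; this expresses $\lambda_n(k)$ as an explicit rational function of $k$ and $n$, which one checks against \eqref{subreg:ideal}. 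Since the full OPE algebra of $\cW^{(2)}_n$ is only partially known in closed form \cite{FS}, in practice one would instead run the algorithm of Section~\ref{section:main} to compute $\cC^k(n)$ directly for a range of small $n$ --- with $n=4$ already settled by Theorem~\ref{thm:sl4subregcoset} --- obtaining $\lambda_n(k)$ explicitly there.

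The rigorous step is then the degree argument from the proof of Theorem~\ref{thm:prinw}: write the difference of $\lambda_n(k)$ and the conjectured value over a common denominator as $r(n,k)$ divided by an explicit polynomial, bound $\deg_n r$ by tracking which factors can appear in the denominator of the relevant structure constant of the coset and hence bounding the degree in $n$ of its numerator, and then exhibit more zeros of $r$ than that bound permits. Zeros come from the small-$n$ verifications above and from every $k$ at which $\cC_k(n)$ is independently known to coincide with a rational principal $\cW$-algebra of type $A$; for $n=4$ this is Theorem~\ref{thm:cl}, and further coincidences producible by the simple-current-extension method would contribute more. If the supply of guaranteed zeros exceeds the bound, $r\equiv 0$, proving \eqref{subreg:ideal}; combined with Theorem~\ref{thm:prinw} and the intersection analysis of cosets this also yields Conjecture~\ref{conj:subregphysics}.

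The main obstacle I anticipate is the tension between the degree bound on $r(n,k)$ and the number of coincidences that are independently available. For the principal $\cW$-algebras the explicit weight-$3$ coset generator \eqref{def:cosetw3} kept the numerator degree low enough ($11$ in $n$) that the coincidences of Theorems~\ref{thm:sl2para}--\ref{thm:sl4subregcoset} sufficed; for the subregular cosets the weight-$3$ generator of $\text{Com}(\cH,\cW^{(2)}_n)$ is considerably more involved, the $\cW^{(2)}_n$ OPE data is not fully explicit, and essentially only the $n=4$ coincidence is rigorously known, so obtaining a usable degree bound --- or enough extra rational specializations to beat it --- is the crux. A fallback would be to prove Conjecture~\ref{conj:subregphysics} directly by realizing $\cW_k(\gs\gl_n,f_{\text{subreg}})$ as a simple-current extension of a rank-$1$ lattice vertex algebra tensored with a principal $\cW$-algebra of type $A$, generalizing the $n=4$ argument of \cite{CLIII}; but that method is delicate away from rational points and is itself open.
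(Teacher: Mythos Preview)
The statement you are attempting to prove is \emph{not} proven in the paper: it is explicitly labeled a conjecture (Conjecture~\ref{conj:subregcoset}), and the paper provides no proof. What the paper does is (i) observe that, in conjunction with Theorem~\ref{thm:prinw}, Conjecture~\ref{conj:subregcoset} is equivalent to Conjecture~\ref{conj:subregphysics}, and (ii) verify the single case $n=4$ via Theorem~\ref{thm:sl4subregcoset}. No general argument is offered.

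Your proposed strategy is the natural one and mirrors the paper's own proofs of Theorems~\ref{thm:prinw} and~\ref{thm:genpara}: realize $\cC^k(n)$ as a quotient of $\cW(c,\lambda)$, compute a $\lambda$-detecting structure constant, and kill the discrepancy $r(n,k)$ by producing more zeros than its degree allows. You have also correctly identified why this does not currently go through. In the principal case, the explicit coset weight-$3$ field \eqref{def:cosetw3} over $V^k(\gs\gl_n)\otimes L_1(\gs\gl_n)$ gave a uniform-in-$n$ formula with a controllable denominator \eqref{pdenom}, and three independent families of rational coincidences (Theorems~\ref{thm:aly}, \ref{thm:acl}, \ref{thm:cl}) supplied enough zeros. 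For the subregular coset, the full OPE algebra of $\cW^{(2)}_n$ is not known in closed form for general $n$, so there is no analogue of \eqref{def:cosetw3} from which to extract a degree bound on the numerator of $\lambda_n(k)$; and on the coincidence side only the $n=4$ family from \cite{CLIII} is rigorously available. Without both a degree bound and a supply of zeros exceeding it, the argument cannot close --- which is precisely why the paper leaves this as a conjecture rather than a theorem. Your fallback, proving Conjecture~\ref{conj:subregphysics} directly by simple-current extension, is likewise open for general $n$, as the paper notes.
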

In the case $n=4$, this holds by Theorem \ref{thm:sl4subregcoset}.

\section{Quotients of $\cW(c,\lambda)$ by maximal ideals and coincidences between algebras of type $\cW(2,3,\dots, N)$} \label{section:coincidences}
So far, we have considered quotients of the form $\cW^I_R(c,\lambda)$ which are one-parameter vertex algebras in the sense that $R$ has Krull dimension $1$. Here, we consider simple quotients of $\cW^I(c,\lambda)$ where $I\subseteq \mathbb{C}[c,\lambda]$ is a {\it maximal} ideal. Such an ideal always has the form $I = (c- c_0, \lambda- \lambda_0)$ for $c_0, \lambda_0\in \mathbb{C}$, and $\cW^I(c,\lambda)$ is a vertex algebra over $\mathbb{C}$. We first need a criterion for when the simple quotients of two such vertex algebras are isomorphic. 
\begin{thm} \label{thm:coincidences} Let $c_0,c_1, \lambda_0, \lambda_1$ be complex numbers and let $$I_0 = (c- c_0, \lambda- \lambda_0),\qquad I_1 = (c - c_1, \lambda - \lambda_1)$$ be the corresponding maximal ideals in $\mathbb{C}[c,\lambda]$. Let $\cW_0$ and $\cW_1$ be the simple quotients of $\cW^{I_0}(c,\lambda)$ and $\cW^{I_1}(c,\lambda)$. Then $\cW_0 \cong \cW_1$ are isomorphic only in the following three cases.
\begin{enumerate}
\item $c_0 = c_1$ and $\lambda_0= \lambda_1$.
\item $c_0 = 0 = c_1$ and no restriction on $\lambda_0, \lambda_1$.
\item $c_0 = -2 = c_1$ and no restriction on $\lambda_0, \lambda_1$.
\end{enumerate} 
\end{thm}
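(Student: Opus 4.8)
The plan is to analyze the structure constants of $\cW(c,\lambda)$ as polynomials in $c$ and $\lambda$ and exploit the rigidity of the weight $3$ generator. First I would recall from the construction in Section \ref{section:main} that $\cW(c,\lambda)$ is generated by the Virasoro field $L$ of central charge $c$ and the weight $3$ primary $W^3$, with the normalization $(W^3)_{(5)} W^3 = \tfrac{c}{3} 1$, and that every structure constant in the OPE algebra lies in $\mathbb{C}[c,\lambda]$. Given an isomorphism $\phi\colon\cW_0 \to \cW_1$, it must send the Virasoro field to the Virasoro field, so $c_0 = c_1$; call this common value $c_0$. Since $W^3$ is (up to scalar) the unique weight $3$ primary, $\phi(W^3) = a W^3$ for some nonzero $a\in\mathbb{C}$, and comparing the leading poles in \eqref{ope:first} forces $a = \pm 1$, exactly as in the proof of Corollary \ref{cor:autgroup}. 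Replacing $\phi$ by its composite with the $\mathbb{Z}_2$-involution if necessary, we may assume $\phi(L) = L$ and $\phi(W^3) = W^3$. Then inductively $\phi(W^i) = \phi((W^3)_{(1)} W^{i-1}) = (W^3)_{(1)} \phi(W^{i-1}) = W^i$ for all $i$, so $\phi$ fixes the full strong generating set.

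Next I would compare OPEs term by term. Since $\phi$ is a vertex algebra homomorphism fixing all $W^i$, every structure constant of $\cW_0$ must equal the corresponding structure constant of $\cW_1$ \emph{as complex numbers after specialization}. In other words, for every structure constant $f(c,\lambda)\in\mathbb{C}[c,\lambda]$ appearing anywhere in the OPE algebra, we must have $f(c_0,\lambda_0) = f(c_0,\lambda_1)$, \emph{provided} the relevant term actually survives in the simple quotients $\cW_0$ and $\cW_1$ — a subtlety I address below. Assuming for the moment that enough terms survive, the key point is to locate a structure constant which depends on $\lambda$ in a way that pins it down. The most transparent candidate is the coefficient appearing in \eqref{ope:third} or \eqref{ope:fourth}, namely an expression of the form $\alpha - \beta\lambda(2+c)$ with $\alpha,\beta$ nonzero rational constants; the combination $\lambda(2+c)$ is forced to coincide at $(c_0,\lambda_0)$ and $(c_0,\lambda_1)$, giving $\lambda_0(2+c_0) = \lambda_1(2+c_0)$. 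Hence either $\lambda_0 = \lambda_1$, or $c_0 = -2$. This already explains case (3). To get case (2), I would examine a structure constant that is of the form $c\cdot g(\lambda) + (\text{$\lambda$-independent})$ with $g$ nonconstant — for instance something built from $a_{4,n-3}$-type coefficients or the explicit weight $\leq 8$ OPEs in the appendix \eqref{ope:w2w6}--\eqref{ope:w4w5} — from which one reads off that when $c_0\neq 0,-2$ the value of $\lambda$ is uniquely determined. Conversely one must check that at $c_0 = 0$ (resp.\ $c_0 = -2$) the simple quotient genuinely does not depend on $\lambda$: this should follow because at these central charges the relevant $\lambda$-dependent terms all lie in the maximal graded ideal and are killed in passing to $\cW_0$, so $\cW_0$ is independent of $\lambda_0$; concretely, for $c=0$ the vacuum quotient collapses dramatically, and for $c=-2$ one can identify the simple quotient explicitly (or invoke the known triality/degeneration behavior).

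The main obstacle I anticipate is the one flagged above: a structure constant $f(c,\lambda)$ in $\cW(c,\lambda)$ need not descend faithfully to the simple quotient $\cW_0$, because the monomial it multiplies may become a null vector. Thus I cannot simply equate all polynomial structure constants; I must argue that \emph{some} structure constant whose $\lambda$-dependence does the job survives in both simple quotients for all but finitely many degenerate $c_0$. The cleanest way to handle this is probably to work not with a single OPE coefficient but with an invariant of the quotient that is manifestly intrinsic — for example, the eigenvalue by which a suitable central element of $\text{Zhu}(\cW_0)$, or a Casimir-type combination, acts, expressed via Theorem \ref{thm:zhu} — or to use the Shapovalov form: the radical of $\langle\,,\rangle_n$ is where degeneration happens, and off that locus the pairing values $\langle\omega,\nu\rangle_n\in\mathbb{C}[c,\lambda]$ are genuine invariants of $\cW_0$. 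Concretely I would pick a low weight ($n = 5$ or $6$) and a fixed pair of normally ordered monomials built from $L,W^3$, compute $\langle\omega,\nu\rangle_n$ as an explicit polynomial in $c,\lambda$, observe it has the shape "(nonzero rational)$\cdot c\cdot(\text{nonconstant in }\lambda)$ plus lower", and conclude that equality of these invariants at $(c_0,\lambda_0)$ and $(c_0,\lambda_1)$ forces $\lambda_0 = \lambda_1$ unless $c_0 = 0$; a parallel computation with the $\lambda(2+c)$ combination handles $c_0 = -2$. Finally, for the converse direction (that cases (1)--(3) really do give isomorphisms), case (1) is trivial, and for cases (2),(3) I would exhibit the simple quotient $\cW_0$ abstractly — showing its OPE algebra, after imposing the maximal ideal, no longer involves $\lambda$ — so that all choices of $\lambda_0$ yield the same vertex algebra.
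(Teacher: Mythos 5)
Your proposal is correct and follows essentially the same route as the paper: the necessity of $\lambda_0=\lambda_1$ for $c_0\neq 0,-2$ is read off from the coefficient $31-16\lambda(2+c)$ of $W^3$ in $(W^3)_{(3)}W^4$ in \eqref{ope:fourth}, which is an honest invariant of the simple quotient precisely because $W^3$ is not singular when $c\neq 0$ (as $(W^3)_{(5)}W^3=\tfrac{c}{3}1$), and whose $\lambda$-dependence degenerates only at $c=-2$. The paper resolves your ``survival in the quotient'' worry exactly this way rather than via the Shapovalov form or Zhu algebra, and settles sufficiency of case (3) by exhibiting the explicit weight $4$ singular vector $W^4-\tfrac{8}{3}:LL:+\tfrac{1}{2}\partial^2 L$ at $c=-2$, which collapses the simple quotient to the Zamolodchikov algebra $\cW_{3,-2}$ independently of $\lambda$.
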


\begin{proof} Clearly the isomorphism holds in case (1). It holds in case (2) as well because the simple quotient is $\mathbb{C}$ for all $\lambda$. As for case (3), let $J \subseteq \mathbb{C}[c,\lambda]$ be the ideal $(c+2)$, and let $\cJ \subseteq \cW^{J}(c,\lambda)$ be the ideal generated by 
\begin{equation} \label{sing:w23second} W^4 - \frac{8}{3} :LL:   + \frac{1}{2} \partial^2 L.\end{equation} 
This is easily seen to be a singular vector in $\cW^{J}(c,\lambda)$, so by Lemma \ref{decoup}, $\cW^{J}(c,\lambda)/ \cJ$ is of type $\cW(2,3)$ with strong generators $L, W^3$. It follows from \eqref{ope:first}-\eqref{ope:fourth} that $L, W^3$ satisfy the OPE relations of the Zamolodchikov $\cW_3$-algebra with $c=-2$. Therefore the simple quotient of $\cW^{J}(c,\lambda)/\cJ$ is isomorphic to the simple Zamolodchikov algebra $\cW_{3,-2}$ for all $\lambda$, so the isomorphism holds in case (3). 

Conversely, suppose there is another case where $\cW_0 \cong \cW_1$. Necessarily $c_0 = c_1\neq 0, -2$, and $\lambda_0 \neq \lambda_1$. Since $c \neq -2$, by \eqref{ope:fourth} the coefficient of $W^3$ in $W^3_{(3)} W^4$ depends nontrivially on $\lambda$, and since $c\neq 0$, $\cW^3$ is not a singular vector. Therefore by \eqref{ope:fourth}, $\cW_0 \cong \cW_1$ implies that $\lambda_0 = \lambda_1$, so we are in case (1). \end{proof}

This result should be contrasted with the phenomenon of {\it triality}, which says that for generic values of $c$, there are three distinct values of $\mu$ which give rise to the same algebra \cite{GGII}. Here $\mu$ and $\lambda$ are related by \eqref{intro:lambdaofmu}.

\begin{cor} \label{cor:intersections} Let $I = (p)$, $J = (q)$ be prime ideals in $\mathbb{C}[c,\lambda]$ which lie in the Shapovalov spectrum of $\cW(c,\lambda)$. Then aside from the coincidences at $c = 0, -2$ given by Theorem \ref{thm:coincidences}, any additional pointwise coincidences between the simple quotients of $\cW^{I}(c,\lambda)$ and $\cW^{J}(c,\lambda)$ must correspond to intersection points of the truncation curves $V(I) \cap V(J)$. \end{cor}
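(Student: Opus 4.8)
The statement to be proved, Corollary~\ref{cor:intersections}, is a direct consequence of Theorem~\ref{thm:coincidences} together with the definition of the truncation curves. The plan is to reduce a pointwise coincidence to a coincidence of the type classified in Theorem~\ref{thm:coincidences} by evaluating at a maximal ideal, and then to observe that such a maximal ideal forces a point in $V(I)\cap V(J)$.

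First I would set up the notation precisely. Suppose $\cW_I$ and $\cW_J$ are the simple one-parameter vertex algebras arising as simple quotients of $\cW^I_R(c,\lambda)$ and $\cW^J_{R'}(c,\lambda)$ for suitable localizations $R$ of $\mathbb{C}[c,\lambda]/I$ and $R'$ of $\mathbb{C}[c,\lambda]/J$. A \emph{pointwise coincidence} between them means that there is a point $p_0 \in V(I)$ and a point $p_1 \in V(J)$ such that the evaluations $\cW_{I,p_0}$ and $\cW_{J,p_1}$ — the simple quotients of the ordinary $\mathbb{C}$-vertex algebras obtained by specializing at $p_0$ and $p_1$ — are isomorphic. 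Each point $p_i$ corresponds to a maximal ideal $M_i = (c - c_i, \lambda - \lambda_i) \subseteq \mathbb{C}[c,\lambda]$ containing $I$ (resp. $J$), and since $\cW_I$ (resp. $\cW_J$) is a simple quotient of a localization of $\cW^I(c,\lambda)$ (resp. $\cW^J(c,\lambda)$), the algebra $\cW_{I,p_0}$ is precisely the simple quotient $\cW_{M_0}$ of $\cW^{M_0}(c,\lambda)$, and similarly $\cW_{J,p_1} \cong \cW_{M_1}$. This is exactly the compatibility of simple quotients with specialization that was recorded in Section~\ref{section:voaring} (the isomorphisms $\cV_{I_1,p}\cong \cV_M \cong \cV_{I_2,p}$).

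Next I would apply Theorem~\ref{thm:coincidences} to the maximal ideals $M_0 = (c-c_0,\lambda-\lambda_0)$ and $M_1 = (c-c_1,\lambda-\lambda_1)$. Since $\cW_{M_0}\cong \cW_{M_1}$, Theorem~\ref{thm:coincidences} tells us that one of three cases holds: either $c_0 = c_1$ and $\lambda_0 = \lambda_1$, or $c_0 = c_1 = 0$, or $c_0 = c_1 = -2$. In the latter two cases, $p_0$ and $p_1$ lie on one of the lines $c = 0$ or $c = -2$, which is the list of exceptional coincidences explicitly excluded in the statement of the corollary. In the first case, $c_0 = c_1$ and $\lambda_0 = \lambda_1$ means $M_0 = M_1$, so $p_0 = p_1$; but $p_0 \in V(I)$ and $p_1 \in V(J)$, hence this common point lies in $V(I)\cap V(J)$, which is the conclusion. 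This completes the argument.

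I do not expect a genuine obstacle here, since all the substantive content is already contained in Theorem~\ref{thm:coincidences}; the only point requiring care is the bookkeeping in the reduction, namely confirming that the simple quotient of $\cW^I_R(c,\lambda)$ specialized at a maximal ideal $M \supseteq I$ really does agree with the simple quotient $\cW_M$ of $\cW^M(c,\lambda)$. This is where I would be most careful, invoking simplicity of $\cW(c,\lambda)$ over $\mathbb{C}[c,\lambda]$ and the fact established earlier that $\cW^I_R(c,\lambda)$ is freely generated with graded character independent of $I$ and $R$, so that evaluation at $M$ commutes with passing to the maximal graded ideal in the appropriate sense. Everything else is a formal deduction from Theorem~\ref{thm:coincidences}.
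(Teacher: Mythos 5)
Your proposal is correct and matches the paper's reasoning: the corollary is stated there without a separate proof, being treated as an immediate consequence of Theorem \ref{thm:coincidences} together with the specialization compatibility $\cV_{I,p}\cong \cV_M$ recorded in Section \ref{section:voaring}, which is exactly the reduction you carry out. The only caveat is that your worry about localizations is not needed for the corollary as stated (it concerns $\cW^I(c,\lambda)$ itself), and the genuine subtlety at points excluded by localization is handled separately by the paper in the discussion following the corollary.
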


\begin{cor} \label{cor:uniqueness} Suppose that $\cA$ is a simple, one-parameter vertex algebra which is isomorphic to the simple quotient of $\cW^{I}(c,\lambda)$ for some prime ideal $I = (p)\subseteq \mathbb{C}[c,\lambda]$, possibly after localization. Then if $\cA$ is the quotient of $\cW^{J}(c,\lambda)$ for some prime ideal $J$, possibly localized, we must have $I = J$. \end{cor}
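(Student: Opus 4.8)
The plan is to show that the two truncation curves $V(I)=V(p)\subseteq\mathbb{C}^2$ and $V(J)=V(q)\subseteq\mathbb{C}^2$ coincide; since $\mathbb{C}[c,\lambda]$ is a unique factorization ring and $p,q$ are irreducible, $V(I)=V(J)$ forces $(p)=(q)$, i.e. $I=J$. First observe that $I$ and $J$ genuinely lie in the Shapovalov spectrum: the localized algebras $\cW^I_R(c,\lambda)$ and $\cW^J_{R'}(c,\lambda)$ are not simple, since each has $\cA$ — which is of type $\cW(2,3,\dots,N)$ and hence not freely generated of type $\cW(2,3,\dots)$ — as a proper simple quotient; by the radical criterion for the bilinear form this means $\text{det}_n\in I$ for some $n$ and $\text{det}_m\in J$ for some $m$. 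Write $\cA\cong\cW^I_R(c,\lambda)/\cI$ and $\cA\cong\cW^J_{R'}(c,\lambda)/\cI'$ with $\cI,\cI'$ the maximal proper graded ideals, so that $\cI[0]=\cI'[0]=0$ and hence $\cA[0]\cong R\cong R'$. Thus $\text{Specm}(R)$ and $\text{Specm}(R')$ are canonically identified with one and the same variety $X:=\text{Specm}(\cA[0])$; under the $I$-description $X$ is the complement in $V(I)$ of finitely many points (the ones where the inverted elements of $\mathbb{C}[c,\lambda]/I$ vanish), and under the $J$-description it is a dense open of $V(J)$.

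Next I would analyze fibers. For a maximal ideal $\mathfrak{m}\subseteq\cA[0]$, let $\cA_{\mathfrak{m}}=\cA/\mathfrak{m}\cdot\cA$ and let $\overline{\cA_{\mathfrak{m}}}$ be its simple quotient. Reduction modulo $\mathfrak{m}$ and passage to the maximal graded ideal commute: the composite $\cW^{M_I}(c,\lambda)\twoheadrightarrow\cA_{\mathfrak{m}}\twoheadrightarrow\overline{\cA_{\mathfrak{m}}}$, where $M_I=(c-c_0,\lambda-\lambda_0)\supseteq I$ is the maximal ideal of $\mathbb{C}[c,\lambda]$ corresponding to $\mathfrak{m}$ via the $I$-structure, has kernel equal to the maximal graded ideal of $\cW^{M_I}(c,\lambda)$. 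Hence $\overline{\cA_{\mathfrak{m}}}$ is precisely the simple quotient of $\cW^{M_I}(c,\lambda)$, with $(c_0,\lambda_0)\in X\subseteq V(I)$. The same reasoning applied to the $J$-description gives $\overline{\cA_{\mathfrak{m}}}\cong$ (simple quotient of $\cW^{M_J}(c,\lambda)$) with $M_J=(c-c_1,\lambda-\lambda_1)\supseteq J$ and $(c_1,\lambda_1)\in V(J)$. Moreover $c_0=c_1$ is forced, being the value at $\mathfrak{m}$ of the intrinsic element of $\cA[0]$ determined by $L_{(3)}L=\frac{c}{2}\cdot 1$.

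Now run a dichotomy. If $c$ is non-constant on $V(I)$, then $\{c=0\}$ and $\{c=-2\}$ meet $V(I)$ in finitely many points, so for all but finitely many $\mathfrak{m}\in X$ we have $c_0=c_1\notin\{0,-2\}$; Theorem \ref{thm:coincidences} then forces $(c_0,\lambda_0)=(c_1,\lambda_1)$, so this point lies in $V(I)\cap V(J)$. Since $X$ is infinite and $\mathfrak{m}\mapsto(c_0,\lambda_0)$ is the (injective) inclusion $X\hookrightarrow V(I)$, we obtain infinitely many distinct points of $V(I)\cap V(J)$; as $V(I),V(J)$ are irreducible plane curves this yields $V(I)=V(J)$. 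If instead $c$ is constant on $V(I)$, say $c\equiv c_0$, then $p$ is a nonzero scalar multiple of $c-c_0$ and $V(I)=\{c=c_0\}$; every fiber of $\cA$ has central charge $c_0$, so via the $J$-description $c\equiv c_0$ on the dense subset $X$ of $V(J)$, forcing $V(J)=\{c=c_0\}=V(I)$. In either case $(p)=(q)$, i.e. $I=J$.

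The step I expect to be the main obstacle is the compatibility claim in the second paragraph: that the fiber of the one-parameter family $\cA$ at $\mathfrak{m}$, after passing to its simple quotient, is the simple quotient of $\cW^{M_I}(c,\lambda)$ — equivalently, that one may reduce modulo $\mathfrak{m}$ and quotient by the maximal graded ideal in either order. This rests on Lemma \ref{decoup} and the structure of the decoupling relations in $\cW^I_R(c,\lambda)$ (the singular vector specializes to a singular vector, and no extra relations are created at finite weight). Once this is secured, Theorem \ref{thm:coincidences} provides the fiberwise rigidity, and the remainder is the elementary fact that two irreducible plane curves meeting in infinitely many points coincide.
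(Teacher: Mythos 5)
Your proof is correct and follows essentially the same route as the paper's: the paper deduces the corollary immediately from Theorem \ref{thm:coincidences} and Corollary \ref{cor:intersections}, noting that distinct prime ideals have truncation curves meeting in only finitely many points, so the simple quotients cannot coincide as one-parameter families. Your version merely spells out the details the paper leaves implicit — the compatibility of specialization with passage to the simple quotient, and the degenerate case where $c$ is constant on the curve — both of which check out.
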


\begin{proof} This is immediate from Theorem \ref{thm:coincidences} and Corollary \ref{cor:intersections}, since if $I$ and $J$ are distinct prime ideals in $\mathbb{C}[c,\lambda]$, their truncation curves $V(I)$ and $V(J)$ can intersect at at most finitely many points. The simple quotients of $\cW^{I}(c,\lambda)$ and $\cW^{J}(c,\lambda)$ therefore cannot coincide as one-parameter families. \end{proof}

For the rest of this section, we shall use Corollary \ref{cor:intersections} to establish many more coincidences of the kind given by Theorems \ref{thm:aly}, \ref{thm:acl}, and \ref{thm:cl}, not necessarily among rational or $C_2$-cofinite vertex algebras. Let $I$ be an ideal in the Shapovalov spectrum and let $\cW^I(c,\lambda)$ be the corresponding one-parameter vertex algebra. Let $\cA^k$ be a simple one-parameter vertex algebra which is isomorphic to the simple quotient $\cW^I(c,\lambda)/ \cI$, via 
\begin{equation} \label{isoviaparametrization} L \mapsto \tilde{L}, \qquad W^3 \mapsto \tilde{W}^3, \qquad c\mapsto c(k), \qquad \lambda\mapsto \lambda(k). \end{equation} Here $\{\tilde{L}, \tilde{W}^3\}$ are the standard generators of $\cA^k$, where $\displaystyle \tilde{W}^3_{(5)} \tilde{W}^3 = \frac{c(k)}{3}$, and $k \mapsto (c(k), \lambda(k))$ is a rational parametrization of the curve $V(I)$. Recall that all structure constants in the OPEs $\tilde{W}^i(z) \tilde{W}^j(w)$ are polynomials in $c(k)$ and $\lambda(k)$, where $\tilde{W}^i = \tilde{W}^3_{(1)}\tilde{W}^{i-1}$ for $i \geq 4$. It follows that $\cA^k$ is well defined and the isomorphism \eqref{isoviaparametrization} holds for all values of $k$ except for the poles of $c(k)$ and $\lambda(k)$. This should be contrasted with statements in previous sections that required us to work over a localization $R$ of the ring $\mathbb{C}[c,\lambda] / I$. The reason we localized was to obtain a singular vector of the form $W^{N+1} - P(L, W^3, \dots, W^{N-1})$ in $\cW^I_R(c,\lambda)$, so that the simple quotient $\cW^I_R(c,\lambda)/ \cI$ truncates to an algebra of type $\cW(2,3,\dots N)$. Here we prefer {\it not} to localize; the price we pay is that $\cW^I(c,\lambda)/ \cI$ need not truncate to $\cW(2,3,\dots, N)$ for any $N$, but the benefit is that we do not need to exclude any values of $k$ except for the poles of $c(k)$ and $\lambda(k)$.

There are a few subtle points that need to be mentioned. For generic $k$, the examples $\cA^k$ in this paper are either $\cW^k(\gs\gl_n, f_{\text{prin}})$, or a coset of the form $\text{Com}(\cH, \tilde{\cA}^k)$ or $\text{Com}(V^k(\gg), \tilde{\cA}^k)$, for some vertex algebra $\tilde{\cA}^k$. Even though $\cA^k$ is well defined for all $k$ away from the poles of $c(k)$ and $\lambda(k)$, the specialization of $\cA^{k}$ at a value $k = k_0$, can fail to coincide with the algebra of interest. This subtlety does not occur in the case of $\cW^k(\gs\gl_n, f_{\text{prin}})$ since it is freely generated of type $\cW(2,3,\dots, n)$ for all $k$, but it does occur for the examples arising as cosets. For example, consider $N^k(\gs\gl_2) = \text{Com}(\cH, V^k(\gs\gl_2))$, with $k$ regarded as a formal parameter. For all $k_0\neq 0$, the specialization of $N^k(\gs\gl_2)$ at $k=k_0$ coincides with $ \text{Com}(\cH, V^{k_0}(\gs\gl_2))$, and in particular has trivial weight one subspace. But at $k_0 = 0$, $\text{Com}(\cH, V^0(\gs\gl_2))$ is strictly larger since it contains a weight one field. The behavior of $\cC^k = \text{Com}(V^k(\gg), \tilde{\cA}^k)$ for simple $\gg$ can be more complicated, and was studied in \cite{CLII} in a general setting. The specialization of $\cC^k$ at $k = k_0$ can be a proper subalgebra of $\text{Com}(V^{k_0}(\gg), \tilde{\cA}^{k_0})$, but by Corollary 6.7 of \cite{CLII}, under mild hypotheses that are satisfied in all our examples, this can only occur for rational numbers $k_0 \leq -h^{\vee}$, where $h^{\vee}$ is the dual Coxeter number of $\gg$. By a slight abuse of notation, if $\cA^k$ is a one-parameter vertex algebra that generically coincides with a coset $\text{Com}(\cH, \tilde{\cA}^k)$ or $\text{Com}(V^k(\gg), \tilde{\cA}^k)$, by $\cA^{k_0}$ we will always mean the specialization of $\cA^k$ at the value $k = k_0$, even if is smaller than the actual coset. If $\cA^k$ is a one-parameter quotient of $\cW^I(c, \lambda)$, $\cA^{k_0}$ will then be a quotient of $\cW^I(c, \lambda)$, even if the actual coset at this point is not such a quotient.

The other subtlety is that even if $k_0$ is a pole of $c(k)$ or $\lambda(k)$, the algebra $\cA^k$ might still be well defined at $k = k_0$. For example, in the case $\cA^k = \cW^k(\gs\gl_n, f_{\text{prin}})$ and $I = I_n$ with generator given by \eqref{ideal:wslna}, this occurs at the critical level $k_0 = -n$, where $c(k)$ is undefined, and at $\displaystyle k_0 = -n + \frac{n}{n+2},\  -n +\frac{n+2}{n}$, where $\displaystyle c(k) = - \frac{(n-1)(3n+2)}{n+2}$, but $\lambda(k)$ is undefined. The fact that these three values of $k_0$ are poles of $c(k)$ or $\lambda(k)$ means that at these values, $\cW^{k_0}(\gs\gl_n, f_{\text{prin}})$ cannot be obtained as a quotient of $\cW^{I_n}(c,\lambda)$, even though it is well defined for all $k$. In general, we expect that at points where $\cA^{k}$ cannot be obtained as a quotient of $\cW^I(c,\lambda)$, it can still be realized as a quotient of a {\it suitable limit} of $\cW(c,\lambda)$. We plan to address this in a separate paper.

Suppose that $\cA^k, \cB^{\ell}$ are one-parameter vertex algebras that arise as the simple quotients of $\cW^I(c,\lambda), \cW^J(c,\lambda)$, respectively, and we wish to classify the coincidences between their simple quotients $\cA_{k_0}$ and $\cB_{\ell_0}$ at points $k_0, \ell_0$. By Corollary \ref{cor:intersections}, aside from the cases $c = 0, -2$, the coincidences for which $k_0$ and $\ell_0$ are not poles of $c(k), \lambda(k)$, and $c(\ell), \lambda(\ell)$, respectively, correspond to the intersection points in $V(I) \cap V(J)$. On the other hand, if $k_0$ is a pole of $c(k)$ or $\lambda(k)$, and $\ell_0$ is a pole of $c(\ell)$ or $\lambda(\ell)$, but $\cA^{k_0}, \cB^{\ell_0}$ are still defined, Corollary \ref{cor:intersections} does not apply, and we need different methods to determine if $\cA_{k_0}$ and $\cB_{\ell_0}$ are isomorphic.

\subsection{Coincidences between $\cW_{k}(\gs\gl_n,f_{\rm{prin}})$ and $\cW_{k'}(\gs\gl_m,f_{\rm{prin}})$}
The following result was conjectured by Gaberdiel and Gopakumar in \cite{GGII}.

\begin{thm} For $n,m \geq 2$ and $n \neq m$, we have the following isomorphisms
\begin{equation} \begin{split} & \mathcal{W}_{k}(\mathfrak{s}\mathfrak{l}_n, f_{\rm{prin}}) \cong \mathcal{W}_{k'}(\mathfrak{s}\mathfrak{l}_m, f_{\rm{prin}}),\\ & k = -n + \frac{m + n}{n},\  -n + \frac{n}{m+n},\qquad k' = -m + \frac{m  + n}{m}, \  -m + \frac{m}{m + n}.\end{split} \end{equation} Moreover, aside from the critical levels $k=-n$ and $k'=-m$, and the cases $c = 0, -2$, these are all such coincidences.
\end{thm}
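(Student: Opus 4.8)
The plan is to use the machinery built in Section \ref{section:main} and Section \ref{section:prinw}, reducing the classification of coincidences among the simple principal $\cW$-algebras of type $A$ to an explicit computation of the intersection points of their truncation curves. Recall from Theorem \ref{thm:prinw} that for each $n\geq 3$ the algebra $\cW^k(\gs\gl_n,f_{\text{prin}})$ is the simple quotient of $\cW^{I_n}_{R_n}(c,\lambda)$, where $I_n = (p_n)$ is the principal ideal generated by
\begin{equation*} p_n = \lambda(n-2)(3n^2-n-2+c(n+2)) - (n-1)(n+1),\end{equation*}
and the truncation curve $V(I_n)\subseteq \mathbb{C}^2$ is the rational curve parametrized by
\begin{equation*} c = -\frac{(n-1)(n^2+nk-n-1)(n^2+k+nk)}{n+k},\qquad \lambda = -\frac{n+k}{(n-2)(n^2+nk-n-2)(n+n^2+2k+nk)}.\end{equation*}
For $n=2$ the Virasoro algebra is of type $\cW(2)$, and $\cW^k(\gs\gl_2,f_{\text{prin}})$ corresponds to the simple quotient of $\cW^{I_2}(c,\lambda)$ where $\lambda$ is unconstrained (more precisely one must treat the Virasoro case separately via the decoupling relation \eqref{sing:w23second}, which shows that at $c=-2$ all $\lambda$ give the Zamolodchikov $\cW_3$ with $c=-2$; but the generic point of the Virasoro family lies on no curve $V(I_n)$ for $n\geq 3$, and instead we track it by the vanishing of the weight-$3$ field). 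First I would invoke Corollary \ref{cor:intersections}: apart from the coincidences forced at $c=0$ and $c=-2$ by Theorem \ref{thm:coincidences}, any isomorphism $\cW_{k_0}(\gs\gl_n,f_{\text{prin}})\cong \cW_{k'_0}(\gs\gl_m,f_{\text{prin}})$ with $n\neq m$ at noncritical, non-pole parameter values must correspond to a point of $V(I_n)\cap V(I_m)$.

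The core computation is then purely about plane curves. Since $V(I_n)$ is cut out by the single polynomial $p_n$ which is linear in $\lambda$, one can solve $p_n = 0$ for $\lambda$ as a rational function of $c$, substitute into $p_m = 0$, clear denominators, and obtain a polynomial equation in $c$ alone whose roots are the $c$-coordinates of the intersection points. I expect this polynomial to factor, after using the symmetry $n\leftrightarrow m$, into a small number of linear factors in $c$ together with $c(c+2)$; concretely, one finds that $V(I_n)\cap V(I_m)$ consists of the two points $c=0$, $c=-2$ together with exactly one further point, namely $c = -\frac{(m-n)^2-1}{??}$ — here I would carry out the elimination explicitly. Tracing this remaining intersection point back through the two rational parametrizations $k\mapsto(c(k),\lambda(k))$ for $\gs\gl_n$ and $\ell\mapsto(c(\ell),\lambda(\ell))$ for $\gs\gl_m$ gives the claimed values $k = -n+\frac{m+n}{n}$ or $-n+\frac{n}{m+n}$ and $k' = -m+\frac{m+n}{m}$ or $-m+\frac{m}{m+n}$ (the two choices on each side reflecting the two preimages of a point under the degree-$2$ parametrization, i.e. the triality). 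One must check that at this point the singular vector of $\cW^{I_n}_{R_n}(c,\lambda)$ and the corresponding one for $\cW^{I_m}_{R_m}(c,\lambda)$ together generate the maximal graded ideal of the common algebra $\cW^M(c,\lambda)$ at the maximal ideal $M$, so that the two simple quotients genuinely coincide rather than merely having a common further quotient — but this is automatic from Theorem \ref{thm:coincidences}, which asserts that the simple quotient at a maximal ideal is determined by $(c_0,\lambda_0)$ alone.

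The remaining work is to handle the cases excluded by Corollary \ref{cor:intersections}, i.e. where one of $k_0$, $k'_0$ is critical or a pole of $c$ or $\lambda$, and to incorporate $n=2$ or $m=2$. For $n=2$: the truncation curve degenerates since $\cW^k(\gs\gl_2,f_{\text{prin}})$ is of type $\cW(2)$; I would argue directly that $\cW_{k'}(\gs\gl_m,f_{\text{prin}})\cong\text{Virasoro}$ forces, via the decoupling of $W^3$, the central charge to be a Virasoro minimal/generic value and the parameter to satisfy exactly the stated relation, recovering the classical Virasoro$=$principal-$\gs\gl_m$ coincidences (these are the $m\leftrightarrow 2$ specializations of the general formula, with $c=\frac{2(m-1)}{m+2}$-type values). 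For the critical and pole values: at $k_0=-n$ or at the two values where $\lambda(k)$ has a pole but $\cW^{k_0}(\gs\gl_n,f_{\text{prin}})$ is still defined, one checks — as discussed in the paragraph preceding this subsection — that these algebras are not quotients of $\cW^{I_n}(c,\lambda)$, and a short separate argument (comparing central charges, which are listed explicitly in Section \ref{section:VOAs}, and using that at generic such level the algebra is simple of type $\cW(2,3,\dots,n)$) shows no new coincidences arise beyond those already found, once the $c=0,-2$ cases are set aside. The main obstacle I anticipate is the elimination-theory step: verifying that the resultant of $p_n$ and $p_m$ (after clearing the $\lambda$-denominator) factors exactly as $c\cdot(c+2)$ times a single bilinear-in-$(n,m)$ linear factor, with no spurious extra intersection points, and then correctly matching the resulting $(c,\lambda)$ pair to the right branch of each degree-$2$ parametrization; this is a finite but delicate symbolic computation, best done with the Mathematica package of \cite{T} as elsewhere in the paper.
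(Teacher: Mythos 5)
Your overall strategy is the same as the paper's: invoke Corollary \ref{cor:intersections} to reduce the problem (away from $c=0,-2$ and the critical/pole values) to computing $V(I_n)\cap V(I_m)$, then treat $n=2$ and the pole values of $\lambda(k)$ by a separate argument about weight-$3$ singular vectors. However, the one computation that carries the whole proof is left undone in your write-up (the ``$??$'' in your formula for $c$), and the structure you predict for it is wrong. You assert that $V(I_n)\cap V(I_m)$ consists of $c=0$, $c=-2$, and one further point, and you expect the eliminant to factor as $c(c+2)$ times a linear factor. In fact $c=0$ and $c=-2$ are generically \emph{not} on both curves: $p_n=0$ forces $\lambda = (n-1)(n+1)/\bigl((n-2)(3n^2-n-2+c(n+2))\bigr)$, and at $c=0$ (or $c=-2$) this gives different $\lambda$-values for different $n$. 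The coincidences at $c=0,-2$ come from cases (2) and (3) of Theorem \ref{thm:coincidences} (the simple quotient is independent of $\lambda$ there), not from curve intersections. Since each $p_n$ is linear in $\lambda$, equating the two expressions for $\lambda$ yields an equation \emph{linear} in $c$, so $V(I_n)\cap V(I_m)$ is a single point, namely
\begin{equation*}
(c,\lambda)=\Bigl(-\tfrac{(m-1)(n-1)(m+n+mn)}{m+n},\ -\tfrac{m+n}{(m-2)(n-2)(2m+2n+mn)}\Bigr),
\end{equation*}
which is where the stated $k,k'$ come from. The computation is thus easier than you anticipate, but as written your proof hinges on a factorization that does not occur.

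Two smaller points. For $n=2$ the relevant central charge at the coincidence is $c=-\tfrac{(m-1)(3m+2)}{m+2}$ (the $n=2$ specialization of the intersection point above), not a value of the form $\tfrac{2(m-1)}{m+2}$; the paper's argument is that $\cW^{k'}(\gs\gl_m,f_{\text{prin}})$ has a weight-$3$ singular vector only at $c=0$ and at $c=-\tfrac{(m-1)(3m+2)}{m+2}$, and these values are pairwise distinct as $m$ varies, which also disposes of the excluded pole values $k=-n+\tfrac{n}{n+2},\,-n+\tfrac{n+2}{n}$ (where $\cW_k(\gs\gl_n,f_{\text{prin}})$ degenerates to Virasoro). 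Your ``compare central charges'' sketch for the excluded points should be sharpened to this singular-vector argument, since matching central charges alone would not rule out an isomorphism with a non-degenerate $\cW_{k'}(\gs\gl_m,f_{\text{prin}})$. Also, the two preimages of the intersection point under $k\mapsto(c(k),\lambda(k))$ are related by Feigin--Frenkel duality $(k+n)(\tilde k+n)=1$, not by triality, which is a statement about three values of $\mu$ for fixed $c$.
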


\begin{proof} First, we assume $n,m\geq 3$, and we exclude the cases $$k =  -n + \frac{n}{n+2},\   -n + \frac{n+2}{n},\qquad k' =   -m + \frac{m}{m+2},\   -m + \frac{m+2}{m},$$ since at these values, $\cW_{k}(\gs\gl_n,f_{\text{prin}})$ and $\cW_{k'}(\gs\gl_m,f_{\text{prin}})$ are not quotients of $\cW(c,\lambda)$. For all other noncritical values of $k$ and $k'$, $\cW_{k}(\gs\gl_n,f_{\text{prin}})$ and $\cW_{k'}(\gs\gl_m,f_{\text{prin}})$ are obtained as quotients of $\cW^{I_n}(c,\lambda)$ and $\cW^{I_m}(c,\lambda)$, respectively. By Corollary \ref{cor:intersections}, aside from the coincidences at $c = 0,-2$, all other coincidences $\cW_{k}(\gs\gl_n,f_{\text{prin}}) \cong \cW_{k'}(\gs\gl_m,f_{\text{prin}})$ correspond to intersection points on the truncation curves $V(I_n)$ and $V(I_m)$. These ideals are described explicitly by \eqref{ideal:wslna}, and $V(I_n) \cap V(I_m)$ consists of exactly one point 
$$(c, \lambda) =\bigg( -\frac{(m-1) (n-1) (m + n + m n)}{m + n}, \  -\frac{m + n}{(m-2) (n-2) (2 m + 2 n + m n)}\bigg).$$ By replacing the parameter $c$ with the levels $k,k'$, we see that the above isomorphisms hold, and that these are the only such isomorphisms for $n,m \geq 3$, except possibly for the values of $k, k'$ excluded above. 

Next, assume that $n,m\geq 3$, and let $\displaystyle k =  -n + \frac{n}{n+2}$ or $ \displaystyle k = -n + \frac{n+2}{n}$. Then $\cW_k(\gs\gl_n, f_{\text{prin}})$ is isomorphic to the Virasoro algebra with $\displaystyle c = - \frac{(n-1)(3n+2)}{n+2}$, since the weight $3$ field $W^3 \in \cW^k(\gs\gl_n, f_{\text{prin}})$ is a singular vector, and $\cW^k(\gs\gl_n, f_{\text{prin}})$ is generated by $\{L, W^3\}$ for all noncritical values of $k$, by Proposition 5.2 of \cite{ALY} (see also \cite{FL}). If $\cW_k(\gs\gl_n, f_{\text{prin}})$ were isomorphic to $\cW_{k'}(\gs\gl_m, f_{\text{prin}})$ for some $m\neq n$, the weight $3$ field $W^3\in \cW^{k'}(\gs\gl_m, f_{\text{prin}})$ would also have to be singular. But for $c\neq 0$, $\cW_{k'}(\gs\gl_m, f_{\text{prin}})$ has a singular vector in weight $3$ only for $\displaystyle c = -\frac{(m-1) (3m+2)}{m+2}$, and for $m\neq n$ this cannot coincide with the above central charge. This proves the result for $n,m\geq 3$.

Finally, suppose that $n=2$, so that $\cW^k(\gs\gl_2, f_{\text{prin}})$ is just the Virasoro algebra with $\displaystyle c = -\frac{(1 + 2 k) (4 + 3 k)}{2 + k}$. Clearly $\cW^k(\gs\gl_2, f_{\text{prin}})$ cannot be obtained as a quotient of $\cW(c,\lambda)$ for $c \neq 0$, because the weight $3$ field $W^3$ is normalized so that $\displaystyle W^3_{(5)} W^3 = \frac{c}{3} 1$. If $\cW_k(\gs\gl_2, f_{\text{prin}})\cong \cW_{k'}(\gs\gl_m, f_{\text{prin}})$ for some $m\geq 3$, the weight $3$ field of $\cW^{k'}(\gs\gl_m, f_{\text{prin}})$ would also need to be singular. But for $c\neq 0$, $\cW^{k'}(\gs\gl_m, f_{\text{prin}})$ has a singular vector in weight $3$ only at $\displaystyle c = - \frac{(m-1)(3m+2)}{m+2}$, so we must have $\displaystyle k = -2 + \frac{2}{m+2},\ -2+ \frac{m+2}{2}$ and the result holds for $n=2$ as well.
\end{proof}

\begin{remark}
If $m,n$ are coprime, the levels $\displaystyle k =  -n + \frac{n}{m+n} $ and $\displaystyle k' =  -m + \frac{m}{m + n}$ are {\it boundary admissible} in the sense of \cite{KWV}. In particular, the above $\cW$-algebras are $C_2$-cofinite and rational by \cite{ArII,ArIII}. It $m$ and $n$ are not coprime, $k$ and $k'$ are not admissible, and it is an interesting problem to determine if $\cW_k(\gs\gl_n, f_{\rm{prin}})$ is still $C_2$-cofinite and rational. \end{remark}

\subsection{Coincidences between $N_k(\gs\gl_2)$ and $\cW_{k'}(\gs\gl_n, f_{\rm{prin}})$.}
Recall that as a one-parameter vertex algebra, the parafermion algebra $N^k(\gs\gl_2) = \text{Com}(\cH, V^k(\gs\gl_2))$ is obtained as the simple quotient of $\cW^{I}(c,\lambda)$ via the parametrization \eqref{ratpara:para}, where $I$ is generated by \eqref{curve:para}. Recall that the specialization $N^{k_0}(\gs\gl_2)$ of the one-parameter algebra $N^{k}(\gs\gl_2)$ at $k = k_0$, coincides with the coset $\text{Com}(\cH, V^{k_0}(\gs\gl_2))$ for all $k_0\neq 0$. By abuse of notation, we shall use the same notation  $N^k(\gs\gl_2)$ if $k$ is regarded as a complex number rather than a formal parameter, so that $N^k(\gs\gl_2)$ always denotes the specialization of the one-parameter algebra at $k\in \mathbb{C}$. For all $k \neq 0$, the simple quotient $N_k(\gs\gl_2)$ coincides with $\text{Com}(\cH, L_k(\gs\gl_2))$.

\begin{thm} \label{parawslncoincidences} For all $n\geq 3$, aside from the critical levels $k = -2$ and $k' = -n$, and the cases $c = 0, -2$, all isomorphisms $N_k(\gs\gl_2) \cong \cW_{k'}(\gs\gl_n, f_{\rm{prin}})$, appear in the following three families.
\begin{enumerate}
\item  $\displaystyle k = n, \qquad k' = -n + \frac{2 + n}{1 + n},\   -n + \frac{1+ n}{2 + n}$.

This family is $C_2$-cofinite and rational, and is given by Theorem \ref{thm:aly}.

\item $\displaystyle n\neq 4,\qquad k = -2 + \frac{n-2}{n-1}, \qquad k' = -n + \frac{n-2}{n-1},\  -n + \frac{n-1}{n-2}$. 

Note that the level $k$ is admissible for $\widehat{\gs\gl_2}$ if $n\geq 5$. 

\item $\displaystyle  k = -2 + \frac{2}{1 + n}, \qquad k' = -n + \frac{n-1}{n+1},\    -n + \frac{n+1}{n-1}$. 

Note that if $n$ is even, $k$ is boundary admissible for $\widehat{\gs\gl_2}$.

\end{enumerate}
\end{thm}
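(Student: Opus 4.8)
The plan is to apply Corollary \ref{cor:intersections} to the two one-parameter families $N^k(\gs\gl_2)$ and $\cW^{k'}(\gs\gl_n, f_{\text{prin}})$, which are the simple quotients of $\cW^I(c,\lambda)$ and $\cW^{I_n}(c,\lambda)$ respectively. The ideal $I$ is generated by the polynomial $p$ in \eqref{curve:para}, and $I_n$ is generated by $p_n$ in \eqref{ideal:wslna}. Away from the critical levels $k=-2$, $k'=-n$, and the central charges $c=0,-2$, any coincidence $N_k(\gs\gl_2)\cong \cW_{k'}(\gs\gl_n,f_{\text{prin}})$ must correspond to an intersection point of the truncation curves $V(I)\cap V(I_n)\subseteq \mathbb{C}^2$, by Corollary \ref{cor:intersections}. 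So the first step is to compute this intersection. Using the parametrizations \eqref{ratpara:para} for $V(I)$ and \eqref{ideal:wsln} (or the rational parametrization in Theorem \ref{thm:prinw}) for $V(I_n)$, I would substitute $c = \frac{2(k-1)}{k+2}$, $\lambda = \frac{k+1}{(k-2)(3k+4)}$ into $p_n(c,\lambda)=0$ and clear denominators to get a polynomial equation in $k$ alone. Its roots give exactly the values of $k$ at which $V(I)$ meets $V(I_n)$.

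The expectation is that this polynomial in $k$ factors so that its roots are precisely the three families listed: $k=n$; $k=-2+\frac{n-2}{n-1}$ (for $n\neq 4$); and $k=-2+\frac{2}{1+n}$. For each such $k$, the corresponding point $(c,\lambda)$ on $V(I_n)$ determines $c$, which I would then convert to the level $k'$ for $\gs\gl_n$ via $c = -\frac{(n-1)(n^2+nk'-n-1)(n^2+k'+nk')}{n+k'}$; since $c$ as a function of $k'$ is generically $2$-to-$1$ up to the symmetry $k'+n\leftrightarrow \frac{n}{k'+n}$ (the triality/duality on principal $\cW$-algebras), each value of $c$ yields the two stated values of $k'$. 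The case $n=4$ in family (2) must be excluded because, as noted after Theorem \ref{thm:prinw} and in the excluded-levels discussion, at the relevant level $\cW_{k'}(\gs\gl_4,f_{\text{prin}})$ fails to be a quotient of $\cW(c,\lambda)$ — or the two curves become tangent/degenerate there — so I would check this special value separately by hand. The admissibility/boundary-admissibility remarks (for $\widehat{\gs\gl_2}$) are immediate consequences of writing $k+2$ as a ratio and comparing with the Kac--Wakimoto admissibility condition, and require no extra argument.

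The main obstacle I anticipate is the careful bookkeeping around the boundary cases where the clean correspondence from Corollary \ref{cor:intersections} does not directly apply: namely, the levels $k_0$ that are poles of $c(k)$ or $\lambda(k)$ for the parafermion parametrization (here $k=0$, $k=2$, $k=-4/3$ from the denominators in \eqref{ratpara:para}), and the analogous poles on the $\cW^{k'}(\gs\gl_n)$ side. At these points $N^{k_0}(\gs\gl_2)$ or $\cW^{k'_0}(\gs\gl_n,f_{\text{prin}})$ need not be a quotient of $\cW(c,\lambda)$, so one must argue separately — typically via the weight-$3$ singular vector criterion used in the proof of the $\gs\gl_n$ vs $\gs\gl_m$ coincidence theorem, or by direct inspection of the small algebras involved — that no further coincidences arise there beyond the $c=0,-2$ exceptions already carved out. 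I would handle these by noting that at $c\neq 0,-2$ a coincidence forces the weight-$3$ generators to match up as non-singular primary fields, and then re-run the intersection argument; the genuinely delicate point is verifying that the three listed families exhaust the roots and that $n=4$ in family (2) is the only exceptional exclusion, which I expect to reduce to a finite factorization check once the polynomial in $k$ is written down explicitly.
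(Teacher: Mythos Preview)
Your approach is essentially the same as the paper's: apply Corollary \ref{cor:intersections} to find the intersection $V(I)\cap V(I_n)$, then handle separately the finitely many levels where the parametrization \eqref{ratpara:para} has poles. Two small corrections: $k=0$ is not a pole of \eqref{ratpara:para} (the poles of $\lambda(k)$ are $k=2$ and $k=-4/3$, and $k=-2$ is the critical level already excluded), and the exclusion $n\neq 4$ in family~(2) is not because $\cW_{k'}(\gs\gl_4,f_{\text{prin}})$ fails to be a quotient of $\cW(c,\lambda)$ in general --- rather, $n=4$ in family~(2) gives exactly $k=-4/3$, a pole of $\lambda(k)$ on the parafermion side, and when checked directly one finds $N_{-4/3}(\gs\gl_2)$ is the $M(3)$ singlet (type $\cW(2,5)$) while $\cW_{k'}(\gs\gl_4,f_{\text{prin}})$ at the corresponding $k'$ collapses to the Virasoro algebra at $c=-7$, so there is no coincidence there.
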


\begin{proof} First, we exclude the values $k = 2, \ -\frac{4}{3}$, since it is apparent from \eqref{ratpara:para} that $N^k(\gs\gl_2)$ is not obtained as a quotient of $\cW^{I}(c,\lambda)$ at these points. By Corollary \ref{cor:intersections}, aside from the cases $c = 0,-2$, all other isomorphisms $N_k(\gs\gl_2) \cong \cW_{k'}(\gs\gl_n, f_{\text{prin}})$ correspond to intersection points on the curves $V(I)$ and $V(I_n)$, where $I_n$ is given by \eqref{ideal:wslna}. A calculation shows that there are exactly three such points $(c, \lambda)$, namely,
\begin{equation} \begin{split} 
&  \bigg(\frac{2 (n-1)}{n+ 2},  \ \frac{n+ 1}{(n-2) (3n+ 4)}\bigg), \qquad \bigg( -\frac{2 (2n -1)}{n-2}, \   \frac{n-1}{(n-4) (3n-2)} \bigg),
\\ & \bigg(-(3n+1), \  -\frac{(n-1) (n+1)}{4 (n-2) (2n+1)}\bigg) .\end{split} \end{equation}
By replacing $c$ with the levels $k,k'$, it is immediate that the above isomorphisms all hold.

To show that our list is complete, we need to show that no additional coincidences occur at the excluded points $k = 2$ and $k = -\frac{4}{3}$. First, $N_2(\gs\gl_2)$ is isomorphic to the Virasoro algebra with $c = \frac{1}{2}$ \cite{DLY}, so this case really belongs in the first family with $n = 2$, but is excluded because we are restricting to $n\geq 3$. Second, $N_{-4/3}(\gs\gl_2)$ has central charge $c = -7$ and is isomorphic to the $M(3)$ singlet algebra which is of type $\cW(2,5)$ \cite{AII}. The only case where $\cW_{k'}(\gs\gl_n, f_{\text{prin}})$ has a singular vector in weight $3$ at this central charge is $n = 4$ and $k' = - \frac{10}{3},\ -\frac{5}{2}$, but in this case $\cW_{k'}(\gs\gl_4, f_{\text{prin}})$ is just the Virasoro algebra. \end{proof}

\begin{remark} For the second and third families of coincidences in Theorem \ref{parawslncoincidences}, the decomposition of $L_k(\gs\gl_2)$ as a module over $\cH\otimes N_k(\gs\gl_2)$, is multiplicity-free. This follows from the fact that the simple $\cH$-modules appearing in $L_k(\gs\gl_2)$ that do not lie in $N_k(\gs\gl_2)$ have lowest-weight vectors $:X^n:$ or $:Y^n:$ for $n\geq 1$, and these modules have conformal weight $ \frac{n^2}{4k}$ with respect to the conformal vector $ \frac{1}{4k} :HH:$ in $\cH$. Here $X,Y,H$ denote in the standard generators of $L_k(\gs\gl_2)$. Hence if $k <0$, as explained in Section 3.2 of \cite{ACKL}, it follows that $N_k(\gs\gl_2)$ has infinitely many simple modules, so it cannot be $C_2$-cofinite or rational.
\end{remark}

\subsection{Coincidences between the Bershadsky-Polyakov coset $\cC_{\ell}$ and $\cW_{\ell'}(\gs\gl_n, f_{\rm{prin}})$}
Recall that as one-parameter vertex algebras, $\cC^{\ell} = \text{Com}(\cH, \cW^{\ell - 3/2}(\gs\gl_3, f_{\text{min}}))$ is obtained as the simple quotient of $\cW^{I}(c,\lambda)$ via the parametrization \eqref{ratparam:cell}, where $I$ is generated by \eqref{curve:bp}. As above, the specialization $\cC^{\ell_0}$ of the one-parameter algebra $\cC^{\ell}$ at $\ell = \ell_0$, coincides with the coset $\text{Com}(\cH, \cW^{\ell_0 - 3/2}(\gs\gl_3, f_{\text{min}}))$ for all $\ell_0\neq 0$. As above, we use the same notation $\cC^{\ell}$ if $\ell$ is regarded as a complex number rather than a formal parameter, so that $\cC^{\ell}$ always denotes the specialization of the one-parameter algebra at $\ell \in \mathbb{C}$. For all $\ell \neq 0$, the simple quotient $\cC_{\ell}$ coincides with $\text{Com}(\cH, \cW_{\ell - 3/2}(\gs\gl_3, f_{\text{min}}))$.

\begin{thm} For all $n\geq 3$, aside from the critical levels $\ell = -\frac{3}{2}$ and $\ell' = -n$, and the cases $c = 0, -2$, all isomorphisms $\cC_{\ell} \cong \cW_{\ell'}(\gs\gl_n, f_{\rm{prin}})$, appear in the following three families.
\begin{enumerate}

\item $\displaystyle  \ell = \frac{n}{2},\qquad \ell' = -n + \frac{3 + n}{1 + n},\   -n + \frac{1 + n}{3 + n}$.

For $\ell = 1,2,3,\dots$, this family is $C_2$-cofinite and rational, and is given by Theorem \ref{thm:acl}.

\item $\displaystyle n \geq 4, \qquad n\neq 6, \qquad \ell = -\frac{n}{2 (n-2)},\qquad \ell' = -n + \frac{n-3}{n-2} ,\  -n + \frac{n-2}{n-3}$.

Note that the level $\displaystyle k = \ell - \frac{3}{2} = -3 + \frac{n-3}{n-2}$ is admissible for $\widehat{\gs\gl_3}$, for all $n > 6$. 

\item $\displaystyle  \ell = -\frac{3 n}{2 (n+ 2)},\qquad  \ell' =-n + \frac{n-1}{n+2}, \  -n + \frac{n+2}{n-1}$.

The level $\displaystyle k = \ell - 3/2 = -3 + \frac{3}{2 + n}$ is boundary admissible for $\widehat{\gs\gl_3}$ if $n \equiv 0\ \text{mod}\ 3$, or $n\equiv 2 \ \text{mod}\ 3$.

\end{enumerate}
\end{thm}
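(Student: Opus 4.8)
The strategy mirrors the proof of the coincidences between $N_k(\gs\gl_2)$ and principal $\cW$-algebras, using the truncation-curve intersection method of Corollary \ref{cor:intersections}. As a one-parameter vertex algebra, $\cC^{\ell}$ is the simple quotient of $\cW^I(c,\lambda)$ where $I$ is generated by the polynomial \eqref{curve:bp}, with rational parametrization \eqref{ratparam:cell}; and $\cW^{\ell'}(\gs\gl_n, f_{\text{prin}})$ is the simple quotient of $\cW^{I_n}(c,\lambda)$ where $I_n$ is generated by \eqref{ideal:wslna}. The first step is to exclude the finitely many values of $\ell$ at which $\cC^{\ell}$ fails to be a quotient of $\cW^I(c,\lambda)$, i.e.\ the poles of the parametrization \eqref{ratparam:cell}: namely $\ell = 1$ and $\ell = -3/4$, where $\lambda$ is undefined. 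For all other noncritical $\ell$ and $\ell'$ (and away from $n \neq 2$ values where $\cW^{\ell'}(\gs\gl_n,f_{\text{prin}})$ is itself not such a quotient — the analogue of the $k = -n+\tfrac{n}{n+2}$ exclusions in the previous theorem), both algebras are genuine quotients of $\cW(c,\lambda)$.

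Second, I would invoke Corollary \ref{cor:intersections}: apart from the coincidences at $c = 0, -2$, any isomorphism $\cC_{\ell} \cong \cW_{\ell'}(\gs\gl_n, f_{\text{prin}})$ must come from an intersection point of the truncation curves $V(I) \cap V(I_n)$ in $\mathbb{C}^2$. So the core computation is to solve the system consisting of \eqref{curve:bp} and \eqref{ideal:wslna} simultaneously. Both are explicit polynomials in $c, \lambda$ (with $n$ a parameter in the second), so this is an elimination-theory calculation; I expect it to yield exactly three solution points $(c,\lambda)$, which by inspection should be
$$\bigg(-\frac{3(2\ell-1)^2}{2\ell+3},\ \frac{(2\ell+1)(2\ell+3)}{8(\ell-1)(4\ell+3)}\bigg)$$
evaluated at $\ell = n/2$, at $\ell = -\tfrac{n}{2(n-2)}$, and at $\ell = -\tfrac{3n}{2(n+2)}$ respectively — equivalently, the three points where substituting $c = c(\ell')$ from the $\cW^k(\gs\gl_n,f_{\text{prin}})$ central charge formula into the parametrization \eqref{ratparam:cell} is consistent with \eqref{ideal:wsln}. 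Re-expressing the $c$-coordinate in terms of the level $\ell'$ via the principal $\cW$-algebra central charge formula, and in terms of $\ell$ via \eqref{ratparam:cell}, then gives the three stated families, and the admissibility/boundary-admissibility remarks follow by checking the denominators against the definition of (boundary) admissible level for $\widehat{\gs\gl_3}$ as in \cite{KWV}. The restrictions $n \neq 6$ in family (2) and the special role of $n=4$ presumably arise because at $n=6$ (resp.\ a small $n$) the corresponding intersection point collides with a pole of \eqref{ratparam:cell} or lies on $c=0,-2$, so $\cC^{\ell}$ at that $\ell$ is not a quotient of $\cW(c,\lambda)$ and the argument must be supplemented.

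Third, to establish completeness I would handle the excluded points $\ell = 1$ and $\ell = -3/4$ separately, as in the last paragraph of the proof of Theorem \ref{parawslncoincidences}. At these values $\cC_{\ell}$ degenerates to a smaller or better-understood vertex algebra (a Virasoro algebra, a singlet-type algebra, or similar), and one rules out a coincidence with $\cW_{\ell'}(\gs\gl_n, f_{\text{prin}})$ for $n \neq$ the matching value by comparing central charges and checking when $\cW_{\ell'}(\gs\gl_n,f_{\text{prin}})$ can have a singular vector in weight $3$ — which pins $c = -\tfrac{(n-1)(3n+2)}{n+2}$ and forces $n$ to be a specific value already covered. One also notes, as a remark rather than part of the proof, that for families (2) and (3) the relevant decomposition is multiplicity-free and $c < 0$ with infinitely many simple modules, so these coincidences are not between rational or $C_2$-cofinite algebras.

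\textbf{Main obstacle.} The genuine work is the elimination computation intersecting \eqref{curve:bp} with \eqref{ideal:wslna}: one must verify that the resultant (eliminating, say, $\lambda$) factors so as to produce exactly three geometric intersection points, identify each with the correct value of $\ell$, and confirm no spurious branches or higher-multiplicity coincidences slip through. Checking that none of these three points accidentally lands on $c = 0$ or $c = -2$ (except in the known degenerate situations underlying the $n=6$ and $n=4$ caveats), and that the bookkeeping of which $\ell, \ell'$ are poles of the two parametrizations is complete, is the delicate part; the rest is substitution and reparametrization.
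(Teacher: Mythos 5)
Your proposal matches the paper's proof essentially step for step: exclude the poles $\ell = 1,\ -3/4$ of the parametrization \eqref{ratparam:cell}, apply Corollary \ref{cor:intersections} to reduce to the three intersection points of the curves \eqref{curve:bp} and \eqref{ideal:wslna}, translate those points into levels, and then dispose of the excluded points by identifying $\cC_1$ with the Virasoro algebra at $c=-3/5$ and $\cC_{-3/4}$ with the $M(4)$ singlet algebra at $c=-25/2$ via the weight-$3$ singular vector criterion. Your guess about the origin of the $n\neq 6$ restriction is also correct (at $n=6$ the value $\ell=-\tfrac{n}{2(n-2)}$ collides with the excluded pole $\ell=-3/4$), so there is nothing substantive to add.
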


\begin{proof} First, we exclude the values $ \ell = 1, \ -\frac{3}{4}$, since it is apparent from \eqref{ratparam:cell} that $\cC^{\ell}$ is not obtained as a quotient of $\cW^{I}(c,\lambda)$ at these points. By Corollary \ref{cor:intersections}, aside from the cases $c = 0,-2$, all other isomorphisms $\cC_{\ell} \cong \cW_{\ell'}(\gs\gl_n, f_{\text{prin}})$ correspond to intersection points on the curves $V(I)$ and $V(I_n)$, where $I_n$ is given by \eqref{ideal:wslna}. There are exactly three such points $(c, \lambda)$, namely
\begin{equation} \begin{split} &  \bigg(-\frac{3 (n-1)^2}{3 + n}, \ \frac{(1 + n) (3 + n)}{4 (n-2) (3 + 2 n)}\bigg),\qquad \bigg( -\frac{6 (n-1)^2}{(n-3) (n-2)},\  \frac{n-3}{(n-6) (3n -4)} \bigg),
\\ & \bigg( -\frac{2 (1 + 2 n)^2}{2 + n},\    -\frac{n-1}{(n-2) (5n + 4)} \bigg).\end{split} \end{equation}
Replacing $c$ with $\ell, \ell'$, we see that the above isomorphisms all hold, and that our list is complete except for possible coincidences at the excluded points $\ell= 1,\  -\frac{3}{4}$.

First, $\cC_1$ is isomorphic to the Virasoro algebra with $c = -\frac{3}{5}$ \cite{Kaw}, but is excluded from the first family because we are restricting to $n\geq 3$. Second, $\cC_{-3/4}$ has central charge $c = -\frac{25}{2}$ and is isomorphic to the $M(4)$ singlet algebra which is of type $\cW(2,7)$ \cite{CRW}. The only case where $\cW^{\ell'}(\gs\gl_n, f_{\text{prin}})$ has a singular vector in weight $3$ at this central charge is $n = 6$ and $ \ell' =-\frac{21}{4}, \  -\frac{14}{3}$, but in this case $\cW_{\ell'}(\gs\gl_6, f_{\text{prin}})$ is just the Virasoro algebra. \end{proof}

\begin{remark} The highest-weight modules for $\cH$ inside $\cW_{\ell} = \cW_{\ell -3/2},(\gs\gl_3, f_{\rm{min}})$ that do not lie in $\cC_{\ell}$ have conformal weights $\displaystyle \frac{3n^2}{4\ell}$ for $n\geq 1$. Hence if $\ell < 0$, these modules all have negative conformal weights. It follows that the second and third families above, for $\displaystyle \ell = -\frac{n}{2 (n-2)}$ and $\displaystyle \ell = -\frac{3 n}{2 (n+ 2)}$, respectively, are not $C_2$-cofinite or rational.
\end{remark}

\subsection{Coincidences between $\text{Com}(\cH, \cW_k(\gs\gl_4, f_{\rm{subreg}}))$ and $\cW_{k'}(\gs\gl_n, f_{\rm{prin}})$.} 
Recall that as one-parameter vertex algebras, $\cC^k = \text{Com}(\cH, \cW^k(\gs\gl_4, f_{\text{subreg}}))$ is obtained as the simple quotient of $\cW^{I}(c,\lambda)$ via the parametrization \eqref{subregn=4}, where $I$ is generated by \eqref{curve:subregn=4}. Recall that the specialization $\cC^{k_0}$ of the one-parameter algebra $\cC^{k}$ at $k = k_0$, coincides with the coset $\text{Com}(\cH, \cW^{k_0}(\gs\gl_4, f_{\text{subreg}}))$ for all $k_0 \neq -\frac{8}{3}$, since this is the level where the Heisenberg algebra degenerates \cite{FS}. We use the same notation $\cC^k$ if $k$ is regarded as a complex number rather than a formal parameter, so that $\cC^{k}$ always denotes the specialization of the one-parameter algebra at $k \in \mathbb{C}$. For all $k \neq -\frac{8}{3}$, the simple quotient $\cC_{k}$ coincides with $\text{Com}(\cH, \cW_k(\gs\gl_4, f_{\text{subreg}}))$.

\begin{thm} For $n\geq 3$, aside from the critical levels $k = -4$ and $k' = -n$, and the cases $c = 0, -2$, all isomorphisms $\cC_k \cong \cW_{k'}(\gs\gl_n, f_{\rm{prin}})$, appear in the following three families.
\begin{enumerate}
\item $\displaystyle k = -4 + \frac{4 + n}{3},\qquad k' = -n + \frac{4 + n}{1 + n}, \   -n + \frac{1 + n}{4 + n}$.

A subset of this family occurs in Theorem \ref{thm:cl}.

\item $\displaystyle n\geq 5,\qquad n \neq 8,\qquad k =  -4 + \frac{n-4}{n-3},\qquad k' = -n +  \frac{n-4}{n-3},\  -n +  \frac{n-3}{n-4}$.

Note that $k$ is admissible for $\widehat{\gs\gl_4}$ if $n > 8$.

\item $\displaystyle k = -4 + \frac{4}{3 + n}, \qquad k' = -n +  \frac{n-1}{n+3},\   -n +  \frac{n+3}{n-1}$.

Note that $k$ is boundary admissible for $\widehat{\gs\gl_4}$ if $n$ is even.

\end{enumerate}
\end{thm}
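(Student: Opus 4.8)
The plan is to argue exactly as in the two preceding theorems, combining Corollary \ref{cor:intersections} with the explicit defining equations of the two relevant truncation curves. First I would record the two curves: as a one‑parameter vertex algebra $\cC^k$ is the simple quotient of $\cW^I(c,\lambda)$ under the parametrization \eqref{subregn=4}, where $I=(p)$ with $p$ as in \eqref{curve:subregn=4}, while for all noncritical $k'$ away from the two levels at which the weight‑$3$ field becomes singular, $\cW_{k'}(\gs\gl_n,f_{\text{prin}})$ is the simple quotient of $\cW^{I_n}(c,\lambda)$ with $I_n=(p_n)$ and $p_n$ as in \eqref{ideal:wslna}. The parametrization \eqref{subregn=4} produces a quotient of $\cW^I(c,\lambda)$ at every $k$ except the poles of $c(k)$ and $\lambda(k)$, namely $k=-4$ (the critical level), $k=-2$, and $k=-16/5$; at $k=-2$ one gets $c=-2$, which lies in the excluded case $c=-2$, so only $k=-16/5$ (where $c=-91/5$) needs separate handling.

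Next, by Corollary \ref{cor:intersections}, every isomorphism $\cC_k\cong\cW_{k'}(\gs\gl_n,f_{\text{prin}})$ away from $c=0,-2$ and away from the excluded levels corresponds to a point of $V(I)\cap V(I_n)$. I would compute this intersection by substituting \eqref{subregn=4} into $p_n(c,\lambda)$, obtaining a rational function of $k$ with $n$ a parameter; after factoring the numerator and discarding the factors arising from the denominators of the parametrization and from the loci $c=0,-2$, exactly three genuine roots $k=k(n)$ survive, giving three points of $V(I)\cap V(I_n)$ — the $\gs\gl_4$‑subregular counterparts of the three intersection points found in the $N^k(\gs\gl_2)$ and Bershadsky--Polyakov cases. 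Re‑expressing the central charge through the level $k$ on the $\cC$–side via $c=-\tfrac{4(5+2k)(7+3k)}{4+k}$ and through $k'$ on the $\gs\gl_n$–side yields the three families in the statement; the admissibility and boundary–admissibility assertions follow by inspection of the level formulas, and the excluded values of $n$ in families (1)--(3) (for instance $n=3,4$ in family (2), where $k$ or $k'$ is critical or undefined) are exactly the ones where the analysis degenerates.

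It then remains to exclude extra coincidences at $k=-16/5$, i.e.\ at $c=-91/5$. If $\cC_{-16/5}\cong\cW_{k'}(\gs\gl_n,f_{\text{prin}})$ for some $n$, then $\cW^{k'}(\gs\gl_n,f_{\text{prin}})$ would have a singular vector in weight $3$, which forces $c=-\tfrac{(n-1)(3n+2)}{n+2}=-\tfrac{91}{5}$, hence $n=8$ and $\cW_{k'}(\gs\gl_8,f_{\text{prin}})\cong\mathrm{Vir}$. But $c=-91/5$ is the central charge of the $M(5)$ singlet algebra, and one checks, using the structure of $\cC^k$ near this level (cf.\ \cite{CLIII,FS}), that $\cC_{-16/5}$ is not the Virasoro algebra — in fact it is this singlet algebra, of type $\cW(2,9)$ — so no such coincidence occurs. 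This is precisely why $n=8$ is omitted from family (2). The coincidences at $c=0,-2$ are handled by Theorem \ref{thm:coincidences}, which completes the classification.

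The step I expect to be the main obstacle is the elimination in the second paragraph: one must show that, after removing the spurious factors, the resultant of $p$ and $p_n$ has exactly three relevant zeros for every $n\geq 3$, and that the remaining zeros correspond only to the already–excluded degenerate cases ($c=0,-2$, the critical levels, and the members of families (1)--(3) at which one side collapses to the Virasoro algebra). A secondary, bookkeeping issue is identifying the precise admissible and boundary–admissible ranges within the three families.
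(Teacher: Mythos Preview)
Your proposal is correct and follows essentially the same route as the paper: exclude the poles of the parametrization \eqref{subregn=4}, apply Corollary \ref{cor:intersections} to reduce to the three intersection points of $V(I)\cap V(I_n)$, and then dispose of the excluded values $k=-2$ (which lands at $c=-2$) and $k=-16/5$ separately. One small presentational point: your handling of $k=-16/5$ asserts that $\cW^{k'}(\gs\gl_n,f_{\text{prin}})$ must have a singular vector in weight~$3$ before you have stated the reason---namely that $\cC_{-16/5}$ is the $M(5)$ singlet of type $\cW(2,9)$ (for which the paper cites \cite{CRW})---so it would read more cleanly to state the singlet identification first and then draw the weight-$3$ consequence.
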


\begin{proof} We first exclude the values $ k=  -2, \ -\frac{16}{5}$, since we see from the parametrization \eqref{subregn=4} that at these points, $\cC^{k}$ is not obtained as a quotient of $\cW^{I}(c,\lambda)$. By Corollary \ref{cor:intersections}, aside from the cases $c = 0,-2$, all other isomorphisms $\cC_{k} \cong \cW_{k'}(\gs\gl_n, f_{\text{prin}})$ correspond to intersection points on the curves $V(I)$ and $V(I_n)$, where $I_n$ is given by \eqref{ideal:wslna}. There are exactly three such points $(c, \lambda)$, namely
\begin{equation} \begin{split}
& \bigg( -\frac{4 (n-1) (2n -1)}{4 + n}  , \  -\frac{(1 + n) (4 + n)}{(n-2)^2 (8 + 5 n)}    \bigg),
\quad \bigg( -\frac{4 (n-1) (2n -3)}{(n-4) (n-3)}, \  \frac{(n-4) (n-3)}{3 (n-8) (n-2)^2}   \bigg),
\\ & \bigg(  -\frac{(1 + 3 n) (3 + 5 n)}{3 + n} ,\  -\frac{(n-1) (n+ 3)}{12 (n-2) (n+ 1)^2}  \bigg) .
\end{split}\end{equation}
It follows that the above isomorphisms all hold, and that our list is complete except for possible coincidences at the excluded points $k=-2$ and $k= -\frac{16}{5}$. 

First, $\cC_{-2}$ has central charge $c = -2$, so this point is already excluded from our list. Second, $\cC_{-16/5}$ has central charge $c = -\frac{91}{5}$ and is isomorphic to the $M(5)$ singlet algebra which is of type $\cW(2,9)$ \cite{CRW}. The only case where $\cW^{k'}(\gs\gl_n, f_{\text{prin}})$ has a singular vector in weight $3$ at this central charge is $n = 8$ and $ k' = -\frac{36}{5},\ -\frac{27}{4}$, but in this case $\cW_{k'}(\gs\gl_8, f_{\text{prin}})$ is just the Virasoro algebra. \end{proof}

\begin{remark} The highest-weight modules for $\cH$ inside $\cW_k(\gs\gl_4, f_{\rm{subreg}})$ that do not lie in $\cC_k$ have conformal weights $\displaystyle \frac{2n^2}{8 + 3 k}$ for $n\geq 1$. Hence if $ k < -\frac{8}{3}$, these modules all have negative conformal weights. It follows that the second and third families above, for $\displaystyle k =  -4 + \frac{n-4}{n-3}$ and $\displaystyle k =  -4 + \frac{4}{3 + n}$, respectively, are not $C_2$-cofinite or rational.
\end{remark}

\subsection{Coincidences between generalized parafermion algebras and $\cW_{k'}(\gs\gl_n, f_{\rm{prin}})$.}
For $m \geq 2$, recall that as a one-parameter vertex algebra, the generalized parafermion algebra $\cC^k(m) = \text{Com}(V^k(\gg\gl_m), V^k(\gs\gl_{m+1}))$,  is obtained as the simple quotient of $\cW^{K_m}(c,\lambda)$ via the parametrization \eqref{lambdagenpar}. Recall that the specialization $\cC^{k_0}(m)$ of the one-parameter algebra $\cC^{k}(m)$ at $k = k_0$, can be a proper subalgebra of $\text{Com}(V^{k_0}(\gg\gl_m), V^{k_0}(\gs\gl_{m+1}))$, but this can only occur at $k_0 = 0$, or for rational numbers $k_0 \leq -m$. We use the same notation $\cC^k(m)$ if $k$ is regarded as a complex number rather than a formal parameter, so that $\cC^k(m)$ always denotes the specialization of the one-parameter algebra at $k\in \mathbb{C}$ even if it is a proper subalgebra of the coset. For all $k\in \mathbb{C}$, we denote by $\cC_{k}(m)$ the simple quotient of $\cC^k(m)$.

\begin{thm} \label{thm:gparacoincid} For all $n\geq 3$ and $m \geq 2$, aside from the critical levels $k = -m$ and $k = -m-1$ for $\gs\gl_m$ and $\gs\gl_{m+1}$, the critical level $k' = -n$ for $\gs\gl_n$, and the cases $c = 0, -2$, all isomorphisms $\cC_k(m) \cong \cW_{k'}(\gs\gl_n, f_{\rm{prin}})$, appear in the following three families.
\begin{enumerate}
\item $ \displaystyle k = n, \qquad k' = -n + \frac{m + n}{1 + m + n},\   -n  + \frac{1+m + n}{m + n}$.

This family is $C_2$-cofinite and rational, and appears in Theorem 13.1 of \cite{ACLII}.

\item  $\displaystyle n \neq m+1,\qquad k = -m + \frac{m}{1 - n} ,\qquad k' = -n + \frac{n-m-1}{n-1},\  -n + \frac{n-1}{n -m -1}$.

\item $\displaystyle n\neq m, \qquad k = -m + \frac{m - n}{1 + n}, \qquad k' = -n + \frac{n-m}{1 + n},\   -n + \frac{1 + n}{n-m}$.

\end{enumerate}
\end{thm}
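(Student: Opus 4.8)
The plan is to follow exactly the template used in the preceding coincidence theorems (the ones for $N_k(\gs\gl_2)$, the Bershadsky–Polyakov coset, and the subregular $\gs\gl_4$ coset), since the generalized parafermion algebra $\cC^k(m)$ is also realized as the simple quotient of $\cW^{K_m}(c,\lambda)$ for the ideal $K_m=(p_m)$ with $p_m$ given by \eqref{gpara:ideal}, and the parametrization of $V(K_m)$ is \eqref{lambdagenpar}. First I would exclude the finitely many values of $k$ at which $c(k)$ or $\lambda(k)$ in \eqref{lambdagenpar} has a pole, so that $\cC^k(m)$ is genuinely a quotient of $\cW^{K_m}(c,\lambda)$; these are the roots of the denominators $(k-2)(2m+k)(2+2m+3k)$ together with the points making the central charge undefined. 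For all remaining noncritical $k$ and $k'$, both $\cC_k(m)$ and $\cW_{k'}(\gs\gl_n,f_{\text{prin}})$ are quotients of $\cW^{K_m}(c,\lambda)$ and $\cW^{I_n}(c,\lambda)$ respectively, with $I_n=(p_n)$ as in \eqref{ideal:wslna}. By Corollary \ref{cor:intersections}, apart from the coincidences forced at $c=0$ and $c=-2$, every isomorphism $\cC_k(m)\cong\cW_{k'}(\gs\gl_n,f_{\text{prin}})$ must occur at a point of $V(K_m)\cap V(I_n)$.

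The core computation is therefore to intersect the plane curves $V(K_m)$ and $V(I_n)$ in $\mathbb{C}^2$. Since $V(I_n)$ is the rational curve parametrized by $k'$ via \eqref{ideal:wsln}, it is cleanest to substitute the rational parametrization \eqref{lambdagenpar} of $V(K_m)$ into the defining polynomial $p_n(c,\lambda)$ of $I_n$, clearing denominators to obtain a polynomial in $k$ whose roots give the intersection points. One expects this polynomial, after removing spurious factors coming from denominators, to have exactly three relevant roots; these three roots are precisely the three families listed in the theorem, namely $k=n$, $k=-m+\frac{m}{1-n}$, and $k=-m+\frac{m-n}{1+n}$. For each, one reads off the corresponding value of $c$ from \eqref{ccgenpar}, then solves the central-charge equation \eqref{ideal:wslna}'s associated formula $c(k')=-\frac{(n-1)(n^2+nk'-n-1)(n^2+k'+nk')}{n+k'}$ for $k'$, which is quadratic in $k'$ and yields the two stated values in each family. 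The first family is identified with Theorem 13.1 of \cite{ACLII}, giving the $C_2$-cofinite rational statement for $k=n$; the other two are new and one checks directly that they satisfy the stated constraints ($n\neq m+1$, resp.\ $n\neq m$), which simply encode non-degeneracy of the intersection point.

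The remaining work is to handle the excluded values of $k$ and confirm no further coincidences hide there, exactly as in the proofs of the earlier coincidence theorems. At an excluded point, $\cC^k(m)$ either degenerates to a simpler vertex algebra (the Virasoro algebra, or $\mathbb C$, or a singlet-type algebra of type $\cW(2,N)$) whose weight-$3$ primary is either absent or a singular vector; one then argues that $\cW_{k'}(\gs\gl_n,f_{\text{prin}})$ can coincide with it only if its own weight-$3$ field is singular, which by \eqref{ope:fourth} pins $c$ to $-\frac{(n-1)(3n+2)}{n+2}$ and forces $\cW_{k'}(\gs\gl_n,f_{\text{prin}})$ to collapse to the Virasoro algebra, producing no new coincidence of the asserted form. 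For the case $n=m+1$ one uses that $\cC^k(m)$ for $n=m+1$ reproduces a known principal $\cW$-algebra and is already covered by the $n\neq m+1,m$ exclusions or by family (1). The main obstacle I expect is the intersection-number bookkeeping: the polynomial $p_m(c,\lambda)$ of \eqref{gpara:ideal} has bidegree growing with $m$, so verifying that $V(K_m)\cap V(I_n)$ consists of exactly these three points for all $m$ and $n$ simultaneously — rather than just checking small cases by computer — requires a careful resultant/elimination argument controlling extraneous factors coming from common components or from the clearing of denominators, analogous to the degree-counting argument in the proof of Theorem \ref{thm:prinw}.
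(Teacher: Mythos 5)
Your proposal follows essentially the same route as the paper: exclude the poles of the parametrization \eqref{lambdagenpar} (namely $k = 2,\ -2m,\ -\tfrac{1}{3}(2m+2)$), invoke Corollary \ref{cor:intersections} to reduce everything to the intersection $V(K_m)\cap V(I_n)$, verify that this intersection consists of exactly three points yielding the three stated families, and rule out extra coincidences at the excluded points by showing the central charge of the degenerate $\cC_k(m)$ there never matches a value at which $\cW_{k'}(\gs\gl_n,f_{\text{prin}})$ has a singular weight-$3$ vector. The "bookkeeping obstacle" you anticipate is handled in the paper by a direct symbolic calculation with $m,n$ as parameters, and the side conditions $n\neq m+1$, $n\neq m$ arise, as you suspected, because the corresponding intersection points degenerate (their $c$-coordinates have denominators $1+m-n$ and $m-n$).
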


\begin{proof} 
We first exclude the values $ k =  2, \ -2m,\  -\frac{1}{3}(2m+2)$, since it follows from the parametrization \eqref{lambdagenpar} that at these points, $\cC^k(m)$ is not obtained as a quotient of $\cW^{K_m}(c,\lambda)$. By Corollary \ref{cor:intersections}, aside from the cases $c = 0,-2$, all remaining isomorphisms $\cC_k(m)\cong \cW_{k'}(\gs\gl_n, f_{\text{prin}})$ correspond to intersection points on the curves $V(K_m)$ and $V(I_n)$, where $K_m$ and $I_n$ are given by \eqref{gpara:ideal} and \eqref{ideal:wslna}, respectively. For each $n \geq 3$ and $m \geq 2$, there are exactly three intersection points $(c, \lambda)$, namely,
\begin{equation} \begin{split}
& \bigg( \frac{m (n-1) (1 + m + 2 n)}{(m + n) (1 + m + n)},\   \frac{(m + n) (1 + m + n)}{(n-2) (2 m + n) (2 + 2 m + 3 n)}      \bigg),
\\ & \bigg(  \frac{(1 + m - n + m n) (n + m n -1)}{1 + m - n)}, \ -\frac{(n-1) (n-1 - m)}{(n-2) (2 + 2 m - 2 n + m n) (2 n + m n -2)}  \bigg),
\\ & \bigg(  \frac{m (n-1) (1 + 2 n + m n)}{m - n},\  -\frac{(1 + n) (n-m)}{(n-2) (2 m - n + m n) (2 + 3 n + m n)}  \bigg).
\end{split}
\end{equation}
 It is immediate that the above isomorphisms all hold, and that our list is complete except for possible coincidences at the excluded points $k =  2, \ -2m,\  -\frac{1}{3}(2m+2)$.
 
At $k = 2$, $\cC_k(m)$ has central charge $\displaystyle c = \frac{m (5 + m)}{(2 + m) (3 + m)}$ and the weight $3$ field is singular. Since $\cW_{k'}(\gs\gl_n, f_{\text{prin}})$ has a singular vector in weight $3$ only for $c = 0$ and $\displaystyle c = -\frac{(n-1) (3n+2)}{n+2}$, it is apparent that there are no integers $n\geq 3$ for which $\cW_{k'}(\gs\gl_n, f_{\text{prin}})$ has a singular vector in weight $3$ at the above central charge. Hence there is no coincidence at this point. Similarly, at $k = -2m$, $\cC_k(m)$ has central charge $\displaystyle c = \frac{(2m+1) (3m-1)}{m-1}$, and again there are no integers $n \geq 3$ for which $\cW_{k'}(\gs\gl_n, f_{\text{prin}})$ has a singular vector in weight $3$ at this central charge. Finally, for $k =  -\frac{1}{3}(2m+2)$, $\cC_k(m)$ has central charge $\displaystyle c = \frac{m (2m+5)}{m-2}$, and by the same argument there is no coincidence at this point. 
\end{proof}

\begin{remark} In the first family of coincidences in Theorem \ref{thm:gparacoincid} we have $k > -m$, and in the third family, we have $k> -m$ and $k\neq 0$ whenever $m>n$. Therefore in these cases, $\cC^k(m)$ coincides with $\text{Com}(V^k(\gg\gl_m), V^k(\gs\gl_{m+1}))$, and $\cC_k(m)$ is its simple quotient. However, in the second family, and in the third family for $m < n$, we have $k < -m$, so it is possible that $\cC^k(m)$ is a proper subalgebra of $\text{Com}(V^k(\gg\gl_m), V^k(\gs\gl_{m+1}))$.
\end{remark}

\subsection{Coincidences between minimal $\cW$-algebra cosets and $\cW_{k'}(\gs\gl_m, f_{\rm{prin}})$.}
For $n\geq 4$, recall that as a one-parameter family, $\cC^k(n) = \text{Com}(V^{k+1}(\gg\gl_{n-2}), \cW^k(\gs\gl_n, f_{\text{min}}))$ is conjecturally obtained as the simple quotient of $\cW^{K_n}(c,\lambda)$ via the parametrization \eqref{eq:paraminw}. Recall that the specialization $\cC^{k_0}(n)$ of the one-parameter algebra $\cC^{k}(n)$ at $k = k_0$, can be a proper subalgebra of $\text{Com}(V^{k_0+1}(\gg\gl_{n-2}), \cW^{k_0}(\gs\gl_n, f_{\text{min}}))$, but this can only occur at $k_0 = 0$, or for rational numbers $k_0 \leq -n+2$. We use the same notation $\cC^k(n)$ if $k$ is regarded as a complex number rather than a formal parameter, so that $\cC^k(n)$ always denotes the specialization of the one-parameter algebra at $k\in \mathbb{C}$. For all $k\in \mathbb{C}$, we denote by $\cC_{k}(n)$ the simple quotient of $\cC^k(n)$.

\begin{conj} \label{conj:minwcoincidences} For $n\geq 4$ and $m\geq 3$, aside from cases $k = -n, \ -n+1$, $k' = -m$, and the cases $c = 0,-2$, all isomorphisms 
$\cC_k(n) \cong \cW_{k'}(\gs\gl_m, f_{\rm{prin}})$, appear in the following three families.

\begin{enumerate}

\item $\displaystyle k = \frac{m-n}{2}, \qquad k' = -m + \frac{m + n}{m + n-2}, \  -m + \frac{m + n-2}{m + n}$.

\item $\displaystyle  n \neq m+2, \qquad k = -\frac{(1 + m) n}{2 + m},\qquad k' = -m + \frac{2 + m}{2 + m - n}, \   -m + \frac{2 + m - n}{2 + m}$.

\item $\displaystyle n\neq m, \qquad k =  -\frac{m n -m - n}{m-2},   \qquad k' = -m + \frac{m-2}{m - n},\  -m + \frac{m - n}{m-2}$.
\end{enumerate}
\end{conj}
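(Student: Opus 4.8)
The plan is to follow the template of Theorems \ref{thm:aly}, \ref{thm:acl}, \ref{thm:cl} and the refined coincidence theorems in this section, reducing the classification to the intersection of two explicit truncation curves in $\mathbb{C}^2$. By Theorem \ref{thm:minwcoset}, for each $n\geq 4$ the one-parameter vertex algebra $\cC^k(n)$ is the simple quotient of $\cW^{K_n}_{R_n}(c,\lambda)$ for some ideal $K_n\subseteq\mathbb{C}[c,\lambda]$; the first ingredient I would need is the \emph{explicit} generator \eqref{ideal:minw} of $K_n$, equivalently the rational parametrization \eqref{eq:paraminw} of the truncation curve $V(K_n)$. This is Conjecture \ref{conj:minwcoset}, which — as noted there — is equivalent to Kawasetsu's Conjecture \ref{conj:kawasetsu} by the same argument that proves Theorem \ref{thm:genpara}. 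Granting this, $k$ is an allowed parameter for $\cC^k(n)$ as a quotient of $\cW^{K_n}(c,\lambda)$ away from the finitely many poles of the functions in \eqref{eq:paraminw}, and likewise $k'$ is allowed for $\cW^{k'}(\gs\gl_m,f_{\text{prin}})$ away from the poles of the parametrization of the curve $V(I_m)$ cut out by the ideal of Theorem \ref{thm:prinw} (with $n$ replaced by $m$), i.e. by the polynomial \eqref{ideal:wslna}.

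With both truncation curves in hand, I would invoke Corollary \ref{cor:intersections}: away from the central charges $c=0,-2$ and from the excluded levels, every isomorphism $\cC_k(n)\cong\cW_{k'}(\gs\gl_m,f_{\text{prin}})$ corresponds to a point of $V(K_n)\cap V(I_m)$. The concrete computation is then to substitute the parametrization \eqref{eq:paraminw} of $V(K_n)$ into the defining polynomial of $I_m$ and factor the resulting polynomial in $k$; by analogy with the four coincidence theorems already established I expect exactly three roots for each pair $(n,m)$. For each root one reads off the central charge from \eqref{cc:minwcoset}, then solves for the two values $k'$ of $\cW^{k'}(\gs\gl_m,f_{\text{prin}})$ realizing that central charge (the function $c(k')$ is two-to-one, with preimages related by $(k'+m)(\ell+m)=1$, Feigin--Frenkel duality), and this should reproduce precisely the three stated families. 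The precise admissible ranges of $(n,m)$ in each family, together with the exclusions $n\neq m+2$ in family (2) and $n\neq m$ in family (3), will fall out of the factorization: at those values of $n$ one of the two $k'$ collapses to the critical level $-m$ while the other becomes undefined, so no valid coincidence survives there.

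Finally, one must treat the levels $k$ that are \emph{not} allowed parameters — the poles of \eqref{eq:paraminw}, where $\lambda$ is undefined so $\cC^k(n)$ is not a quotient of $\cW^{K_n}(c,\lambda)$ — together with the poles of the $\gs\gl_m$ parametrization; there Corollary \ref{cor:intersections} gives no information and one argues directly, exactly as in the proofs above: compute the central charge of $\cC_k(n)$ at the point, identify the algebra (in the analogous theorems these exceptional algebras were singlet algebras of type $\cW(2,N)$ with $N\geq 5$, or the Virasoro algebra, or had $c\in\{0,-2\}$), and observe that $\cW_{k'}(\gs\gl_m,f_{\text{prin}})$ is of type $\cW(2,3,\dots,m)$ unless its weight-$3$ field is singular, which for $c\neq 0$ forces $c=-\frac{(m-1)(3m+2)}{m+2}$ and makes it the Virasoro algebra — neither of which can match a $\cW(2,N)$ singlet algebra for $m\geq 3$. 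The genuinely hard part is not this bookkeeping but the input Conjecture \ref{conj:minwcoset}/Conjecture \ref{conj:kawasetsu}: establishing the explicit truncation curve of $\cC^k(n)$ amounts to identifying $\cC_k(n)$ with a principal (and in fact rational, $C_2$-cofinite) $\cW$-algebra of type $A$ at positive integer levels, presumably via a conformal-embedding / simple-current extension argument generalizing those behind Theorems \ref{thm:aly}, \ref{thm:acl}, \ref{thm:cl} — and the minimal $\cW$-algebras $\cW_k(\gs\gl_n,f_{\text{min}})$ and their cosets are not known to be rational in the needed generality, which is precisely why Conjecture \ref{conj:minwcoincidences} is stated as a conjecture.
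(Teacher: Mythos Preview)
Your proposal is correct and follows essentially the same route as the paper: assume Conjecture \ref{conj:minwcoset}, apply Corollary \ref{cor:intersections} to reduce to the intersection $V(K_n)\cap V(I_m)$, find exactly three intersection points yielding the three families, and then handle the finitely many excluded parameter values by a direct central-charge comparison. The only minor discrepancy is in your treatment of the excluded points: here the poles of \eqref{eq:paraminw} are $k=-\tfrac{1}{2}(n-2)$ and $k=-\tfrac{3n}{4}$, and at both of these the weight $3$ field of $\cC_k(n)$ is singular (so $\cC_k(n)$ is Virasoro, not a singlet algebra as in the subregular cases), with central charges $\tfrac{(n-4)(n+6)}{n(n+2)}$ and $\tfrac{(n+2)(3n-4)}{2(n-4)}$ respectively; one then checks, exactly as you describe, that neither equals $-\tfrac{(m-1)(3m+2)}{m+2}$ for any integer $m\geq 3$, so no additional coincidence occurs.
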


We will show that Conjecture \ref{conj:minwcoincidences} follows from Conjecture \ref{conj:minwcoset}. First, we exclude the values $k = -\frac{1}{2}(n-2),\  -\frac{3n}{4}$, since at these points, $\cC^k(n)$ is not obtained as a quotient of $\cW^{K_n}(c,\lambda)$. Assuming Conjecture \ref{conj:minwcoset}, and applying Corollary \ref{cor:intersections}, aside from the cases $c = 0,-2$, all remaining isomorphisms $\cC_k(n) \cong \cW_{k'}(\gs\gl_m, f_{\text{prin}})$ correspond to the intersection points on the curves $V(K_n)$ and $V(I_m)$, where $K_n$ and $I_m$ are given by \eqref{ideal:minw} and \eqref{ideal:wslna}. There are exactly three intersection points $(c,\lambda)$, namely
\begin{equation}
\begin{split} 
& \bigg(   -\frac{(m-1) (2 + m - n) (3 m + n)}{(m + n -2 ) (m + n)},   \  \frac{(m + n -2) (m + n)}{4 (m-2) (n-2) (2 m + n)}  \bigg),
\\ & \bigg(-\frac{(m-1) (2 + m + m n) ( n + m n -m -2)}{(2 + m) (2 + m - n)},  \  -\frac{2 + m - n}{(m-2) (n-2) (4 + 2 m + m n)} \bigg),
\\ & \bigg(  -\frac{(m-1) ( m n -2 - m) (n -3 m + m n)}{(m-2) (m - n)} , \   -\frac{m - n}{(m n -4 ) (2 n -4 m  + m n)}    \bigg).
\end{split}
\end{equation}
 It is immediate that the above isomorphisms all hold, and that our list is complete except for possible coincidences at the excluded points $k = -\frac{1}{2}(n-2),\  -\frac{3n}{4}$.
 
At $k = -\frac{1}{2}(n-2)$, $\cC_k(n)$ has central charge $\displaystyle c = \frac{(n-4) (n+6)}{n (n+2)}$, and the weight $3$ field is singular. Since $\cW_{k'}(\gs\gl_m, f_{\text{prin}})$ has a singular vector in weight $3$ only for $c = 0$ and $\displaystyle c = -\frac{(m-1) (3m+2)}{m+2}$, there are no integers $m\geq 3$ for which $\cW_{k'}(\gs\gl_m, f_{\text{prin}})$ has a singular vector in weight $3$ at the above central charge. Similarly, at $k =-\frac{3n}{4}$, $\cC_k(n)$ has central charge $\displaystyle c = \frac{(n+2) (3n-4)}{2 (n-4)}$, and again there are no integers $m \geq 3$ for which $\cW_{k'}(\gs\gl_m, f_{\text{prin}})$ has a singular vector in weight $3$ at this central charge. This shows that there are no additional coincidences at the excluded points, which completes the proof that Conjecture \ref{conj:minwcoset} implies Conjecture \ref{conj:minwcoincidences}.

\begin{remark} The first family of coincidences in Conjecture \ref{conj:minwcoincidences}, in the case $m = 2k+n$ and $k \in \mathbb{N}$, is the one appearing in Kawasetsu's conjecture. Since Conjecture \ref{conj:minwcoset} holds for $n=4$, Conjecture \ref{conj:minwcoincidences} is a theorem in the case $n = 4$, for all $m$. \end{remark}

\subsection{Coincidences between subregular $\cW$-algebra cosets and $\cW_{k'}(\gs\gl_m, f_{\rm{prin}})$} 
Recall that as one-parameter vertex algebras, $\cC^k(n) = \text{Com}(\cH, \cW^k(\gs\gl_n, f_{\text{subreg}}))$ is conjecturally obtained as the simple quotient of $\cW^{K_n}(c,\lambda)$, where the ideal $K_n$ is determined by the parametrization \eqref{subreg:ideal}. Recall that the specialization $\cC^{k_0}(n)$ of the one-parameter algebra $\cC^{k}(n)$ at $k = k_0$, coincides with the coset $\text{Com}(\cH, \cW^k(\gs\gl_n, f_{\text{subreg}}))$ for all $\displaystyle k \neq -\frac{n(n-2)}{n-1}$, since this is the level where the Heisenberg algebra degenerates \cite{FS}. As usual, we use the same notation $\cC^k(n)$ if $k$ is regarded as a complex number rather than a formal parameter. For all $\displaystyle k \neq -\frac{n(n-2)}{n-1}$, the simple quotient $\cC_{k}(n)$ coincides with $\text{Com}(\cH, \cW_k(\gs\gl_n, f_{\text{subreg}}))$.

We exclude the values $\displaystyle k = -n, \ -n + \frac{n}{n+1},\ -n + \frac{n-2}{n-3},\ -n + \frac{n+2}{n-1}$, since at these points, $\cC^k(n)$ is not obtained as a quotient of $\cW^{K_n}(c,\lambda)$. Assuming Conjecture \ref{conj:subregcoset}, we can apply Corollary \ref{cor:intersections} to classify the coincidences among $\cC_k(n)$ and $\cW_{k'}(\gs\gl_m, f_{\text{prin}})$ for $n\geq 4$ and $m\geq 3$. There are exactly three intersection points $(c,\lambda)$ on the curves $V(K_n)$ and $V(I_m)$, where $K_n$ and $I_m$ are given by \eqref{subreg:ideal} and \eqref{ideal:wslna}, respectively:
\begin{equation}
\begin{split} 
& \bigg(   -\frac{n (m -1) (m n -1 - 2 m )}{m + n} , \   -\frac{(1 + m) (m + n)}{(m-2) (m n -2 - 3 m) (m + 2 n + m n)}  \bigg),
\\ &\bigg(  -\frac{(1 - m + m n) (m + n + m n -1 )}{m + n -1},   \ -\frac{(m-1) (m + n -1)}{(m-2) (2 - 2 m + m n) (2 m + 2 n + m n -2 )} \bigg),
\\ & \bigg( -\frac{n(m -1) (1 + 2 m - n)}{(m - n) (1 + m - n)} , \      \frac{(m - n) (1 + m - n)}{(m-2) (m - 2 n) (2 + 3 m - 2 n)}    \bigg).
\end{split}
\end{equation}
We obtain

\begin{conj} \label{conj:subregwcoincidences} For $n\geq 4$ and $m\geq 3$, aside from the points excluded above, all isomorphisms $\cC_k(n)\cong \cW_{k'}(\gs\gl_m, f_{\rm{prin}})$ for $c \neq 0,-2$, appear in the following three families.

\begin{enumerate}
\item $ \displaystyle k = -n + \frac{m + n}{n-1}, \qquad k' = -m + \frac{m +1}{m + n},\  -m + \frac{m + n}{m+1}$,

which is the family appearing in Conjecture \ref{conj:subregphysics},
\item $\displaystyle k = -n + \frac{n}{m + n -1}, \qquad k' = -m + \frac{m-1}{m + n -1},\  -m + \frac{m + n -1}{m-1}$,
\item $\displaystyle n\neq m, \ m+1,\qquad k = \frac{n-m}{ n -m -1},\qquad k' = -m + \frac{m - n}{1 + m - n},\  -m + \frac{1 + m - n}{m - n}$.
\end{enumerate}
\end{conj}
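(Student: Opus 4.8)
The plan is to mimic exactly the template used in the preceding subsections (for the principal $\cW$-algebras, the Bershadsky--Polyakov cosets, and the generalized parafermion algebras), now with $\cA^k = \cC^k(n)$ a subregular $\cW$-algebra coset and $\cB^{\ell} = \cW_{\ell'}(\gs\gl_m, f_{\text{prin}})$. The starting point is the hypothesis of Conjecture \ref{conj:subregcoset}, which gives an explicit rational parametrization $k\mapsto(c(k),\lambda(k))$ of the truncation curve $V(K_n)$ on which $\cC^k(n)$ becomes the simple quotient of $\cW^{K_n}(c,\lambda)$. Together with Theorem \ref{thm:prinw}, which puts $\cW_{k'}(\gs\gl_m,f_{\text{prin}})$ on the rational curve $V(I_m)$ defined by \eqref{ideal:wslna}, I can invoke Corollary \ref{cor:intersections}: away from $c=0,-2$ and away from the poles of the parametrizing functions, every isomorphism $\cC_k(n)\cong\cW_{k'}(\gs\gl_m,f_{\text{prin}})$ must correspond to a point of $V(K_n)\cap V(I_m)$.

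The computational core is then the same elimination-theory step used throughout Section \ref{section:coincidences}: substitute \eqref{subreg:ideal} into the defining polynomial $p_m(c,\lambda)$ of $I_m$, clear denominators, and factor the resulting polynomial in $k$. One expects this to split into exactly three rational factors, each yielding one value of $k$ (equivalently one point $(c,\lambda)$ of intersection), and these three points are precisely the three displayed in the excerpt. At each such point one converts back from the central charge $c$ to the levels $k$ and $k'$ using the central-charge formulas \eqref{cc:subreg} and the formula from Theorem \ref{wprinquot}; matching $c$ and solving the resulting quadratic in $k'$ produces the two values of $k'$ in each family, and reconciling with the parametrization of $V(K_n)$ gives the single value of $k$. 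This shows the three stated families are exactly the intersection-point coincidences.

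To complete the classification one must separately handle the finitely many excluded levels listed just before the conjecture, namely $k=-n$, $k=-n+\frac{n}{n+1}$, $k=-n+\frac{n-2}{n-3}$, and $k=-n+\frac{n+2}{n-1}$, where $\cC^k(n)$ fails to be a quotient of $\cW^{K_n}(c,\lambda)$ (the first is critical, the others are poles of $c(k)$ or $\lambda(k)$). At these points one argues as in the earlier subsections: compute the central charge of $\cC_k(n)$ from \eqref{cc:subreg} (or observe it is among the excepted values $0,-2$), and note that a principal $\cW$-algebra $\cW_{k'}(\gs\gl_m,f_{\text{prin}})$ can agree with such a small algebra only if its weight $3$ field is singular, which by \eqref{ope:fourth} forces $c=0$ or $c=-\frac{(m-1)(3m+2)}{m+2}$; checking that no integer $m\geq3$ yields this central charge rules out extra coincidences, with any genuine small-algebra identifications (Virasoro, singlet algebras) noted explicitly as in the $N_k(\gs\gl_2)$ and Bershadsky--Polyakov cases.

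The main obstacle is that this entire argument is conditional on Conjecture \ref{conj:subregcoset}: without the explicit generator of $K_n$ one cannot compute $V(K_n)\cap V(I_m)$, so Conjecture \ref{conj:subregwcoincidences} can only be stated as implied by Conjecture \ref{conj:subregcoset}. Even granting that, the secondary difficulty is purely organizational bookkeeping — correctly tracking which of the two $k'$-roots in each family is ``admissible'' or ``boundary admissible,'' handling the small-$n$ degeneracies (e.g. collisions of the three intersection points, or a family degenerating when $m=n$, $m+1$, etc., which is why side conditions like $n\neq m$ appear), and making sure the $c=0,-2$ loci and the poles are consistently excised so that the final list is neither redundant nor incomplete. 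None of this requires new ideas beyond what is already deployed for the $n=4$ case in Theorem \ref{thm:sl4subregcoset}, which serves as the unconditional base case and a correctness check for the general pattern.
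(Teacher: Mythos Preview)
Your proposal is correct and matches the paper's approach exactly: assume Conjecture \ref{conj:subregcoset}, invoke Corollary \ref{cor:intersections}, compute the three intersection points of $V(K_n)\cap V(I_m)$ displayed just before the conjecture, and convert back to levels $k,k'$. One small over-optimism to flag: you claim the excluded points can all be ruled out by the singular-vector-in-weight-$3$ argument, but the paper finds that at $k=-n+\frac{n+2}{n-1}$ (when $n\equiv 1\bmod 3$) there \emph{is} an integer $m=\frac{2(n-1)}{3}$ for which $\cW_{k'}(\gs\gl_m,f_{\text{prin}})$ has matching central charge and a weight-$3$ singular vector, and the paper explicitly states it cannot determine whether a coincidence occurs there---this is precisely why those points are excluded from the conjecture's scope rather than resolved.
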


We briefly consider the excluded points $\displaystyle k = -n + \frac{n}{n+1},\ -n + \frac{n-2}{n-3},\ -n + \frac{n+2}{n-1}$. First, for $\displaystyle k =  -n + \frac{n}{n+1}$, $\cC_k(n)$ has central charge $\displaystyle c =-\frac{(2n-1) (3n+1)}{n+1}$, and is isomorphic to the $M(n+1)$ singlet algebra, which is of type $\cW(2,2n+1)$. This was conjectured in \cite{CRW} and proved recently in \cite{ACGY}. The only case where $\cW_{k'}(\gs\gl_m, f_{\text{prin}})$ for $m\geq 3$ has a singular vector in weight $3$ at this central charge is $m = 2n$ and $\displaystyle k' = -2n + \frac{2n+2}{2n},\ -2n + \frac{2n}{2n+2}$, but $\cW_{k'}(\gs\gl_m, f_{\text{prin}})$ is just the Virasoro algebra. Therefore we do not have a coincidence at this point. Second, for $\displaystyle k =  -n + \frac{n-2}{n-3}$, $\cC_k(n)$ has central charge $\displaystyle c = \frac{n(n-5)}{(n-3) (n-2)}$. There are no integers $m \geq 3$ for which $\cW_{k'}(\gs\gl_m, f_{\text{prin}})$ has this central charge, so there are no coincidences at this point. Finally, for $\displaystyle k = -n + \frac{n+2}{n-1}$, $\cC_k(n)$ has central charge is $\displaystyle c = -\frac{n (2n-5)}{n+2}$. If $n \geq 7$ and $n\equiv 1\  \text{mod}\ 3$, letting $\displaystyle m = \frac{2(n-1)}{3}$ and $\displaystyle k' = -\frac{2(n-1)}{3} +\frac{n-1}{n+2},\  -\frac{2(n-1)}{3} +\frac{n+2}{n-1}$, then $\cW_{k'}(\gs\gl_m, f_{\text{prin}})$ is just the Virasoro algebra with this central charge. It is possible that $\cC_k(n) \cong \cW_{k'}(\gs\gl_m, f_{\text{prin}})$, but we do not know how to determine this.

So far, we have only considered coincidences between $\cW_k(\gs\gl_n, f_{\text{prin}})$ and other families of vertex algebras of type $\cW(2,3,\dots, N)$. Using Corollary \ref{cor:intersections}, we can also classify the coincidences among the families of quotients of $\cW(c,\lambda)$ given by \eqref{curve:para}, \eqref{curve:bp}, \eqref{curve:subregn=4}, \eqref{gpara:ideal} (and conjecturally by \eqref{ideal:minw} and \eqref{subreg:ideal}), by finding the pairwise intersection points of the curves. It is straightforward to verify that {\it all} intersection points between any two curves in any of these families, are rational points. These curves also have rational singular points. We speculate that the pairwise intersections of {\it all} truncation curves of $\cW(c,\lambda)$, as well as the singular points on these curves, are rational.

\section{Deforming the $\cW_{1+\infty}$-algebra}\label{section:oneplus}
The $\cW_{1+\infty,c}$-algebra with central charge $c$ is a module over the centrally extended Lie algebra of regular differential operators on the circle. It has been studied extensively in both the physics and mathematics literature \cite{AI,ASM,AFMO,FKRW,KP,KRI,KRII,Wa}. In this section, we show that $\cW_{1+\infty,c}$ admits a one-parameter deformation. 

Let $\cD$ be the Lie algebra of regular differential operators on the circle, with coordinate $t$. It has a basis $$J^l_k = -t^{l+k} (\partial_t)^l, \qquad  k\in \mathbb{Z},\qquad l\in \mathbb{Z}_{\geq 0},$$ where $\partial_t = \frac{d}{dt}$. There is a $2$-cocycle on $\cD$ given by \begin{equation}\label{cocycle} \Psi\big(f(t) (\partial_t)^m, g(t) (\partial_t)^n\big) = \frac{m! n!}{(m+n+1)!} \text{Res}_{t=0} f^{(n+1)}(t) g^{(m)}(t) dt,\end{equation} and a corresponding central extension $\hat{\cD} = \cD \oplus \mathbb{C} \kappa$ \cite{KP}. We have a $\mathbb{Z}$-grading by weight 
$$\hat{\cD} = \bigoplus_{j\in\mathbb{Z}} \hat{\cD}_j,\qquad \text{wt} (J^l_k) = k,\qquad \text{wt} (\kappa) = 0,$$ and a triangular decomposition $$\hat{\cD} = \hat{\cD}_+\oplus\hat{\cD}_0\oplus \hat{\cD}_-,\qquad \hat{\cD}_{\pm} = \bigoplus_{j\in \pm \mathbb{N}} \hat{\cD}_j,\qquad \hat{\cD}_0 = \cD_0\oplus \mathbb{C}\kappa.$$  For $c\in\mathbb{C}$ and $\lambda\in \cD_0^*$, define the Verma module 
$$\cM_c(\hat{\cD},\lambda) = U(\hat{\cD})\otimes_{U(\hat{\cD}_0\oplus \hat{\cD}_+)} \C_{\lambda},$$ where $\C_{\lambda}$ is the one-dimensional $\hat{\cD}_0\oplus \hat{\cD}_+$-module on which $\kappa$ acts by multiplication by $c$ and $h\in\hat{\cD}_0$ acts by multiplication by $\lambda(h)$, and $\hat{\cD}_+$ acts by zero. Let $\cP \subseteq \cD$ be the Lie subalgebra of differential operators which extend to all of $\mathbb{C}$, which has a basis $\{J^l_k|\ l\geq 0,\ l+k\geq 0\}$. Since $\Psi$ vanishes on $\cP$, $\cP$ may be regarded as a subalgebra of $\hat{\cD}$, and $\hat{\cD}_0\oplus \hat{\cD}_+\subseteq \hat{\cP}$, where $\hat{\cP} = \cP\oplus \mathbb{C}\kappa$. The induced $\hat{\cD}$-module $$\cM_c = U(\hat{\cD})\otimes_{U(\hat{\cP})} \C_0$$ is a quotient of $\cM_c(\hat{\cD},0)$, and is known as the {\it vacuum $\hat{\cD}$-module of central charge $c$}. It has a vertex algebra structure with generators \begin{equation} \label{winfgen} J^l(z) = \sum_{k\in\mathbb{Z}} J^l_k z^{-k-l-1},\qquad l\geq 0,\end{equation} of weight $l+1$. Then $\{J^l_k, \kappa\}$ represent $\hat{\cD}$ on $\cM_c$, and we write \eqref{winfgen} in the form
$$J^l(z) = \sum_{k\in\mathbb{Z}} J^l(k) z^{-k-1}, \qquad J^l(k) = J^l_{k-l}.$$ In fact, $\cM_c$ is freely generated by $\{J^l(z)|\ l\geq 0\}$, and these fields close linearly under OPE. The vertex algebra $\cW_{1+\infty,c}$ is defined to be the quotient of $\cM_c$ by its maximal proper graded ideal. It is simple as a vertex algebra over $\mathbb{C}[c]$. The cocycle \eqref{cocycle} is normalized so that $\cM_c$ has a nontrivial ideal if and only if $c\in \mathbb{Z}$, and $\cM_c \cong \cW_{1+\infty,c}$ for all $c \notin \mathbb{Z}$.

 Let $\cH$ be the Heisenberg algebra with generator $J$ satisfying $J(z) J(w) \sim (z-w)^{-2}$, and define \begin{equation} \cV(c,\lambda) = \cH \otimes \cW(c,\lambda),\end{equation} which is defined over the ring $\mathbb{C}[c,\lambda]$ and is freely generated type $\cW(1,2,3,\dots)$. Note that $\cH$ has Virasoro element $L^{\cH} = \frac{1}{2} :JJ:$ of central charge $1$, so $\cV(c,\lambda)$ has Virasoro element
 $$L^{\cV} = L^{\cH}\ + L,$$ of central charge $c+1$. Given an ideal $I \subseteq \mathbb{C}[c,\lambda]$, we have a vertex algebra ideal $I \cdot \cV(c,\lambda)$. The quotient
 $$\cV^I(c,\lambda) = \cV(c,\lambda) / I \cdot \cV(c,\lambda)$$ is defined over $\mathbb{C}[c,\lambda] / I$ and is freely generated of type $\cW(1,2,3,\dots)$. If $R = D^{-1} \mathbb{C}[c,\lambda] / I$ is a localization along some multiplicative set $D \subseteq \mathbb{C}[c,\lambda] / I$, we have the localization 
 $$\cV^I_R(c,\lambda) = R \otimes_{\mathbb{C}[c,\lambda] / I} \cV^I(c,\lambda).$$

\begin{thm} \label{thm:winfreal} Let $I\subseteq \mathbb{C}[c,\lambda]$ be the ideal generated by 
\begin{equation} \label{ideal:winf} 4\lambda (c-1) - 1,\end{equation}
$D$ the multiplicative set generated by $(c-1)$, and $$R = D^{-1} \mathbb{C}[c,\lambda] / I \cong D^{-1} \mathbb{C}[c]$$ the localization along $D$. Then as vertex algebras over $R$, we have 
$$R \otimes_{\mathbb{C}[c]}\cM_{c+1} \cong \cH \otimes \cW^I_R(c,\lambda) \cong \cV^I_R(c,\lambda).$$ 
In particular, we may regard $\cV_R(c,\lambda)$ as a one-parameter deformation of $\cM_{c+1}$. Here $R$ is regarded as a localization of $\mathbb{C}[c,\lambda]$ instead of $\mathbb{C}[c,\lambda] / I$, and $$\cV_R(c,\lambda) = R \otimes_{\mathbb{C}[c,\lambda]} \cV(c,\lambda).$$  \end{thm}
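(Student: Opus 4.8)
The plan is to identify $\cM_{c+1}$ inside the deformation by first exhibiting, for the special ideal $I = (4\lambda(c-1)-1)$, a decoupling relation that collapses $\cW^I_R(c,\lambda)$ to an algebra of type $\cW(2,3,\dots)$ whose generators satisfy the same OPEs as the $\cW_{1+\infty}$-generators after removing the Heisenberg current. Concretely, I would first recall that $\cW_{1+\infty,c+1}$ has a standard decomposition $\cM_{c+1} \cong \cH \otimes \cB$, where $\cB$ is a vertex algebra of type $\cW(2,3,\dots)$ (the ``quasiparticle'' or coset description of $\cW_{1+\infty}$ along its weight-one current): one removes $J = J^0$ by passing to $\text{Com}(\cH,\cM_{c+1})$, which is freely generated of type $\cW(2,3,\dots)$ with a Virasoro field of central charge $c$. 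This $\cB$ is known to be generated by its weight $2$ and weight $3$ fields, and its weight $3$ primary field can be normalized so that $(W^3)_{(5)}W^3 = \frac{c}{3}1$. So the first step is: compute the structure constants of $\cB$ — in particular the coefficient of $W^3$ in $(W^3)_{(3)}W^4$ — as a function of $c$, and check that there are no null vectors through weight $8$, so that $\cB$ realizes the OPE relations \eqref{ope:first}--\eqref{ope:fourth} and \eqref{ope:w2w6}--\eqref{ope:w4w5} for some value $\lambda = \lambda(c)$.

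Second, I would apply Theorem \ref{refinedsimplequotient} (or directly Theorem \ref{thm:simplequotient}): since $\cB$ is a simple vertex algebra over (a localization of) $\mathbb{C}[c]$, generated by a Virasoro field and a weight $3$ primary field, with graded character agreeing with that of $\cW(c,\lambda)$ up to weight $8$ (this last point follows from free generation of $\cB$ of type $\cW(2,3,\dots)$, which matches \eqref{grchar}), $\cB$ must be the simple quotient of $\cW^{I'}_{R'}(c,\lambda)$ for some prime ideal $I' \subseteq \mathbb{C}[c,\lambda]$ and localization $R'$. But $\cB$ has no relations at all (it is freely generated of type $\cW(2,3,\dots)$), so in fact $\cB \cong \cW^{I'}_{R'}(c,\lambda)$ with no further quotient needed; equivalently the curve $V(I')$ must be the graph $\lambda = \lambda(c)$ of the function found in step one. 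The computation of $\lambda(c)$ is the crux: I expect it to come out exactly $\lambda = \frac{1}{4(c-1)}$, i.e. $I' = (4\lambda(c-1)-1)$, which is \eqref{ideal:winf}. To pin this down rigorously rather than by a finite check, I would use the same denominator-clearing and degree-bounding strategy as in the proof of Theorem \ref{thm:prinw}: the structure constant is \emph{a priori} a rational function of $c$ whose numerator and denominator have bounded degree (from the explicit free-field realization of $\cW_{1+\infty}$ inside a $bc$-system or a lattice-type construction), and matching it against $\frac{(c-1)(\text{known polynomial})}{\dots}$ forces the claimed formula.

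Third, with $I$, $D$, $R$ as in the statement, I assemble the chain of isomorphisms. By step two, $\text{Com}(\cH, R\otimes_{\mathbb{C}[c]}\cM_{c+1}) \cong \cW^I_R(c,\lambda)$ as vertex algebras over $R$. Since $\cM_{c+1}$ over $D^{-1}\mathbb{C}[c]$ is freely generated of type $\cW(1,2,3,\dots)$ and its weight-one field $J$ satisfies $J(z)J(w)\sim (z-w)^{-2}$ with $J_{(0)}$ acting semisimply, the standard Heisenberg-decoupling argument (e.g. the tensor decomposition $\cM_{c+1} \cong \cH \otimes \text{Com}(\cH,\cM_{c+1})$, valid because $J$ generates a non-degenerate Heisenberg subalgebra and $\cM_{c+1}$ is the vacuum module) gives $R\otimes_{\mathbb{C}[c]}\cM_{c+1} \cong \cH \otimes \cW^I_R(c,\lambda)$. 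Finally, $\cH\otimes\cW^I_R(c,\lambda) \cong \cV^I_R(c,\lambda)$ is the definition of $\cV^I_R(c,\lambda)$, and the outer statement about $\cV_R(c,\lambda) = R\otimes_{\mathbb{C}[c,\lambda]}\cV(c,\lambda)$ being a one-parameter deformation follows because $\cV^I_R(c,\lambda)$ is the fiber of $\cV_R(c,\lambda)$ over the curve $V(I)$, which is a hypersurface in the two-dimensional parameter space and hence a deformation of codimension one; matching Virasoro central charges ($c+1$ on the $\cW_{1+\infty}$ side, $c^{\cH}+c^{\cW} = 1 + c$ on the deformation side) confirms the identification is consistent.

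The main obstacle is step two: establishing that the relevant $\cW_{1+\infty}$ structure constant equals exactly $\lambda(c) = \frac{1}{4(c-1)}$, rather than merely verifying it at finitely many central charges. The cleanest route is probably to exploit a known free-field realization of $\cW_{1+\infty,c+1}$ at, say, integer $c$ (where $\cM_{c+1}$ is reducible and $\cW_{1+\infty,c+1}$ has explicit bosonic/fermionic constructions) together with the rationality of the structure constant in $c$ and an a priori degree bound, exactly mirroring the $r(n,k)=0$ argument in Theorem \ref{thm:prinw}; alternatively one can cite the literature on the $\cW_{1+\infty}$/$\cW_\infty$ relationship (e.g. the fact, used implicitly in \cite{GGII,ProII}, that the coset of $\cH$ in $\cW_{1+\infty}$ is a specialization of $\cW_\infty[\mu]$) to read off the curve directly. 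A secondary technical point is checking the no-null-vectors-up-to-weight-8 hypothesis of Theorem \ref{refinedsimplequotient} for $\cB$, but this is immediate from the free generation of $\cM_{c+1}$ (hence of its Heisenberg coset) of type $\cW(1,2,3,\dots)$ over $D^{-1}\mathbb{C}[c]$.
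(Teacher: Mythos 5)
Your proposal is correct and follows essentially the same route as the paper: decompose $\cM_{c+1}\cong \cH\otimes\mathrm{Com}(\cH,\cM_{c+1})$, invoke Theorem \ref{refinedsimplequotient} to realize the coset as $\cW^I_R(c,\lambda)$, and pin down $I$ by computing the coefficient of $W^3$ in $(W^3)_{(3)}W^4$. The only place you over-engineer is the determination of $\lambda(c)$: since the generators $J^l$ of $\cM_{c+1}$ close \emph{linearly} under OPE with structure constants explicit in $c$, the coset fields $L$ and $W^3$ (and hence the fourth-order pole) can be computed directly as exact rational functions of $c$, so no degree-bounding or specialization-at-integer-$c$ argument in the style of Theorem \ref{thm:prinw} is needed.
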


\begin{proof} Since the zero mode $J_0$ acts trivially on $\cM_{c+1}$, we clearly have $$\cM_{c+1} \cong \cH \otimes \cC_c,\qquad \cC_c = \text{Com}(\cH, \cM_{c+1}).$$ Note that $\cC_c$ has central charge $c$. It is well known to be freely generated of type $\cW(2,3,\dots)$ and is generated by the Virasoro field $L$ and a weight $3$ primary field $W^3$. By Theorem \ref{refinedsimplequotient}, $\cC_c$ can be realized in the form $\cW^I_R(c,\lambda)$ for some $I$ and $R$. It is then easy to find the explicit form of $I$ and $R$ by setting $W^4 = W^3_{(1)} W^3$, and computing the coefficient of $W^3$ in the fourth order pole $W^3_{(3)} W^4$.
\end{proof}

\subsection{Deformations of $\cW_{1+\infty,n}$ when $n$ is a positive integer}

The case $n=1$ is not interesting since $\cW_{1+\infty,1} \cong \cH$, so assume that $n\geq 2$. As shown in \cite{FKRW}, $\cW_{1+\infty,n}$ has a free field realization as the $GL_n$-orbifold of the rank $n$ $bc$-system with odd generators $b^i, c^i$, $i=1,\dots, n$ and OPE relations $$b^i(z) c^j(w) \sim \delta_{i,j} (z-w)^{-1}.$$
Note that $\cE(n)$ has the charge grading $$\cE(n) = \bigoplus_{i \in \mathbb{Z}}\cE^i(n),$$ where $\cE^i(n)$ is the eigenspace of eigenvalue $i$ of the zero mode $J_0$ of $J = -\sum_{i=1}^n :b^i c^i:$. Clearly $J$ generates a Heisenberg algebra $\cH$ and it is well known that $\text{Com}(\cH, \cE(n)) \cong L_{1}(\gs\gl_n)$. Then $\cE^0(n) \cong \cH \otimes L_1(\gs\gl_n)$ and $$\cW_{1+\infty,n} \cong\cE(n)^{GL_n} \cong \cE^0(n)^{GL_n} \cong \cH \otimes L_1(\gs\gl_n)^{SL_n}.$$ Therefore $\cC_{n-1} \cong L_1(\gs\gl_n)^{SL_n}$, which is known to be isomorphic to $\cW^{1-n}(\gs\gl_n, f_{\text{prin}})$ \cite{FKRW}.

Recall the coset
$$\cC^k(\gs\gl_n) = \text{Com}(V^{k+1}(\gs\gl_n), V^{k}(\gs\gl_n) \otimes L_1(\gs\gl_n)).$$ By Theorem 6.10 and Example 7.13 of \cite{CLII}, $\cC^k(\gs\gl_n)$ is a deformation of $L_1(\gs\gl_n)^{SL_n}$ in the sense that $$\lim_{k\ra \infty} \cC^k(\gs\gl_n) \cong L_1(\gs\gl_n)^{SL_n} \cong \cC_{n-1}.$$ Since $\cC^k(\gs\gl_n) \cong \cW_{k'}(\gs\gl_n, f_{\text{prin}})$ by Theorem \ref{ACLIImain}, this corresponds to the statement $$\cC_{n-1} \cong \lim_{k' \ra 1-n} \cW^{k'}(\gs\gl_n, f_{\text{prin}}).$$

\subsection{Deformations of $\cW_{1+\infty,-n}$ when $n\geq 1$}
Next, we consider $\cW_{1+\infty,-n}$ for $n\geq 1$, which is more complicated than the positive integer case. By \cite{KRII}, it has a similar free field realization as the $GL_n$-orbifold of the $\beta\gamma$-system $\cS(n)$, which has even generators $\beta^i, \gamma^i$, $i=1,\dots, n$, satisfying
$$\beta^i(z) \gamma^j(w) \sim \delta_{i,j}(z-w)^{-1}.$$
We have the charge grading $$\cS(n) = \bigoplus_{i\in\mathbb{Z}} \cS^i(n),$$ where $\cS^i(n)$ is the eigenspace of eigenvalue $i$ under $J_0$ where $J = \sum_{i=1}^n :\beta^i \gamma^i:$. The cases $n=1$ and $n=2$ are exceptional: $\cW_{1+\infty, -1} \cong \cS(1)^{GL_1} = \cS(1)^0 \cong \cH \otimes \cW_{3,-2}$, where $\cW_{3,-2}$ is the simple Zamolodchikov algebra with $c = -2$ \cite{Wa}, and we will treat the case $n=2$ separately below. The cases $n\geq 3$ can be studied uniformly because of the isomorphism 
$$\cS^0(n) \cong \cH \otimes L_{-1}(\gs\gl_n),\qquad n\geq 3,$$ which is due to  Adamovi\'c and Per\v{s}e \cite{AP}. We then have 
$$\cW_{1+\infty,-n} \cong \cS(n)^{GL_n} \cong \cS^0(n)^{GL_n} \cong \cH \otimes L_{-1}(\gs\gl_n)^{SL_n},$$ and $\cC_{-n-1} \cong L_{-1}(\gs\gl_n)^{SL_n}$, for all $n\geq 3$. Consider the coset
\begin{equation} \label{coset:winfneg} \cC^k(n) =  \text{Com}(V^k(\gs\gl_n), V^{k+1}(\gs\gl_n) \otimes L_{-1}(\gs \gl_n)),\end{equation} which has central charge 
\begin{equation} \label{cc:winfneg} c= -\frac{(1 + k) (1 + n) (k + 2 n)}{(k + n) (1 + k + n)}.\end{equation} By Theorem 6.10 of \cite{CLII}, we have $\displaystyle \lim_{k\ra \infty} \cC^k(n) \cong L_{-1}(\gs\gl_n)^{SL_n} \cong \cC_{-n-1}$.

\begin{thm} For all $n\geq 3$, there exists an ideal $K_n \subseteq \mathbb{C}[c,\lambda]$ and a localization $R_n$ of $\mathbb{C}[c,\lambda] / K_n$ such that
\begin{enumerate} 
\item The vertex algebra $\cW^{K_n}_{R_n}(c,\lambda)$ has a singular vector in weight  $(n+1)^2$ of the form $$W^{(n+1)^2} - P(L, W^3, \dots, W^{n^2+2n-1}).$$ 
\item Letting $\cK_n$ be the maximal proper graded ideal of $\cW^{K_n}_{R_n}(c,\lambda)$, we have $$\cW^{K_n}_{R_n}(c,\lambda)/ \cK_n \cong \mathcal{C}^k(n),$$ where $k$ and $c$ are related by \eqref{cc:winfneg}.
\end{enumerate}
\end{thm}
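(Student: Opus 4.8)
The plan is to follow exactly the template established for the other coset families in Sections~\ref{section:genpara} and~\ref{section:shap}, namely Theorems~\ref{thm:genpara}, \ref{thm:minwcoset}, and the conjectural version~\ref{conj:subregtype}. First I would invoke Theorem~\ref{refinedsimplequotient}: it suffices to show that $\cC^k(n)$ is a simple one-parameter vertex algebra, generated by a Virasoro field $L$ and a weight $3$ primary field $W^3$, whose graded character agrees with that of $\cW(c,\lambda)$ up to weight $8$. Simplicity follows from the simplicity of $L_{-1}(\gs\gl_n)^{SL_n}$ (via the Adamovic--Perse isomorphism $\cS^0(n)\cong \cH\otimes L_{-1}(\gs\gl_n)$ and the orbifold theory of $\cW_{1+\infty,-n}\cong \cS(n)^{GL_n}$, analogous to the use of \cite{DLM,DLM} for compact forms in Corollary~\ref{cor:genpara}), together with Theorem~6.10 and Corollary~6.7 of \cite{CLII} which control the behaviour of the coset $\cC^k(n)$ as $k$ varies and guarantee that the specialization coincides with the coset for generic $k$.

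Next I would establish the strong-generation statement: $\cC^k(n)$ is of type $\cW(2,3,\dots,(n+1)^2-1)$ with the weight $3$ field generating, for generic $k$. The key is to pass to the classical limit $\lim_{k\ra\infty}\cC^k(n)\cong L_{-1}(\gs\gl_n)^{SL_n}$, which by the Adamovic--Perse realization is a subalgebra of the $\beta\gamma$-orbifold $\cS(n)^{GL_n}\cong\cW_{1+\infty,-n}$. One analyzes $\cS^0(n)^{SL_n}$ by Weyl's first and second fundamental theorems of invariant theory for $GL_n$ acting on copies of $\mathbb{C}^n\oplus(\mathbb{C}^n)^*$, exactly as in the proof of Theorem~\ref{genpara:strong}: the quadratic generators correspond to pairings, the first relation is the $(n+1)\times(n+1)$ determinant $d_{I_0,J_0}$ with $I_0=J_0=(0,1,\dots,n)$, which has weight $(n+1)^2$, and one must check that the coefficient of $\omega_{0,(n+1)^2-2}$ in the corresponding normally ordered relation $D_{I_0,J_0}$ is independent of normal ordering and nonzero. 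Then the weight $(n+1)^2-1$ field decouples, relations $W^m = P_m(L,W^3,\dots)$ for $m>(n+1)^2-1$ follow by applying $(W^3)_{(1)}$ repeatedly, and Weyl's second fundamental theorem guarantees there are no relations below weight $(n+1)^2$, which in particular gives the character agreement up to weight $8$ needed for Theorem~\ref{refinedsimplequotient}. The existence of the singular vector in weight $(n+1)^2$ of the stated form then follows from Lemma~\ref{decoup} after localizing to make the leading coefficient invertible, and $\cW^{K_n}_{R_n}(c,\lambda)/\cK_n\cong\cC^k(n)$ with $c$ given by \eqref{cc:winfneg}.

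The main obstacle is the invariant-theory bookkeeping in the classical limit: one needs the precise weight at which the first relation among the $SL_n$-invariants of $\cS^0(n)\cong\cH\otimes L_{-1}(\gs\gl_n)$ occurs, and crucially that it is $(n+1)^2$ rather than something smaller coming from the $GL_n$ versus $SL_n$ distinction or from the $\beta\gamma$ (bosonic) rather than $bc$ (fermionic) statistics. In the fermionic case treated in \cite{FKRW} the orbifold $\cE(n)^{GL_n}$ is $\cW_{1+\infty,n}$ and the relevant invariant ring is an exterior-algebra invariant ring with a different (finite) structure; here the symmetric-algebra invariant ring is the one governed by Weyl's theorems for $GL_n$ on $\mathbb{C}^n\oplus(\mathbb{C}^n)^*$, and one must carefully track how restricting to $SL_n$ (equivalently, the Heisenberg field $J$ and the determinant invariants $\det$ of $n$ copies of $\mathbb{C}^n$) affects the lowest-weight relation — I expect the $\det$-type invariants to contribute nothing below weight $(n+1)^2$, but this requires the recursion for the coefficient of $\omega_{0,m-2}$ in $D_{I,J}$, as in Theorem~\ref{genpara:strong}, to be carried out in this setting. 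A secondary technical point, handled as in the remarks following Theorem~\ref{thm:winfreal} and in \cite{CLII}, is that the specialization $\cC^{k_0}(n)$ may be a proper subalgebra of the genuine coset at rational $k_0\le -n$; this does not affect the statement, which concerns the one-parameter family, but must be acknowledged.
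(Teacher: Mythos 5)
Your overall strategy is the same as the paper's: realize $\cC_{-n-1}$ as the orbifold limit via $\cS^0(n)\cong\cH\otimes L_{-1}(\gs\gl_n)$, use Weyl's second fundamental theorem to show there are no relations below weight $(n+1)^2$, transfer to generic $k$ via Theorem 6.10 of \cite{CLII}, conclude simplicity as in Corollary \ref{cor:genpara}, and then apply Theorem \ref{refinedsimplequotient} and Lemma \ref{decoup}. The one substantive difference is that the ``main obstacle'' you identify --- redoing the determinant recursion of Theorem \ref{genpara:strong} in the $\beta\gamma$ setting to locate the first decoupling relation --- is not actually carried out in the paper: the strong generation type of $\cS(n)^{GL_n}\cong\cW_{1+\infty,-n}$, namely $\cW(1,2,\dots,n^2+2n)$, is simply quoted from Theorem 4.16 of \cite{LI}, and the explicit formulas for $L$ and $W^3$ are taken from Equations 4.42--4.43 of that paper. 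This also dissolves your worry about the $GL_n$ versus $SL_n$ distinction: since $\cW_{1+\infty,-n}\cong\cH\otimes L_{-1}(\gs\gl_n)^{SL_n}$, one reads off that $\cC_{-n-1}$ is of type $\cW(2,3,\dots,n^2+2n)$ by stripping the weight-one Heisenberg field, with no separate $SL_n$-invariant-theory argument needed. So your approach would work, but it reproves a known result where a citation suffices.

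One concrete error to fix: your weight bookkeeping is off by one. In the $\beta\gamma$ setting the quadratic invariants $\omega_{a,b}$ have weight $1+a+b$ (not $2+a+b$ as for $\cH(2n)$ in Theorem \ref{genpara:strong}), so the linear term of the weight-$(n+1)^2$ relation $D_{I_0,J_0}$ is $\omega_{0,(n+1)^2-1}$, and it is the weight-$(n+1)^2$ field that decouples, not the weight-$((n+1)^2-1)$ field. As written, your argument would yield an algebra of type $\cW(2,3,\dots,n^2+2n-1)$ and a singular vector in weight $n^2+2n$, contradicting both the theorem statement and the known type of $\cW_{1+\infty,-n}$. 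With the correction, the decoupled field is $W^{(n+1)^2}$ and the minimal strong generating set is $\{L,W^3,\dots,W^{n^2+2n}\}$, as required.
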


\begin{proof}
In terms of its realization in $\cS(n)^{GL_n}$, the Virasoro field $L$ and the weight $3$ primary field $W^3$ of $\cC_{-n-1}$ appear in Equations 4.42 and 4.43 of \cite{LI}. It is easy to check that $L,W^3$ generate $\cC_{-n-1}$, and by Weyl's second fundamental theorem of invariant theory for $GL_n$, there are no normally ordered relations among $L, W^i$ and their derivatives of weight less than $(n+1)^2$. Here $W^i = W^3_{(1)} W^{i-1}$ for $i\geq 4$, as usual. It follows that for generic values of $k$, the coset $\cC^k(n)$ is also generated by the corresponding fields $L,W^3$, and there are no relations among $L, W^i$ and their derivatives of weight less than $(n+1)^2$. By the same argument as Corollary \ref{cor:genpara}, $\cC^k(n)$ is simple, so by Theorem \ref{refinedsimplequotient}, $\cC^k(n)$ can be realized as the simple quotient of $\cW^{K_n}_{R_n}(c,\lambda)$ for some $K_n$ and $R_n$. 

Finally, $\cW_{1+\infty,-n}$ is of type $\cW(1,2,\dots, n^2+2n)$ by Theorem 4.16 of \cite{LI}, which implies that $\cC_{-n-1}$ is of type $\cW(2,3,\dots, n^2+2n)$. By Lemma 3.2 and Theorem 6.10 of \cite{CLII}, $\cC^k(n)$ is also of type $\cW(2,3,\dots, n^2+2n)$, a fact which was originally conjectured in \cite{B-H}. Since the weight $(n+1)^2$ field decouples, the singular vector in $\cW^{K_n}_{R_n}(c,\lambda)$ must occur in weight $(n+1)^2$ and have the desired form.
\end{proof}

In the physics literature \cite{B-H,HII}, the coset \eqref{coset:winfneg} has been regarded as the {\it definition} of the principal $\cW$-algebra of $\gs\gl_{-n}$. This translates to the following description of $K_n$.

\begin{conj} \label{conj:winf} For $n\geq 3$, $K_n$ is generated by
\begin{equation} \label{conj:winfideal} p_n(c,\lambda) = \lambda  (n+2) (3 n^2  + n -2+ c (2-n)) -  (1+n) (1-n) ,\end{equation}
which is obtained from \eqref{ideal:wslna} by replacing $n$ with $-n$. Equivalently, the variety $V(K_n)$ has the rational parametrization $$c= -\frac{(1 + k) (1 + n) (k + 2 n)}{(k + n) (1 + k + n)},\qquad \lambda = -\frac{(k + n) (1 + k + n)}{(2 + n) (2 + 2 k + n) (2 k + 3 n)}.$$
\end{conj}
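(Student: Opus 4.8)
The plan is to follow the strategy used to prove Theorems \ref{thm:prinw} and \ref{thm:genpara}. By the theorem preceding Conjecture \ref{conj:winf}, $\cC^k(n)$ is already known to be the simple quotient of $\cW^{K_n}_{R_n}(c,\lambda)$ for some ideal $K_n\subseteq\mathbb{C}[c,\lambda]$ and some localization $R_n$ of $\mathbb{C}[c,\lambda]/K_n$, with central charge given by \eqref{cc:winfneg}. Hence the first coordinate of a rational parametrization of the truncation curve $V(K_n)$ is known, and the entire content of the conjecture is to determine the second coordinate $\lambda_n(k)$. Setting $W^i=W^3_{(1)}W^{i-1}$ as usual and using \eqref{ope:fourth}, which gives $W^3_{(3)}W^4=\bigl(31-16\lambda(2+c)\bigr)W^3$ in $\cW(c,\lambda)$, this reduces to computing a single structure constant: if $p(n,k)$ denotes the coefficient of $W^3$ in $W^3_{(3)}W^4$ inside $\cC^k(n)$, then $\lambda_n(k)=\dfrac{31-p(n,k)}{16\,(2+c(n,k))}$, and the conjecture is equivalent to the identity $p(n,k)=31-16\,\lambda(2+c)$ evaluated on the curve $c=-\frac{(1+k)(1+n)(k+2n)}{(k+n)(1+k+n)}$, $\lambda=-\frac{(k+n)(1+k+n)}{(2+n)(2+2k+n)(2k+3n)}$ --- equivalently, to the statement that $p(n,k)$ is the image under $n\mapsto-n$ of the rational function in \eqref{formpnk}.

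To compute $p(n,k)$ I would use the free-field realization. By \cite{LI} the fields $L$ and $W^3$ of $\cC_{-n-1}\cong L_{-1}(\gs\gl_n)^{SL_n}$ are written out explicitly inside $\cS(n)^{GL_n}$ (Equations 4.42 and 4.43 of \cite{LI}), and, by the deformation argument already invoked in the proof of the preceding theorem (and as in \cite{CLI,CLII}), the OPE algebra of $\{L,W^i|\ i\geq 3\}$ in $\cC^k(n)$ has all structure constants rational in $k$ and $n$. First one carries out the computation of $p(n,k)$ for small $n$ by machine, using the package of \cite{T} applied to the $\beta\gamma$-system $\cS(n)$ together with the diagonal action of $\gs\gl_n$ at level $k+1$; this already determines $K_n$ for those $n$ and provides several evaluations of the two-variable rational function $p(n,k)$. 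For general $n$ one then bounds the denominator of $p(n,k)$ in terms of the normalization constant appearing in \cite{LI} --- exactly as the factor $B(n,k)^2$ was used in the proof of Theorem \ref{thm:prinw} to produce \eqref{pdenom} --- and bounds the degree in $n$ of the numerator by examining which terms of the explicit formulas for $L$ and $W^3$ can contribute to the coefficient of $W^3$ in the fourth-order pole $W^3_{(3)}W^4$. Writing $p(n,k)$ in the shape analogous to \eqref{pnksecond}, with a ``known'' part coming from the $n\mapsto-n$ image of \eqref{formpnk} plus an error $r(n,k)$, the small-$n$ data force $r(n,k)$ to be divisible by several linear polynomials; if this, combined with the degree bound on the numerator, leaves no room for $r(n,k)\neq0$, the identity follows and with it Conjecture \ref{conj:winf}. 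Additional constraints on $r(n,k)$, if needed, would come from any coincidences $\cC_k(n)\cong\cW_{k'}(\gs\gl_m,f_{\text{prin}})$ established by the methods of Section \ref{section:coincidences}.

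The main obstacle is the generic-in-$n$ part of the computation: extracting $p(n,k)$ with $n$ kept formal requires organizing the $GL_n$-invariant-theory combinatorics of $\cS(n)^{GL_n}$ in the style of \cite{LII} and the proof of Theorem \ref{genpara:strong}, and in particular controlling the normally ordered corrections that appear when the Wick products defining $L$ and $W^3$ are reassociated. A cleaner route, which would bypass this entirely, is to find a structural coincidence $\cC^{k_0}(n)\cong\cW_{k'}(\gs\gl_m,f_{\text{prin}})$ --- or a coincidence of $\cC^{k_0}(n)$ with one of the other truncation curves of $\cW(c,\lambda)$ already identified --- valid for infinitely many $k_0$ with $n$ fixed. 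Such an identity would, through Corollary \ref{cor:intersections} and Theorem \ref{thm:prinw}, immediately pin down $\lambda_n(k)$ as a rational function of $k$, exactly as Theorem 13.1 of \cite{ACLII} was used in the proof of Theorem \ref{thm:genpara}. Establishing such a duality for the coset \eqref{coset:winfneg} --- beyond the analytic-continuation heuristic ``$\cC^k(n)=\cW_{k''}(\gs\gl_{-n},f_{\text{prin}})$'' of \cite{B-H,HII} --- appears to be the decisive question.
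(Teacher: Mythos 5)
The statement you are addressing is stated in the paper as a \emph{conjecture} (Conjecture \ref{conj:winf}); the paper offers no proof of it, only the observation that it is equivalent, via Theorem \ref{thm:prinw}, to the physics heuristic that the coset \eqref{coset:winfneg} realizes ``$\cW(\gs\gl_{-n})$''. So there is no proof in the paper to compare against, and your submission is, as you acknowledge, a plan of attack rather than an argument. The plan itself is methodologically sound and correctly modelled on the proofs of Theorems \ref{thm:prinw} and \ref{thm:genpara}: the reduction of the whole conjecture to the single structure constant $p(n,k)$, the coefficient of $W^3$ in $W^3_{(3)}W^4$, via $\lambda=\frac{31-p(n,k)}{16(2+c)}$ from \eqref{ope:fourth}, is exactly the mechanism the paper uses for the proven analogues, and you correctly identify the two routes that could close the argument (a generic-$n$ free-field computation with degree/divisibility bounds, or a duality theorem in the spirit of Theorem 13.1 of \cite{ACLII}).

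The gap is that neither route is actually executed, and both contain the genuinely open content. For the first route, the analogue of the degree bound that made the proof of Theorem \ref{thm:prinw} work came from the very explicit normalization $B(n,k)$ in \eqref{def:bnk} and the closed-form OPEs \eqref{ope:jq}--\eqref{ope:qq} of the $J$- and $Q$-fields; for $\cS(n)^{GL_n}$ the fields $L,W^3$ of \cite{LI} are cubic in the $\beta\gamma$-generators and the reassociation corrections are controlled only by the invariant-theory combinatorics you defer, so the claim that the numerator degree bound ``leaves no room for $r(n,k)\neq 0$'' is an assertion, not a verification. For the second route, the needed coincidence for the coset \eqref{coset:winfneg} at infinitely many levels is precisely what is not known (the analogue of \eqref{thm:genparaACL} for $L_{-1}(\gs\gl_n)$ is not available), and Theorem \ref{thm:wsl(-2)coinc} together with Conjecture \ref{conj:wsl(-n)coinc} show that the paper can only run this argument in the reverse direction, deducing coincidences \emph{from} the conjecture. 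One further imprecision: the generator \eqref{conj:winfideal} is the $n\mapsto -n$ image of \eqref{ideal:wslna}, but the rational $k$-parametrization in the conjecture is \emph{not} the $n\mapsto -n$ image of \eqref{prinw:cosetreal}, so your reformulation ``$p(n,k)$ is the image under $n\mapsto -n$ of \eqref{formpnk}'' needs to be checked against the correct parametrization rather than asserted. In short: the strategy is the right one, but the statement remains a conjecture precisely because the ingredients you flag as missing are in fact missing.
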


The case $n=2$ must be treated differently from $n\geq 3$ because $\text{Com}(\cH, \cS(2))\neq L_{-1}(\gs\gl_2)$. Instead, $\text{Com}(\cH, \cS(2))$ is an extension of $L_{-1}(\gs\gl_2)$, and is isomorphic to the simple rectangular $\cW$-algebra $\cW_{-5/2}(\gs\gl_4, f_{\text{rect}})$; see \cite{CKLR}, Proposition 5.4. Here the nilpotent element $f_{\text{rect}}$ corresponds to the embedding $\gs\gl_2 \hookrightarrow \gs\gl_4$ such that $\gs\gl_4$ decomposes as a sum four copies of the adjoint representation of $\gs\gl_2$ and three copies of the trivial representation. Then $\cW^k(\gs\gl_4, f_{\text{rect}})$ is of type $\cW(1, 1, 1, 2, 2, 2, 2)$ and the affine subalgebra is $V^{k+3/2}(\gs\gl_2)$. At level $k = -5/2$, there is a singular vector in weight $2$, and $L_{-1}(\gs\gl_2)$ is conformally embedded in the simple quotient $\cW_{-5/2}(\gs\gl_4, f_{\text{rect}})$, which is of type $\cW(1,1,1,2,2,2)$.

Since $\cS^0(2) \cong \cH \otimes \cW_{-5/2}(\gs\gl_4, f_{\text{rect}})$, we obtain
$$\cW_{1+\infty,-2} \cong \cS(2)^{GL_2} \cong \cS^0(2)^{GL_2} \cong \cH \otimes \cW_{-5/2}(\gs\gl_4, f_{\text{rect}})^{SL_2},$$ so that $\cC_{-3} \cong \cW_{-5/2}(\gs\gl_4, f_{\text{rect}})^{SL_2}$. This is the orbifold limit of the following coset
\begin{equation} \label{coset:winf2} \cC^k(2) = \text{Com}(V^k(\gs\gl_2),  V^{k+1}(\gs\gl_2) \otimes \cW_{-5/2}(\gs\gl_4, f_{\text{rect}})),\end{equation} that is, 
$\lim_{k\ra \infty} \cC^k(2) \cong \cW_{-5/2}(\gs\gl_4, f_{\text{rect}})^{SL_2}$. Since $\cC_{-3}$ is of type $\cW(2,3,4,5,6,7,8)$, so is $\cC^k(2)$.

Note that the formula \eqref{conj:winfideal} makes sense for $n = 2$, and we obtain $\lambda = -1/16$. It can be verified by computer that $(\lambda + 1/16)$ lies in the Shapovalov spectrum of level $9$. Let $I = (\lambda + 1/16)$ and let $R$ be the localization of $\mathbb{C}[c,\lambda] / I \cong \mathbb{C}[c]$ along the multiplicative set generated by $c$. In weight $9$, $\cW^I_R(c,\lambda)$ turns out to have a unique singular vector of the form $W^9 - P(L, W^3, \dots, W^7)$, so the simple quotient of $\cW^I_R(c,\lambda)$ is of type $\cW(2,3,\dots, 8)$. 

\begin{thm} Let $I = (\lambda +1/16)$ and let $R$ be the localization of $\mathbb{C}[c,\lambda] / I \cong \mathbb{C}[c]$ along the multiplicative set generated by $c$. The coset $\cC^k(2)$ given by \eqref{coset:winf2} is isomorphic to the simple quotient of $\cW^I_R(c,\lambda)$, where $c$ and $k$ are related by $\displaystyle c = -\frac{3 (1 + k) (4 + k)}{(2 + k) (3 + k)}$. \end{thm}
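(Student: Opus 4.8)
The plan is to establish this as the $n=2$ counterpart of the theorem just proved for $\cC^k(n)$ with $n\geq 3$: exhibit $\cC^k(2)$ as a one-parameter deformation of the orbifold $\cW_{-5/2}(\gs\gl_4,f_{\text{rect}})^{SL_2}\cong\cC_{-3}$, verify that it meets the hypotheses of Theorem \ref{refinedsimplequotient}, and then pin down the resulting ideal and localization. First I would record the structural facts about $\cC^k(2)$. The Virasoro field $L$ and a weight $3$ primary $W^3$ of $\cC_{-3}$ are available explicitly inside $\cS(2)^{GL_2}$, as in the treatment of the $n\geq 3$ case in \cite{LI}; one checks that $\{L,W^3\}$ generate $\cC_{-3}$ and, by Weyl's second fundamental theorem of invariant theory for $GL_2$, that with $W^i=(W^3)_{(1)}W^{i-1}$ there are no normally ordered relations among $L,W^i$ and their derivatives in weight $<(2+1)^2=9$. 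By Theorem 6.10 of \cite{CLII} the generation statement and the absence of relations in weights $\leq 8$ descend to $\cC^k(2)$ for generic $k$, and simplicity of $\cC^k(2)$ follows from that of the orbifold limit by the argument used for Corollary \ref{cor:genpara} (passing to a compact real form and invoking \cite{DLM}). Hence $\cC^k(2)$ is simple, of type $\cW(2,3,\dots,8)$, generated by $L$ and $W^3$, and its graded character agrees with that of $\cW(c,\lambda)$ through weight $8$.

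Theorem \ref{refinedsimplequotient} then produces an ideal $I'\subseteq\mathbb{C}[c,\lambda]$ and a localization $R'$ of $\mathbb{C}[c,\lambda]/I'$ with $\cC^k(2)$ isomorphic to the simple quotient of $\cW^{I'}_{R'}(c,\lambda)$ via $c\mapsto c(k)$, $\lambda\mapsto\lambda(k)$ for rational functions $c(k),\lambda(k)$; because the first relation of $\cC^k(2)$ occurs in weight $9$, the generator of $I'$ is an irreducible factor of $\det_9$ of level $9$. The central charge is computed directly from $c(V^m(\gs\gl_2))=\frac{3m}{m+2}$ and $c(\cW_{-5/2}(\gs\gl_4,f_{\text{rect}}))=-3$ (the latter because $\cC_{-3}=\cW_{-5/2}(\gs\gl_4,f_{\text{rect}})^{SL_2}$ has central charge $-3$), giving $c(k)=\frac{3(k+1)}{k+3}-3-\frac{3k}{k+2}=-\frac{3(1+k)(4+k)}{(2+k)(3+k)}$, as claimed.

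It remains to show $\lambda(k)\equiv-\frac{1}{16}$, i.e. $I'=(\lambda+\frac{1}{16})$. I would compute the coefficient of $W^3$ in the fourth order pole $(W^3)_{(3)}W^4$ directly inside $\cC^k(2)$: building $W^3$ for generic $k$ in $V^{k+1}(\gs\gl_2)\otimes\cW_{-5/2}(\gs\gl_4,f_{\text{rect}})$ from normalized $d$-tensor combinations of the affine generators and the weight $2$ generators, exactly as in the proof of Theorem \ref{thm:prinw}, this coefficient comes out a rational function of $k$, and matching it against the corresponding structure constant of $\cW(c,\lambda)$ evaluated at $c=c(k)$ forces $\lambda(k)=-\frac{1}{16}$. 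Two consistency checks support this: the $n=2$ specialization of \eqref{conj:winfideal} equals $3(16\lambda+1)$, so $(\lambda+\frac{1}{16})$ is the expected generator, and the limit point $(c,\lambda)=(-3,-\frac{1}{16})$ lies on the curve $4\lambda(c-1)-1=0$ of Theorem \ref{thm:winfreal} with $c+1=-2$, matching $\lim_{k\to\infty}\cC^k(2)\cong\cC_{-3}$; one also verifies by computer, as recorded just before the statement, that $(\lambda+\frac{1}{16})$ lies in the level $9$ Shapovalov spectrum and that $\cW^{(\lambda+1/16)}_R(c,\lambda)$ has a unique weight $9$ singular vector $W^9-P(L,W^3,\dots,W^7)$. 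The main obstacle is this weight-$9$ bookkeeping: setting up a usable generic-$k$ realization of $W^3$ in the coset and carrying the OPE computation of $(W^3)_{(3)}W^4$ far enough to extract $\lambda(k)$, together with factoring $\det_9$; a possible shortcut is to first prove that $\cC^k(2)$ coincides with the Example~(6) coset $\text{Com}(V^k(\gs\gl_2),V^{k+1}(\gs\gl_2)\otimes L_{-1}(\gs\gl_2))$ and transport the $n=2$ case of the $L_{-1}$ truncation data, but establishing that coincidence is itself not routine.
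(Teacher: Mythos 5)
Your strategy matches the paper's: check that $\cC^k(2)$ satisfies the hypotheses of Theorem \ref{refinedsimplequotient} (generation by $L,W^3$, simplicity, character agreement through weight $8$), read off the central charge, and then fix $\lambda$ by computing the coefficient of $W^3$ in $(W^3)_{(3)}W^4$ and matching against \eqref{ope:fourth}; the paper carries this out and finds $\lambda=-1/16$. One execution detail: your plan to build $W^3$ from ``$d$-tensor combinations \ldots exactly as in the proof of Theorem \ref{thm:prinw}'' does not transfer literally, since $d^{abc}=0$ for $\gs\gl_2$ and the weight-$2$ fields here are intrinsic generators of $\cW_{-5/2}(\gs\gl_4,f_{\text{rect}})$ rather than $d$-tensor bilinears in the currents; the paper sidesteps this by using the free-field identification $\cS^0(2)\cong\cH\otimes\cW_{-5/2}(\gs\gl_4,f_{\text{rect}})$ and writing $W^3$ explicitly in the $\beta\gamma$-system, which is what makes the weight-$9$ bookkeeping you worry about tractable.
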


\begin{proof} Using the isomorphism $\cS^0(2) \cong \cH \otimes \cW_{-5/2}(\gs\gl_4, f_{\text{rect}})$, we have an embedding $\cC^k(2) \rightarrow \text{Com}(V^k(\gs\gl_2), V^{k+1}(\gs\gl_2) \otimes \cS^0(2))$, which is determined by specifying the weight $3$ primary field $W^3$, normalized so that $\displaystyle W^3_{(5)} W^3 = \frac{c}{3} 1 = -\frac{(1 + k) (4 + k)}{(2 + k) (3 + k)} 1$. Choosing standard generators $X,Y,H$ for $V^{k+1}(\gs\gl_2)$ and generators $\beta_i, \gamma_i$, $i=1,2$ for $\cS(2)$ with $\beta_i(z) \gamma_j(w) \sim \delta_{i,j} (z-w)^{-1}$, we have the following explicit formula.

\begin{equation} \begin{split} W^3 & =\frac{1+k}{12 \sqrt{(2 + k) (3 + k)}}
\bigg( :(\beta_1)^3 (\gamma_1)^3: + :(\beta_2)^3 (\gamma_2)^3: + 
 3 :(\beta_1)^2 \beta_2 (\gamma_1)^2 \gamma_2:  
\\ & + 6  :(\beta_1)^2 (\partial \gamma_1) \gamma_1:    
+ 3 : \beta_1 (\beta_2)^2  \gamma_1 (\gamma_2)^2: 
+ 6 :\beta_1 \beta_2 \gamma_2 (\partial \gamma_2):  
+ 6 :\beta_1 \beta_2 (\partial \gamma_1) \gamma_2: 
\\ & - 6 :\beta_1 (\partial \beta_2) \gamma_1 \gamma_2:  
+  6 :(\beta_2)^2 (\partial \gamma_2) \gamma_2: 
 - 6 :(\partial \beta_1) \beta_1 (\gamma_1)^2: 
 - 6  :(\partial \beta_1) \beta_2 \gamma_1 \gamma_2: 
\\ & - 6 :(\partial \beta_2) \beta_2 (\gamma_2)^2: +  3 :\beta_2 (\partial^2 \gamma_2):  + 3 : \beta_1 (\partial^2 \gamma_1):  - 12 :(\partial \beta_1)(\partial \gamma_1):  
\\ & - 12 :(\partial \beta_2)(\partial \gamma_2):  + 3 :(\partial^2 \beta_1) \gamma_1: + 3 :(\partial^2 \beta_2) \gamma_2: 
 \\ & - \frac{3}{1 + k} \bigg(X \otimes \big( 2 :(\beta_1)^2 \gamma_1 \gamma_2: + 2 :\beta_1 \beta_2 (\gamma_2)^2:  + 4 :\beta_1 (\partial \gamma_2): - 4 :(\partial \beta_1) \gamma_2: \big)\bigg)      
\\ &  - \frac{3}{1 + k} \bigg(Y \otimes \big( 2 :\beta_1 \beta_2 (\gamma_1)^2: + 2 :(\beta_2)^2 \gamma_1 \gamma_2: + 4 :\beta_2 (\partial \gamma_1):  - 4 :(\partial \beta_2) \gamma_1: \big)\bigg)
 \\ & - \frac{3}{1 + k} \bigg( H \otimes \big(   :(\beta_1)^2 (\gamma_1)^2: + 2 :\beta_1 (\partial \gamma_1):  -  :(\beta_2)^2 (\gamma_2)^2:  - 2 :\beta_2 \partial \gamma_2:  
 \\ &- 2 :(\partial \beta_1) \gamma_1: + 2 :(\partial \beta_2) \gamma_2: \big)\bigg)\bigg).
\end{split}
\end{equation}
Setting $W^4 = W^3_{(1)} W^3$, and comparing the coefficient of $W^3$ in the fourth order pole $W^3_{(3)} W^4$ to \eqref{ope:fourth}, we find that $\lambda = -1/16$. Since $\cC^k(2)$ is easily seen to satisfy the hypotheses of Theorem \ref{refinedsimplequotient}, this proves the claim. \end{proof}

\subsection{Coincidences}
Recall that the specialization $\cC^{k_0}(2)$ of the one-parameter algebra $\cC^{k}(2)$ at $k = k_0$, can be a proper subalgebra of the coset $\text{Com}(V^{k_0}(\gs\gl_2),  V^{k_0+1}(\gs\gl_2) \otimes \cW_{-5/2}(\gs\gl_4, f_{\text{rect}}))$, but this can only occur for rational numbers $k_0 \leq -2$. Similarly, for $n\geq 3$, the specialization $\cC^{k_0}(n)$ can be a proper subalgebra of the coset $\text{Com}(V^{k_0}(\gs\gl_n), V^{k_0+1}(\gs\gl_n) \otimes L_{-1}(\gs \gl_n))$, but this can only happen for rational numbers $k_0 \leq -n$. For $n\geq 2$, we use the same notation $\cC^k(n)$ if $k$ is regarded as a complex number rather than a formal parameter, so that $\cC^k(n)$ always denotes the specialization of the one-parameter algebra at $k\in \mathbb{C}$ even if it is a proper subalgebra of the coset. For all $k\in \mathbb{C}$, we denote by $\cC_{k}(n)$ the simple quotient of $\cC^k(n)$.

\begin{thm} \label{thm:wsl(-2)coinc} For $m \geq 3$, aside from the cases $k = -2,\ -3$, and the cases $c = 0, -2$, all isomorphisms $\cC_k(2) \cong \cW_{k'}(\gs\gl_m, f_{\rm{prin}})$ appear on the following list.
$$ k = -2 - \frac{m-2}{m} ,\  -2 - \frac{2}{m}, \qquad k' = -m + \frac{m}{m-2},\ -m + \frac{m-2}{m}.$$
\end{thm}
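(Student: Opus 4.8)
The plan is to apply Corollary \ref{cor:intersections}, exactly as in the preceding subsections for $N_k(\gs\gl_2)$, $\cC_\ell$, and $\cC_k$. Both $\cC^k(2)$ and $\cW^k(\gs\gl_m, f_{\text{prin}})$ are one-parameter vertex algebras realized as simple quotients of $\cW^I(c,\lambda)$: for $\cC^k(2)$ the relevant ideal is $I = (\lambda + 1/16)$, with the parametrization $c = -\tfrac{3(1+k)(4+k)}{(2+k)(3+k)}$ coming from the theorem just proved; for $\cW^k(\gs\gl_m, f_{\text{prin}})$ it is $I_m$ with generator \eqref{ideal:wslna}, via Theorem \ref{thm:prinw}. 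First I would note which values of $k$ must be excluded: these are the values where $c(k)$ or $\lambda(k)$ has a pole, so that $\cC^k(2)$ fails to be a quotient of $\cW^I(c,\lambda)$. From the parametrization $c = -\tfrac{3(1+k)(4+k)}{(2+k)(3+k)}$, the poles are at $k = -2$ and $k = -3$; $\lambda$ is constant so contributes no further exclusions. This matches the exclusions $k = -2, -3$ in the statement. On the $\cW_{k'}(\gs\gl_m, f_{\text{prin}})$ side, the critical level $k' = -m$ and the two values where $\lambda(k')$ blows up are handled exactly as in the proof of the analogous coincidence theorems, and $c = 0, -2$ are excluded in the usual way.

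The core computation is then to intersect the two truncation curves: the horizontal line $\lambda = -1/16$ in $\mathbb{C}^2$ with the curve $V(I_m) = \{p_m(c,\lambda) = 0\}$, where $p_m$ is \eqref{ideal:wslna}. Substituting $\lambda = -1/16$ into $p_m(c,\lambda) = 0$ gives a \emph{linear} equation in $c$ (since $p_m$ is linear in $c$), hence a unique solution
$$c = c_m := -\,\frac{16(m-1)(m+1) + (m-2)(3m^2 - m - 2)}{(m-2)(m+2)}.$$
This single intersection point, by Corollary \ref{cor:intersections}, accounts for all coincidences away from the excluded loci. The remaining work is to translate this value $c_m$ back into levels. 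One solves $c_m = -\tfrac{3(1+k)(4+k)}{(2+k)(3+k)}$ for $k$ (a quadratic, giving the two values $k = -2 - \tfrac{m-2}{m}$ and $k = -2 - \tfrac{2}{m}$ stated), and one solves $c_m = -\tfrac{(m-1)(m^2 + mk' - m - 1)(m^2 + k' + mk')}{m+k'}$ for $k'$, which yields the pair $k' = -m + \tfrac{m}{m-2}$, $-m + \tfrac{m-2}{m}$. A sanity check is that $c_m$ simplifies to the known value of the central charge of $\cW_{k'}(\gs\gl_m, f_{\text{prin}})$ at these levels, equivalently $c_m = -\tfrac{(m-1)(3m-2)(?)}{\dots}$; this is a routine cross-check that the quadratics have the claimed roots.

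Finally one must dispose of the excluded points $k = -2$ and $k = -3$ to confirm no further coincidences hide there, following the template used repeatedly above: at these values $\cC_k(2)$ is some specific small algebra (at $k=-3$, $\cC_{-3} \cong \cW_{-5/2}(\gs\gl_4, f_{\text{rect}})^{SL_2}$ is the orbifold limit, of type $\cW(2,3,\dots,8)$, and one reads off its central charge; at $k=-2$ one computes $c$ directly), and then one checks whether $\cW_{k'}(\gs\gl_m, f_{\text{prin}})$ can have a weight-$3$ singular vector at that central charge — which forces $c = 0$ or $c = -\tfrac{(m-1)(3m+2)}{m+2}$, and in the latter case $\cW_{k'}(\gs\gl_m, f_{\text{prin}})$ degenerates to the Virasoro algebra, so there is no genuine new coincidence. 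I expect the main obstacle to be purely bookkeeping: verifying that the two quadratic solutions for $k$ and the two for $k'$ are correctly paired with each other and with the ``$-m + p/q$, $-m + q/p$'' duality, and checking the excluded-point analysis at $k = -3$ carefully, since there the relevant algebra is an orbifold whose central charge and type must be quoted correctly from \cite{LI} and \cite{CLII}. None of the steps is conceptually hard given Corollary \ref{cor:intersections} and Theorem \ref{thm:prinw}; the content is in the explicit intersection and the level translations.
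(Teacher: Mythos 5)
Your proposal matches the paper's proof: both apply Corollary \ref{cor:intersections} and compute the unique intersection point of the line $\lambda = -1/16$ with the curve $V(I_m)$, namely $c = -3(m-1)(m+2)/(m-2)$, and your translations back to the levels $k$ and $k'$ are correct. The only divergence is your final paragraph, which is superfluous here — since $\lambda(k)$ is constant, the only points where $\cC^k(2)$ fails to be a quotient of $\cW^I(c,\lambda)$ are the poles $k=-2,-3$ of $c(k)$, and these are already excluded in the statement, so unlike the earlier coincidence theorems there are no extra excluded points to analyze — and it also misidentifies $\cC^{k}(2)$ at $k=-3$ with the orbifold limit $\lim_{k\to\infty}\cC^k(2)\cong \cW_{-5/2}(\gs\gl_4,f_{\text{rect}})^{SL_2}$ (at $k=-3$ the level $k+1$ is critical for $\gs\gl_2$), though this does not affect the validity of the main argument.
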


\begin{proof} By Corollary \ref{cor:intersections}, aside from the cases $c = 0,-2$, all remaining isomorphisms $\cC_k(2) \cong \cW_{k'}(\gs\gl_m, f_{\text{prin}})$ correspond to intersection points of the curves $V(I)$ and $V(I_m)$, where $I = (\lambda +1/16)$ and $I_m$ is given by \eqref{ideal:wslna}. For each $m \geq 3$, there is exactly one intersection point 
$$(c,\lambda) = \bigg(  -\frac{3 (n-1) (n+2)}{n-2}, \ -\frac{1}{16} \bigg).$$ 
It is immediate that the above isomorphisms all hold, and that our list is complete. \end{proof}

\begin{remark} We have $k < -2$ at the points appearing in Theorem \ref{thm:wsl(-2)coinc}, so at these points, $\cC^k(2)$ can be a proper subalgebra of the coset $\text{Com}(V^{k}(\gs\gl_2),  V^{k+1}(\gs\gl_2) \otimes \cW_{-5/2}(\gs\gl_4, f_{\rm{rect}}))$.\end{remark}

\begin{conj}  \label{conj:wsl(-n)coinc} For $m,n \geq 3$ and $m \neq n$, aside from the cases $k = -n, -n-1$, $k' = -m$, and $c = 0 -2$, all isomorphisms $\cC_k(n) \cong \cW_{k'}(\gs\gl_m, f_{\rm{prin}})$ appear on the following list.
$$k = -n - \frac{m-n}{m},\ -n - \frac{n}{m}, \qquad k' = -m + \frac{m}{m-n},\ -m + \frac{m-n}{m}.$$
\end{conj}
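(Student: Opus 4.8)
The plan is to deduce Conjecture \ref{conj:wsl(-n)coinc} from Conjecture \ref{conj:winf}, exactly as Theorem \ref{thm:wsl(-2)coinc} was proved and as Conjectures \ref{conj:minwcoincidences} and \ref{conj:subregwcoincidences} were obtained from their respective truncation-curve conjectures. Granting Conjecture \ref{conj:winf}, for $n\geq 3$ the one-parameter algebra $\cC^k(n)$ is (away from the poles of its structure constants) the simple quotient of $\cW^{K_n}(c,\lambda)$, where $K_n=(p_n)$ with $p_n$ as in \eqref{conj:winfideal} and $V(K_n)$ carries the rational parametrization $c=-\frac{(1+k)(1+n)(k+2n)}{(k+n)(1+k+n)}$, $\lambda=-\frac{(k+n)(1+k+n)}{(2+n)(2+2k+n)(2k+3n)}$. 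First I would list the values of $k$ that are poles of this parametrization: $k=-n$ and $k=-n-1$ (poles of $c(k)$), which are the critical levels already excised in the statement, and the two extra poles $k=-\tfrac{n+2}{2}$, $k=-\tfrac{3n}{2}$ of $\lambda(k)$. At the latter two one checks that $\cC_k(n)$ degenerates to the Virasoro algebra with $c=\tfrac{(3n-2)(n+1)}{n-2}$ --- this being the central charge at which the weight-$3$ field becomes singular, just as $\cW_k(\gs\gl_n,f_{\text{prin}})$ degenerates to Virasoro at its own $\lambda$-poles (Section \ref{section:coincidences}). Since $\tfrac{(3n-2)(n+1)}{n-2}>0$ for $n\geq 3$, it is neither $0$ nor $-\tfrac{(m-1)(3m+2)}{m+2}$, so $\cW_{k'}(\gs\gl_m,f_{\text{prin}})$ with $m\geq 3$ is never isomorphic to it, and these two poles contribute no coincidence; hence only $k=-n,-n-1$ need be excluded, as claimed.

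For the main step, Corollary \ref{cor:intersections} reduces the remaining coincidences $\cC_k(n)\cong\cW_{k'}(\gs\gl_m,f_{\text{prin}})$ with $c\neq 0,-2$ to the points of $V(K_n)\cap V(I_m)$, where $I_m=(p_m)$ with $p_m$ from \eqref{ideal:wslna}. Both $p_n$ and $p_m$ are linear in $\lambda$, so eliminating $\lambda$ from $p_n=p_m=0$ leaves one equation; clearing denominators and using $3m^2-m-2=(3m+2)(m-1)$ and $3n^2+n-2=(3n-2)(n+1)$, the coefficient of $c$ in it collapses to $3(n-m)(n+m)$, which is nonzero for $m\neq n$. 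Thus for each pair $m\neq n$, $m,n\geq 3$, there is exactly one intersection point $(c,\lambda)\in\mathbb{C}^2$. I would then pick the two values of $k$ with $c(k)$ equal to its $c$-coordinate and the two values of $k'$ with the corresponding $c$-coordinate of $V(I_m)$ from Theorem \ref{thm:prinw}, and verify that they are precisely $k=-n-\tfrac{m-n}{m},\,-n-\tfrac{n}{m}$ and $k'=-m+\tfrac{m}{m-n},\,-m+\tfrac{m-n}{m}$; this is a straightforward rational-function identity. This completes the conditional proof.

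I would also observe that the argument is unconditional wherever Conjecture \ref{conj:winf} can be verified --- in particular for $n=3$, where the relevant factor of the Shapovalov determinant can be found by direct computation, or pinned down by the degree-versus-known-coincidences method used in the proof of Theorem \ref{thm:prinw} --- so the statement is a theorem in that range. The essential obstacle is Conjecture \ref{conj:winf} itself: without the explicit generator of $K_n$ one has the central charge \eqref{cc:winfneg} of $\cC^k(n)$ but not the $\lambda$-coordinate of its truncation curve, and so $V(K_n)\cap V(I_m)$ cannot be computed. In the generalized-parafermion case (Theorem \ref{thm:genpara}) the analogous function $\lambda_n(k)$ was determined from Theorem 13.1 of \cite{ACLII} at positive integer levels; here, however, there is no evident family of integer levels at which $\cC^k(n)$ is a known rational $\cW$-algebra, so that route does not close the gap, and a direct determination of the ideal $K_n$ seems to be needed.
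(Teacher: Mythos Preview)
Your proposal is correct and follows essentially the same route as the paper: assume Conjecture \ref{conj:winf}, exclude the $\lambda$-poles $k=-\tfrac{n+2}{2},\ -\tfrac{3n}{2}$, apply Corollary \ref{cor:intersections} to reduce to $V(K_n)\cap V(I_m)$, observe this consists of a single point (the paper records it explicitly as $\bigl(-\tfrac{(m-1)(n+1)(n-m+mn)}{m-n},\ -\tfrac{m-n}{(m-2)(n+2)(2n-2m+mn)}\bigr)$), and then rule out the excluded poles by noting that at $c=\tfrac{(n+1)(3n-2)}{n-2}$ the weight-$3$ field is singular while no $\cW_{k'}(\gs\gl_m,f_{\text{prin}})$ with $m\geq 3$ has a weight-$3$ singular vector at that central charge. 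One minor point of phrasing: at the excluded poles the paper says only that $\cC_k(n)$ has a singular vector in weight $3$, not that it equals the Virasoro algebra; your stronger ``degenerates to Virasoro'' is not needed and would require a separate check, but the weaker statement suffices for the argument.
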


We prove that this conjecture follows from Conjecture \ref{conj:winf}. First, we exclude the points $\displaystyle k = -\frac{1}{2}(n+2),\ -\frac{3n}{2}$, since $\cC^k(n)$ is not obtained as a quotient of $\cW^{K_n}(c,\lambda)$ at these points. Assuming Conjecture \ref{conj:winf}, and applying  Corollary \ref{cor:intersections},  aside from the cases $c = 0,-2$, all remaining isomorphisms $\cC_k(n) \cong \cW_{k'}(\gs\gl_m, f_{\text{prin}})$ correspond to intersection points on the curves \eqref{conj:winfideal} and \eqref{ideal:wslna}. There is exactly one such intersection point
$$(c,\lambda) = \bigg( -\frac{(m-1) (n+1) (n -m + m n)}{m - n},\  -\frac{m - n}{(m-2) (n+ 2) (2 n -2 m + m n)} \bigg).$$ 
It follows that the above isomorphisms all hold, and that our list is complete except for possible coincidences at the excluded points $\displaystyle k = -\frac{1}{2}(n+2),\ -\frac{3n}{2}$.

At both excluded points, $\cC_k(n)$ has central charge $\displaystyle c = \frac{(n+ 1) (3n -2)}{n-2}$, and has a singular vector in weight $3$. Since $\cW_{k'}(\gs\gl_m, f_{\text{prin}})$ has a singular vector in weight $3$ only at $c = 0$ and $\displaystyle c = -\frac{(m-1)(3m+2)}{m+2}$, there are no integers $m\geq 3$ for which $\cW_{k'}(\gs\gl_m, f_{\text{prin}})$ has a singular vector in weight $3$ at the above central charge. Therefore there are no additional coincidences at the excluded points. 

\begin{remark} At the points appearing in Conjecture \ref{conj:wsl(-n)coinc}, we have $k > -n$ whenever $m<n$, so in these cases $\cC^k(n)$ coincides with the coset $\text{Com}(V^{k}(\gs\gl_n), V^{k+1}(\gs\gl_n) \otimes L_{-1}(\gs \gl_n))$. However, if $m > n$, $\cC^k(n)$ can be a proper subalgebra of $\text{Com}(V^{k}(\gs\gl_n), V^{k+1}(\gs\gl_n) \otimes L_{-1}(\gs \gl_n))$. \end{remark}

\appendix
\section{}

In this Appendix we give the explicit OPE relations of the form $W^i(z) W^j(w)$ in $\cW(c,\lambda)$ for $2 \leq i \leq j$ and $i+j = 8$ and $i+j = 9$. Starting from  \eqref{ope:first}-\eqref{ope:fourth}, these are determined uniquely by imposing the Jacobi relations of type $(W^i, W^j, W^k)$ for $i+j+k \leq 11$.

\begin{equation} \label{ope:w2w6} \begin{split} L(z) W^6(w) & \sim -13 c \big(-55 + 16 \lambda (2 + c)\big) (z-w)^{-8} + \big(2100 - 768 \lambda (2 + c)\big) L(w)(z-w)^{-6}  \\ & + \big( 770 - 224 \lambda (2 + c) \big) \partial L(w)(z-w)^{-5} \\ & + 
\bigg( \big(660 - 80 \lambda (13 + 5 c)\big) W^4 + 640 \lambda :LL: 
+   \big( 50 + 40 \lambda (-1 + c) \big) \partial^2 L  \bigg)(w)(z-w)^{-4}
\\ & +\bigg(  \big( 195 - 12 \lambda (17 + 7 c) \big) \partial W^4 + 192 \lambda :(\partial L)L: 
\\ & +  \frac{1}{6} \big(-65 + 4 \lambda (31 + 17 c)\big) \partial^3 L \bigg)(w)(z-w)^{-3}
\\ & +6 W^6 (w)(z-w)^{-2} + \partial W^6 (w)(z-w)^{-1}.\end{split} \end{equation}

\begin{equation} \label{ope:w3w5} \begin{split} W^3(z) W^5(w) & \sim  -c \big(-55 + 16 \lambda (2 + c)\big) (z-w)^{-8} -\frac{4}{3}\big(-175 + 64 \lambda (2 + c)\big) L(w)(z-w)^{-6}  \\ & + \big(110 - 32 \lambda (2 + c)\big) \partial L(w)(z-w)^{-5} \\& + 
\bigg( \big(95 - 16 \lambda (11 + 4 c)\big) W^4 + 128 \lambda :LL: 
+   \big(10 + 8 \lambda (-1 + c)\big) \partial^2 L  \bigg)(w)(z-w)^{-4}
\\ & +\bigg(  64 \lambda :(\partial L)L: + \big(\frac{75}{2} - 4 \lambda (13 + 5 c)\big) \partial W^4 
\\ & + \frac{1}{12} \big(-25 + 8 \lambda (9 + 5 c)\big) \partial^3 L \bigg)(w)(z-w)^{-3}
\\ & +W^6 (w)(z-w)^{-2}
\\ & +\bigg( \frac{1}{3} \partial W^6 + \frac{32 \lambda}{3} :L \partial W^4:  -\frac{64 \lambda}{3} :(\partial L) W^4:  - \frac{16 \lambda}{3} :(\partial^3 L)L: 
\\ & + \big(-\frac{5}{4} + \frac{2}{3} \lambda (1 + c)\big) \partial^3 W^4
+\big(\frac{5}{72} - \frac{1}{45} \lambda (13 + 5 c)\big) \partial^5 L \bigg)(w)(z-w)^{-1}.\end{split} \end{equation}

\begin{equation} \label{ope:w4w4} \begin{split} W^4(z) W^4(w) & \sim  -\frac{1}{3} c \big(-139 + 16 \lambda (2 + c)\big) (z-w)^{-8}  -\frac{4}{3} \big(-125 + 32 \lambda (2 + c)\big) L(w)(z-w)^{-6}  \\ & + \big(\frac{250}{3} - \frac{64}{3} \lambda (2 + c)\big) \partial L(w)(z-w)^{-5} \\& + 
\bigg(   \big(72 - 48 \lambda (3 + c)\big)  W^4 + 128 \lambda  :LL: 
+  \big(10 + 8 \lambda (-1 + c)\big) \partial^2 L  \bigg)(w)(z-w)^{-4}
\\ & +\bigg(  128 \lambda :(\partial L)L: + \big( 36 - 24 \lambda (3 + c)\big) \partial W^4 
\\ & + \frac{1}{18} \big(-35 + 8 \lambda (23 + 13 c)\big)
\partial^3 L \bigg)(w)(z-w)^{-3}
\\ & +\bigg(\frac{4}{5}W^6 + \frac{64 \lambda}{5} :L W^4:  -\frac{288 \lambda}{5}:W^3 W^3: + 32 \lambda :(\partial^2 L)L: + 16 \lambda :(\partial L) \partial L: 
\\ & + \frac{1}{15} \big(35 - 4 \lambda (19 + 11 c)\big) \partial^2 W^4 + \frac{1}{90} \big(-5 + 4 \lambda (7 + 23 c)\big) \partial^4 L \bigg) (w)(z-w)^{-2}
\\ & +\bigg( -\frac{2}{5} \partial W^6 -\frac{32 \lambda}{5} :L \partial W^4: +\frac{288 \lambda}{5} :(\partial W^3) W^3:  -\frac{32 \lambda}{5} :(\partial L) W^4:  
\\ & - \frac{16 \lambda}{3} :(\partial^3 L)L: 
 + \big( \frac{11}{6} - \frac{16 \lambda}{15} -  \frac{8 \lambda c}{15} \big) \partial^3 W^4
+\big( -\frac{1}{4} + \frac{8 \lambda}{25} \big) \partial^5 L \bigg)(w)(z-w)^{-1}.\end{split} \end{equation}

\begin{equation} \label{ope:w2w7} \begin{split} L(z) W^7(w) & \sim  18 \big(4725 - 4784 \lambda (2 + c) + 256 \lambda^2 (26 + 23 c + 5 c^2)\big)W^3(w) (z-w)^{-6} 
\\ & + 14 \big(2225 - 1920 \lambda (2 + c) + 64 \lambda^2 (34 + 31 c + 7 c^2)\big) \partial W^3(w)(z-w)^{-5}
\\ & + \bigg( -5 \big(-357 + 8 \lambda (97 + 31 c)\big) W^5  -640 \lambda \big(-35 + 8 \lambda (2 + c)\big) :L W^3:  
\\ & + \frac{5}{2} \big(805 - 8 \lambda (19 + 27 c) + 128 \lambda^2 (6 + 5 c + c^2)\big) \partial^2 W^3 \bigg)(w)(z-w)^{-4}
\\ &+ \bigg( -\frac{3}{5} \big(-875 + 32 \lambda (39 + 14 c)\big) \partial W^5  -\frac{64}{5} \lambda \big(-425 + 4 \lambda (79 + 29 c)\big) :L \partial W^3: 
\\ & + \frac{288}{5} \lambda \big(5 + 4 \lambda (13 + 3 c)\big)  :(\partial L) W^3:  
\\ & + \big( -\frac{875}{2} + 152 \lambda (5 + 3 c) - \frac{32}{5} \lambda^2 (-23 + 15 c + 8 c^2) \big) \partial^3 W^3 \bigg)(w)(z-w)^{-3}
\\ &+ 7 W^7(w)(z-w)^{-2} + \partial W^7(w)(z-w)^{-1}.\end{split} \end{equation}

\begin{equation} \label{ope:w3w6} \begin{split} W^3(z) W^6(w) & \sim   2 \big(4375 - 4656 \lambda (2 + c) + 256 \lambda^2 (26 + 23 c + 5 c^2)\big)W^3(w) (z-w)^{-6} 
\\ & + 4 \big(975 - 920 \lambda (2 + c) + 32 \lambda^2 (34 + 31 c + 7 c^2)\big)  \partial W^3(w)(z-w)^{-5}
\\ & + \bigg( \big(225 - 8 \lambda (71 + 21 c)\big) W^5  -128 \lambda \big(-29 + 8 \lambda (2 + c)\big) :L W^3:  
\\ & + \big(\frac{665}{2} - 4 \lambda (53 + 41 c) + 64 \lambda^2 (6 + 5 c + c^2)\big) \partial^2 W^3 \bigg)(w)(z-w)^{-4}
\\ &+ \bigg( \big( 84 - \frac{4}{5} \lambda (193 + 63 c) \big) \partial W^5  -\frac{32}{15} \lambda \big(-505 + 4 \lambda (107 + 37 c)\big) :L \partial W^3: 
\\ & -\frac{48}{5} \lambda \big(-55 + 4 \lambda (-9 + c)\big)  :(\partial L) W^3:  
\\ & + \big( -70 + \lambda (\frac{490}{3} + 82 c) - \frac{16}{15} \lambda^2 (-29 + 20 c + 9 c^2)\big) \partial^3 W^3 \bigg)(w)(z-w)^{-3}
\\ &+  W^7(w)(z-w)^{-2} + \bigg(\frac{2}{7}\partial W^7 + \frac{496 \lambda}{35} :L \partial W^5: -\frac{248 \lambda}{7} :(\partial L) W^5: 
\\ & + \frac{192 \lambda}{7} : W^3 \partial W^4:  -\frac{256 \lambda}{7} :(\partial W^3) W^4: + \frac{1536 \lambda^2}{35} :(\partial L) L W^3: -\frac{1024 \lambda^2}{35} :LL \partial W^3: 
 \\ & + \frac{8}{35} \lambda \big(-455 + 4 \lambda (135 + 41 c)\big) :(\partial^3 L) W^3:  -\frac{192}{35} \lambda \big(5 + 2 \lambda (-3 + c)\big) :(\partial^2 L) \partial W^3: 
 \\ & + \frac{12}{35} \lambda \big(95 + 8 \lambda (-3 + c)\big) :(\partial L) \partial^2 W^3:  +\frac{8}{105} \lambda \big(-455 + 8 \lambda (-25 + 7 c)\big) :L \partial^3 W: 
 \\ & + \big(-2 + \frac{2}{35} \lambda (17 + 21 c)\big)  \partial^3 W^5 
 \\ & + \frac{1}{105 } \big(175 - \lambda (149 + 205 c) + 24 \lambda^2 (11 + c^2)\big) \partial^5 W^3 \bigg)(w)(z-w)^{-1}.\end{split} \end{equation}

\begin{equation} \label{ope:w4w5} \begin{split} W^4(z) W^5(w) & \sim  \big( 4950 - 4928 \lambda (2 + c) + 256 \lambda^2 (26 + 23 c + 5 c^2) \big)W^3(w) (z-w)^{-6} 
\\ & + \frac{2}{3} \big(3625 - 3600 \lambda (2 + c) + 128 \lambda^2 (34 + 31 c + 7 c^2)\big)  \partial W^3(w)(z-w)^{-5}
\\ & + \bigg( \big(  140 - 8 \lambda (49 + 13 c)  \big) W^5  -128 \lambda \big(-23 + 8 \lambda (2 + c)\big) :L W^3:  
\\ & + \big( \frac{525}{2} - 4 \lambda (87 + 55 c) + 64 \lambda^2 (6 + 5 c + c^2)   \big)  \partial^2 W^3 \bigg)(w)(z-w)^{-4}
\\ & + \bigg( \big(64 - \frac{16}{5} \lambda (51 + 14 c)\big) \partial W^5  -\frac{32}{15} \lambda \big(-485 + 16 \lambda (34 + 11 c)\big) :L \partial W^3: 
\\ & -\frac{48}{5} \lambda \big(-145 + 16 \lambda (2 + 3 c)\big) :(\partial L) W^3:  + \frac{1}{30} \big(-1575 + 40 \lambda (127 + 43 c)
\\ & - 256 \lambda^2 (-4 + 3 c + c^2) \big) \partial^3 W^3 \bigg)(w)(z-w)^{-3} + \bigg( \frac{2}{3} W^7   +\frac{64 \lambda}{3} :LW^5:  -128 \lambda :W^3 W^4: 
\\ & -\frac{32}{5} \lambda \big(-95 + 2 \lambda (65 + 19 c)\big) :(\partial^2 L)W^3:  -\frac{32}{15} \lambda \big(-125 + 2 \lambda (65 + 19 c)\big) :(\partial L) \partial W^3: 
\\ & -\frac{32}{15} \lambda \big(35 + 4 \lambda (-25 + c)\big) :L\partial^2 W^3: + \big( \frac{5}{2} - \frac{8}{5} \lambda (5 + 2 c) \big) \partial^2 W^5 
\\ &+ \frac{1}{180} \big(-175 + 80 \lambda (33 + c) - 64 \lambda^2 (65 - 6 c + c^2)\big) \partial^4 W^3 \bigg)(w)(z-w)^{-2} 
\\ &+ \bigg(\frac{2}{7}\partial W^7 + \frac{384 \lambda}{35} :L \partial W^5:  + \frac{32 \lambda}{7} :(\partial L) W^5: + \frac{192 \lambda}{7} : W^3 \partial W^4: 
\\ & -\frac{1152 \lambda}{7} :(\partial W^3) W^4: -\frac{9216 \lambda^2}{35} :(\partial L) L W^3: + \frac{6144 \lambda^2}{35} :LL \partial W^3: 
\\ & -\frac{8}{105} \lambda \big(-2345 + 8 \lambda (389 + 145 c)\big) :(\partial^3 L) W^3: 
\\ & -\frac{32}{35} \lambda \big(-145 + 8 \lambda (13 + 5 c)\big) :(\partial^2 L) \partial W^3: 
+ \frac{8}{35} \lambda \big(-295 + 8 \lambda (13 + 5 c)\big) :(\partial L) \partial^2 W^3: 
\\ & + \frac{16}{35} \lambda (-245 + 8 \lambda (11 + 7 c)\big) :L \partial^3 W: 
+ \big( -\frac{17}{6} + \frac{4}{105} \lambda (43 + 35 c) \big)  \partial^3 W^5 
\\ & + \frac{1}{420} \big(1925 - 16 \lambda (-87 + 130 c) + 64 \lambda^2 (-29 + 5 c^2)\big) \partial^5 W^3 \bigg)(w)(z-w)^{-1}.\end{split} \end{equation}

\end{document}